\newtheorem{theorem}{Theorem}[section]
\newtheorem{proposition}[theorem]{Proposition}
\newtheorem{lemma}[theorem]{Lemma}
\newtheorem{corollary}[theorem]{Corollary}
\newtheorem{claim}[theorem]{Claim}
\newtheorem{proof}{\textmd{\textit{Proof.}}}
\newtheorem{remark}[theorem]{Remark}
\newtheorem{example}[theorem]{Example}
\newtheorem{definition}[theorem]{Definition}
\newcommand{\qedd}{\hfill \Box}
\newcommand{\ve}{\varepsilon}
\newcommand{\del}{\partial}
\newcommand{\lra}{\longrightarrow}
\renewcommand{\det}{\ensuremath{\mathrm{det}}}
\newcommand{\grad}{\nabla\!\!_-}
\newcommand{\lr}[2]{\langle{#1}{,#2}\rangle}
\newcommand{\N}{\ensuremath{\mathbb{N}}}
\newcommand{\R}{\ensuremath{\mathbb{R}}}
\newcommand{\cF}{\ensuremath{\mathcal{F}}}
\newcommand{\cL}{\ensuremath{\mathcal{L}}} 
\newcommand{\cP}{\ensuremath{\mathcal{P}}}
\newcommand{\cT}{\ensuremath{\mathcal{T}}}
\newcommand{\DC}{\ensuremath{\mathcal{DC}}}
\newcommand{\cM}{\ensuremath{\mathcal{M}}}
\newcommand{\bs}{\ensuremath{\mathbf{s}}}
\newcommand{\bv}{\ensuremath{\mathbf{v}}}
\newcommand{\bJ}{\ensuremath{\mathbf{J}}}
\newcommand{\CD}{\ensuremath{\mathsf{CD}}}
\def\trace{\mathop{\mathrm{tr}}\nolimits}
\def\diam{\mathop{\mathrm{diam}}\nolimits}
\def\vol{\mathop{\mathrm{vol}}\nolimits}
\def\area{\mathop{\mathrm{area}}\nolimits}
\def\div{\mathop{\mathrm{div}}\nolimits}
\def\Hess{\mathop{\mathrm{Hess}}\nolimits}
\def\supp{\mathop{\mathrm{supp}}\nolimits}
\def\Ent{\mathop{\mathrm{Ent}}\nolimits}
\def\Ric{\mathop{\mathrm{Ric}}\nolimits}
\def\loc{\mathop{\mathrm{loc}}\nolimits}
\def\ac{\mathop{\mathrm{ac}}\nolimits}
\def\ev{\mathop{\mathrm{ev}}\nolimits}
\def\diag{\mathop{\mathrm{diag}}\nolimits}
\title{Displacement convexity\\ of generalized relative entropies.~II}
\author{
Shin-ichi Ohta\thanks{Department of Mathematics, Kyoto University, Kyoto 606-8502,
Japan ({\sf sohta@math.kyoto-u.ac.jp});
Supported in part by the Grant-in-Aid for Young Scientists (B) 23740048.}
and 
Asuka Takatsu\thanks{Graduate School of Mathematics, Nagoya University, Nagoya 464-8602,
Japan ({\sf takatsu@math.nagoya-u.ac.jp}) ;
Supported in part by the Grant-in-Aid for Young Scientists (B) 24740042.}}
\date{\today}
\begin{document}

\maketitle

\begin{abstract}
We introduce a class of generalized relative entropies
(inspired by the Bregman divergence in information theory)
on the Wasserstein space over a weighted Riemannian or Finsler manifold.
We prove that the convexity of all the entropies in this class is equivalent to
the combination of the nonnegative weighted Ricci curvature and
the convexity of another weight function used in the definition of the generalized relative entropies.
This convexity condition corresponds to Lott and Villani's version of the curvature-dimension condition.
As applications, we obtain appropriate variants of the Talagrand, HWI
and logarithmic Sobolev inequalities, as well as the concentration of measures.
We also investigate the gradient flow of our generalized relative entropy.
%
\end{abstract}

\tableofcontents

\section{Introduction}\label{sc:intro}

This is a continuation of our work~\cite{mCD} on the displacement convexity
of generalized entropies and its applications.
We consider more general entropies than \cite{mCD} and generalize most results in appropriate ways.
Some of our observation shall shed new light on \cite{mCD}.

It has been known since the celebrated work of McCann~\cite{Mc1} 
that the convexity of an energy (entropy) functional along geodesics in the Wasserstein space
plays a vital role in the study of the existence and the uniqueness of a ground state
(a minimizer of the energy).
Here the (quadratic) \emph{Wasserstein space} over a complete separable metric space $(X,d)$
is the space $\cP^2(X)$ of Borel probability measures on $X$ having finite second moments, endowed with
the \emph{Wasserstein distance function} $W_2$ derived from the Monge--Kantorovich mass transport problem
(see Subsection~\ref{ssc:wass}).
We say that a functional $S$ on $\cP^2(X)$ is \emph{displacement $K$-convex} for $K \in \R$
($\Hess S \ge K$ for short) if any pair $\mu_0, \mu_1 \in \cP^2(X)$ can be joined by
a minimal geodesic $(\mu_t)_{t \in [0,1]}$ in $(\cP^2(X),W_2)$ such that
\[ S(\mu_t) \le (1-t) S(\mu_0) +t S(\mu_1) -\frac{K}{2}(1-t)t W_2(\mu_0,\mu_1)^2 \]
holds for all $t \in [0,1]$.
As usual, the displacement $0$-convexity may be simply called the \emph{displacement convexity}.
The word `displacement' is inserted for avoiding a confusion with the convexity
along the linear interpolation $S((1-t)\mu_0 +t\mu_1) \le (1-t)S(\mu_0)+tS(\mu_1)$.
Since we deal with only the displacement convexity,
we may sometimes omit `displacement'.

As any geodesic in the Wasserstein space is written as the transport along geodesics
in the underlying metric space, the displacement convexity of an energy functional
can be derived from the convexity of its generating function.
For instance, let $S_u^\Psi$ be the \emph{free energy functional} on $\cP^2(\R^n)$
consisting of the internal energy and the potential energy as
\[ S_u^{\Psi}(\mu) :=\int_{\R^n} u \left( \frac{d\mu}{d\cL^n} \right) \,d\cL^n +\int_{\R^n} \Psi \,d\mu \]
for absolutely continuous probability measures $\mu$ on $\R^n$
with respect to the Lebesgue measure $\cL^n$,
where the energy density $u$ is a function on $\R$ and the potential $\Psi$ is a function on $\R^n$.
Then $S_u^\Psi$ is strictly displacement convex
if $u$ is convex (and satisfies certain additional conditions to be precise, see Definition~\ref{df:DC_N})
and $\Psi$ is strictly convex,
and the unique ground state $\nu:=\sigma\cL^n$ satisfies $u'(\sigma)=-\Psi +\lambda$ with
a normalizing constant $\lambda$.
We mention that the uniqueness is measured at the level of the energy functional, that is,
we have $S_u^{\Psi}(\mu) -S_u^{\Psi}(\nu) \ge 0$
and equality holds if and only if $\mu=\nu$.
Moreover, the displacement convexity of the free energy $S_u^\Psi$ is a crucial tool also in the investigation of
the asymptotic behavior of the solution to the associated evolution equation
\[ \frac{\del \rho}{\del t} =\div \!\big(\rho \nabla [u'(\rho)] +\rho \nabla \Psi \big) \]
by regarding it as the gradient flow of $S_u^{\Psi}$ in the Wasserstein space
(see~\cite{JKO}, \cite{AGS}, \cite{CMV1} and \cite{CMV2} among others).
In particular, the heat flow is regarded as the gradient flow of the \emph{relative entropy}
(with respect to the Lebesgue measure)
\[ \Ent_{\cL^n}(\mu) :=\int_{\R^n} \frac{d\mu}{d\cL^n} \ln \left(\frac{d\mu}{d\cL^n}\right) \,d\cL^n, \]
which is also called the \emph{Kullback--Leibler divergence} in information theory.
\medskip

On curved spaces such as Riemannian manifolds, the displacement convexity of energy functionals
is related to the curvature of the underlying space,
that is a crucial difference from the convexity along linear interpolations $(1-t)\mu_0+t\mu_1$.
On a Riemannian manifold equipped with the Riemannian volume measure $\vol_g$,
the relative entropy is similarly defined by
\[ \Ent_{\vol_g}(\mu):=\int_M \frac{d\mu}{d\!\vol_g} \ln \left(\frac{d\mu}{d\!\vol_g}\right) \,d\!\vol_g. \]
It has been shown by von Renesse and Sturm~\cite{vRS} (inspired by \cite{CMS} and \cite{OV})
that for any $K \in \R$ the following are mutually equivalent:
\begin{itemize}
\item The relative entropy $\Ent_{\vol_g}$ is displacement $K$-convex on $(\cP^2(M),W_2)$.

\item The Ricci curvature is bounded from below by $K$ as
$\Ric_g(\bv,\bv) \ge K\langle \bv,\bv \rangle$ for all $\bv \in TM$.

\item The heat flow is \emph{$K$-contractive} in the sense that
$W_2(\mu_t,\tilde{\mu}_t) \le e^{-Kt} W_2(\mu_0,\tilde{\mu}_0)$ holds
for all $t \ge 0$ and for any weak solutions $(\rho_t)_{t \ge 0}$, $(\tilde{\rho}_t)_{t \ge 0}$
to the heat equation $\del \rho/\del t=\Delta \rho$ such that
$\mu_t:=\rho_t \vol_g, \tilde{\mu}_t:=\tilde{\rho}_t \vol_g \in \cP^2(M)$.
\end{itemize}
See also \cite{AGS}, \cite{AGS3}, \cite{Ogra}, \cite{Sa} and \cite[Chapter~23]{Vi2} for the connection
between the $K$-convexity of the functional and the $K$-contraction property of its gradient flow.

The displacement $K$-convexity of $\Ent_{\vol_g}$ ($\Hess \Ent_{\vol_g} \ge K$) is called
the \emph{curvature-dimension condition} $\CD(K,\infty)$ after Bakry and \'Emery's pioneering work~\cite{BE}.
One remarkable point of $\CD(K,\infty)$ is that it can be formulated on general metric measure spaces
without any differentiable (manifold) structure.
Such metric measure spaces \emph{with Ricci curvature bounded below} are independently investigated
by Sturm~\cite{StI} and Lott and Villani~\cite{LV2},
and known to enjoy several properties common to Riemannian manifolds of $\Ric_g \ge K$.
For example, as was indicated by Otto and Villani~\cite{OV},
$\CD(K,\infty)$ with $K>0$ implies various functional inequalities such as the \emph{Talagrand inequality},
the \emph{HWI inequality}, the \emph{logarithmic Sobolev inequality}
and the \emph{global Poincar\'e inequality} (\cite[Section~6]{LV2}).

The curvature-dimension condition $\CD(K,\infty)$ is generalized to $\CD(K,N)$
for each $K \in \R$ and $N \in (1,\infty]$.
On an $n$-dimensional complete connected Riemannian manifold $(M,g)$ of $n \ge 2$
equipped with a weighted measure $\omega=e^{-f}\vol_g$ with $f \in C^{\infty}(M)$,
the condition $\CD(K,N)$ is known to be equivalent to the lower bound of the $N$-Ricci curvature
$\Ric_N(\bv,\bv) \ge K\langle \bv,\bv \rangle$ (\cite{Stcon}, \cite{StII}, \cite{LV1}, \cite{LV2},
see Definition~\ref{df:Ric_N} for the definition of $\Ric_N$).
In particular, an unweighted Riemannian manifold $(M,\vol_g)$ satisfies $\CD(K,N)$ if and only if
its Ricci curvature is bounded below by $K$ and its dimension is bounded above by $N$.
We remark that Sturm's and Lott and Villani's definitions of the curvature-dimension condition
are slightly different, though they are equivalent on non-branching spaces
such as Riemannian or Finsler manifolds.
In both cases it is a certain convexity condition of a class of entropies, and Lott and Villani's class
is larger than Sturm's one.

On non-branching metric measure spaces,
the condition $\CD(0,N)$ for $N \in [n,\infty)$ is equivalent to the displacement convexity of
the \emph{R\'enyi entropy}
\[ S_N(\mu):=-\int_M \left( \frac{d\mu}{d\omega} \right)^{(N-1)/N} \,d\omega. \]
For $K \neq 0$, however, $\CD(K,N)$ is not simply the displacement $K$-convexity of $S_N$.
In fact, it was shown in \cite{Stcon} (see also \cite[Theorem~4.1, Remark~4.3(2)]{mCD} and \cite{BS}) that,
on a weighted Riemannian manifold $(M,\omega)$, $\Hess S_N \ge K$ can hold only for $K \le 0$
and is equivalent to $\Ric_N \ge 0$ regardless of the value of $K \le 0$.
It was also observed in \cite[Theorem~1.7]{Stcon} for unweighted Riemannian manifolds
that there are some functionals whose displacement $K$-convexity characterizes
the combination of $\Ric \ge K$ and $\dim \le N$, whereas it is unclear if there are any applications
of these entropies.
\medskip

In our previous work~\cite{mCD}, we introduced the \emph{$m$-relative entropy}
$H_m$ for the parameter $m \in [(n-1)/n,1) \cup (1,\infty)$ inspired by the Bregman divergence
in information theory/geometry (see \cite{Am}, \cite{AN})
as well as the Tsallis entropy in statistical mechanics (see \cite{Ts1}, \cite{Ts2}).
We fix a reference measure $\nu=\exp_m(-\Psi) \omega$ on a weighted Riemannian manifold $(M,\omega)$
involving the \emph{$m$-exponential function}
\[ \exp_m(t):=\max\{ 1+(m-1)t,0 \}^{1/(m-1)}, \]
then the $m$-relative entropy of an absolutely continuous measure $\mu \in\cP^2(M)$
with respect to $\nu$ is given by (up to an additive constant)
\[ H_m(\mu):=\frac{1}{m(m-1)} \int_M \left\{ \bigg( \frac{d\mu}{d\omega} \bigg)^m
 -m \frac{d\mu}{d\omega} \bigg( \frac{d\nu}{d\omega} \bigg)^{m-1} \right\} \,d\omega. \]
This generalizes the relative and the R\'enyi entropies in the sense that
$\lim_{m \to 1}H_m(\mu)=\Ent_{\nu}(\mu)-1$ and that $H_m(\mu)=N\{m^{-1}S_N(\mu)+1\}$
with $N=1/(1-m)$ if $\Psi \equiv 0$ (i.e., $\nu=\omega$).

Then the displacement $K$-convexity of $H_m$ is equivalent to the combination of $\Ric_N \ge 0$
(of $(M,\omega)$) and $\Hess\Psi \ge K$ (\cite[Theorem~4.1]{mCD}).
We stress that $N$ becomes \emph{negative} for $m>1$, then $\Ric_N$ is defined
in the same form as the case of $N \in (n,\infty)$ (see Definition~\ref{df:Ric_N}).
Similarly to $\CD(K,\infty)$, we can derive from $\Hess H_m \ge K>0$ the associated functional inequalities
(see also \cite{AGK}, \cite{CGH}, \cite{Ta1} for related works)
and the concentration of measures (in terms of $\exp_m$).
Furthermore, the gradient flow of $H_m$ produces weak solutions to the fast diffusion equation ($m<1$)
or the porous medium equation ($m>1$) with drift of the form
\[ \frac{\del \rho}{\del t} =\div_{\omega}\left( \frac{1}{m}\nabla(\rho^m) +\rho\nabla\Psi \right), \]
where $\div_{\omega}$ is the divergence of $(M,\omega)$ (see also \cite{Ot}, \cite[Theorem~23.19]{Vi2}).
We remark that Sturm~\cite{Stcon} studied a more general class of entropies on unweighted
Riemannian manifolds, where $\Ric_N =\Ric$ for all $N$.
Compared to it, \cite{mCD} gave a detailed investigation of a concrete class of entropies,
on more general weighted Riemannian manifolds (by choosing appropriate parameters $N$).
\medskip

In this article, we introduce the more general class of entropies,
called the \emph{$\varphi$-relative entropies} $H_{\varphi}$, again inspired by information theory/geometry.
Here $\varphi:(0,\infty) \lra (0,\infty)$ is a non-decreasing, positive, continuous function.
Roughly speaking, our new class corresponds to Lott and Villani's class of entropies
in their definition of the curvature-dimension condition, while the $m$-relative entropies in \cite{mCD}
correspond to Sturm's class.
The definition of $H_{\varphi}$ (see Definition~\ref{df:Hm} for details)
involves $\nu=\exp_{\varphi}(-\Psi)$ with the \emph{$\varphi$-exponential function}
$\exp_{\varphi}$ which is the inverse function of the \emph{$\varphi$-logarithmic function}
$\ln_{\varphi}(t):=\int_1^t \varphi(s)^{-1} \,ds$.
We recover $\exp_m$ and $H_m$ from $\varphi(s)=s^{2-m}$.

Our first main theorem (Theorem~\ref{th:mCD}) asserts that $\Hess H_m \ge K$
is equivalent to $\Hess H_{\varphi} \ge K$ for all $\varphi$'s in a certain class.
This actually corresponds to the equivalence between Sturm's and Lott and Villani's
curvature-dimension conditions on weighted Riemannian manifolds.
This reveals that $H_m$ is an extremal element among $H_{\varphi}$'s
in the appropriate class, see \cite{Tphi} for a related work.
Similarly to $H_m$,
we can derive from $\Hess H_{\varphi} \ge K>0$ the variants of the Talagrand, HWI, logarithmic Sobolev,
and global Poincar\'e inequalities (Theorem~\ref{thm:inequ}) as well as the concentration
of measures in terms of $\exp_m$ for some $m=m(\varphi)$ (Theorem~\ref{thm:conc}).
Moreover, the gradient flow of $H_{\varphi}$ in $(\cP^2(M),W_2)$ produces weak solutions
to the \emph{$\varphi$-heat equation} (Theorems~\ref{th:gf}, \ref{th:gf+})
\[ \frac{\del \rho}{\del t}=\div_{\omega} \left( \frac{\rho\nabla\rho}{\varphi(\rho)}+\rho\nabla\Psi \right). \]

The article is organized as follows:
We first review the basic notions of weighted Riemannian geometry, Wasserstein geometry
and information geometry in Section~\ref{sc:pre}.
Then, after preparing necessary notions in Sections~\ref{sc:DC}, \ref{sc:ad},
we define $H_{\varphi}$ and study its displacement convexity in Section~\ref{sc:ent}.
Section~\ref{sc:func} is devoted to the functional inequalities and
Section~\ref{sc:conc} is concerned with the concentration of measures.
The gradient flow of $H_\varphi$ is studied in Sections~\ref{sc:gf}, \ref{sc:gf+}
in the compact and noncompact cases, respectively.
We extend most results to Finsler manifolds in Section~\ref{sc:Fins}.
Finally in Appendix, we compare our concentration of measures derived from the generalized
Talagrand inequality with the Herbst-type argument deriving the concentration
from the $u_{\varphi}$-entropy inequality, which is a generalization of
the logarithmic Sobolev inequality different from ours.

\section{Preliminaries}\label{sc:pre}

\subsection{Weighted Riemannian manifolds}\label{ssc:weight}

Throughout the article except Section~\ref{sc:Fins},
$(M,g)$ will be an $n$-dimensional complete connected Riemannian manifold without boundary.
As we are interested in the role of the curvature, we will always assume $n \ge 2$.
Denote by $d_g$ and $\vol_g$ the Riemannian distance function and the Riemannian volume measure of $(M,g)$.
We fix an arbitrary measure
\[ \omega=e^{-f}\vol_g, \quad\ f \in C^{\infty}(M), \]
as our base measure.
To control the behavior of $\omega$, we modify the Ricci curvature $\Ric_g$ of $(M,g)$ as follows.

\begin{definition}[Weighted Ricci curvature]\label{df:Ric_N}
Given $N \in (-\infty,0) \cup [n,\infty]$, we define the \emph{$N$-Ricci curvature tensor}
of $(M,\omega)$ by
\[ \Ric_{N}:=
\begin{cases}
\displaystyle\Ric_g+\Hess_g f  & \text{if\ } N=\infty, \\
\displaystyle\Ric_g+\Hess_g f-\frac{Df \otimes Df}{N-n}  & \text{if\ } N \in (-\infty,0) \cup (n,\infty), \\
\displaystyle\Ric_g+\Hess_g f -\infty \cdot(Df \otimes Df) & \text{if\ } N =n, \\
\end{cases} \]
where by convention $\infty\cdot0=0$.
\end{definition}

We set $\Ric_N(\bv):=\Ric_N(\bv,\bv)$ and will say that $\Ric_N \ge K$ holds for some $K \in \R$
if $\Ric_N(\bv) \ge K\langle \bv,\bv \rangle$ for every $\bv \in TM$.

\begin{remark}\label{rm:Ric_N}
The tensor $\Ric_N$ was usually considered only for $N \in [n,\infty]$,
and then the monotonicity
$\Ric_N(\bv) \le \Ric_{N'}(\bv)$ for $N<N'$
clearly holds.
Note that $\Ric_{\infty}$ is the famous \emph{Bakry--\'Emery tensor}
and $\Ric_N$ for $N \in (n,\infty)$ was introduced by Qian (see \cite{BE}, \cite{Qi} and \cite{Lo} as well).
Extending the range of $N$ to $(-\infty,0) \cup [n,\infty]$ violates the above monotonicity in $N$,
however, observe that $\Ric_N$ is non-decreasing in the parameter
\[ m:=1-\frac{1}{N} \in \bigg[ 1-\frac{1}{n},\infty \bigg),
 \quad \text{where}\ m:=1\ \text{if}\ N=\infty. \]
This observation will be helpful for understanding the validity of Theorem~\ref{th:mCD} below.
\end{remark}

Note that, if $(M,\omega)$ satisfies $\Ric_N \ge K$ for some $K \in \R$ and $N \in [n, \infty)$,
then it behaves like a Riemannian manifold with dimension bounded above by $N$
and Ricci curvature bounded below by $K$ (see \cite{Qi}, \cite{Lo},
as well as \cite{StI}, \cite{StII}, \cite{LV1}, \cite{LV2}, \cite[Part~III]{Vi2} related to the curvature-dimension condition).
For example, the following area growth inequality of Bishop type
(numerically extended to non-integer $N$'s) holds.
Denote by $\area_{\omega}[S(x_0,r)]$ the area of the sphere
$S(x_0,r):=\{ x \in M \,|\, d_g(x_0,x)=r \}$
with respect to $\omega$.

\begin{theorem}{\rm (\cite{Qi}, \cite[Theorem~2.3]{StII})}\label{thm:BG}
If $(M,\omega)$ satisfies $\Ric_N \ge 0$ for some $N \in[n,\infty)$, then  
\[ \area_{\omega}[S(x_0,R)] \le \area_{\omega}[S(x_0,r)] \cdot \left( \frac{R}{r} \right)^{N-1} \]
holds for any $0<r<R$ and $x_0 \in M$.
\end{theorem}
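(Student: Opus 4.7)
The plan is to establish the stronger pointwise comparison for the weighted Jacobian in geodesic polar coordinates centered at $x_0$, and then integrate over the unit tangent sphere. For each $v \in T_{x_0}M$ with $|v|=1$, let $\gamma_v(t):=\exp_{x_0}(tv)$ and let $t_c(v) \in (0,\infty]$ denote its cut value, and set
\[ J_\omega(t,v):=e^{-f(\gamma_v(t))}\det\bigl(d\exp_{x_0}|_{tv}\bigr) \quad \text{for } t \in [0,t_c(v)), \]
extended by $0$ beyond $t_c(v)$. The polar area formula gives $\area_{\omega}[S(x_0,t)]=\int_{S_{x_0}M} J_\omega(t,v)\,d\sigma(v)$, so it suffices to prove that $t \mapsto J_\omega(t,v)/t^{N-1}$ is non-increasing on $(0,\infty)$ for each fixed $v$.

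On the regular locus $(0,t_c(v))$, set $\theta(t):=(\log J_\omega)'(t)=H(t)-(f\circ\gamma_v)'(t)$, where $H(t)$ denotes the unweighted mean curvature of the geodesic sphere at $\gamma_v(t)$. The classical Riccati inequality $H'(t)\le-\Ric(\dot\gamma_v)-H(t)^2/(n-1)$ (from the Bochner formula together with Cauchy--Schwarz applied to the shape operator) combined with $(f\circ\gamma_v)''(t)=\Hess f(\dot\gamma_v,\dot\gamma_v)$ yields $\theta'(t) \le -\Ric_\infty(\dot\gamma_v)-H(t)^2/(n-1)$. Next, invoke the elementary estimate $\xi^2/a+\eta^2/b\ge(\xi+\eta)^2/(a+b)$ with $a=n-1$, $b=N-n$, $\xi=H(t)$, $\eta=-(f\circ\gamma_v)'(t)$ (so $\xi+\eta=\theta(t)$), together with the identity $\Ric_\infty=\Ric_N+(Df\otimes Df)/(N-n)$; under the hypothesis $\Ric_N\ge 0$ this produces the weighted Riccati inequality $\theta'(t)\le -\theta(t)^2/(N-1)$, equivalently the concavity $(J_\omega^{1/(N-1)})''(t)\le 0$ on $(0,t_c(v))$. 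Since $J_\omega(t,v)\sim e^{-f(x_0)}t^{n-1}$ as $t\to 0^+$ and $N\ge n$, we have $J_\omega^{1/(N-1)}(t)\to 0$; a concave function vanishing at the origin has non-increasing slope, so $J_\omega(t,v)/t^{N-1}$ is non-increasing on $(0,t_c(v))$, and the drop to $0$ at $t=t_c(v)$ only strengthens the monotonicity on all of $(0,\infty)$.

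Integrating the pointwise monotonicity over $S_{x_0}M\subset T_{x_0}M$ preserves the direction of the inequality and yields $\area_{\omega}[S(x_0,R)]/R^{N-1}\le\area_{\omega}[S(x_0,r)]/r^{N-1}$ for all $0<r<R$, which is the claim after rearrangement. The main technical obstacle is the careful treatment of the cut locus: $J_\omega(\cdot,v)$ is only smooth on $[0,t_c(v))$, so one invokes the upper semicontinuity of $t_c$ and the fact that the set of cut vectors has $\sigma$-measure zero in $S_{x_0}M$ to justify both the polar decomposition of $\area_\omega[S(x_0,t)]$ and the passage of monotonicity through the Jacobian's discontinuity. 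The degenerate boundary case $N=n$ should be treated separately: here the convention $-\infty\cdot(Df\otimes Df)$ in Definition~\ref{df:Ric_N} forces $Df\equiv 0$, so $\omega$ is a constant multiple of $\vol_g$ and the statement reduces to the classical Bishop--Gromov inequality on the unweighted manifold $(M,\vol_g)$.
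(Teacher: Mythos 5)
The paper states this theorem with citations to Qian and to Sturm and gives no proof of its own, so there is nothing in-paper to compare against; your argument is the standard Jacobian comparison proof and it is correct. The chain you run --- the classical Riccati inequality for the mean curvature of geodesic spheres, absorption of $(f\circ\gamma_v)''$ to reach $\theta'\le-\Ric_\infty(\dot\gamma_v)-H^2/(n-1)$, the Cauchy--Schwarz step $\xi^2/a+\eta^2/b\ge(\xi+\eta)^2/(a+b)$ with $(a,b)=(n-1,N-n)$ to convert this into $\theta'\le-\Ric_N(\dot\gamma_v)-\theta^2/(N-1)$, hence concavity of $J_\omega^{1/(N-1)}$ and monotonicity of its chord slope from the origin --- is the standard route, and it is the same infinitesimal estimate that drives Claim~\ref{cl:mCD} of this paper (there applied to the transport Jacobian rather than to $\exp_{x_0}$). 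Two small remarks. First, as written, $\det(d\exp_{x_0}|_{tv})$ denotes the normalized Jacobian tending to $1$ at $t=0$, but your asymptotics $J_\omega(t,v)\sim e^{-f(x_0)}t^{n-1}$ (needed so that $J_\omega^{1/(N-1)}$ vanishes at the origin) and the polar area formula both require the unnormalized Jacobi-field determinant, which carries the factor $t^{n-1}$; this is only a notational slip. Second, the assertion that the set of cut vectors at a fixed radius has $\sigma$-measure zero is not literally true in general (on a round sphere at antipodal distance every direction is a cut direction), but your proof does not actually need it: with $J_\omega$ extended by zero past $t_c(v)$, both the monotonicity of $J_\omega(t,v)/t^{N-1}$ and the polar area formula hold for every $t$, so the integration over $S_{x_0}M$ goes through regardless. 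Your separate treatment of $N=n$, where $\Ric_n\ge 0$ forces $Df\equiv 0$ and the claim reduces to the unweighted Bishop--Gromov inequality, is also correct.
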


For $N=\infty$, we have the following global estimate.

\begin{theorem} {\rm (\cite[Theorem~18.12]{Vi2})} \label{thm:mBG}
Under the nonnegativity of $\Ric_{\infty}$ of $(M,\omega)$, 
\[
\int_M \exp \left( -\lambda d_g(x_0,x)^2 \right) d\omega(x) <\infty
\]
holds for any $\lambda>0$ and $x_0\in M$.
\end{theorem}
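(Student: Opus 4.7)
The plan is to establish exponential growth of the weighted ball volume $V(R) := \omega(B(x_0, R))$, which together with Fubini's theorem implies $\int_M e^{-\lambda d_g(x_0,x)^2}\, d\omega(x) < \infty$ for every $\lambda > 0$, since $e^{-\lambda R^2}$ dominates any exponential of $R$. Indeed, $\int_M e^{-\lambda d_g(x_0,x)^2}\,d\omega(x) = 2\lambda \int_0^\infty R\, e^{-\lambda R^2} V(R)\,dR$, and if $V(R) \le C_1 e^{C_2 R}$ this integral converges for any $\lambda > 0$.

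To establish exponential growth, I would use the equivalence (von Renesse--Sturm, recalled in Section~\ref{sc:intro}) between $\Ric_\infty \ge 0$ and the displacement convexity of $\Ent_\omega$ on $(\cP^2(M), W_2)$. Fix a compact neighborhood $U_0 \ni x_0$ with $\omega(U_0) \in (0, \infty)$, set $\mu_0 := \omega|_{U_0}/\omega(U_0)$ (absolutely continuous with finite entropy), and for each $R$ large enough that $U_0 \subset B(x_0, R)$ set $\mu_1^R := \omega|_{B(x_0,R)}/V(R)$, so that $\Ent_\omega(\mu_1^R) = -\log V(R)$. By McCann's theorem, join $\mu_0$ to $\mu_1^R$ by the displacement interpolation $(\mu_t)_{t\in[0,1]}$ along Riemannian geodesics. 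The convexity inequality
\[
\Ent_\omega(\mu_{1/2}) \le \tfrac{1}{2}\Ent_\omega(\mu_0) - \tfrac{1}{2}\log V(R),
\]
combined with Jensen's lower bound $\Ent_\omega(\mu_{1/2}) \ge -\log\omega(\supp \mu_{1/2})$ and the inclusion $\supp\mu_{1/2} \subset B(x_0, R/2 + C_0)$ with $C_0 = \tfrac{3}{2}\diam(U_0)$ (by the triangle inequality along each transport geodesic), yields
\[
\log V(R) \le 2\log V(R/2 + C_0) + \Ent_\omega(\mu_0).
\]
Iterating this inequality over dyadic scales produces $\log V(R) \le C_1' R + C_2'$, as required.

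The main obstacle is the rigorous control of $\supp \mu_{1/2}$: one must exploit that the $W_2$-geodesic is a genuine displacement interpolation along Riemannian geodesics (ensured by the absolute continuity of $\mu_0$ via McCann's theorem), and then use that midpoints of minimizing geodesics from $U_0$ to $B(x_0, R)$ lie within distance $R/2 + O(\diam(U_0))$ of $x_0$. A secondary point is ensuring $V(R) > 0$ along the iteration, which holds once $R$ exceeds $\diam(U_0)$. I note that no Bishop--Gromov-type polynomial bound is available in this infinite-dimensional setting (contrast Theorem~\ref{thm:BG}), so exponential growth is the best one can hope for from the sole hypothesis $\Ric_\infty \ge 0$; but it is already sufficient for the claimed Gaussian-tail integrability.
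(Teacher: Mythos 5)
The paper does not give its own proof here; it simply cites \cite[Theorem~18.12]{Vi2}, and your argument reproduces essentially the standard proof: displacement convexity of $\Ent_\omega$ under $\Ric_\infty\ge 0$ (Theorem~\ref{th:CD} with $K=0$) applied at the midpoint, Jensen's lower bound $\Ent_\omega(\mu_{1/2})\ge -\log\omega[\supp\mu_{1/2}]$, and triangle-inequality control of the midpoint support combine to give the doubling-type recursion for $\log V$, whose iteration yields exponential volume growth and hence Gaussian integrability by the layer-cake formula. Your proof is correct. Two points worth making explicit in a full write-up: the Jensen step requires $\mu_{1/2}$ to be absolutely continuous, which follows from Theorem~\ref{th:MA} since $\mu_0\ll\omega$; and the iteration of $\log V(R)\le 2\log V(R/2+C_0)+A$ gives $\log V(R)=O(R)$ by stopping after $k\approx\log_2(R/C_0)$ steps, so that the argument of $V$ has stabilized near $2C_0$ and the $2^k=O(R)$ accumulated copies of the bounded initial data and of $A$ give the linear bound (the case $\omega[M]<\infty$ being trivial, one need not worry about the sign of $\log V$ along the way).
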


Though Theorems~\ref{thm:BG}, \ref{thm:mBG} are generalized to $\Ric_N \ge K$ for $K \neq 0$,
we will need only the above special cases.

\subsection{Wasserstein geometry}\label{ssc:wass}

Let us recall some basic notions and facts in optimal transport theory and Wasserstein geometry.
See \cite{AGS}, \cite{Vi1} and \cite{Vi2} for details and more information.

Let $(X,d)$ be a metric space.
A rectifiable curve $\gamma:[0,1] \lra X$ is called a {\it geodesic} if it is locally minimizing and has a constant speed.
We say that $\gamma$ is {\it minimal} if it is globally minimizing, namely
$d(\gamma(s),\gamma(t))=|s-t|d(\gamma(0),\gamma(1))$ holds for all $s,t \in [0,1]$.
A subset $Y$ of $X$ is said to be \emph{totally convex} if, for any $x,y \in Y$,
any minimal geodesic in $X$ from $x$ to $y$ is contained in $Y$.

For a complete Riemannian manifold $(M,g)$, let $\cP(M)$ be the set of all Borel probability measures on $M$.
Given $\mu\in\cP(M)$ and a measurable map $\cT:M\lra M$, the \emph{push forward measure}
$\cT_{\sharp}\mu$ of $\mu$ through $\cT$ is defined by
$\cT_{\sharp}\mu[B]:=\mu[\cT^{-1}(B)]$ for all Borel sets $B \subset M$.
For each $p \in [1,\infty)$, denote by $\cP^p(M) \subset \cP(M)$ the subset consisting of
measures $\mu$ of finite $p$-th moments, that is,
$\int_M d_g(x_0,x)^p \,d\mu(x) <\infty$
for some (and hence all) $x_0 \in M$.

For $\mu,\nu \in \cP(M)$, a probability measure $\pi \in \cP(M \times M)$ is called a \emph{coupling}
of $\mu$ and $\nu$ if its projections are $\mu$ and $\nu$, namely
$\pi[B \times M]=\mu[B]$ and $\pi[M \times B]=\nu[B]$ hold for any Borel set $B \subset M$.
We define the \emph{$L^p$-Wasserstein distance} between $\mu,\nu \in \cP^p(M)$ by
\[ W_p(\mu,\nu):=\inf\left\{ \left( \int_{M \times M} d_g(x,y)^p \,d\pi(x,y) \right)^{1/p} \Biggm|
 \text{$\pi$: a coupling of $\mu$ and $\nu$} \right\}. \]
A coupling $\pi$ is said to be \emph{optimal} if it attains the infimum above.
The function $W_p$ is indeed a distance function on $\cP^p(M)$.
The metric space $(\cP^p(M),W_p)$ is complete, separable and called
the \emph{$L^p$-Wasserstein space} over $M$.
The Wasserstein space inherits several properties of $M$.
For instance, if $M$ is compact, then $(\cP^p(M),W_p)$ is also compact
and the topology induced from $W_p$ coincides with the weak topology.
We will mainly consider the quadratic case $p=2$, and then we omit `$L^2$-' and simply call
$W_2$ and $(\cP^2(M),W_2)$ the Wasserstein distance function and the Wasserstein space.

In view of optimal transport theory, $W_2(\mu_0,\mu_1)^2$ is regarded as the least cost
of transporting $\mu_0$ to $\mu_1$, where the cost of transporting a unit mass
from $x$ to $y$ is $d_g(x,y)^2$.
A minimal geodesic $(\mu_t)_{t \in [0,1]}$ with respect to $W_2$
is then also called the \emph{optimal transport} from $\mu_0$ to $\mu_1$,
and it can be described by using a family of minimal geodesics in the underlying space $M$.
We denote by $\Gamma(M)$ the set of all minimal geodesics $\gamma:[0,1] \lra M$
endowed with the uniform topology induced from the distance function 
$d_{\Gamma(M)}(\gamma,\eta):=\sup_{t \in [0,1]}d_g(\gamma(t),\eta(t))$.
For $t \in [0,1]$, the \emph{evaluation map} $\ev_t:\Gamma(M) \lra M$
is defined by $\ev_t(\gamma):=\gamma(t)$, which is clearly $1$-Lipschitz.

\begin{proposition}{\rm (\cite[Proposition~2.10]{LV2}, \cite[Corollary~7.22]{Vi2})}\label{pr:LV}
Given any minimal geodesic $(\mu_t)_{t \in [0,1]} \subset \cP^2(M)$,
there exists $\Pi \in \cP(\Gamma(M))$ such that $(\ev_t)_{\sharp}\Pi=\mu_t$ for all $t \in [0,1]$
and that $(\ev_0 \times \ev_1)_{\sharp}\Pi$ is an optimal coupling of $\mu_0$ and $\mu_1$.

In particular, for any totally convex set $X$ of $(M,d_g)$, $\cP^2(X)$ is also totally convex in $(\cP^2(M),W_2)$.
\end{proposition}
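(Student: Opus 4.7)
The plan is to establish the first assertion by lifting the given Wasserstein geodesic to a measure on path space via piecewise-geodesic approximations, and then to deduce the second assertion as a direct consequence.

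First I would fix a minimal geodesic $(\mu_t)_{t \in [0,1]}$ in $(\cP^2(M), W_2)$. For each $N \in \N$, set $t_k := k/N$ for $0 \le k \le N$ and choose an optimal coupling $\pi^{(k)}$ of $\mu_{t_k}$ and $\mu_{t_{k+1}}$. The gluing lemma then produces $\Pi^{(N)} \in \cP(M^{N+1})$ whose consecutive two-dimensional marginals are the $\pi^{(k)}$. Using a Borel measurable selection $\sigma:M \times M \to \Gamma(M)$ of a minimal geodesic for each pair (available because $M$ is a locally compact Polish geodesic space and the set of minimizers between two given points is compact), define $\Psi^{(N)}:M^{N+1}\to C([0,1],M)$ by concatenating the reparametrizations of $\sigma(x_k,x_{k+1})$ to the subintervals $[t_k,t_{k+1}]$, and set $\tilde\Pi^{(N)}:=\Psi^{(N)}_\sharp \Pi^{(N)}$.

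The core observation is a chain of equalities forced by the geodesic hypothesis. Since $(\mu_t)$ is a $W_2$-geodesic, $\int d_g(x_k,x_{k+1})^2\,d\pi^{(k)} = W_2(\mu_0,\mu_1)^2/N^2$, so summing over $k$ and applying Cauchy--Schwarz gives
\[ \int \biggl(\sum_{k=0}^{N-1} d_g(x_k,x_{k+1})\biggr)^2 d\Pi^{(N)} \le W_2(\mu_0,\mu_1)^2. \]
On the other hand $(\ev_0 \times \ev_1)_\sharp\tilde\Pi^{(N)}$ is a coupling of $\mu_0$ and $\mu_1$, so the triangle inequality yields
\[ W_2(\mu_0,\mu_1)^2 \le \int d_g(x_0,x_N)^2\,d\Pi^{(N)} \le \int\biggl(\sum_k d_g(x_k,x_{k+1})\biggr)^2 d\Pi^{(N)}. \]
Equality thus holds throughout: Cauchy--Schwarz saturation forces $d_g(x_k,x_{k+1})$ to be independent of $k$, and triangle-inequality saturation forces $x_0,\ldots,x_N$ to lie in order along a minimal geodesic, so each concatenated path is itself a minimal geodesic. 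Hence $\tilde\Pi^{(N)}$ is concentrated on $\Gamma(M)$ and $(\ev_0\times\ev_1)_\sharp\tilde\Pi^{(N)}$ is an optimal coupling of $\mu_0$ and $\mu_1$.

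To extract a limit, note that $\gamma\mapsto d_g(\gamma(0),\gamma(1))$ is uniformly $L^2$-bounded under $\tilde\Pi^{(N)}$, so Arzel\`a--Ascoli applied to the Lipschitz constants of the geodesics, combined with tightness of $\mu_0$, yields tightness of $\{\tilde\Pi^{(N)}\}$ in $\cP(\Gamma(M))$. After passing to a subsequential limit $\Pi\in\cP(\Gamma(M))$, the identities $(\ev_{t_k})_\sharp\tilde\Pi^{(N)}=\mu_{t_k}$ persist along dyadic refinements, so $(\ev_t)_\sharp\Pi=\mu_t$ first on a dense set of $t$ and then for all $t$ by continuity of $t\mapsto\mu_t$. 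The second assertion is then immediate: if $X \subset M$ is totally convex and $\mu_0,\mu_1\in\cP^2(X)$, then $(\ev_0\times\ev_1)_\sharp\Pi$ is concentrated on $X\times X$, so $\Pi$-a.e.\ $\gamma$ has endpoints in $X$; total convexity forces $\gamma([0,1])\subset X$, whence $\mu_t=(\ev_t)_\sharp\Pi$ is supported in $X$. The main obstacle is producing the Borel geodesic selection $\sigma$ and confirming that the weak limit $\Pi$ remains concentrated in $\Gamma(M)\subset C([0,1],M)$; the saturation argument above is precisely what makes the latter work.
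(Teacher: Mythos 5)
The paper does not prove this proposition; it cites it directly from Lott--Villani and Villani, so there is no in-house argument to compare against. Your proposal reconstructs what is essentially the Lott--Villani proof: discretize the geodesic at times $k/N$, glue consecutive optimal couplings, lift via a Borel geodesic selection, force the lift to lie on $\Gamma(M)$ by the double saturation (Cauchy--Schwarz and the triangle inequality), and pass to a subsequential weak limit over nested (dyadic) partitions. The saturation argument is correctly executed: since $\int(\sum_k d_g(x_k,x_{k+1}))^2\,d\Pi^{(N)}$ is squeezed between $W_2(\mu_0,\mu_1)^2$ from both sides, the integrand saturates both inequalities $\Pi^{(N)}$-a.e., so the concatenated piecewise geodesic has the correct total length and constant speed and is therefore a single minimal geodesic. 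The deduction of total convexity of $\cP^2(X)$ from the first assertion is also fine.

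A few spots are left implicit and would need to be filled in for a complete proof. First, the tightness claim: the correct mechanism is that the speeds $d_g(\gamma(0),\gamma(1))$ are uniformly $L^2$-bounded (giving, via Chebyshev, that most of the $\tilde\Pi^{(N)}$-mass lives on geodesics of bounded speed) and $\mu_0 = (\ev_0)_\sharp\tilde\Pi^{(N)}$ is tight, so most geodesics start in a compact set; together with Arzel\`a--Ascoli and closedness of $\Gamma(M)$ in $C([0,1],M)$ this gives tightness in $\cP(\Gamma(M))$. Your phrasing gestures at this but does not say it. Second, you never explicitly verify that the limit coupling $(\ev_0\times\ev_1)_\sharp\Pi$ is optimal; this does follow, since Portmanteau gives $\int d_g(\gamma(0),\gamma(1))^2\,d\Pi \le \liminf_N\int d_g(\gamma(0),\gamma(1))^2\,d\tilde\Pi^{(N)} = W_2(\mu_0,\mu_1)^2$, and the reverse inequality is automatic for any coupling, so it should be recorded. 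Third, the measurable geodesic selection is a genuine technical ingredient (Kuratowski--Ryll-Nardzewski applied to the compact-valued multifunction of minimizing geodesics); you acknowledge it as the ``main obstacle'' but do not resolve it. None of these is an error, and the architecture of the argument is sound and matches the standard proof in the literature.
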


If one of $\mu_0$ and $\mu_1$ is absolutely continuous with respect to $\vol_g$,
then a more precise description of a minimal geodesic $(\mu_t)_{t \in [0,1]}$
is obtained via the gradient vector field of a \emph{locally semi-convex} function $\phi$
(i.e., every point $x \in M$ admits a neighborhood on which $\phi$ is $K$-convex
in the weak sense for some $K \in \R$, see Definition~\ref{df:K-conv}).
For a measure $\nu$ on $M$, we denote by $\cP_{\ac}(M,\nu) \subset \cP(M)$ the subset
of absolutely continuous measures with respect to $\nu$.
We also set $\cP^2_{\ac}(M,\nu):=\cP^2(M) \cap \cP_{\ac}(M,\nu)$.

\begin{theorem}{\rm (\cite[Theorem~1]{FG})}\label{th:FG}
Given any $\mu_0 \in \cP^2_{\ac}(M,\vol_g)$ and $\mu_1 \in \cP^2(M)$,
there exists a locally semi-convex function $\phi:\Omega \lra \R$ on an open set $\Omega \subset M$
with $\mu_0[\Omega]=1$ such that the map $\cT_t(x):=\exp_x(t\nabla\phi(x))$, $t \in [0,1]$,
provides a unique minimal geodesic from $\mu_0$ to $\mu_1$.
Precisely, $(\cT_0 \times \cT_1)_{\sharp}\mu_0$ is a unique optimal coupling of $\mu_0$ and $\mu_1$,
and $\mu_t:=(\cT_t)_{\sharp}\mu_0$ is a unique minimal geodesic from $\mu_0$ to $\mu_1$ with respect to $W_2$.
\end{theorem}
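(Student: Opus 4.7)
The plan is to invoke the Kantorovich duality for the quadratic cost $c(x,y) := d_g(x,y)^2/2$ and then read off the transport map from the dual potential, following the Brenier--McCann strategy adapted to Riemannian manifolds. First, I would fix an optimal coupling $\pi$ between $\mu_0$ and $\mu_1$ (which exists because $W_2$ is attained) and apply Kantorovich duality to produce a $c$-concave potential $\psi:M \lra \R \cup \{-\infty\}$ such that $\pi$ is concentrated on the $c$-superdifferential $\{(x,y) \mid \psi(x)+\psi^c(y)=c(x,y)\}$. Setting $\phi(x):=d_g(x_0,x)^2/2 - \psi(x)$ on the open set $\Omega$ where $\psi$ is finite, the structure theorem for $c$-concave functions on Riemannian manifolds gives that $\phi$ is locally semi-convex on $\Omega$; in particular, $\phi$ is locally Lipschitz and (by a Rademacher-type statement together with Alexandrov-type second differentiability) is differentiable $\vol_g$-a.e. on $\Omega$. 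Since $\mu_0 \ll \vol_g$, we may adjust $\Omega$ so that $\mu_0[\Omega]=1$ and $\nabla\phi$ exists $\mu_0$-a.e.

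Next, I would define $\cT_t(x):=\exp_x(t\nabla\phi(x))$ and verify that $\mu_t:=(\cT_t)_{\sharp}\mu_0$ is a minimal $W_2$-geodesic. Pointwise, $t \mapsto \cT_t(x)$ is a minimizing geodesic from $x$ to $\cT_1(x)$, with $d_g(\cT_s(x),\cT_t(x))=|s-t|\,|\nabla\phi(x)|$. Taking $((\cT_s \times \cT_t)_{\sharp}\mu_0)$ as a candidate coupling yields
\[
W_2(\mu_s,\mu_t)^2 \le \int_M d_g(\cT_s(x),\cT_t(x))^2 \,d\mu_0(x) = (s-t)^2 \int_M |\nabla\phi|^2 \,d\mu_0,
\]
while $(\cT_0 \times \cT_1)_{\sharp}\mu_0$ is supported on the $c$-superdifferential of $\psi$ and so is an optimal coupling with cost equal to $\int_M |\nabla\phi|^2 \,d\mu_0 = W_2(\mu_0,\mu_1)^2$. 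Combining these with the triangle inequality forces equalities and yields both the $W_2$-geodesic property of $(\mu_t)_{t \in [0,1]}$ and the optimality of $(\cT_0 \times \cT_1)_{\sharp}\mu_0$.

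For uniqueness, suppose $\pi'$ is any optimal coupling and $(\tilde\mu_t)_{t \in [0,1]}$ any minimal geodesic from $\mu_0$ to $\mu_1$. By Kantorovich duality $\pi'$ is still concentrated on the $c$-superdifferential of $\psi$; differentiability of $\phi$ (equivalently of $\psi$) at $\mu_0$-a.e.\ point forces $\pi'$ to be concentrated on the graph of $\cT_1$, so $\pi'=(\cT_0 \times \cT_1)_{\sharp}\mu_0$, giving uniqueness of the optimal coupling. For $(\tilde\mu_t)$, apply Proposition~\ref{pr:LV} to obtain $\Pi \in \cP(\Gamma(M))$ with $(\ev_t)_{\sharp}\Pi=\tilde\mu_t$ and $(\ev_0 \times \ev_1)_{\sharp}\Pi$ optimal; by what we just showed, this endpoint measure equals $(\cT_0 \times \cT_1)_{\sharp}\mu_0$. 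A $\mu_0$-a.e.\ point $x$ is not in the cut locus of $\cT_1(x)$ (again because $\phi$ is differentiable there and $\nabla\phi(x)$ is the unique initial velocity of a minimizer), so the minimizing geodesic from $x$ to $\cT_1(x)$ is unique and $\Pi$-a.e.\ curve must coincide with $t \mapsto \cT_t(x)$, giving $\tilde\mu_t=\mu_t$.

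The main obstacles are the two regularity facts underlying the argument: the local semi-convexity and a.e.\ differentiability of $c$-concave potentials on a general (possibly non-compact) Riemannian manifold, and the attendant control showing that a $\mu_0$-full-measure set of points $x$ avoids the cut locus of $\cT_1(x)$. These are exactly the technical inputs that make the Brenier--McCann scheme work beyond the compact, smooth setting; the finite second moment hypotheses and the absolute continuity of $\mu_0$ are used precisely to localize the arguments to $\Omega$ and to discard the measure-zero singular set of $\phi$.
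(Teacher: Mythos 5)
This statement is cited from Figalli--Gigli~\cite{FG} and is not proved in the paper, so there is no internal proof to compare against; your outline follows exactly the Brenier--McCann duality strategy on which~\cite{FG} (together with McCann's polar factorization) is built, and you correctly identify that the real content of~\cite{FG} is the local semi-convexity of Kantorovich potentials on \emph{non-compact} manifolds, which cannot be obtained by naively taking infima of the squared-distance functions as in the compact case.

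There is, however, one concrete error in the construction. You set $\phi(x) := d_g(x_0,x)^2/2 - \psi(x)$, which is a carry-over from the Euclidean Brenier picture, where the convex potential is $u(x) = |x|^2/2 - \psi(x)$ and the optimal map is $T(x) = \nabla u(x)$. On a Riemannian manifold this identity does not hold: if $\psi$ is $c$-concave with $\pi$ concentrated on $\partial^c\psi$, differentiating $\psi(x') + \psi^c(y) - c(x',y) \le 0$ at a point of equality gives $\nabla\psi(x) = \nabla_x c(x,y) = -\exp_x^{-1}(y)$, so the transport map is $\cT_1(x) = \exp_x\bigl(-\nabla\psi(x)\bigr)$. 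To write it as $\exp_x(\nabla\phi(x))$ you must therefore take $\phi := -\psi$. With your definition, $\nabla\phi(x) = -\exp_x^{-1}(x_0) - \nabla\psi(x)$ carries the spurious term $-\exp_x^{-1}(x_0)$, and $\cT_1 = \exp(\nabla\phi)$ is no longer the optimal map; the Benamou--Brenier-style cost computation and the optimality of $(\cT_0 \times \cT_1)_\sharp \mu_0$ in the second paragraph would then fail. Once you replace $\phi$ by $-\psi$, the local semi-convexity of $\phi$ is exactly the statement of~\cite{FG} (equivalently, local semi-concavity of $\psi$), and the rest of your argument — uniqueness of the optimal plan via $\mu_0$-a.e.\ differentiability of $\psi$, and uniqueness of the geodesic via Proposition~\ref{pr:LV} and the fact (due to~\cite{CMS}, Proposition~4.1) that $\cT_1(x)$ is not a cut point of $x$ for $\mu_0$-a.e.\ $x$ — is the standard and correct route.
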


If $M$ is compact, then the above theorem is due to McCann's celebrated work~\cite{Mc2}
and we can take as the potential function $-\phi$ a \emph{$c$-concave function} for the cost $c(x,y)=d_g(x,y)^2/2$.
(We do not give the definition of the $c$-concave function, what we need is
only the fact that $c$-concave functions are locally semi-convex.)
A locally semi-convex function is locally Lipschitz and twice differentiable almost everywhere by the Alexandrov--Bangert theorem.
Thus $\cT_t$ is differentiable $\mu_0$-a.e.\ and
the following \emph{Jacobian} (or \emph{Monge--Amper\`e}) \emph{equation} holds.

\begin{theorem}{\rm (\cite[Theorems~8.7, 11.1]{Vi2})}\label{th:MA}
Under the same assumptions as Theorem~$\ref{th:FG}$ above,
we have $\mu_t \in \cP^2_{\ac}(M,\vol_g)$ for all $t \in [0,1)$.
Moreover, by putting
\[ \rho_t\omega:=\mu_t=(\cT_t)_{\sharp}\mu, \qquad
\bJ_t^{\omega}(x):=e^{f(x)-f(\cT_t(x))} \det\big( D\cT_t(x) \big), \]
we have $\rho_t(\cT_t(x)) \bJ^{\omega}_t(x) =\rho_0(x)$ and $\bJ^{\omega}_t(x) >0$
for all $t \in [0,1)$ at $\mu_0$-a.e.\ $x \in \Omega$.
In the case of $\nu \in \cP_{\ac}^2(M,\vol_g)$, the above assertions hold also at $t=1$.
\end{theorem}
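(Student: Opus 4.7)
The plan is to build the proof in three stages: first establish the differentiability structure of $\cT_t$, then show its Jacobian is positive $\mu_0$-almost everywhere for $t \in [0,1)$, and finally apply the area formula to obtain the Jacobi equation.

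First, since the potential $\phi$ from Theorem \ref{th:FG} is locally semi-convex on the open set $\Omega$ with $\mu_0[\Omega] = 1$, the Alexandrov--Bangert theorem (recalled in the excerpt) shows that $\phi$ is twice differentiable at $\vol_g$-almost every point of $\Omega$, and hence at $\mu_0$-almost every point since $\mu_0 \ll \vol_g$. At such points $\nabla\phi$ admits a symmetric Alexandrov derivative $\Hess_g \phi$, which makes $\cT_t(x) = \exp_x(t\nabla\phi(x))$ differentiable with a differential $D\cT_t(x)$ expressible through the Jacobi field along the geodesic $s \mapsto \cT_s(x)$ having initial velocity $\nabla\phi(x)$ and initial ``tangent spread'' governed by $\Hess_g\phi(x)$.

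Second, to prove $\det D\cT_t(x) > 0$ for $\mu_0$-almost every $x$ and every $t \in [0,1)$, I would combine two ingredients. (i) \emph{No cut points before time $1$}: since $s \mapsto \cT_s(x)$ is a minimal geodesic on $[0,1]$, for any $t \in [0,1)$ the point $t\nabla\phi(x)$ lies strictly inside the tangential injectivity domain at $x$, so $d(\exp_x)_{t\nabla\phi(x)}$ is nonsingular. (ii) \emph{Non-crossing of transport rays}: cyclical monotonicity of the optimal coupling $(\cT_0 \times \cT_1)_{\sharp}\mu_0$ from Theorem \ref{th:FG} forces the geodesics $\{s \mapsto \cT_s(x)\}$ to be disjoint on $(0,1)$ off a $\mu_0$-negligible set, which yields $\mu_0$-a.e.\ injectivity of $\cT_t$. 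Combined with the Jacobi field representation of $D\cT_t$, (i) shows the Jacobian is nonzero, and continuity in $t$ from $\det D\cT_0 = 1$ (along the geodesic and before the first conjugate point) fixes its sign.

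Third, setting $\bJ^\omega_t(x) := e^{f(x) - f(\cT_t(x))}\det D\cT_t(x)$, which is the Jacobian of $\cT_t$ with respect to $\omega$ on both source and target, the area/change-of-variables formula gives for every Borel $B \subset M$
\begin{equation*}
\mu_t[B] \,=\, \mu_0\big[\cT_t^{-1}(B)\big] \,=\, \int_{\cT_t^{-1}(B)} \rho_0 \,d\omega \,=\, \int_B \frac{\rho_0(\cT_t^{-1}(y))}{\bJ^\omega_t(\cT_t^{-1}(y))} \,d\omega(y),
\end{equation*}
whence $\mu_t \in \cP^2_{\ac}(M,\vol_g)$ with density $\rho_t$ satisfying $\rho_t(\cT_t(x)) \bJ^\omega_t(x) = \rho_0(x)$ at $\mu_0$-a.e.\ $x$. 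For the endpoint $t=1$ under the extra hypothesis $\mu_1 \in \cP^2_{\ac}(M,\vol_g)$, one runs the same argument with the roles of $\mu_0$ and $\mu_1$ exchanged, applying Theorem \ref{th:FG} to the reversed transport; the resulting time-reversed map coincides with $\cT_1^{-1}$ on a full-measure set, which promotes the injectivity and positive-Jacobian conclusions to $t=1$.

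The main obstacle is the second stage, specifically ruling out a positive $\mu_0$-measure set of $x$ for which $\cT_t(x)$ with $t < 1$ would sit at a conjugate or cut point of $x$. This is where local semi-convexity of $\phi$ on the interior of the transport (rather than only $c$-concavity in the compact case) and the fine structure of geodesics in the support of optimal couplings really matter, and this is the core analytic content supplied by \cite[Theorems~8.7, 11.1]{Vi2}.
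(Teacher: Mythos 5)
The paper does not prove this theorem; it cites \cite[Theorems~8.7, 11.1]{Vi2} and moves on, so there is no in-paper argument to compare against. Your sketch is a reasonable outline of the underlying proof, and stages~1 and~3 (Alexandrov--Bangert twice-differentiability at $\mu_0$-a.e.\ point, and the change-of-variables computation) as well as the time-reversal trick for the $t=1$ endpoint are all correct in spirit.

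The genuine gap is in stage~2, the positivity of $\det D\cT_t(x)$. Your items~(i) and~(ii) are true but neither gives what you need. Item~(i) asserts that $d(\exp_x)_{t\nabla\phi(x)}$ is nonsingular for $t<1$, i.e.\ there are no conjugate points before time~1; but $D\cT_t(x)$ is a composition of $d(\exp_x)_{t\nabla\phi(x)}$ with a matrix that incorporates the (Alexandrov) Hessian $\Hess\phi(x)$, and that second factor can perfectly well degenerate even when the exponential map is a local diffeomorphism. Saying that ``continuity in $t$ from $\det D\cT_0 = 1$\ldots fixes its sign'' begs precisely the question of why the determinant cannot pass through zero in $(0,1)$. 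Item~(ii), a.e.\ injectivity of $\cT_t$, is needed for the change of variables but does not by itself yield strict positivity of the Jacobian either. The missing ingredient is the second-order inequality obtained from $c$-concavity: at $\mu_0$-a.e.\ $x$ the function $y \mapsto \phi(y) + \tfrac12 d_g(y,\cT_1(x))^2$ has a local minimum at $y=x$, hence $\Hess\phi(x) + \Hess_y\big[\tfrac12 d_g(\cdot,\cT_1(x))^2\big](x) \ge 0$. Feeding this lower bound on $\Hess\phi$ into the Jacobi-matrix representation of $D\cT_t(x)$, together with the comparison identity that expresses $\Hess[\tfrac12 d_g(\cdot,\cT_1(x))^2]$ through the Jacobi matrix $B(t)$ with $B(1)=0$, is what forces $\det D\cT_t(x)>0$ for $t\in[0,1)$. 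You correctly flag that ``this is the core analytic content supplied by [Vi2],'' but as written the argument you give for positivity does not establish it and mischaracterizes the obstacle as purely one of conjugate/cut points; the real issue is the interaction between $\Hess\phi$ and the second fundamental form of the bundle of transport geodesics.
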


Note that $\bJ^{\omega}_t$ should be understood as the Jacobian with respect to $\omega$,
and its behavior is naturally controlled by the weighted Ricci curvature.
This is a fundamental geometric intuition behind the curvature-dimension condition
(see Section~\ref{sc:ent}).

\subsection{Information geometry}\label{ssc:inf}

We briefly summarize some notions in information geometry associated with
a non-decreasing, positive, continuous function $\varphi:(0,\infty) \lra (0,\infty)$.
We refer to~\cite{Nad} and \cite{Na} for further discussion.

We define the \emph{$\varphi$-logarithmic function} on $(0,\infty)$ by
\[ \ln_\varphi(t):=\int_1^t \frac1{\varphi(s)} \,ds, \]
which is clearly strictly increasing.
We will denote by $l_{\varphi}$ and $L_{\varphi}$ the infimum and the supremum of $\ln_ {\varphi}$, that is,
\[ l_{\varphi} :=\inf_{t>0} \ln_{\varphi}(t) =\lim_{t\downarrow 0} \ln_{\varphi}(t) \in [-\infty,0),\quad
 L_{\varphi} :=\sup_{t>0} \ln_{\varphi}(t) = \lim_{t\uparrow \infty} \ln_{\varphi}(t) \in(0,\infty]. \]
The inverse function of $\ln_{\varphi}$ is called the \emph{$\varphi$-exponential function}.
We extend it to the function on $\R$ as
\[ \exp_{\varphi}(\tau):=
\begin{cases}
0   &\text{if\ }  \tau  \leq l_{\varphi}, \\
\ln_{\varphi}^{-1}(\tau) &  \text{if\ }  \tau \in \left(l_{\varphi},L_{\varphi} \right), \\
\infty &  \text{if\ }  \tau \geq L_\varphi.
\end{cases} \]
We also introduce the strictly convex function
\[ u_{\varphi}(r):=\int_0^{r} \ln_{\varphi} (t) \,dt, \qquad r \in [0,\infty), \]
provided that it is well-defined (i.e., $\ln_{\varphi}$ is integrable on $(0,1)$).

\begin{lemma}\label{lm:u_phi}
The function  $u_{\varphi}$ is well-defined if 
\begin{equation}\label{del}
\inf \left\{ \delta \in \R \biggm| \frac{s^{1+\delta}}{\varphi(s)} \text{ is bounded on $(0,1)$} \right\}<1.
\end{equation}
\end{lemma}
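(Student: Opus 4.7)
The goal is to show that under condition \eqref{del}, $\ln_\varphi$ is integrable on $(0,1)$, which is the only obstacle to defining $u_\varphi$: since $\varphi$ is positive and continuous, $\ln_\varphi$ is continuous on $(0,\infty)$, hence automatically integrable on any compact subinterval of $(0,\infty)$, so only the behavior near $0$ matters for $u_\varphi(r) = \int_0^r \ln_\varphi(t)\,dt$.

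The first step is to rewrite the absolute value. For $t \in (0,1)$ we have
\[
|\ln_\varphi(t)| = -\ln_\varphi(t) = \int_t^1 \frac{ds}{\varphi(s)}.
\]
By Fubini--Tonelli (the integrand $1/\varphi(s)$ is nonnegative and measurable),
\[
\int_0^1 |\ln_\varphi(t)|\,dt = \int_0^1 \int_t^1 \frac{ds}{\varphi(s)}\,dt = \int_0^1 \frac{1}{\varphi(s)} \int_0^s dt\,ds = \int_0^1 \frac{s}{\varphi(s)}\,ds.
\]
So the problem reduces to showing $\int_0^1 s/\varphi(s)\,ds < \infty$ under the hypothesis.

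For the second step, the hypothesis \eqref{del} provides some $\delta < 1$ and a constant $C > 0$ such that $s^{1+\delta}/\varphi(s) \le C$ on $(0,1)$, equivalently
\[
\frac{s}{\varphi(s)} \le \frac{C}{s^{\delta}} \quad \text{for } s \in (0,1).
\]
Since $\delta < 1$, the function $s^{-\delta}$ is integrable on $(0,1)$, and therefore $\int_0^1 s/\varphi(s)\,ds \le C\int_0^1 s^{-\delta}\,ds < \infty$, completing the argument.

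There is no real obstacle here; the only thing to be slightly careful about is the logical structure of the hypothesis, namely that ``the infimum is strictly less than $1$'' guarantees the existence of some admissible $\delta < 1$ (rather than only $\delta \le 1$, which would fail at the endpoint). Once that is pinned down, the Fubini reduction and the pointwise bound close the proof immediately.
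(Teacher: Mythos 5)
Your proof is correct and follows essentially the same route as the paper: reduce to $u_\varphi(1)>-\infty$, rewrite the double integral $\int_0^1\int_t^1 \varphi(s)^{-1}\,ds\,dt$ as $\int_0^1 s/\varphi(s)\,ds$, and bound $s/\varphi(s)\le C s^{-\delta}$ using some $\delta<1$ supplied by the hypothesis. You merely spell out two steps the paper leaves implicit (the Fubini--Tonelli justification and the extraction of a strictly admissible $\delta<1$ from the infimum condition), which is fine but not a different argument.
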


\begin{proof}
As $\varphi$ is positive and non-decreasing, it suffices to see $u_{\varphi}(1)>-\infty$.
We deduce from the hypothesis that $s/\varphi(s)$ is integrable on $(0,1)$.
This shows the claim since
\[ u_{\varphi}(1)
 =-\int_0^1 \int_t^1 \frac{1}{\varphi(s)} \,dsdt
 =-\int_0^1 \frac{s}{\varphi(s)} \,ds >-\infty. \]
$\qedd$
\end{proof}

Entropy is a function measuring the uncertainty of an event, and the divergence
in information theory is a quantity expressing the difference between a pair of probability measures.
In this spirit, the {\it $\varphi$-entropy} for $\rho \omega \in \cP_{\ac}(M,\omega)$ is defined by 
\[ E_\varphi(\rho\omega):=-\int_{M} u_{\varphi}(\rho) \,d\omega \]
(provided that it is well-defined).
Then we define the \emph{Bregman divergence} between $\rho\omega, \sigma\omega \in \cP_{\ac}(M,\omega)$ by
\begin{equation}\label{eq:Breg}
D_{\varphi}(\rho \omega|\sigma \omega)
 :=\int_M \left\{ u_{\varphi}(\rho)-u_{\varphi}(\sigma)-u'_{\varphi}(\sigma)(\rho-\sigma) \right\} \,d\omega.
\end{equation}
The strict convexity of $u_{\varphi}$ guarantees $D_{\varphi}(\rho \omega|\sigma \omega)>0$
unless $\rho=\sigma$ $\omega$-a.e..
Furthermore,  the square root of the divergence $D_\varphi$ satisfies a generalized Pythagorean theorem
and hence it can be regarded as a kind of distance function, though it is not symmetric
(i.e., $D_{\varphi}(\rho \omega|\sigma \omega) \neq D_{\varphi}(\sigma \omega|\rho \omega)$ in general).
We shall explain some more details of these facts.

Let us consider an $\R^k$-valued  random variable $X=(X_i)_{i=1}^k$ on $M$,
and a convex open set $\Xi \subset \R^k$ on which a partition function
$\Lambda_\varphi:\Xi \lra \R$ can be chosen so that 
the total mass of $\rho_\xi$ is normalized to unity for any $\xi \in \Xi$,
where $\rho_\xi$ is defined by
\[ \nu_{\xi} =\rho_{\xi} \omega
 :=\exp_{\varphi}(\Lambda_{\varphi}(\xi)-\lr{\xi}{X}) \omega \in \cP_{\ac}(M,\omega). \]
Then $\nu_\xi$ is a conditional maximizer of $E_\varphi$ (see~\cite[Theorem~7.2]{Na}).
We set $\cM_{\varphi}$ as the set of conditional maximizers for $E_\varphi$,
\[ \cM_{\varphi}:= \{ \nu_\xi=\exp_{\varphi}(\Lambda_{\varphi}(\xi)-\lr{\xi}{X}) \omega
 \,|\, \xi \in \Xi \ \}  \subset \cP_{\ac}(M,\omega). \]
Through the coordinate $\xi \mapsto \nu_{\xi}$, we introduce the Riemannian metric
$g_{\varphi}$ of $\cM_{\varphi}$ (associated with the Fisher metric when $\varphi(s)=s$) as
\[ g_{\varphi} \left(\frac{\partial}{\partial \xi^i}, \frac{\partial}{\partial \xi^j} \right)
 :=\int_{\Omega} \frac{\partial \rho_{\xi}}{\partial \xi^i}
 \frac{\partial (\ln_\varphi( \rho_{\xi}))}{\partial \xi^j} \,d\omega,
 \quad i,j=1,\ldots,k. \]
We consider another global coordinate $(\eta_{\alpha})_{\alpha=1}^k$ of $\cM_{\varphi}$ given by
$\eta_{\alpha}(\mu) :=\int_{M}  X_{\alpha} \,d\mu$
provided that $\eta_{\alpha}(\mu) \in \R$ for all $\mu \in \cM_{\varphi}$ and $\alpha=1,\ldots,k$.
Then it holds
\[ g_{\varphi} \left(\frac{\partial}{\partial \xi^i}, \frac{\partial}{\partial \eta_{\alpha}} \right)
 =\delta^i_{\alpha}. \]
This shows that geodesics generated by two coordinate systems $\xi$ and $\eta$ are orthogonal to each other.
Then the square root of the Bregman divergence $D_{\varphi}$ satisfies a generalized Pythagorean theorem
for geodesic right triangles in the following sense.

\begin{proposition}{\rm (\cite[Proposition~3]{OW})}\label{pro:pyth}
Fix $\mu\in \cP_{\ac}(M,\omega)$ satisfying $\eta_{\alpha}(\mu) \in \R $ for any $\alpha=1,\ldots, k$.
We assume that, for any $\xi \in \Xi$,  $\supp(\nu_\xi)=M$ holds and
$D_{\varphi}(\mu|\cdot)$ and $\Lambda_{\varphi}$ are differentiable on $\cM_\varphi$.
If some $\xi(\mu) \in \Xi$ satisfies 
$\inf_{ \xi \in \Xi} D_{\varphi}(\mu|\nu_\xi)=D_{\varphi}(\mu|\nu_{\xi(\mu)})$,
in other words, if $\nu_{\xi(\mu)}$ is the foot of the $\eta$-geodesic from $\mu$
meeting $\cM_{\varphi}$ orthogonally, then we have
\begin{align*} 
\eta_{\alpha}(\mu) &=\eta_{\alpha}(\nu_{\xi(\mu)}) =\int_{M} X_\alpha \,d\nu_{\xi(\mu)}
 \quad \text{for all\ } \alpha =1,\ldots, k, \\
D_{\varphi}(\mu|\nu_{\xi})
&=D_{\varphi}(\mu|\nu_{\xi(\mu)})+D_{\varphi}(\nu_{\xi(\mu)}|\nu_{\xi})
\quad \text{for all\ } \xi \in \Xi.
\end{align*}
\end{proposition}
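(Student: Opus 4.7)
The plan is to reduce everything to a single algebraic identity for $D_\varphi(\mu|\nu_\xi)$ as a function of $\xi$, from which both claims fall out by computing its differential.

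First, I would rewrite the Bregman divergence in a convenient form. Note that $u'_\varphi = \ln_\varphi$, and by definition of $\exp_\varphi$ and the hypothesis $\supp(\nu_\xi) = M$ one has $\ln_\varphi(\rho_\xi) = \Lambda_\varphi(\xi) - \langle \xi, X\rangle$ $\omega$-a.e. Plugging this into (\ref{eq:Breg}) and using the normalizations $\int_M \rho\,d\omega = \int_M \rho_\xi\,d\omega = 1$, the constant $\Lambda_\varphi(\xi)$ drops out, giving
\[
D_\varphi(\mu|\nu_\xi) = -E_\varphi(\mu) + E_\varphi(\nu_\xi) + \sum_{i=1}^{k}\xi^i\bigl(\eta_i(\mu) - \eta_i(\nu_\xi)\bigr).
\]
This formula is the technical heart of the argument, and it already makes it plausible that $\partial_{\xi^j}D_\varphi(\mu|\nu_\xi)$ depends only on $\eta_j(\mu) - \eta_j(\nu_\xi)$.

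Second, I would differentiate this expression in $\xi^j$. Using the chain rule and $u'_\varphi = \ln_\varphi$, together with the relation $\ln_\varphi(\rho_\xi) = \Lambda_\varphi(\xi) - \langle\xi, X\rangle$ and the fact that $\int_M \partial_{\xi^j}\rho_\xi\,d\omega = 0$ (from $\xi$-independence of the total mass), the key cancellation
\[
\frac{\partial E_\varphi(\nu_\xi)}{\partial \xi^j} = \sum_{i=1}^{k}\xi^i\,\frac{\partial \eta_i(\nu_\xi)}{\partial \xi^j}
\]
emerges, whence $\partial_{\xi^j}D_\varphi(\mu|\nu_\xi) = \eta_j(\mu) - \eta_j(\nu_\xi)$. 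Imposing the first-order condition at the minimizer $\xi(\mu)$ (which lies in the interior of the convex open set $\Xi$, so the derivative vanishes there) yields the first assertion $\eta_\alpha(\mu) = \eta_\alpha(\nu_{\xi(\mu)})$.

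Third, for the Pythagorean identity, I would simply substitute the boxed formula for $D_\varphi$ three times and use the first claim. Writing out $D_\varphi(\mu|\nu_\xi) - D_\varphi(\nu_{\xi(\mu)}|\nu_\xi)$, the terms $E_\varphi(\nu_\xi)$ and the $\xi$-pairings with $\eta(\nu_\xi)$ cancel; what survives is $-E_\varphi(\mu) + E_\varphi(\nu_{\xi(\mu)}) + \sum_i \xi^i(\eta_i(\mu) - \eta_i(\nu_{\xi(\mu)}))$, and the last sum vanishes by the first assertion, leaving $D_\varphi(\mu|\nu_{\xi(\mu)})$.

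The main technical obstacle will be justifying the differentiation under the integral sign in Step~2, in particular differentiating $E_\varphi(\nu_\xi)$ and $\eta_i(\nu_\xi)$ in $\xi$. However, this is precisely what is guaranteed by the hypothesis that $\Lambda_\varphi$ and $D_\varphi(\mu|\cdot)$ are differentiable on $\cM_\varphi$ together with $\eta_\alpha(\mu) \in \R$, so the analytic content is packaged into the assumptions and the remaining work is the algebraic manipulation sketched above.
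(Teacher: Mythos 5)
The paper does not actually prove Proposition~\ref{pro:pyth}: it cites it from \cite[Proposition~3]{OW} (Ohara--Wada) and gives no argument of its own, so there is no proof of record in this paper to compare against. That said, your derivation is correct and is the standard Bregman/information-geometry argument one would expect in the cited reference: the key reduction
\[
D_\varphi(\mu|\nu_\xi) = -E_\varphi(\mu) + E_\varphi(\nu_\xi) + \sum_{i}\xi^i\bigl(\eta_i(\mu) - \eta_i(\nu_\xi)\bigr)
\]
follows because $u'_\varphi = \ln_\varphi$, $\ln_\varphi(\rho_\xi) = \Lambda_\varphi(\xi) - \langle \xi, X\rangle$ on all of $M$ (using $\supp\nu_\xi = M$), and the constant $\Lambda_\varphi(\xi)$ integrates against $\rho - \rho_\xi$ to zero. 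Your cancellation $\partial_{\xi^j} E_\varphi(\nu_\xi) = \sum_i \xi^i \partial_{\xi^j}\eta_i(\nu_\xi)$ is right (again using $\int_M \partial_{\xi^j}\rho_\xi\,d\omega = 0$), giving $\partial_{\xi^j} D_\varphi(\mu|\nu_\xi) = \eta_j(\mu) - \eta_j(\nu_\xi)$, and the first-order condition at the interior minimizer $\xi(\mu) \in \Xi$ yields the moment-matching. The Pythagorean identity then follows by subtracting the boxed formula applied to $(\mu, \nu_\xi)$ and $(\nu_{\xi(\mu)}, \nu_\xi)$ and using moment-matching twice. The differentiability issues you flag are exactly what the hypotheses ($\Lambda_\varphi$ and $D_\varphi(\mu|\cdot)$ differentiable, $\eta_\alpha(\mu) \in \R$) are meant to cover; I see no gap.
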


We define three more quantities measuring the order of $\varphi$ for later use:
\begin{align}
&\theta_{\varphi} :=\sup\left\{ \frac{s}{\varphi(s)} \cdot \limsup_{t \downarrow 0} \frac{\varphi(s+t)-\varphi(s)}{t}
 \biggm| s>0 \right\} \in [0,\infty], \label{eq:theta}\\
&\delta_{\varphi} :=\inf\left\{ \frac{s}{\varphi(s)} \cdot \limsup_{t \downarrow 0} \frac{\varphi(s+t)-\varphi(s)}{t}
 \biggm| s>0  \right\} \in [0,\infty), \label{eq:delta}\\
&N_\varphi:= 
\begin{cases}
(\theta_\varphi-1)^{-1}  &\text{if\ } \theta_\varphi \neq 1,\\
\infty &\text{if\ } \theta_\varphi =1.
\end{cases} \label{eq:N}
\end{align}
The following lemma will be useful.

\begin{lemma} \label{lem:mono}
The function $s^{\delta_{\varphi}}/\varphi(s)$ is non-increasing in $s \in (0,\infty)$.
Moreover, if $\theta_\varphi$ is finite, then the function $s^{\theta_{\varphi}}/\varphi(s)$
is non-decreasing in $s \in (0,\infty)$.
\end{lemma}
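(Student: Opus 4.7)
The plan is to compute the upper right Dini derivative of each of the two functions in question, verify it has the desired sign from the defining inequalities of $\delta_\varphi$ and $\theta_\varphi$, and conclude by the standard monotonicity principle for continuous functions. Set $F(s):=\varphi(s)/s^{\delta_\varphi}$ and $G(s):=s^{\theta_\varphi}/\varphi(s)$, so that the lemma's two assertions are equivalent to $F$ and $G$ both being non-decreasing on $(0,\infty)$.

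Writing $D^+\varphi(s):=\limsup_{t\downarrow 0}t^{-1}[\varphi(s+t)-\varphi(s)]$ and $D_+\varphi(s):=\liminf_{t\downarrow 0}t^{-1}[\varphi(s+t)-\varphi(s)]$, definitions \eqref{eq:delta} and \eqref{eq:theta} read, for every $s>0$,
\[ \delta_\varphi \,\le\, \frac{sD^+\varphi(s)}{\varphi(s)} \,\le\, \theta_\varphi \qquad (\text{the right inequality when } \theta_\varphi<\infty), \]
and in particular $sD_+\varphi(s)/\varphi(s)\le\theta_\varphi$ via the trivial $D_+\le D^+$. A routine difference-quotient calculation---splitting each quotient and using that $s^a$ is $C^1$, so that the cross terms have genuine limits and the outer limsup separates cleanly---then yields
\[ D^+F(s)=\frac{\varphi(s)}{s^{\delta_\varphi+1}}\left(\frac{sD^+\varphi(s)}{\varphi(s)}-\delta_\varphi\right), \qquad D^+G(s)=\frac{s^{\theta_\varphi-1}}{\varphi(s)}\left(\theta_\varphi-\frac{sD_+\varphi(s)}{\varphi(s)}\right), \]
both of which are non-negative by the bounds above.

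To finish, I would invoke the classical monotonicity theorem: a continuous function $h$ on an interval with $D^+h\ge 0$ pointwise is non-decreasing. (A short proof: fix $s_1<s_2$ and $\eta>0$, consider the set $A_\eta:=\{s\in[s_1,s_2] : h(s)\ge h(s_1)-\eta(s-s_1)\}$, and show $\sup A_\eta=s_2$ using $D^+h>-\eta$ at the supremum; then let $\eta\downarrow 0$.) Applied separately to $F$ and $G$, this completes the argument.

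The main point requiring care is the $G$-computation: its Dini derivative naturally involves the \emph{lower} right derivative $D_+\varphi$ rather than $D^+\varphi$, because of the sign flip arising when one differentiates through $1/\varphi$. The definition of $\theta_\varphi$ only bounds $D^+\varphi$ directly, so the trivial inequality $D_+\le D^+$ is essential to transfer the bound to $D_+\varphi$; there is no analogous obstacle in the computation for $F$.
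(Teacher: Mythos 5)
Your argument is correct, and it reaches the same conclusion by a more streamlined route than the paper's. You bound the upper right Dini derivatives of $F(s)=\varphi(s)/s^{\delta_{\varphi}}$ and $G(s)=s^{\theta_{\varphi}}/\varphi(s)$ directly from the defining inequalities $\delta_{\varphi}\le sD^+\varphi(s)/\varphi(s)\le\theta_{\varphi}$, and you correctly flag the one subtlety: the sign flip from differentiating through $1/\varphi$ makes the \emph{lower} right derivative $D_+\varphi$ appear in the $G$-computation, so the trivial bound $D_+\varphi\le D^+\varphi$ is what transfers the $\theta_{\varphi}$-estimate; this also uses that finiteness of $\theta_{\varphi}$ forces $D^+\varphi(s)<\infty$ for every $s$, so the Dini derivative of $G$ is well-defined and finite. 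You then close with the classical Dini-derivative monotonicity theorem. The paper proceeds differently in implementation: it perturbs the exponents to $\theta_{\varphi}+\varepsilon$ and $\delta_{\varphi}-\varepsilon$, rewrites the relevant difference via an algebraic identity in terms of the auxiliary comparison functions $h_{\pm}(\tau)$, derives a strict local inequality for small $t$, tacitly uses the same ``locally increasing and continuous implies globally increasing'' principle you state explicitly, and finally sends $\varepsilon\downarrow 0$. Both proofs are at bottom the integration of a pointwise logarithmic-derivative bound, but yours dispenses with the $\varepsilon$-perturbation and the comparison functions by working with the boundary exponents and Dini derivatives directly; the paper's detour yields the slightly stronger by-product of strict monotonicity for exponents strictly inside the range, which is not needed for the stated non-strict conclusion.
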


\begin{proof}
Assume $\theta_{\varphi}<\infty$ (which will not play any role in the discussion on $\delta_{\varphi}$),
and fix $s>0$ and small $\ve>0$.
By the definitions of $\theta_{\varphi}$ and $\delta_{\varphi}$,
there exists $r_{\ve}(s)>0$ such that
\[ \delta_{\varphi} \le
 \frac{s}{\varphi(s)} \cdot \sup_{t \in (0,r_\ve(s)]}\frac{\varphi(s+t) -\varphi(s)}{t}
 \le \theta_{\varphi} +\frac{\ve}{2}. \]
Consider the functions
\[ h_+(\tau) :=\theta_{\varphi} +\frac\ve2 -\frac{(1+\tau)^{\theta_{\varphi}+\ve}-1}{\tau},
 \qquad
 h_-(\tau) :=\delta_{\varphi} -\frac{(1+\tau)^{\delta_{\varphi}-\ve}-1}{\tau} \]
for $\tau>0$.
Since $h_+$ and $h_-$ are continuous and satisfy
\[ \lim_{\tau \downarrow 0} h_+(\tau) =\theta_{\varphi} +\frac\ve2 -(\theta_{\varphi}+\ve) <0,
 \qquad
 \lim_{\tau \downarrow 0} h_-(\tau) =\delta_{\varphi} -(\delta_{\varphi}-\ve) >0, \]
there exists $\tau_{\ve}>0$ such that we have $h_+(\tau)<0$ and $h_-(\tau)>0$ for all $\tau \in (0,\tau_\ve)$.

Given any $t \in (0,\min\{r_\ve(s),s\tau_\ve\})$, we have
\begin{align*}
\frac{s^{\theta_{\varphi}+\ve}}{\varphi(s)} -\frac{(s+t)^{\theta_{\varphi}+\ve}}{\varphi(s+t)}
 &=\frac{ ts^{\theta_\varphi+\ve-1}}{\varphi(s+t)}
 \bigg\{ \frac{s}{\varphi(s)} \frac{\varphi(s+t)-\varphi(s)}{t}
 -\frac{ \left(1+ ts^{-1} \right)^{\theta_\varphi+\ve} -1}{ts^{-1}} \bigg\} \\
 & \le \frac{ts^{\theta_\varphi+\ve-1}}{\varphi(s+t)} h_+ (ts^{-1}) <0.
\end{align*}
As $s>0$ was arbitrary, this shows that $s^{\theta_{\varphi}+\ve}/\varphi(s)$ is strictly increasing in $s>0$.
We similarly obtain
\[ \frac{s^{\delta_\varphi-\ve}}{\varphi(s)} -\frac{(s+t)^{\delta_\varphi-\ve}}{\varphi(s+t)}
 \ge \frac{ts^{\delta_\varphi-\ve-1}}{\varphi(s+t)} h_- (ts^{-1}) >0, \]
so that ${s^{\delta_\varphi-\ve}}/{\varphi(s)}$ is strictly decreasing.
Letting $\ve \downarrow 0$, we complete the proof.
$\qedd$
\end{proof}

\begin{remark}\label{rm:sim}
The function $\varphi$ will be sometimes normalized so as to satisfy $\varphi(1)=1$.
This costs no generality as we easily see the following relations for any $a>0$:
\begin{align*}
&\ln_{a\varphi}(t)=a^{-1} \ln_{\varphi}(t), \quad
\exp_{a\varphi}(\tau)=\exp_{\varphi}(a \tau), \quad
u_{a\varphi}(r)=a^{-1} u_{\varphi}(r), \\
&l_{a\varphi}(\tau)=a^{-1} l_{\varphi}, \quad
L_{a\varphi}=a^{-1} L_{\varphi}, \quad
\theta_{a\varphi}=\theta_{\varphi}, \quad
\delta_{a\varphi}=\delta_{\varphi}, \quad
N_{a\varphi}=N_{\varphi}.
\end{align*}
\end{remark}

\subsection{Information geometry continued: The case of $\varphi_m(s)=s^{2-m}$}\label{ssc:phi_m}

In \cite{mCD}, we considered the power function $\varphi_m(s):=s^{2-m}$ for $m \in (0,2]$
and the corresponding \emph{$m$-logarithmic} and \emph{$m$-exponential functions}.
(We have actually considered $m \in [(n-1)/n,\infty)$ in \cite{mCD},
but $\varphi_m$ is non-decreasing only when $m \le 2$.)
We summarize several facts in this especially important case.
For brevity, we set
\[ \ell_m:=\ln_{\varphi_m},\quad e_m:=\exp_{\varphi_m},\quad
 l_m:=l_{\varphi_m},\quad L_m:=L_{\varphi_m},\quad 
 \theta_m:=\theta_{\varphi_m},\quad N_m:=N_{\varphi_m}. \]

(A)
In the case of $\varphi_1(s)=s$, $\ell_1$ and $e_1$ coincide with
the usual logarithmic and exponential functions, respectively.
Thus we find $l_1=-\infty$ and $L_1=\infty$.
We can easily observe $\theta_1=1$ and $N_1=\infty$ as well.
For $\rho\omega,\sigma\omega\in\cP_{\ac}(M,\omega)$, we deduce from
$u_{\varphi_1}(r)=r\ln r -r$ that
\[ E_{\varphi_1}(\rho\omega)=-\int_M \rho \ln \rho \,d\omega+1, \qquad
D_{\varphi_1}(\rho\omega|\sigma \omega)= \int_M \rho \ln \frac{\rho}{\sigma} \,d\omega. \]
Namely $E_{\varphi_1}$ is the \emph{Boltzmann entropy} up to adding $1$,
and $D_{\varphi_1}$ is the \emph{Kullback--Leibler divergence}.
By choosing $\sigma \equiv 1$ formally in the definition of $D_{\varphi_1}(\rho\omega|\sigma \omega)$,
the \emph{relative entropy} of $\mu \in \cP^2(M)$ with respect to $\omega$ is defined by
\begin{equation}\label{eq:Ent}
\Ent_{\omega}(\mu) :=\left\{
\begin{array}{ll}
\displaystyle\lim_{\ve \downarrow 0} \int_{\{\rho \ge \ve\}} \rho\ln\rho \,d\omega
 & \text{if}\ \mu=\rho\omega \in \cP^2_{\ac}(M,\omega), \\
 \infty & \text{otherwise}.
\end{array} \right.
\end{equation}
In other words, the relative entropy is `$(-1) \times$ the Boltzmann entropy.'

(B)
For ${\varphi_m}(s)=s^{2-m}$ with $m \in (0,1) \cup (1,2]$,
$\ell_m$ and $e_m$ are given by power functions as
\[ \ell_m(t) =\frac{t^{m-1}-1}{m-1},\qquad
e_m(\tau) =[1+(m-1)\tau]_+^{1/(m-1)}, \]
where we set $[t]_+:=\max\{t,0\}$ and by convention $0^a:=\infty$ for $a<0$.
Observe
\begin{equation}\label{eq:lLm}
l_m =\begin{cases}
 -\infty& \text{if\ }m<1, \\
 \displaystyle -\frac{1}{m-1} & \text{if\ }m>1,
 \end{cases} \qquad
 L_m =\begin{cases}
 \displaystyle \frac{1}{1-m} &\text{if\ }m<1, \\
 \infty & \text{if\ }m>1,
\end{cases}
\end{equation}
$\theta_m=2-m$ and $N_m=(1-m)^{-1}$.
As $u_{\varphi_m}(r)=(r^m-mr)/\{m(m-1)\}$,
the ${\varphi_m}$-entropy for $\rho\omega\in\cP_{\ac}(M,\omega)$ is given by 
\[ E_{{\varphi_m}}(\rho\omega)=-\int_M \frac{\rho^m}{m(m-1)} \,d\omega +\frac1{m-1}. \]
Up to additive and multiplicative constants, this coincides with the
\emph{R\'enyi$($-Tsallis$)$ entropy}
\begin{equation}\label{eq:S_N}
S_N(\rho\omega):=-\int_M \rho^{(N-1)/N} \,d\omega
\end{equation}
with $N=N_m$, which is applied to complex (strongly correlated) systems.
The Bregman divergence between $\rho\omega,\sigma\omega\in\cP_{\ac}(M,\omega)$ is given by 
\[ D_{{\varphi_m}}(\rho\omega|\sigma\omega)
=\frac1{m(m-1)} \int_{M} \left[(\rho^m-\sigma^m)-m\sigma^{m-1} (\rho-\sigma) \right] d\omega. \]
This coincides with the \emph{$\beta$-divergence}, whose strength is its robustness.
For instance, we refer to~\cite{mtke} for the roles and the differences of statistical divergences
including the Bregman divergences.
Note that as $m \to 1$ we have
\[ \ell_m(t) \to \ell_1(t),\quad e_m(\tau) \to e_1(t),\quad
E_{\varphi_m}(\rho\omega) \to E_{\varphi_1}(\rho\omega),\quad
D_{\varphi_m}(\rho\omega|\sigma\omega) \to D_{\varphi_1}(\rho\omega|\sigma\omega). \]

The function $\varphi_m=s^{2-m}$ is an extremal element among those $\varphi$'s satisfying
$\theta_{\varphi}=2-m$ in several respects, as one can see in the next useful lemma for instance.

\begin{lemma}\label{hikaku}
Assume $\theta_{\varphi}<2$ and put $m=2-\theta_{\varphi}$.
Then for any $t>0$ and $r \in \R$ we have 
\begin{gather}\label{eq:ln}
\frac{1}{\varphi(1)}\ell_m(t) \leq \ln_\varphi(t)
\leq \frac{t^{\theta_\varphi}}{\varphi(t)}\ell_m(t),\\ \label{eq:exp}
\exp_\varphi(r) \leq e_m \big( \varphi(1)r \big).
\end{gather}
\end{lemma}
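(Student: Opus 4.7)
The plan is to reduce everything to the monotonicity established in Lemma~\ref{lem:mono}: since $\theta_{\varphi}<2$ is finite, the function $s \mapsto s^{\theta_{\varphi}}/\varphi(s)$ is non-decreasing on $(0,\infty)$, and it takes the distinguished value $1/\varphi(1)$ at $s=1$.

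For (\ref{eq:ln}), I would start from the identity $m-2=-\theta_{\varphi}$ so that $\ell_m(t)=\int_1^t s^{-\theta_{\varphi}}\,ds$, and rewrite
\[ \ln_{\varphi}(t)=\int_1^t \frac{1}{\varphi(s)}\,ds=\int_1^t \frac{s^{\theta_{\varphi}}}{\varphi(s)} \cdot s^{-\theta_{\varphi}}\,ds. \]
Then I would split into the cases $t\ge 1$ and $t<1$. For $t\ge 1$ the integration is over $[1,t]$ and Lemma~\ref{lem:mono} gives $1/\varphi(1)\le s^{\theta_{\varphi}}/\varphi(s)\le t^{\theta_{\varphi}}/\varphi(t)$ throughout; both bounds of (\ref{eq:ln}) follow by inserting this into the integrand, using $\ell_m(t)\ge 0$. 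For $t<1$ I would flip the orientation, writing the integral as $-\int_t^1(\cdot)\,ds$, again apply monotonicity on $[t,1]$, and note that $\ell_m(t)\le 0$ so the inequalities reverse appropriately and yield the same two bounds of (\ref{eq:ln}).

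For (\ref{eq:exp}), the plan is to feed the already-proved left inequality of (\ref{eq:ln}) into the definition of $\exp_{\varphi}$. For $r\in(l_{\varphi},L_{\varphi})$ I set $t=\exp_{\varphi}(r)$, so $\ln_{\varphi}(t)=r$, and the left inequality in (\ref{eq:ln}) becomes $\ell_m(\exp_{\varphi}(r))\le\varphi(1)r$; since $\ell_m=\ln_{\varphi_m}$ is strictly increasing, applying $e_m=\exp_{\varphi_m}$ to both sides gives $\exp_{\varphi}(r)\le e_m(\varphi(1)r)$. The boundary cases would be handled separately: if $r\le l_{\varphi}$ the left side is $0$ and the inequality is trivial; if $r\ge L_{\varphi}$ I would need $e_m(\varphi(1)r)=\infty$, which follows by passing to the limit $t\uparrow\infty$ in the left inequality of (\ref{eq:ln}) to obtain $L_m\le\varphi(1)L_{\varphi}$, hence $\varphi(1)r\ge L_m$.

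The main obstacle is keeping the sign bookkeeping straight when $t<1$: both $\ln_{\varphi}(t)$ and $\ell_m(t)$ are negative, and the direction of the inequality between $s^{\theta_{\varphi}}/\varphi(s)$ and the endpoint values $1/\varphi(1)$, $t^{\theta_{\varphi}}/\varphi(t)$ reverses relative to the case $t\ge 1$, so one must verify that after multiplying by the negative quantity $\int s^{-\theta_{\varphi}}\,ds$ the resulting inequalities still line up with (\ref{eq:ln}). A secondary point is the boundary discussion for (\ref{eq:exp}) across the cases $m\le 1$ (where $L_m=\infty$ makes the claim automatic for $r\ge L_{\varphi}$) and $m>1$ (where the finite value of $L_m$ must be matched against $\varphi(1)L_{\varphi}$ via the inequality just established).
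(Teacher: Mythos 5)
Your argument is correct and follows essentially the same route as the paper: (\ref{eq:ln}) comes from inserting the monotonicity of $s^{\theta_{\varphi}}/\varphi(s)$ (Lemma~\ref{lem:mono}) into the integral defining $\ln_{\varphi}$ (the paper writes this without the explicit $t\ge 1$ vs.\ $t<1$ split, which your bookkeeping confirms), and (\ref{eq:exp}) follows by feeding the left side of (\ref{eq:ln}) into $e_m=\ln_{\varphi_m}^{-1}$ with the same three-way split $r\le l_{\varphi}$, $r\ge L_{\varphi}$, $l_{\varphi}<r<L_{\varphi}$. One small slip in your closing remark: by \eqref{eq:lLm} it is $m\ge 1$ (equivalently $\theta_{\varphi}\le 1$) that gives $L_m=\infty$ and $m<1$ that gives $L_m$ finite, not the other way around; this does not affect the proof since the bound $\varphi(1)L_{\varphi}\ge L_m$ handles both cases uniformly.
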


\begin{proof}
It follows from Lemma~\ref{lem:mono} that, for any $t>0$,
\[ \frac{1}{\varphi(1)} \int_1^t s^{-\theta_\varphi} \,ds
 \le \int_1^t \frac{1}{\varphi(s)} \,ds
 \le \frac{t^{\theta_\varphi}}{\varphi(t)} \int_1^t s^{-\theta_\varphi} \,ds. \]
This is exactly \eqref{eq:ln} since $\theta_{\varphi}=2-m$.

As for \eqref{eq:exp},
the assertion for $r \le l_\varphi$ is trivial since $\exp_{\varphi}(r)=0$ by definition.
If $r \ge L _\varphi$, then we deduce from \eqref{eq:ln} that
$\varphi(1)r \ge \varphi(1)L_{\varphi} \ge L_m$,
which shows $e_m(\varphi(1)r)=\infty$.
We therefore assume $l_\varphi<r<L_\varphi$ and set $t:=\exp_\varphi(r)>0$.
Then we obtain again from \eqref{eq:ln} that
\[ \exp_\varphi(r)=t=e_m \big( \ell_m(t) \big) \le e_m \big( \varphi(1) \ln_\varphi(t) \big)
 =e_m \big( \varphi(1)r \big). \]
$\qedd$
\end{proof}

Taking the limits as $t \downarrow 0$ or $t \uparrow \infty$ in \eqref{eq:ln},
we obtain from \eqref{eq:lLm} the following.

\begin{lemma}\label{lm:case}
Suppose $\theta_\varphi<2$.
\begin{enumerate}[{\rm (i)}]
\item If $\theta_{\varphi} <1$, then $l_{\varphi}>-\infty$
$($equivalently, if $l_{\varphi}=-\infty$, then $\theta_{\varphi} \ge 1)$.
\item If $\theta_{\varphi} \le 1$, then $L_{\varphi}=\infty$
$($equivalently, if $L_{\varphi} <\infty$, then $\theta_{\varphi}>1)$.
\end{enumerate}
\end{lemma}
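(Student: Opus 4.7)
The plan is to read off both parts directly from the left inequality in \eqref{eq:ln} of Lemma~\ref{hikaku}, by taking one-sided limits and then consulting the explicit formulas \eqref{eq:lLm} for $l_m$ and $L_m$. Setting $m := 2 - \theta_\varphi \in (0,2)$ (well-defined since $\theta_\varphi < 2$), the left half of \eqref{eq:ln} says
\[ \ell_m(t) \le \varphi(1) \ln_\varphi(t) \qquad \text{for all } t>0. \]

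For part (i), I would let $t \downarrow 0$ on both sides. The left side tends to $l_m$ and the right side tends to $\varphi(1) \cdot l_\varphi$, so we obtain $l_m \le \varphi(1) \cdot l_\varphi$. Now when $\theta_\varphi < 1$ we have $m > 1$, and \eqref{eq:lLm} gives $l_m = -1/(m-1) = -1/(1-\theta_\varphi) > -\infty$. Since $\varphi(1)>0$, this forces $l_\varphi > -\infty$.

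For part (ii), I would instead let $t \uparrow \infty$, obtaining $L_m \le \varphi(1) \cdot L_\varphi$. When $\theta_\varphi \le 1$ we have $m \ge 1$; if $m > 1$ then \eqref{eq:lLm} yields $L_m = \infty$, and if $m = 1$ (i.e.\ $\theta_\varphi = 1$) then $\ell_1 = \ln$ and again $L_m = \infty$. In either subcase we conclude $L_\varphi = \infty$.

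No obstacle really arises: the content of the lemma is essentially a dictionary translation of the ranges of $\ln_\varphi$ into the ranges of $\ell_m$ via the inequality already proved in Lemma~\ref{hikaku}. The only minor thing to be slightly careful about is the boundary case $\theta_\varphi = 1$ in (ii), where one must remember that $\ell_1$ is the ordinary logarithm, so that the formula for $L_m$ in \eqref{eq:lLm} (stated for $m \neq 1$) extends by continuity and still gives $L_1 = \infty$.
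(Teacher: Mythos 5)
Your proof is correct and follows precisely the route the paper indicates: take the limits $t \downarrow 0$ and $t \uparrow \infty$ in the left inequality of \eqref{eq:ln} and read off the conclusion from \eqref{eq:lLm}. The only thing you spell out that the paper leaves implicit is the boundary case $\theta_\varphi = 1$, where $\ell_1 = \ln$ and $L_1 = \infty$, which is indeed the right thing to check.
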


We similarly find the corresponding estimates concerning $\delta_{\varphi}$.
Note that $\delta_{\varphi_m}=\theta_m=2-m$.

\begin{lemma}\label{lm:cased}
Assume $\delta_{\varphi}<2$ and put $m=2-\delta_{\varphi}$.
Then for any $t>0$ and $r \in \R$ we have 
\[ \frac{t^{\delta_{\varphi}}}{\varphi(t)} \ell_m(t) \le \ln_\varphi(t) \le \frac{1}{\varphi(1)} \ell_m(t),
 \qquad \exp_\varphi(r) \ge e_m \big( \varphi(1)r \big). \]
In particular,
\begin{enumerate}[{\rm (i)}]
\item If $\delta_{\varphi} >1$, then $L_{\varphi}<\infty$
$($equivalently, if $L_{\varphi}=\infty$, then $\delta_{\varphi} \le 1)$.
\item If $\delta_{\varphi} \ge 1$, then $l_{\varphi}=-\infty$
$($equivalently, if $l_{\varphi}>-\infty$, then $\delta_{\varphi}<1)$.
\end{enumerate}
\end{lemma}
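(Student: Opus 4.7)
The plan is to mirror the proof of Lemma~\ref{hikaku} (and its consequence Lemma~\ref{lm:case}), but invoking the other half of Lemma~\ref{lem:mono}: the non-increasing monotonicity of $s^{\delta_{\varphi}}/\varphi(s)$ in place of the non-decreasing monotonicity of $s^{\theta_{\varphi}}/\varphi(s)$. Exactly this reversal of monotonicity is what flips the direction of the inequalities compared with \eqref{eq:ln}.

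For the double inequality on $\ln_{\varphi}(t)$, I would first treat $t \ge 1$. By Lemma~\ref{lem:mono}, for $s \in [1,t]$ one has $t^{\delta_{\varphi}}/\varphi(t) \le s^{\delta_{\varphi}}/\varphi(s) \le 1/\varphi(1)$, which rewrites as the pointwise sandwich $s^{-\delta_{\varphi}} t^{\delta_{\varphi}}/\varphi(t) \le 1/\varphi(s) \le s^{-\delta_{\varphi}}/\varphi(1)$. Integrating over $[1,t]$ and using $\int_1^t s^{-\delta_{\varphi}}\,ds = \ell_m(t)$ (since $m-2 = -\delta_{\varphi}$) yields the claim. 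For $0 < t < 1$ I would integrate the analogous pointwise estimates over $[t,1]$: both $\int_t^1(\cdot)\,ds$ and $-\ell_m(t)$ are positive, so inverting signs produces the identical double inequality.

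For the exponential inequality, the boundary cases are easy: passing to the limit $t \downarrow 0$ in the just-proved upper bound gives $\varphi(1) l_{\varphi} \le l_m$, so $r \le l_{\varphi}$ forces $\exp_{\varphi}(r) = 0 = e_m(\varphi(1)r)$, while $r \ge L_{\varphi}$ makes $\exp_{\varphi}(r) = \infty$ and the inequality is trivial. For $l_{\varphi} < r < L_{\varphi}$, set $t := \exp_{\varphi}(r)$; then $r = \ln_{\varphi}(t) \le (1/\varphi(1))\ell_m(t)$ gives $\varphi(1) r \le \ell_m(t)$, and the monotonicity of $e_m$ delivers $e_m(\varphi(1)r) \le t = \exp_{\varphi}(r)$.

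The consequences (i) and (ii) are read off by taking $t \uparrow \infty$ (respectively $t \downarrow 0$) in the upper bound $\ln_{\varphi}(t) \le (1/\varphi(1))\ell_m(t)$ and comparing with \eqref{eq:lLm}: $\delta_{\varphi} > 1$ gives $m < 1$ and so $L_m < \infty$, forcing $L_{\varphi} < \infty$; $\delta_{\varphi} \ge 1$ gives $m \le 1$ and so $l_m = -\infty$, forcing $l_{\varphi} = -\infty$. I do not anticipate any serious obstacle, since the whole statement is a systematic dual of Lemma~\ref{hikaku}; the only point demanding care is correctly tracking sign reversals when multiplying by negative quantities such as $\ell_m(t)$ for $t < 1$, but this is pure bookkeeping.
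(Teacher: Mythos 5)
Your proof is correct and is exactly the argument the paper has in mind: the paper omits the proof of this lemma entirely, writing only ``We similarly find the corresponding estimates concerning $\delta_{\varphi}$,'' and your dualization of the proof of Lemma~\ref{hikaku} (using the non-increasing half of Lemma~\ref{lem:mono} in place of the non-decreasing half, which flips every inequality) is precisely that similar argument, including the careful sign-tracking for $t<1$ and the three-case split for the exponential inequality.
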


\section{Displacement convexity classes $\DC_N$}\label{sc:DC}

In this section, we introduce the important classes of convex functions.
These classes were first considered by McCann~\cite{Mc1} for $N \ge 1$
(see also \cite[\S 5.1]{LV2}, \cite[Section~5.2]{Vi1}, \cite[Chapter~16]{Vi2}),
we adopt the same definition also for $N<0$.

\begin{definition}[Displacement convexity classes]\label{df:DC_N}
For $N \in (-\infty,0) \cup [1,\infty)$, we define $\DC_N$ as the set of all continuous convex functions
$u:[0,\infty) \lra \R$ such that $u(0)=0$ and that the function 
\[ \psi_N(r):=r^N u(r^{-N}) \]
is convex on $(0,\infty)$.
In a similar way, $\DC_\infty$ is defined as  the set of all continuous convex functions
$u:[0,\infty) \lra \R$ such that $u(0)=0$ and that the function 
\[ \psi_{\infty}(r):=e^r u(e^{-r}) \] 
is convex on $\R$.
\end{definition}

The following is well-known for $N \ge 1$, we give a proof for completeness.

\begin{lemma}\label{lm:psi_N}
If $u \in \DC_N$ for $N \in [1,\infty]$ $($resp.\ $N \in (-\infty,0))$,
then the function $\psi_N$ is non-increasing $($resp.\ non-decreasing$)$.
\end{lemma}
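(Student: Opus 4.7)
The plan rests on the simple identity $\psi_N(r) = u(s)/s$ under the substitution $s = r^{-N}$ (for finite $N$) or $s = e^{-r}$ (for $N = \infty$). Indeed, $r^N u(r^{-N}) = u(r^{-N})/r^{-N}$, and similarly $e^r u(e^{-r}) = u(e^{-r})/e^{-r}$. So the monotonicity of $\psi_N$ is governed by the monotonicity of the scalar function $s \mapsto u(s)/s$ composed with a monotone change of variable.

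The first step is to establish the standard fact that for any convex function $u:[0,\infty) \lra \R$ with $u(0) = 0$, the difference quotient $s \mapsto u(s)/s$ is non-decreasing on $(0,\infty)$. This follows from convexity applied at the three points $0 < s_1 < s_2$: writing $s_1 = (s_1/s_2) s_2 + (1 - s_1/s_2)\cdot 0$ gives $u(s_1) \le (s_1/s_2) u(s_2)$, hence $u(s_1)/s_1 \le u(s_2)/s_2$. This step uses only the convexity of $u$ and the normalization $u(0)=0$; the additional convexity of $\psi_N$ built into the definition of $\DC_N$ is not needed for this lemma.

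The second step is to combine this monotonicity with the chain rule for the substitution. For $N \in [1,\infty)$, the map $r \mapsto r^{-N}$ is strictly decreasing on $(0,\infty)$, so $\psi_N(r) = u(r^{-N})/r^{-N}$ is non-increasing in $r$. For $N = \infty$, the map $r \mapsto e^{-r}$ is strictly decreasing on $\R$, giving the same conclusion for $\psi_\infty$. For $N \in (-\infty,0)$, the map $r \mapsto r^{-N}$ is strictly increasing on $(0,\infty)$, so $\psi_N$ is non-decreasing.

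There is essentially no obstacle here; the only thing to be careful about is the sign of the exponent $-N$ governing whether the substitution reverses or preserves orientation, which is exactly what distinguishes the two cases in the statement.
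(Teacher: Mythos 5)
Your proof is correct and is essentially the same argument as the paper's, just factored more cleanly: the paper inlines the convexity estimate at the three points $0$, $s^{-N}$, $t^{-N}$ directly, while you isolate the underlying fact that $s\mapsto u(s)/s$ is non-decreasing for a convex $u$ with $u(0)=0$ and then compose with the monotone substitution $s=r^{-N}$ (or $s=e^{-r}$). Your explicit remark that only convexity of $u$ and $u(0)=0$ (not the extra $\DC_N$ convexity of $\psi_N$) is used is a nice clarification, but the core computation is identical.
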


\begin{proof}
For $N \in [1,\infty)$ and $0<s<t$, the convexity of $u$ and $u(0)=0$ yield
\[ \psi_N(t) =t^N u(t^{-N})
 \le t^N \bigg\{ \bigg( 1-\frac{s^N}{t^N} \bigg) u(0)+\frac{s^N}{t^N}u(s^{-N}) \bigg\}
 =\psi_N(s). \]
We similarly obtain for $N=\infty$ and $s,t \in \R$ with $s<t$ that
\[ \psi_{\infty}(t) =e^t u(e^{-t})
 \le e^t \bigg\{ \bigg( 1-\frac{e^s}{e^t} \bigg) u(0)+\frac{e^s}{e^t}u(e^{-s}) \bigg\}
 =\psi_{\infty}(s). \]
Finally, for $N<0$ and $0<s<t$, it holds
\[ \psi_N(s) =s^N u(s^{-N})
 \le s^N \bigg\{ \bigg( 1-\frac{s^{-N}}{t^{-N}} \bigg) u(0)+\frac{s^{-N}}{t^{-N}}u(t^{-N}) \bigg\}
 =\psi_N(t). \]
$\qedd$
\end{proof}

It is also known that $\DC_{N'} \subset \DC_N$ for $1 \le N<N' \le \infty$.
This monotonicity in $N$ is violated by extending to $N<0$,
but the monotonicity in $m=(N-1)/N \in [0,\infty)$ holds instead.
Compare this with the monotonicity of $\Ric_N$ in $m$ (Remark~\ref{rm:Ric_N}).

\begin{lemma}\label{lm:DC_N}
For each $N,N' \in (-\infty,0) \cup [1,\infty]$ with $m<m'$,
we have $\DC_{N'} \subset \DC_N$, where we set $m=(N-1)/N$, $m'=(N'-1)/N'$
and $m=1$ if $N=\infty$ $($resp.\ $m'=1$ if $N'=\infty)$.
\end{lemma}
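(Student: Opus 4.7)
My plan is to characterize membership in $\DC_N$ via the auxiliary function $\phi(t) := tu(1/t)$ on $(0,\infty)$ together with a weighted power-mean inequality, and then derive the inclusion from the classical monotonicity of power means in their exponent.

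The first observation is that $\phi$ is non-increasing on $(0,\infty)$. Indeed, since $u$ is convex with $u(0)=0$, the secant slope $s \mapsto u(s)/s = (u(s)-u(0))/(s-0)$ is non-decreasing in $s>0$, and $\phi(t) = u(s)/s$ with $s = 1/t$. Next, observing that $\psi_N(r) = \phi(r^N)$ for finite $N \neq 0$ and $\psi_\infty(r) = \phi(e^r)$, and substituting $t_i := r_i^N$ (resp.\ $t_i := e^{r_i}$ when $N = \infty$) into the convexity inequality for $\psi_N$, one sees that $u \in \DC_N$ is equivalent to
\[
\phi\bigl(M_{1/N}(t_0,t_1;\lambda)\bigr) \le \lambda \phi(t_0) + (1-\lambda)\phi(t_1) \qquad (t_0,t_1 > 0,\ \lambda \in [0,1]),
\]
where $M_p(t_0,t_1;\lambda) := (\lambda t_0^p + (1-\lambda) t_1^p)^{1/p}$ is the weighted power mean, with $M_0(t_0,t_1;\lambda) = t_0^\lambda t_1^{1-\lambda}$ and the convention $1/N = 0$ for $N = \infty$.

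Given $u \in \DC_{N'}$ and $m < m'$, the relation $1/N = 1 - m$ (valid uniformly across $N \in [1,\infty)$, $N = \infty$, and $N < 0$) yields $1/N = 1 - m > 1 - m' = 1/N'$. The classical monotonicity $M_p \ge M_q$ for $p \ge q$ of weighted power means — valid for arbitrary real exponents, including the limit $p = 0$ — then gives $M_{1/N}(t_0,t_1;\lambda) \ge M_{1/N'}(t_0,t_1;\lambda)$ for all $t_0,t_1,\lambda$. Combining this with the non-increasing nature of $\phi$ and the hypothesis $u \in \DC_{N'}$, we chain
\[
\phi\bigl(M_{1/N}(t_0,t_1;\lambda)\bigr) \le \phi\bigl(M_{1/N'}(t_0,t_1;\lambda)\bigr) \le \lambda \phi(t_0) + (1-\lambda)\phi(t_1),
\]
which is precisely the characterization of $u \in \DC_N$ above.

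The only real subtlety is the uniform treatment across the three regimes for the pair $(N,N')$—both in $[1,\infty]$, both in $(-\infty,0)$, or one in each—but the parametrization $m \mapsto 1/N = 1 - m$ is strictly decreasing on $[0,\infty)$ and the power-mean inequality applies for arbitrary real exponents, so all three regimes collapse into the single chain above.
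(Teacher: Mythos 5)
Your proof is correct, and it takes a genuinely different route from the paper's. The paper handles the claim by factoring $\psi_N$ through $\psi_{N'}$ — writing $\psi_N(r)=\psi_{N'}(r^{N/N'})$ (or the analogous exponential/logarithmic substitution when one of $N,N'$ is $\infty$) and then arguing that a convex, appropriately monotone outer function composed with a concave or convex inner function stays convex — which forces a case split according to whether $N,N'$ lie in $[1,\infty)$, in $(-\infty,0)$, or whether one of them equals $\infty$. Your approach instead normalizes everything to the single function $\phi(t)=tu(1/t)$, recognizes $\psi_N(r)=\phi(r^N)$ (and $\psi_\infty(r)=\phi(e^r)$) so that the $\DC_N$ condition becomes a one-parameter family of ``power-mean convexity'' conditions on $\phi$, and then invokes the monotonicity of weighted power means $M_p$ in $p$ over all real exponents (including $p=0$ as the geometric mean). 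The observation that $1/N=1-m$ turns the awkward parameter range $(-\infty,0)\cup[1,\infty]$ into a single monotone parameter and absorbs all four of the paper's cases into one chain of inequalities. What your route buys is this uniformity; what it costs is the appeal to the general power-mean inequality over arbitrary real exponents, whereas the paper gets by with the more elementary observations that $r\mapsto r^{a}$ is concave for $a\in(0,1)$ and convex for $a>1$, plus its Lemma~3.2. Your Lemma~3.2 surrogate — that $\phi$ is non-increasing — is itself cleanly derived from $u$ convex, $u(0)=0$, and is exactly the content of the paper's Lemma~3.2 in the $\phi$-normalization. The only point worth spelling out if this were to be written up in full is that for $N<0$ the substitution $t=r^N$ is a decreasing bijection of $(0,\infty)$ onto itself, so the translation between convexity of $\psi_N$ and the displayed $\phi$-inequality is still an equivalence; your proof implicitly relies on this but a referee would want it stated.
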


\begin{proof}
We first consider the case of $0\le m<m'<1$ (equivalently, $1 \le N<N'<\infty$).
For any $u \in \DC_{N'}$ and $r>0$, we observe
\[ \psi_N(r) =r^N u(r^{-N}) =(r^{N/N'})^{N'} u\big( (r^{N/N'})^{-N'}\big)
 =\psi_{N'}(r^{N/N'}). \]
This is convex in $r$ since the function $r \mapsto r^{N/N'}$ is concave
and $\psi_{N'}$ is convex and non-increasing.
Thus $u \in \DC_N$ and hence $\DC_{N'} \subset \DC_N$.

The other cases are similar.
For $1<m<m'$, we have $N<N'<0$ so that
$r \mapsto r^{N/N'}$ is convex and $\psi_{N'}$ is non-decreasing.
When $m'=1>m$, $\psi_N(r)=\psi_{\infty}(N\log r)$ holds and note that
$r \mapsto N\log r$ is concave and $\psi_{\infty}$ is non-increasing.
For $m=1<m'$, we find $\psi_{\infty}(r)=\psi_{N'}(e^{r/N'})$
and that $r \mapsto e^{r/N'}$ is convex and $\psi_{N'}$ is non-decreasing.
$\qedd$
\end{proof}

We shall write down a condition for $u_{\varphi} \in \DC_N$ on $\varphi$.
As $u_{\varphi}$ is continuous, convex and satisfies $u_{\varphi}(0)=0$
by definition once it is well-defined, it is sufficient to check \eqref{del} and the convexity of $\psi_N$.

\begin{proposition}\label{prop:dochi}
Assume that $\varphi$ satisfies  the condition~\eqref{del}.
Then the function $\psi_N$ for $N \in (-\infty,0) \cup (1,\infty]$ is convex if and only if 
\begin{equation}\label{eq:cvx}
\int_0^t \frac{s}{\varphi(s)} \,ds \le \frac{N}{N-1} \frac{t^2}{\varphi(t)}
\end{equation}
holds for all $t>0$, where $N/(N-1)=1$ if $N=\infty$.
\end{proposition}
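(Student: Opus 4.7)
The plan is to reduce the convexity of $\psi_N$ to a pointwise inequality by direct differentiation, and then to convert that inequality into \eqref{eq:cvx} by means of the integration-by-parts formula for $u_\varphi$. Since $u_\varphi'(r) = \ln_\varphi(r)$ and $u_\varphi''(r) = 1/\varphi(r)$, the function $\psi_N$ is $C^2$ on $(0,\infty)$ and convexity is equivalent to $\psi_N'' \ge 0$ pointwise.

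For finite $N \in (-\infty,0) \cup (1,\infty)$ I would apply the chain rule twice to $\psi_N(r)=r^N u_\varphi(r^{-N})$, collect the three terms, introduce the variable $t=r^{-N}$, and factor out the positive quantity $r^{-2}N(N-1)$ (note $N(N-1)>0$ both for $N>1$ and for $N<0$, which is where the sign hypothesis enters). The upshot is that $\psi_N$ is convex if and only if
\[
\frac{u_\varphi(t)}{t} - \ln_\varphi(t) + \frac{N}{N-1}\cdot\frac{t}{\varphi(t)} \ge 0 \qquad \text{for every } t>0.
\]
The case $N=\infty$ is handled separately by the same computation applied to $\psi_\infty(r)=e^r u_\varphi(e^{-r})$ with substitution $t=e^{-r}$; this yields exactly the same pointwise inequality under the convention $N/(N-1)=1$.

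The second step is to record the integration-by-parts identity
\[
u_\varphi(t) = t\ln_\varphi(t) - \int_0^t \frac{s}{\varphi(s)}\,ds,
\]
valid for every $t>0$. Both sides have derivative $\ln_\varphi(t)$, so it suffices to match values at the origin: the integral tends to $0$ as in the proof of Lemma~\ref{lm:u_phi}, and \eqref{del} yields $\lim_{t\downarrow 0} t\ln_\varphi(t)=0$. Inserting this identity into the pointwise inequality above cancels the $\ln_\varphi(t)$ terms, leaving precisely \eqref{eq:cvx}, which completes both directions of the equivalence.

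The only genuine technicality is the boundary limit $\lim_{t\downarrow 0} t\ln_\varphi(t)=0$ under the hypothesis \eqref{del}. Picking $\delta<1$ with $s^{1+\delta}/\varphi(s)$ bounded on $(0,1)$ gives $1/\varphi(s) \le C s^{-1-\delta}$ near $0$, and integrating on $(s,1)$ bounds $|\ln_\varphi(s)|$ by $Cs^{-\delta}/\delta$, $-C\log s$, or a constant according to whether $\delta$ is positive, zero, or negative; in every case multiplication by $s$ produces something that vanishes as $s\downarrow 0$ because $\delta<1$. Beyond this, the proof is a matter of keeping the signs straight through the chain rule, especially in the negative-$N$ regime.
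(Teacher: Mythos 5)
Your proposal is correct and follows essentially the same route as the paper: compute $\psi_N''$, factor out $N(N-1)r^{N-2}>0$, and reduce convexity to the pointwise inequality, then transform it via the identity $u_\varphi(t)-t\ln_\varphi(t)=-\int_0^t s/\varphi(s)\,ds$. The only cosmetic difference is that you justify this identity by differentiating both sides and matching the limits as $t\downarrow 0$ (which forces you to verify $t\ln_\varphi(t)\to 0$ under \eqref{del}), whereas the paper obtains it directly by writing $u_\varphi(t)-t\ln_\varphi(t)=\int_0^t\{\ln_\varphi(\tau)-\ln_\varphi(t)\}\,d\tau$ and applying Fubini; both derivations are valid.
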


\begin{proof}
We first of all recall that $u_{\varphi}$ is well-defined thanks to \eqref{del} (Lemma~\ref{lm:u_phi}).
For $N \in (-\infty,0) \cup (1,\infty)$ and $r>0$, we calculate
\begin{align*}
\psi'_N(r) &=Nr^{N-1}u_{\varphi}(r^{-N}) -Nr^{-1}u'_{\varphi}(r^{-N}), \\
\psi''_N(r) &= N(N-1)r^{N-2}u_{\varphi}(r^{-N}) +(N-N^2)r^{-2}u'_{\varphi}(r^{-N})
 +N^2r^{-N-2}u''_{\varphi}(r^{-N}) \\
&=N(N-1)r^{N-2} 
 \left\{ u_{\varphi}(r^{-N})-  r^{-N}\ln_{\varphi}(r^{-N})  +\frac{N}{N-1}\frac{r^{-2N}}{\varphi( r^{-N})} \right\}.
\end{align*}
Note that $N(N-1)>0$.
For any $t>0$, we have
\begin{align*}
&u_{\varphi}(t)-t\ln_{\varphi}(t)+\frac{N}{N-1}\frac{t^2}{\varphi(t)}
 = \int_0^t \{ \ln_{\varphi}(\tau)-\ln_{\varphi}(t) \} \,d\tau +\frac{N}{N-1}\frac{t^2}{\varphi(t)} \\
&= -\int_0^t \int_{\tau}^t \frac{1}{\varphi(s)} \,dsd\tau +\frac{N}{N-1}\frac{t^2}{\varphi(t)}
 = -\int_0^t \frac{s}{\varphi(s)} \,ds +\frac{N}{N-1}\frac{t^2}{\varphi(t)}.
\end{align*}
Therefore $\psi''_N \ge 0$ if and only if \eqref{eq:cvx} holds.
For $N=\infty$, we similarly obtain
\[ \psi''_{\infty}(r)
 =e^r\left\{ u_{\varphi}(e^{-r}) -e^{-r}\ln_{\varphi}(e^{-r})+\frac{e^{-2r}}{\varphi(e^{-r})} \right\} \]
for $r \in \R$, and
\[ u_{\varphi}(t)-t\ln_{\varphi}(t)+\frac{t^2}{\varphi(t)}
 = -\int_0^t \frac{s}{\varphi(s)} \,ds +\frac{t^2}{\varphi(t)} \]
for $t>0$.
$\qedd$
\end{proof}

\begin{theorem}\label{th:DC}
If $\theta_\varphi<2$, then the condition~\eqref{del} holds
and we have $u_{\varphi} \in \DC_{N_{\varphi}}$. 
\end{theorem}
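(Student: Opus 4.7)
The plan is to verify the two assertions in order, with both reduced to the monotonicity statement in Lemma~\ref{lem:mono}. First, since $\theta_\varphi<2$ is finite, Lemma~\ref{lem:mono} tells us that $s^{\theta_\varphi}/\varphi(s)$ is non-decreasing on $(0,\infty)$, so it is bounded on $(0,1)$ by its value at $s=1$. Setting $\delta=\theta_\varphi-1$, the function $s^{1+\delta}/\varphi(s)$ is bounded on $(0,1)$, and since $\theta_\varphi<2$ gives $\delta<1$, the infimum in \eqref{del} is strictly less than $1$. Therefore $u_\varphi$ is well-defined by Lemma~\ref{lm:u_phi}, and condition \eqref{del} is established.

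Next, I would invoke Proposition~\ref{prop:dochi} with $N=N_\varphi$. One first checks that $N_\varphi$ lies in the admissible range $(-\infty,0)\cup(1,\infty]$: for $\theta_\varphi<1$ one gets $N_\varphi=(\theta_\varphi-1)^{-1}<0$; for $\theta_\varphi=1$ one gets $N_\varphi=\infty$; for $1<\theta_\varphi<2$ one gets $N_\varphi=(\theta_\varphi-1)^{-1}\in(1,\infty)$. A direct computation shows that in all three cases $N_\varphi/(N_\varphi-1)=1/(2-\theta_\varphi)$, where the value at $N_\varphi=\infty$ is interpreted as $1$. Hence by Proposition~\ref{prop:dochi}, the membership $u_\varphi\in\DC_{N_\varphi}$ reduces to verifying
\[
\int_0^t \frac{s}{\varphi(s)}\,ds \;\le\; \frac{1}{2-\theta_\varphi}\cdot\frac{t^2}{\varphi(t)}
\qquad \text{for all } t>0.
\]

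For this key inequality, I would write $s/\varphi(s)=s^{1-\theta_\varphi}\cdot s^{\theta_\varphi}/\varphi(s)$ and use the monotonicity from Lemma~\ref{lem:mono}: for $0<s\le t$,
\[
\frac{s^{\theta_\varphi}}{\varphi(s)} \le \frac{t^{\theta_\varphi}}{\varphi(t)}.
\]
Integrating then yields
\[
\int_0^t \frac{s}{\varphi(s)}\,ds
 \;\le\; \frac{t^{\theta_\varphi}}{\varphi(t)}\int_0^t s^{1-\theta_\varphi}\,ds
 \;=\; \frac{t^{\theta_\varphi}}{\varphi(t)}\cdot\frac{t^{2-\theta_\varphi}}{2-\theta_\varphi}
 \;=\; \frac{1}{2-\theta_\varphi}\cdot\frac{t^2}{\varphi(t)},
\]
where the integral $\int_0^t s^{1-\theta_\varphi}\,ds$ converges precisely because $\theta_\varphi<2$.

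The argument is essentially one-line once Lemma~\ref{lem:mono} is in hand, so there is no real obstacle; the only point deserving care is the bookkeeping at the boundary case $\theta_\varphi=1$ (where $N_\varphi=\infty$), which one checks fits the same uniform formula $N/(N-1)=1/(2-\theta_\varphi)$.
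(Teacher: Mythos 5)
Your argument reproduces the paper's proof almost verbatim: both establish \eqref{del} by using Lemma~\ref{lem:mono} to bound $s^{\theta_\varphi}/\varphi(s)$ on $(0,1)$, and both verify the convexity criterion of Proposition~\ref{prop:dochi} by the same integral comparison $\int_0^t s/\varphi(s)\,ds \le (t^{\theta_\varphi}/\varphi(t))\int_0^t s^{1-\theta_\varphi}\,ds$. The extra bookkeeping you supply (checking that $N_\varphi$ lands in the admissible range and that $N_\varphi/(N_\varphi-1)=1/(2-\theta_\varphi)$ across the three cases) is left implicit in the paper but does not change the substance of the argument.
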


\begin{proof}
We deduce from Lemma~\ref{lem:mono} that
\[ 0 \leq \frac{s^{\theta_\varphi}}{\varphi(s)} \leq \frac{1}{\varphi(1)} \]
for all $s \in (0,1)$, this implies \eqref{del} since $\theta_{\varphi}<2$.
Lemma~\ref{lem:mono} also yields that, for any $t>0$,
\[ \int_0^t \frac{s}{\varphi (s)} \,ds 
 \leq \int_0^t \frac{t^{\theta_\varphi}}{\varphi (t)} s^{1-\theta_\varphi} \,ds
=\frac{1}{2-\theta_\varphi}  \frac{t^2}{\varphi (t)}. \]
This is nothing but \eqref{eq:cvx} with $N=N_{\varphi}$ (recall the definition of $N_{\varphi}$ in \eqref{eq:N}),
and hence $u_{\varphi} \in \DC_{N_{\varphi}}$ by Proposition~\ref{prop:dochi}.
$\qedd$
\end{proof}

Recall that $\varphi_m(s)=s^{2-m}$ with $m\in (0,2]$ satisfies $\theta_m=2-m<2$.
Hence Theorem~\ref{th:DC} shows $u_{\varphi_m} \in \DC_{N_m}$.
We close the section with a partial converse of Theorem~\ref{th:DC}.

\begin{proposition}\label{pr:rDC}
If  the condition~\eqref{del} holds, $\delta_{\varphi}<2$
and if we have $u_{\varphi} \in \DC_{N}$ with some $N \in(-\infty,0) \cup (1,\infty]$,
then it holds $\delta_\varphi \le (N+1)/N$
$($where $(N+1)/N=1$ for $N=\infty)$.
\end{proposition}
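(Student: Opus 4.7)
My plan is to mirror the strategy of Theorem~\ref{th:DC} in the reverse direction, playing the integral characterization of $\DC_N$ (Proposition~\ref{prop:dochi}) against the monotonicity of $s^{\delta_\varphi}/\varphi(s)$ given by Lemma~\ref{lem:mono}. Since \eqref{del} holds by hypothesis, $u_\varphi$ is well-defined and Proposition~\ref{prop:dochi} applies: the assumption $u_\varphi \in \DC_N$ translates directly into
\[
\int_0^t \frac{s}{\varphi(s)}\,ds \le \frac{N}{N-1}\cdot\frac{t^2}{\varphi(t)} \qquad \text{for all } t>0,
\]
with the usual convention $N/(N-1)=1$ when $N=\infty$. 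This is the upper bound on the left-hand integral that I will use.

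For the matching lower bound, I would invoke Lemma~\ref{lem:mono}: the function $s^{\delta_\varphi}/\varphi(s)$ is non-increasing on $(0,\infty)$, so for every $0<s\le t$,
\[
\frac{s}{\varphi(s)} \ge \frac{t^{\delta_\varphi}}{\varphi(t)}\, s^{1-\delta_\varphi}.
\]
Because $\delta_\varphi<2$, the exponent $1-\delta_\varphi$ exceeds $-1$, so integrating from $0$ to $t$ yields
\[
\int_0^t \frac{s}{\varphi(s)}\,ds \ge \frac{t^{\delta_\varphi}}{\varphi(t)}\cdot\frac{t^{2-\delta_\varphi}}{2-\delta_\varphi} = \frac{1}{2-\delta_\varphi}\cdot\frac{t^2}{\varphi(t)}.
\]
Combining the two displayed inequalities and cancelling the common factor $t^2/\varphi(t)>0$ gives $\tfrac{1}{2-\delta_\varphi}\le \tfrac{N}{N-1}$.

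The final step is the arithmetic. For $N\in(1,\infty]$ the quantity $N/(N-1)$ is positive and at least $1$; for $N\in(-\infty,0)$ it lies in $(0,1)$. In both cases $N/(N-1)>0$, so I may invert to obtain $2-\delta_\varphi\ge (N-1)/N$, i.e.\ $\delta_\varphi\le 2-(N-1)/N=(N+1)/N$, which is the stated conclusion; the case $N=\infty$ reduces to $\delta_\varphi\le 1$ from $N/(N-1)=1$. I do not anticipate a genuine obstacle: the only subtle point is making sure the sign analysis is correct when inverting the inequality for $N<0$, and that Lemma~\ref{lem:mono} is applied without an extra $\varepsilon$ (the non-strict monotonicity stated there is exactly what is needed, and the limit $\varepsilon\downarrow 0$ is already absorbed in the statement of that lemma).
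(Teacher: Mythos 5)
Your proof is correct and is essentially identical to the paper's: both combine the integral characterization of $\DC_N$ from Proposition~\ref{prop:dochi} with the lower bound $\int_0^t s/\varphi(s)\,ds \ge \frac{1}{2-\delta_\varphi}\frac{t^2}{\varphi(t)}$ obtained from the monotonicity of $s^{\delta_\varphi}/\varphi(s)$ in Lemma~\ref{lem:mono}. Your sign analysis for inverting the resulting inequality is sound in both the $N>1$ and $N<0$ cases.
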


\begin{proof}
Lemma~\ref{lem:mono} with Proposition~\ref{prop:dochi} yields that, for any $t>0$,
\[  \frac{N}{N-1} \frac{t^2}{\varphi(t)} \ge \int_0^t \frac{s}{\varphi (s)} \,ds
 \ge \int_0^t \frac{t^{\delta_\varphi}}{\varphi (t)} s^{1-\delta_\varphi} \,ds
 =\frac{1}{2-\delta_\varphi}\frac{t^2}{\varphi(t)}, \]
which shows $\delta_\varphi \le (N+1)/N$ as desired.
$\qedd$
\end{proof}

In particular,  for $m \in (0,2]$, $u_{\varphi_m} \in \DC_N$ if and only if $m \ge (N-1)/N$.

\section{Admissible spaces}\label{sc:ad}

This section is devoted to introducing the class of spaces admissible in our consideration.
Recall our weighted Riemannian manifold $(M,\omega)$
and a function $\varphi$ as in Subsection~\ref{ssc:inf}.
From here on, we further fix the reference measure
\[ \nu=\sigma\omega:= \exp_\varphi(-\Psi)\omega, \]
where $\Psi \in C(M)$ such that
\begin{equation}\label{eq:Mphi}
\Psi>-L_{\varphi}\ \text{on}\ M, \qquad
 M_\varphi^{\Psi}:=\Psi^{-1}\big( (-L_{\varphi}, -l_{\varphi}) \big) \neq \emptyset.
\end{equation}
Note that $\supp\nu=\overline{M_\varphi^\Psi} \neq \emptyset$.
For later convenience, let us define the $K$-convexity of a function on a general metric space.

\begin{definition}[$K$-convexity]\label{df:K-conv}
Given $K \in \R$, we say that a function $\Psi:X \lra (-\infty,\infty]$ on a metric space $(X,d)$
is {\it $K$-convex in the weak sense} (denoted by $\Hess\Psi \ge K$ by slight abuse of notation)
if it is not identically $-\infty$ and, for any two points $x,y \in X$,
there exists a minimal geodesic $\gamma:[0,1] \lra X$ from $x$ to $y$ along which
\begin{equation}\label{eq:Psi}
\Psi\big( \gamma(t) \big) \le (1-t)\Psi(x) +t\Psi(y) -\frac{K}{2}(1-t)td(x,y)^2
\end{equation}
holds for all $t \in [0,1]$.
\end{definition}

We remark that, on a Riemannian manifold $M$, \eqref{eq:Psi} certainly holds for any minimal geodesic
$\gamma:[0,1] \lra M$ by approximation.
Indeed, $\gamma|_{[\ve,1-\ve]}$ is a unique minimal geodesic for any $\ve>0$
and $\Psi$ is continuous.
We are interested in the situation that $\Ric_{N_{\varphi}} \ge 0$ as well as $\Hess\Psi \ge K$ hold
(see Theorem~\ref{th:mCD}).
Finer analysis is possible in the particular case of $K>0$ (Sections~\ref{sc:func}, \ref{sc:conc}).
We prove a lemma in such a case for later use.
The open ball of center $x\in M$ and radius $r>0$ will be denoted by $B(x, r)$.

\begin{lemma}\label{lm:K>0}
Suppose that $\varphi(1)=1$ $($this costs no generality, see Remark~$\ref{rm:sim})$,
$\Hess\Psi \ge K$ for some $K>0$, and take a minimizer $x_0 \in M$ of $\Psi$.
\begin{enumerate}[{\rm (i)}]
\item If $l_{\varphi}>-\infty$, then the set $M_\varphi^\Psi$ as in \eqref{eq:Mphi} is totally convex
and $M_\varphi^\Psi  \subset B (x_0, R)$ holds with $R=\sqrt{-2(l_\varphi +\Psi(x_0))/K}$.
Moreover, $\supp\nu$ is also totally convex and compact.

\item If $l_{\varphi}=-\infty$, $N_\varphi \in [n,\infty)$ and if $\Ric_{N_{\varphi}} \ge 0$,
then we have $M^{\Psi}_{\varphi}=M$, $\nu[M]<\infty$ and
\[ \int_{M} d_g(x_0,x)^p \sigma(x)^a \,d\omega(x)
 \le C_1\nu[M]^{a}+C_2 K^{-aN_{\varphi}} <\infty \]
for any $a\in (1/2,1]$ and $p \in [0,\infty)$ satisfying $(2a-1)N_{\varphi}-p>0$,
where $C_1=C_1(\omega)$ and $C_2=C_2(a,p,\theta_\varphi,\omega)$.
In particular, $\sigma \in L^a(M,\omega)$ for all $a \in (1/2,1]$ and
$\nu[M]^{-1} \cdot \nu \in \cP^p_{\ac}(M,\omega)$ for all $p \in [0,N_{\varphi})$.

\item If $N_{\varphi}=\infty$ and $\Ric_{N_{\varphi}} \ge 0$,
then $\sigma \in L^a(M,\omega)$ for any $a>0$.
\end{enumerate}
\end{lemma}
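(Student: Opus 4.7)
The plan rests on a single elementary consequence of the $K$-convexity of $\Psi$: setting $x=x_0$ in Definition~\ref{df:K-conv}, using $\Psi(\gamma(t)) \ge \Psi(x_0)$, dividing by $t$, and letting $t \downarrow 0$ gives the quadratic lower bound
\[ \Psi(y) \ge \Psi(x_0) + \frac{K}{2} d_g(x_0, y)^2, \qquad y \in M. \]
All three parts follow by combining this with the exponential comparison in Lemma~\ref{hikaku}.

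For (i), the bound immediately forces $M_\varphi^\Psi \subset B(x_0, R)$, since $\Psi(y) < -l_\varphi$ gives $d_g(x_0,y)^2 < -2(l_\varphi + \Psi(x_0))/K$. Total convexity of $M_\varphi^\Psi$ is then clear: along any minimal geodesic between $x,y \in M_\varphi^\Psi$, the $K$-convexity inequality (with $K>0$) keeps $\Psi$ below $\max\{\Psi(x), \Psi(y)\} < -l_\varphi$ and above $-L_\varphi$ by hypothesis. For $\supp \nu = \overline{M_\varphi^\Psi}$, continuity yields $\Psi(x), \Psi(y) \le -l_\varphi$ for boundary points, and the strict quadratic drop $-(K/2)(1-t)t d_g(x,y)^2$ places the interior of any geodesic strictly inside $M_\varphi^\Psi$. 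Compactness is immediate from $\supp\nu \subset \overline{B(x_0, R)}$ and Hopf--Rinow.

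For (ii), $l_\varphi = -\infty$ renders the condition $\Psi < -l_\varphi$ vacuous, so $M_\varphi^\Psi = M$. The assumption $N_\varphi \in [n, \infty)$ forces $\theta_\varphi > 1$, and setting $m := 2-\theta_\varphi \in (0,1)$ we have $N_m = N_\varphi$; Lemma~\ref{hikaku} (after normalizing $\varphi(1)=1$) gives $\sigma \le e_m(-\Psi)$. Inserting the quadratic bound on $-\Psi$ produces
\[ \sigma(x)^a \le \Bigl[1 + (1-m)\bigl(\Psi(x_0) + (K/2) d_g(x_0, x)^2\bigr)\Bigr]^{-aN_\varphi}. \]
I would split the integral at a radius $R_0$ chosen so that the bracket exceeds $(1-m)K \rho^2/4$ for $\rho := d_g(x_0, \cdot) \ge R_0$. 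On the outer region this yields $\sigma^a$ bounded by a constant times $K^{-aN_\varphi} \rho^{-2aN_\varphi}$; combined with the Bishop growth $\omega[S(x_0, \rho)] \le C_\omega \rho^{N_\varphi - 1}$ from Theorem~\ref{thm:BG}, the outer integral reduces to $\int_{R_0}^\infty \rho^{p + N_\varphi - 1 - 2aN_\varphi} \,d\rho$, which converges exactly under $(2a-1)N_\varphi > p$ and produces the $C_2 K^{-aN_\varphi}$ term. On the inner region $B(x_0, R_0)$ the factor $\rho^p$ is dominated by $R_0^p$, and H\"older's inequality
\[ \int_{B(x_0, R_0)} \sigma^a \,d\omega \le \omega[B(x_0, R_0)]^{1-a} \nu[B(x_0, R_0)]^a, \]
valid since $a \in (1/2,1]$, absorbs the inner part into $C_1 \nu[M]^a$. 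The two specializations follow by taking $(p,a) = (0,1)$ and $a = 1$, $p \in [0, N_\varphi)$. For (iii), $N_\varphi = \infty$ means $\theta_\varphi = 1$ and $m = 1$, so Lemma~\ref{hikaku} yields the Gaussian-type bound $\sigma \le e^{-\Psi}$; together with the quadratic bound, $\sigma^a \le e^{-a\Psi(x_0)}\exp(-(aK/2) d_g(x_0,\cdot)^2)$, and Theorem~\ref{thm:mBG}, applicable under $\Ric_\infty \ge 0$, finishes the proof for every $a > 0$.

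The principal obstacle is the bookkeeping of constants in (ii): one must ensure that $C_2$ depends only on $a, p, \theta_\varphi$ and the Bishop constant of $\omega$, while all residual dependence on $\Psi(x_0)$ and the transition radius $R_0$ is quarantined in the near-field piece and swallowed by $\nu[M]^a$ via H\"older. The restriction $a > 1/2$ is tight: it is precisely what simultaneously makes the tail integral converge (through $(2a-1)N_\varphi > p \ge 0$) and keeps H\"older's inequality effective on the near-field contribution.
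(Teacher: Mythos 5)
Your outline is structurally the paper's own: derive $\Psi(y)\ge\Psi(x_0)+\tfrac{K}{2}d_g(x_0,y)^2$ from $K$-convexity, invoke Lemma~\ref{hikaku} to dominate $\sigma=\exp_\varphi(-\Psi)$ by $e_m(-\Psi)$ with $m=2-\theta_\varphi$, use Theorem~\ref{thm:BG} for the sphere area and H\"older on the near-field ball, and parts (i), (iii) go through exactly as you say. But the way you propose to handle the tail in (ii) does not deliver the stated constant structure, and you have correctly identified the obstacle without resolving it. Splitting at a transition radius $R_0$ chosen so that $1+(1-m)\Psi(x_0)+(1-m)\tfrac{K}{2}\rho^2\ge(1-m)\tfrac{K}{4}\rho^2$ for $\rho\ge R_0$ makes $R_0$ depend on $\Psi(x_0)$ and $K$. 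That dependence then contaminates \emph{both} pieces: the tail integral evaluates to $\frac{R_0^{p-(2a-1)N_\varphi}}{(2a-1)N_\varphi-p}$, and the near-field bound after H\"older is $R_0^{p}\,\omega[B(x_0,R_0)]^{1-a}\,\nu[M]^a$. The prefactor $R_0^{p}\,\omega[B(x_0,R_0)]^{1-a}$ cannot be absorbed into $\nu[M]^a$ and is not a quantity of the form $C_1(\omega)$; it depends on $\Psi(x_0)$ and $K$ through $R_0$. So the claimed form $C_1\nu[M]^a+C_2K^{-aN_\varphi}$ with $C_1=C_1(\omega)$, $C_2=C_2(a,p,\theta_\varphi,\omega)$ is not established.

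The paper's argument avoids a moving split by an algebraic step you are missing. Fix the split at the radius $1$. For $r\ge1$, pull $r^{-2}$ inside the bracket:
\begin{equation*}
\Big\{e_m\big(\!-\Psi(x_0)\big)^{m-1}+(1-m)\tfrac{K}{2}r^2\Big\}^{-aN_\varphi}r^{p+N_\varphi-1}
=\Big\{e_m\big(\!-\Psi(x_0)\big)^{m-1}r^{-2}+(1-m)\tfrac{K}{2}\Big\}^{-aN_\varphi}r^{p-(2a-1)N_\varphi-1},
\end{equation*}
and then simply drop the nonnegative term $e_m(-\Psi(x_0))^{m-1}r^{-2}$ (this only increases the right-hand side because the exponent $-aN_\varphi$ is negative). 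The $\Psi(x_0)$-dependence disappears entirely, the $K$-dependence is exactly the advertised $K^{-aN_\varphi}$, and the near-field piece uses only the fixed ball $B(x_0,1)$ so its H\"older bound $\omega[B(x_0,1)]^{1-a}\nu[M]^a$ has the required form. This factoring-and-dropping step is the content you need to close (ii); the rest of your proposal is sound.
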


\begin{proof}
We first remark that the assumption $\Hess\Psi \ge K>0$ guarantees the unique existence
of a point $x_0 \in M_\varphi^\Psi$ such that $\Psi(x_0)=\inf_M \Psi$.
We deduce from the $K$-convexity \eqref{eq:Psi} that
\begin{equation}\label{eq:K>0}
\Psi\big( \gamma(1) \big) \ge \Psi(x_0)+\frac{K}{2}d_g\big( x_0,\gamma(1) \big)^2 
\end{equation}
holds for all minimal geodesics $\gamma:[0,1] \lra M$ with $\gamma(0)=x_0$.

(i)
For any minimal geodesic $\gamma:[0,1] \lra M$ connecting two points $x,y \in M_\varphi^\Psi$, we have 
\[ \Psi\big( \gamma(t) \big) \le (1-t)\Psi(x) +t\Psi(y) -\frac{K}{2}(1-t)td_g(x,y)^2  <-l_\varphi, \]
so that $\gamma$ is contained in $M_\varphi^\Psi$.
The total convexity of $\supp\nu$ can be seen similarly.
Precisely, for any $x,y \in \Psi^{-1}((-L_{\varphi},-l_{\varphi}])$,
$\gamma$ as above satisfies $\gamma((0,1)) \subset M_{\varphi}^{\Psi}$.
This in fact implies that
$\supp\nu=\overline{M^{\Psi}_{\varphi}}=\Psi^{-1}((-L_{\varphi},-l_{\varphi}])$
and is totally convex.
We moreover obtain $M_\varphi^\Psi \subset B(x_0,R)$ from \eqref{eq:K>0},
and thus $\supp\nu$ is compact.

(ii)
The first assertion $M^{\Psi}_{\varphi}=M$ is obvious by definition (see \eqref{eq:Mphi}).
Note that $N_{\varphi} \in [n,\infty)$ implies $\theta_{\varphi} \in (1,(n+1)/n]$ (see \eqref{eq:N}).
Set $m:=2-\theta_{\varphi}<1$ and take $a \in (1/2,1]$ and $p \ge 0$
satisfying $(2a-1)N_{\varphi}-p>0$.
Then \eqref{eq:exp} and \eqref{eq:K>0} imply 
\begin{align*}
&\int_M d_g(x_0,x)^p \sigma(x)^a \,d\omega(x) \\
&\le \int_{B(x_0,1)} \sigma^a \,d\omega
 +\int_1^{\infty} e_m \left(-\Psi(x_0)-\frac{K}{2}r^2\right)^a  r^p \area_{\omega}[S(x_0,r)] \,dr.
\end{align*}
We mention that $m=(N_{\varphi}-1)/N_\varphi$ and, for $s<L_\varphi$ and $t<0$, we have 
\[ e_m(s+t)=\{ e_m(s)^{m-1} +(m-1)t \}^{-N_\varphi}. \]
Thanks to the hypothesis $\Ric_{N_{\varphi}} \ge 0$ with $N_{\varphi} \in [n,\infty)$,
we can estimate the second term by Theorem~\ref{thm:BG} as
\begin{align*}
&\int_1^{\infty} e_m\left(-\Psi(x_0)-\frac{K}{2}r^2\right)^a r^p \area_{\omega}[S(x_0,r)] \,dr \\
&\le \area_{\omega}[S(x_0,1)] \int_1^{\infty} 
 \left\{ e_m \big(\!-\Psi(x_0) \big)^{m-1}  +(1-m)\frac{K}{2}r^2 \right\}^{-aN_\varphi}
 r^{p+N_\varphi-1} \,dr \\
&= \area_{\omega}[S(x_0,1)] \int_1^{\infty}
 \left\{ e_m \big(\!-\Psi(x_0) \big)^{m-1} r^{-2}+(1-m)\frac{K}{2} \right\}^{-aN_\varphi}
 r^{-(2a-1)N_\varphi +p-1} \,dr \\
&\le \area_{\omega}[S(x_0,1)] \left\{ (1-m)\frac{K}{2} \right\}^{-aN_\varphi}
 \int_1^{\infty} r^{-(2a-1)N_\varphi +p-1} \,dr \\
&=\frac{1}{(2a-1)N_\varphi -p} \area_{\omega}[S(x_0,1)]
 \left\{ (1-m)\frac{K}{2} \right\}^{-aN_\varphi}.
\end{align*}
We used the condition $(2a-1)N_{\varphi}-p>0$ in the last equality.
Choosing $a=1$ and $p=0$, we in particular find $\nu[M]<\infty$.
Then the H\"older inequality yields that
\[  \int_{B(x_0,1)} \sigma^a \,d\omega
 \le \left( \int_{B(x_0,1)} \sigma \,d\omega \right)^a \omega[ B(x_0,1) ]^{1-a}
 \le \nu[M]^a \omega[ B(x_0,1) ]^{1-a}. \]
Thus choosing
\begin{align*}
C_1 &:=\max\{ \omega[B(x_0,1)],1 \} \ge \omega [ B(x_0,1) ]^{1-a}, \\
C_2 &:=\frac{1}{(2a-1)N_\varphi -p} \left( \frac{2}{1-m} \right)^{aN_{\varphi}}
 \area_{\omega}[S(x_0,1)]
\end{align*}
gives the desired estimate.

(iii)
Combining~\eqref{eq:K>0} with \eqref{eq:exp} provides, as $\theta_{\varphi}=1$,
\begin{align*}
\int_M \sigma(x)^a \,d\omega(x)
&\le \int_M \exp_{\varphi} \left(-\Psi(x_0)-\frac{K}2d_g(x_0,x)^2\right)^a \,d\omega (x) \\
&\le \exp\big(\!-a\Psi(x_0) \big) \int_M \exp \left(-\frac{aK}2d_g(x_0,x)^2\right) \,d\omega(x).
\end{align*}
Hence the assertion follows from Theorem~\ref{thm:mBG}.
$\qedd$
\end{proof}

Now we introduce the conditions for a quadruple $(M,\omega,\varphi,\Psi)$
to be admissible in our consideration.

\begin{definition}[Admissibility]\label{df:adm}
We say that a quadruple $(M,\omega,\varphi,\Psi)$ is \emph{admissible}
if all the following conditions hold:
\begin{enumerate}[{\rm ({A-}1)}]
\item \label{normal}
$\varphi(1)=1$.

\item \label{joken}
$N_\varphi \in (-\infty,-1] \cup [n, \infty]$ and $N_{\varphi} \neq 2$ or, equivalently,
$\theta_\varphi \in [0,(n+1)/n]$ and $\theta_\varphi<3/2$.

\item \label{katei}
$\Psi>-L_{2-\theta_\varphi}$ on $M$ and
$M_{\varphi}^{\Psi}=\Psi^{-1}((-L_{\varphi},-l_{\varphi})) \neq \emptyset$.

\item \label{integrable}
$h_{\varphi}(\sigma) \in L^1(M,\omega)$ and $\sigma \ln_{\varphi}(\sigma) \in L^1(M,\omega)$,
where $h_{\varphi}:=u_{\varphi}$ if $L_{\varphi}=\infty$ and
$h_{\varphi}(r):=u_{\varphi}(r)-rL_{\varphi}$ if $L_{\varphi}<\infty$
(see \eqref{eq:hphi} below).
\end{enumerate}
\end{definition}

We mention that $l_m \le l_{\varphi}$ and $L_m \le L_{\varphi}$
hold with $m=2-\theta_\varphi$ by \eqref{eq:ln},
and hence $M_{\varphi}^{\Psi} \subset M_{\varphi_m}^{\Psi}$ by (A-\ref{katei}).
The first condition $\varphi(1)=1$ is merely the normalization (see Remark~\ref{rm:sim}),
and (A-\ref{integrable}) is imposed for $\nu$ being adopted as a reference measure of
the Bregman divergence (see \eqref{eq:Breg}).
The next lemma ensures that (A-\ref{integrable}) automatically holds
if $\Ric_{N_{\varphi}} \ge 0$ and $\Hess\Psi \ge K$ for some $K>0$.

\begin{lemma}\label{pro:fit}
Suppose that $(M,\omega,\varphi,\Psi)$ satisfies {\rm (A-\ref{normal})}, {\rm (A-\ref{joken})},
$\Psi>-L_{\varphi}$ on $M$, $M^{\Psi}_{\varphi} \neq \emptyset$,
$\Ric_{N_{\varphi}} \ge 0$ and $\Hess\Psi \ge K$ for some $K>0$.
Then {\rm (A-\ref{integrable})} also holds.
\end{lemma}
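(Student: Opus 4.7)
The plan is to exploit $\ln_\varphi(\sigma)=-\Psi$ on $\{\sigma>0\}$, which follows from $\sigma=\exp_\varphi(-\Psi)$, together with the integration-by-parts identity $u_\varphi(\sigma)=\sigma\ln_\varphi(\sigma)-\int_0^\sigma t/\varphi(t)\,dt$ (the boundary term $t\ln_\varphi(t)\to 0$ as $t\to 0$, since Lemma~\ref{hikaku} gives $|\ln_\varphi(t)|\le\varphi(1)^{-1}|\ell_m(t)|$ for $t\le 1$ with $m:=2-\theta_\varphi\in(1/2,2]$, and $t|\ell_m(t)|\to 0$ because $m>0$). Writing $h_\varphi(\sigma)$ via this decomposition and noting that the linear piece $rL_\varphi$ in $h_\varphi$ (when $L_\varphi<\infty$) contributes $L_\varphi\,\nu[M]$, the proof reduces to establishing
\[ \int_M \sigma|\Psi|\,d\omega<\infty,\qquad \int_M \Bigl(\int_0^{\sigma(x)}\tfrac{t}{\varphi(t)}\,dt\Bigr)d\omega(x)<\infty, \]
together with $\nu[M]<\infty$. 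The analytic input throughout is the quadratic lower bound $\Psi\ge\Psi(x_0)+(K/2)d_g(x_0,\cdot)^2$ at the unique minimizer $x_0$, which is a standard consequence of $\Hess\Psi\ge K>0$.

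If $l_\varphi>-\infty$, Lemma~\ref{lm:K>0}(i) forces $\supp\nu$ to be compact, $\Psi$ is bounded there, and both $\sigma$ and $u_\varphi(\sigma)$ are bounded on this compact set; everything is trivially finite.

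Assume instead $l_\varphi=-\infty$, so $\theta_\varphi\ge 1$ by Lemma~\ref{lm:case}(i). Combined with (A-\ref{joken}) this places us in the regime $N_\varphi=\infty$ (when $\theta_\varphi=1$) or $N_\varphi\in(2,\infty)$ (when $\theta_\varphi\in(1,3/2)$). Lemma~\ref{lm:K>0}(ii,iii) then supplies $\sigma\in L^a(M,\omega)$ for every $a\in(1/2,1]$ (hence $\nu[M]<\infty$). Setting $V:=\Psi-\Psi(x_0)\ge Kd_g(x_0,\cdot)^2/2$, Lemma~\ref{hikaku} yields the pointwise bound $\sigma\le e_m(-\Psi(x_0)-V)$ with $m=2-\theta_\varphi$. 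For $\theta_\varphi=1$ ($m=1$, $e_m=\exp$), one has $Ve^{-V}\le C\exp(-Kd_g(x_0,\cdot)^2/4)$, and Theorem~\ref{thm:mBG} gives $\int\sigma V\,d\omega<\infty$. For $\theta_\varphi\in(1,3/2)$, direct computation with $e_m(\tau)=[1+(m-1)\tau]^{-N_\varphi}$ shows $e_m(-\Psi(x_0)-V)\le CV^{-N_\varphi}$ for large $V$, so $\sigma V\le C'd_g(x_0,\cdot)^{-2(N_\varphi-1)}$ outside a ball, and Theorem~\ref{thm:BG}'s area bound $\area_\omega[S(x_0,r)]\le Cr^{N_\varphi-1}$ integrates this to a finite value precisely because $N_\varphi-1>1$.

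For the second integral, Theorem~\ref{th:DC}'s proof already gives $\int_0^\sigma t/\varphi(t)\,dt\le(2-\theta_\varphi)^{-1}\sigma^2/\varphi(\sigma)$, and Lemma~\ref{lem:mono} yields $\sigma^2/\varphi(\sigma)\le\sigma^{2-\theta_\varphi}$ on $\{\sigma\le 1\}$. Since $2-\theta_\varphi\in(1/2,1]$, Lemma~\ref{lm:K>0} gives $\sigma^{2-\theta_\varphi}\in L^1(M,\omega)$; on $\{\sigma\ge 1\}$, continuity of $\sigma$ together with $\sigma\to 0$ at infinity (again via Lemma~\ref{hikaku}) confines this set to a bounded region of finite $\omega$-measure on which the integrand is bounded. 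The main obstacle is balancing the polynomial decay of $e_m$ against Bishop--Gromov volume growth in the regime $\theta_\varphi\in(1,3/2)$, and this is exactly where the exclusion $N_\varphi\ne 2$ in (A-\ref{joken}) becomes essential: it is precisely the strict inequality $N_\varphi>2$ that makes the tail estimate integrable.
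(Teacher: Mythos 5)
Your proof is correct, but it follows a genuinely different route from the paper's. The paper never invokes the exact identity $\ln_\varphi(\sigma)=-\Psi$; instead, working on $M\setminus B(x_0,R)$ where $0\le\sigma\le 1$, it applies the one-sided comparison $\ln_\varphi(t)\ge\ell_{2-\theta_\varphi}(t)$ from Lemma~\ref{hikaku} to replace $\ln_\varphi$ by its power-law model, bounding $|\sigma\ln_\varphi(\sigma)|$ and $|u_\varphi(\sigma)|$ by explicit multiples of $\sigma^{2-\theta_\varphi}-\sigma$ (resp.\ $\sqrt{\sigma}$ when $\theta_\varphi=1$), and then finishes with a single appeal to Lemma~\ref{lm:K>0}(ii) or~(iii). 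You instead exploit the Bregman structure: $\ln_\varphi(\sigma)=-\Psi$ and the integration-by-parts split $u_\varphi(\sigma)=\sigma\ln_\varphi(\sigma)-\int_0^\sigma t/\varphi(t)\,dt$ turn one piece into $-\sigma\Psi$, which you then control by a fresh Bishop--Gromov computation (replicating, in effect, part of what is already packaged inside Lemma~\ref{lm:K>0}(ii)), while the remaining piece reduces to $\sigma^{2-\theta_\varphi}\in L^1(M,\omega)$ just as in the paper. The paper's version is somewhat more economical because all three integrabilities in (A-\ref{integrable}) flow through the one $L^a$-estimate of Lemma~\ref{lm:K>0}; in exchange, your version makes the role of $\Psi$ and the divergence geometry explicit. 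Both proofs correctly locate the critical inequality $N_\varphi>2$ (equivalently $\theta_\varphi<3/2$) coming from (A-\ref{joken}). A minor economy for your $\sigma|\Psi|$ step when $\theta_\varphi>1$: choosing any $a\in[1/N_\varphi,1/2)$ makes $\sigma^aV$ bounded (since $\sigma\le CV^{-N_\varphi}$), so $\sigma V\le C'\sigma^{1-a}$, and $\sigma^{1-a}\in L^1(M,\omega)$ by Lemma~\ref{lm:K>0}(ii); this avoids re-integrating against the area bound of Theorem~\ref{thm:BG}.
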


\begin{proof}
The case of $l_{\varphi}>-\infty$ is clear due to Lemma~\ref{lm:K>0}(i),
so that we assume $l_{\varphi}=-\infty$ and then $\theta_{\varphi} \ge 1$ (Proposition~\ref{lm:case}(i)).
Observe also that $u_{\varphi}(\sigma) \in L^1(M,\omega)$ implies
$h_{\varphi}(\sigma) \in L^1(M,\omega)$ since $\nu[M]<\infty$ by Lemma~\ref{lm:K>0}(ii), (iii).
Let $x_0$ be the minimizer of $\Psi$ and set $R:=\sqrt{\max\{1,-2\Psi(x_0)\}/K}$.
Note that the $K$-convexity of $\Psi$~\eqref{eq:K>0} guarantees that, on $M \setminus B(x_0,R)$,
\[ 0 \le \sigma=\exp_{\varphi}(-\Psi) \le \exp_{\varphi}\left( -\Psi(x_0)-\frac{K}{2}R^2 \right)
 \le \exp_{\varphi}(0)=1. \]

We first consider the case of $\theta_{\varphi}>1$.
On $M \setminus B(x_0,R)$, \eqref{eq:ln} implies that
\begin{align*}
|\sigma\ln_{\varphi}(\sigma)| &=-\sigma \ln_{\varphi}(\sigma)
 \le -\sigma \ell_{2-\theta_{\varphi}}(\sigma)
 =N_\varphi (\sigma^{2-\theta_\varphi}-\sigma), \\
|u_{\varphi}(\sigma)| &= -\int_0^\sigma \ln_\varphi(t) \,dt
 \le \int_0^\sigma N_\varphi (t^{1-\theta_\varphi}-1) \,dt
 =N_\varphi \left( \frac{ \sigma^{2-\theta_\varphi} } {2-\theta_\varphi}-\sigma \right).
\end{align*}
Thus we have
\begin{align*}
\int_M |\sigma\ln_{\varphi}(\sigma)| \,d\omega
&\le \int_{B(x_0,R)} |\sigma\ln_{\varphi}(\sigma)| \,d\omega
 +\int_{M \setminus B(x_0,R)} N_\varphi (\sigma^{2-\theta_\varphi}-\sigma) \,d\omega, \\
\int_M |u_{\varphi}(\sigma)| \,d\omega
&\le \int_{B(x_0,R)} |u_{\varphi}(\sigma)| \,d\omega
 +\int_{M\setminus B(x_0,R)} N_\varphi \left(\frac{\sigma^{2-\theta_\varphi}}{2-\theta_\varphi}-\sigma \right) \,d\omega.
\end{align*}
As $2-\theta_{\varphi} \in (1/2,1)$ by $\theta_\varphi<3/2$,
Lemma~\ref{lm:K>0}(ii) ensures $u_{\varphi}(\sigma), \sigma\ln_{\varphi}(\sigma) \in L^1(M,\omega)$.

In  the case of $\theta_\varphi=1$, we similarly have on $M\setminus B(x_0,R)$
\[
|\sigma\ln_{\varphi}(\sigma)| \le -\sigma \ln \sigma \le \sqrt{\sigma}, \qquad
|u_{\varphi}(\sigma)| = -\int_0^\sigma \ln_\varphi(t) \,dt
 \le \int_0^\sigma \frac{1}{\sqrt{t}} \,dt =2\sqrt{\sigma}.
\]
Then the claim follows from Lemma~\ref{lm:K>0}(iii).
$\qedd$
\end{proof}

We close the section with an auxiliary lemma on how to normalize $\nu$ when $K>0$.

\begin{lemma}\label{pro:fit2}
Let $(M,\omega,\varphi,\Psi)$ be admissible, $\Ric_{N_{\varphi}} \ge 0$ and
$\Hess\Psi \ge K$ for some $K>0$, and set
$I =(l,L):= (l_{\varphi}+\inf_M \Psi,L_{\varphi}+\inf_M \Psi)$.
We in addition assume that $\Psi$ is differentiable at the minimizer of $\Psi$ if $N_{\varphi}=n$.
Then there exists some $\lambda \in I$
such that $\exp_{\varphi}(\lambda-\Psi) \omega \in \cP_{\ac}(M,\omega)$.
\end{lemma}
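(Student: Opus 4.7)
The plan is to define $F \colon I \to [0,\infty]$ by
\[ F(\lambda) := \int_M \exp_\varphi(\lambda-\Psi) \,d\omega \]
and produce a value $\lambda \in I$ with $F(\lambda)=1$ via the intermediate value theorem. Set $p:=\inf_M \Psi=\Psi(x_0)$ for the unique minimizer $x_0$ of $\Psi$, whose existence is guaranteed by $\Hess\Psi \ge K>0$.

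First I would check that $0\in I$ and $F(0)<\infty$. Since $\Psi>-L_\varphi$ gives $p>-L_\varphi$ and $M_\varphi^\Psi\neq\emptyset$ forces $p<-l_\varphi$, we have $l_\varphi+p<0<L_\varphi+p$. Moreover $F(0)=\nu[M]$, which is finite by Lemma~\ref{lm:K>0} in all three cases (using compactness of $\supp\nu$ when $l_\varphi>-\infty$, and the integrability estimates when $l_\varphi=-\infty$, which require $\Ric_{N_\varphi}\ge 0$). The finiteness of $F(\lambda)$ for any other $\lambda\in I$ follows by applying the very same proof to the shifted potential $\tilde\Psi:=\Psi-\lambda$, which is still $K$-convex and whose infimum $p-\lambda$ still lies in $(-L_\varphi,-l_\varphi)$.

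Next, since $\exp_\varphi$ is non-decreasing and continuous, so is $\lambda\mapsto\exp_\varphi(\lambda-\Psi(x))$ for every $x$, and monotone/dominated convergence makes $F$ non-decreasing and continuous on $I$. It therefore remains to analyze the endpoint behavior.

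As $\lambda\downarrow l=l_\varphi+p$, for each $x$ we have $\lambda-\Psi(x)\le\lambda-p\downarrow l_\varphi$, hence $\exp_\varphi(\lambda-\Psi(x))\downarrow 0$; monotone convergence then gives $F(\lambda)\downarrow 0$. As $\lambda\uparrow L=L_\varphi+p$ I split into two subcases. If $L_\varphi=\infty$, then for every $x$ we have $\exp_\varphi(\lambda-\Psi(x))\uparrow\infty$, so monotone convergence directly yields $F(\lambda)\uparrow\infty$. If $L_\varphi<\infty$, then $\theta_\varphi>1$ and $N_\varphi\in[n,\infty)$ by Lemma~\ref{lm:case}(ii). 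For any small $\ve>0$, continuity of $\Psi$ makes $A_\ve:=\{\Psi<p+\ve\}$ an open neighborhood of $x_0$, and on $A_\ve$ we have $\exp_\varphi(\lambda-\Psi)\ge\exp_\varphi(\lambda-p-\ve)$, so
\[ \liminf_{\lambda\uparrow L} F(\lambda) \ge \omega[A_\ve]\cdot\exp_\varphi(L_\varphi-\ve). \]
Using Lemma~\ref{lm:cased} with $m=2-\delta_\varphi$ to bound $\exp_\varphi(L_\varphi-\ve)$ from below by an explicit power of $\ve$, combined with the local volume lower bound on $\omega[A_\ve]$ coming from the (Bishop-type) smoothness of $\omega$ near $x_0$, the right-hand side tends to $\infty$ as $\ve\downarrow 0$. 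Finally, the IVT applied to the continuous monotone function $F$ produces $\lambda\in I$ with $F(\lambda)=1$, so $\exp_\varphi(\lambda-\Psi)\omega\in\cP_{\ac}(M,\omega)$.

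The main obstacle is the last step in the critical case $N_\varphi=n$: the Bishop comparison becomes sharp, so $\omega[B(x_0,r)]\sim r^n$ exactly matches the $\ve^{-n}$ blowup of $\exp_\varphi(L_\varphi-\ve)$ (via the $e_m$ comparison), and only a first-order control $\Psi-p=o(d_g(\cdot,x_0))$ at $x_0$ is strong enough to push the product $\omega[A_\ve]\cdot\exp_\varphi(L_\varphi-\ve)$ to infinity. This is precisely what the extra hypothesis that $\Psi$ be differentiable at $x_0$ (forcing $d\Psi(x_0)=0$ and hence the $o$-estimate) is designed to provide.
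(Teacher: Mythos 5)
Your skeleton is the same as the paper's: show $F(\lambda)=\Xi(\lambda)$ is finite, non-decreasing, and continuous on $I$ (via Lemma~\ref{lm:K>0} applied to the shifted potentials), vanishes as $\lambda\downarrow l$, and exceeds $1$ as $\lambda\uparrow L$. For $L_\varphi=\infty$ the two arguments coincide. For $L_\varphi<\infty$ you take a genuinely different, shorter route than the paper: you bound the integral from below on the sublevel set $A_\ve=\{\Psi<p+\ve\}$ to get the product estimate $\lim_{\lambda\uparrow L}F(\lambda)\ge\omega[A_\ve]\exp_\varphi(L_\varphi-\ve)$, whereas the paper integrates over $B(x_0,R)$ and runs a delicate change-of-variables/Jensen/triangle-inequality computation to get the explicit lower bound $\frac{\area_\omega[S(x_0,R)]}{N_\varphi R^{N_\varphi-1}}(2a)^{-N_\varphi}$. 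Your route is conceptually cleaner, but there is a real gap in the key estimate.

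The gap is the citation of Lemma~\ref{lm:cased} to extract a power-of-$\ve$ lower bound for $\exp_\varphi(L_\varphi-\ve)$. Lemma~\ref{lm:cased} gives $\exp_\varphi(r)\ge e_{m'}(r)$ with $m'=2-\delta_\varphi$, and simultaneously gives $\ln_\varphi\le\ell_{m'}$, hence $L_\varphi\le L_{m'}$. If $L_\varphi<L_{m'}$ (which happens whenever $\varphi(s)>s^{\delta_\varphi}$ on a set of positive measure in $[1,\infty)$), then $e_{m'}(L_\varphi-\ve)\to e_{m'}(L_\varphi)<\infty$ and no blowup occurs; the comparison $e_{m'}$ blows up only at $L_{m'}$, not at $L_\varphi$. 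The estimate you actually need comes from the other direction, namely $\varphi(s)\le s^{\theta_\varphi}$ for $s\ge1$ (Lemma~\ref{lem:mono}), which yields
\[
L_\varphi-\ln_\varphi(t)=\int_t^\infty\frac{ds}{\varphi(s)}\ge\frac{t^{1-\theta_\varphi}}{\theta_\varphi-1}
\quad\Longrightarrow\quad
\exp_\varphi(L_\varphi-\ve)\ge\big((\theta_\varphi-1)\ve\big)^{-N_\varphi}.
\]
Note the rate is $\ve^{-N_\varphi}$, not $\ve^{-n}$ as you wrote; they agree only at the borderline $N_\varphi=n$. With this corrected bound, your scheme does close: eq.~\eqref{eq:fit2} (valid for any $K$-convex $\Psi$) gives $A_\ve\supset B(x_0,c\ve)$ for some $c>0$, so $\omega[A_\ve]\cdot\exp_\varphi(L_\varphi-\ve)\gtrsim\ve^{n-N_\varphi}\to\infty$ whenever $N_\varphi>n$ (no extra hypothesis needed), while for $N_\varphi=n$ the product is only bounded below by a constant, and the differentiability hypothesis at $x_0$ (which forces $\Psi-p=o(d_g(x_0,\cdot))$ and lets $c$ be taken arbitrarily large) pushes it to $\infty$, exactly as you say. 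Two small inaccuracies: the lower bound $\omega[B(x_0,r)]\gtrsim r^n$ is just local regularity of the smooth positive density of $\omega$, not Bishop (Bishop gives upper bounds); and the needed estimate $\Psi-p\lesssim d_g(x_0,\cdot)$ in the $N_\varphi>n$ case is the paper's eq.~\eqref{eq:fit2}, not something automatic, so you should cite it.
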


\begin{proof}
We first remark that $\inf_M \Psi>-\infty$, and that for any $\lambda \in I$
\[ \Psi-\lambda >\inf_M \Psi -\left( L_{\varphi}+\inf_M \Psi \right) =-L_{\varphi} \]
as well as $\Hess(\Psi-\lambda)=\Hess\Psi \ge K$ hold.
Thus
\[ \Xi(\lambda) :=\int_M \exp_{\varphi}(\lambda-\Psi) \,d\omega <\infty \]
by Lemma~\ref{lm:K>0}.
Since $\Xi$ is non-decreasing and continuous on $I$ by Lebesgue's dominated convergence theorem
(or the monotone convergence theorem),
we are done if $\lim_{\lambda \downarrow l} \Xi(\lambda)<1 < \lim_{\lambda \uparrow L} \Xi(\lambda)$ holds.
We also deduce from the dominated convergence theorem that
$\lim_{\lambda \downarrow l} \Xi(\lambda)=0$.
If $L_\varphi=\infty$, then we find $\lim_{\lambda \uparrow \infty} \Xi(\lambda)=\infty$
by the monotone convergence theorem.

The rest is to prove $\lim_{\lambda \uparrow L} \Xi(\lambda)>1$ when $L_\varphi<\infty$.
Note that $L_\varphi<\infty$ implies $\lim_{s\uparrow \infty}\varphi(s)=\infty$ by definition,
and $\theta_\varphi>1$ (i.e., $N_{\varphi} \in [n,\infty)$) by Proposition~\ref{lm:case}(ii).
Let $x_0 \in M$ be the unique minimizer of $\Psi$ and take $R_0>0$ such that
$B(x_0,R_0) \subset M_\varphi^\Psi$ and that $B(x_0,R_0)$ contains no cut point of $x_0$.
Then, for any $x \in S(x_0,r)$ with $0<r<R \le R_0$, the $K$-convexity of $\Psi$ provides
\begin{equation}\label{eq:fit2}
\Psi(x)
 \le \left(1-\frac{r}{R}\right)\Psi(x_0) +\frac{r}{R}\sup_{S(x_0,R)}\Psi -\frac{K}{2}\left(1-\frac{r}{R}\right) \frac{r}{R} R^2
 =\Psi(x_0) +\frac{K}{2}r^2+ar,
\end{equation}
where we set
\[ a=a(R) :=\frac{1}{R} \left( \sup_{S(x_0,R)}\Psi -\Psi(x_0) -\frac{K}{2}R^2 \right) \ge 0. \]
Observe that $\lim_{R \downarrow 0}a(R)<\infty$ by the $K$-convexity of $\Psi$,
and that $\lim_{R \downarrow 0}a(R)=0$ holds if $N_{\varphi}=n$ since $\Psi$ is assumed to be
differentiable at $x_0$.
In both cases ($N_{\varphi}>n$ or $N_{\varphi}=n$) we can choose $R \in (0,R_0]$ small enough to satisfy
\begin{equation}\label{eq:a(R)}
\frac{K}{2}R^2 +aR <L_{\varphi}, \qquad
 2a<\left( \frac{\area_\omega [S(x_0,R)]}{N_\varphi R^{N_{\varphi}-1}} \right)^{1/N_\varphi}.
\end{equation}
Then take large $\lambda \in I$ such that
\[ \lambda >\Psi(x_0)+\frac{K}{2}R^2+aR. \]
Set
\[ e^\lambda_\varphi(r):=\exp_{\varphi} \left( \lambda-\Psi(x_0) -\frac{K}{2}r^2-a r \right) \]
and note that it is decreasing.

We deduce from Theorem~\ref{thm:BG} and \eqref{eq:fit2} that
\begin{align*}
\Xi(\lambda) &\ge \int_{B(x_0,R)} \exp_{\varphi}(\lambda-\Psi) \,d\omega
 \ge \int_0^R \area_\omega [S(x_0,r)] e^{\lambda}_{\varphi}(r) \,dr \\
&\ge \frac{\area_\omega [S(x_0,R)]}{R^{N_{\varphi}-1}}
 \int_0^R r^{N_{\varphi}-1} e^{\lambda}_{\varphi}(r) \,dr \\
& =\frac{\area_\omega [S(x_0,R)]}{R^{N_{\varphi}-1}}
 \left( \frac{R^{N_{\varphi}}}{N_{\varphi}} e^{\lambda}_{\varphi}(R)
 -\int_0^R \frac{r^{N_{\varphi}}}{N_{\varphi}} (e^{\lambda}_{\varphi})'(r) \,dr\right).
\end{align*}
The concavity of $\ln_\varphi$ yields 
\[ \ln_{\varphi}(t) \le \ln_\varphi\!\big( e_\varphi^\lambda(0) \big)
 +\frac{t-e_\varphi^\lambda(0)}{\varphi(e_\varphi^\lambda(0))}, \]
and hence it holds for  $t:=e_\varphi^\lambda(r)$ that 
\[ 
Kr =-a+\sqrt{a^2+2K \left\{ \ln_{\varphi}\!\big( e_\varphi^\lambda(0) \big) -\ln_{\varphi}(t) \right\}} 
 \ge -a+\sqrt{a^2+2K \frac{e_\varphi^\lambda(0)-t}{\varphi(e_\varphi^\lambda(0))}}. \]
By the change of variables formula for $t=e_\varphi^\lambda(r)$, we have
\begin{align*}
-\int_0^R r^{N_{\varphi}} (e_{\varphi}^{\lambda})'(r) \,dr
&=\frac{1}{K^{N_\varphi}} \int_{e_\varphi^\lambda(R)}^{e_\varphi^\lambda(0)}
 \left(-a+\sqrt{a^2+2K\left\{ \ln_{\varphi}\!\big( e_\varphi^\lambda(0) \big) -\ln_\varphi(t) \right\}} \right)^{N_\varphi} dt \\
&\ge \frac{1}{K^{N_{\varphi}}} \int_{e_\varphi^\lambda(R)}^{e_\varphi^\lambda(0)}
 \left(-a+\sqrt{a^2+2K \frac{e_\varphi^\lambda(0)-t}{\varphi(e_\varphi^\lambda(0))}} \right)^{N_\varphi} dt.
\end{align*}
Combining the triangle inequality
\begin{align*}
&\left\{ \int_{e_\varphi^\lambda(R)}^{e_\varphi^\lambda(0)}
 \bigg(\! -a+\sqrt{a^2+2K \frac{e_\varphi^\lambda(0)-t}{\varphi(e_\varphi^\lambda(0))}} \bigg)^{N_\varphi} dt
 \right\}^{1/N_{\varphi}} \\
&\ge \left\{ \int_{e_\varphi^\lambda(R)}^{e_\varphi^\lambda(0)}
 \bigg(\sqrt{a^2+2K \frac{e_\varphi^\lambda(0)-t}{\varphi(e_\varphi^\lambda(0))}} \bigg)^{N_\varphi} dt
 \right\}^{1/N_{\varphi}}
 -\{e_\varphi^\lambda(0)-e_\varphi^\lambda(R) \}^{1/N_{\varphi}}a
\end{align*}
with Jensen's inequality for the convex function $s \mapsto (\sqrt{a^2+s})^{N_{\varphi}}$
($s \ge 0$), we deduce that
\begin{align*}
&\int_{e_\varphi^\lambda(R)}^{e_\varphi^\lambda(0)}
 \left(-a+\sqrt{a^2+2K \frac{e_\varphi^\lambda(0)-t}{\varphi(e_\varphi^\lambda(0))}} \right)^{N_\varphi} dt \\
&\ge \{ e_\varphi^\lambda(0)-e_\varphi^\lambda(R) \}
 \left\{ \sqrt{ a^2 +\frac{2K}{\varphi(e_\varphi^\lambda(0))}
 \int_{e_\varphi^\lambda(R)}^{e_\varphi^\lambda(0)}
 \frac{e_\varphi^\lambda(0)-t}{e_\varphi^\lambda(0)-e_\varphi^\lambda(R)} dt} -a \right\}^{N_\varphi} \\
&= \{e_\varphi^\lambda(0)-e_\varphi^\lambda(R) \}
 \left( \sqrt{a^2+K\frac{e_\varphi^\lambda(0)-e_{\varphi}^{\lambda}(R)}{\varphi(e_\varphi^\lambda(0))}}
 -a \right)^{N_{\varphi}}.
\end{align*}
Hence we obtain, as $\varphi(s) \le s^{\theta_{\varphi}}$ for $s \ge 1$ by Lemma~\ref{lem:mono},
\begin{align*}
\lim_{\lambda \uparrow L} \Xi(\lambda)
&\ge \frac{\area_{\omega}[S(x_0,R)]}{N_{\varphi} R^{N_{\varphi}-1} K^{N_{\varphi}}}
 \lim_{\lambda \uparrow L} \left[ \{ e_\varphi^\lambda(0)-e_\varphi^\lambda(R) \}
 \left(\sqrt{a^2+K\frac{e_\varphi^\lambda(0)-e_\varphi^\lambda(R)}{\varphi(e_\varphi^\lambda(0))}}
 -a \right)^{N_\varphi} \right] \\
&\ge \frac{\area_{\omega}[S(x_0,R)]}{N_{\varphi} R^{N_{\varphi}-1} K^{N_{\varphi}}}
 \lim_{s \uparrow \infty} \left[ s \left(\sqrt{a^2+\frac{Ks}{(s+e^L_{\varphi}(R))^{\theta_{\varphi}}}}
 -a \right)^{N_\varphi} \right] \\
&= \frac{\area_{\omega}[S(x_0,R)]}{N_{\varphi} R^{N_{\varphi}-1} K^{N_{\varphi}}}
 \lim_{s \uparrow \infty} \left[ s^{\theta_\varphi-1} (\sqrt{a^2+Ks^{1-\theta_\varphi}} -a) \right]^{N_\varphi} \\
&= \frac{\area_{\omega}[S(x_0,R)]}{N_{\varphi} R^{N_{\varphi}-1} K^{N_{\varphi}}}
 \lim_{t \downarrow 0} \left( \frac{\sqrt{a^2+Kt}-a}{t} \right)^{N_\varphi} 
= \frac{\area_{\omega}[S(x_0,R)]}{N_{\varphi} R^{N_{\varphi}-1}} (2a)^{-N_{\varphi}}.
\end{align*}
This is bigger than $1$ by the choice of $R$ (recall \eqref{eq:a(R)}) and we complete the proof.
$\qedd$
\end{proof}

\begin{remark}\label{rem:prob}
If $\Ric_{N_{\varphi}} \ge 0$ and $\Hess\Psi \ge K$ for some $K>0$,
then Lemma~\ref{lm:K>0} yields $\nu[M]<\infty$.
We will sometimes normalize $(M,\omega,\varphi,\Psi)$ so as to satisfy $\nu[M]=1$
(Sections~\ref{sc:func}, \ref{sc:conc}).
There are two ways of such a normalization:
\begin{itemize}
\item
Put $a:=\nu[M]^{-1}$ and consider $(M,\tilde{\omega},\varphi,\Psi):=(M,a\omega,\varphi,\Psi)$,
i.e., $\tilde{\omega}=e^{-f+\ln a}\vol_g$.
\item
Take $\lambda$ as in Lemma~\ref{pro:fit2} and consider
$(M,\omega,\varphi,\widetilde{\Psi}):=(M,\omega,\varphi,\Psi-\lambda)$.
\end{itemize}
In both cases it is easily seen that the conditions $\Ric_{N_{\varphi}} \ge 0$ and $\Hess\Psi \ge K$
are preserved.
These two normalizations are equivalent when $\varphi=\varphi_1$,
where we indeed observe
\[ \exp(-\Psi) \tilde{\omega} =\exp(-\Psi-f+\ln a) \omega, \quad
 \exp(-\widetilde{\Psi}) \omega =\exp(-\Psi -f+\lambda) \omega, \]
and hence $\lambda=\ln a$.
\end{remark}

\section{$\varphi$-relative entropy and its displacement convexity}\label{sc:ent}

In this section, we introduce a generalization of the relative entropy,
that we call the \emph{$\varphi$-relative entropy}, associated with functions $\varphi$
in an appropriate class.
For the relative entropy on a (unweighted) Riemannian manifold,
it is known by von Renesse and Sturm~\cite{vRS} that its $K$-convexity in the Wasserstein space
$(\cP^2(M),W_2)$ (in the sense of Definition~\ref{df:K-conv})
is equivalent to the lower Ricci curvature bound $\Ric \ge K$.
Then it was shown by Lott and Villani~\cite{LV2} that $\Ric \ge K$
further implies a kind of convexity property of a class of entropies
including the relative entropy.
In this sense, the relative entropy is an extremal element in such a class of entropies.
In the same spirit, our main theorem in the section (Theorem~\ref{th:mCD})
asserts that the $m$-relative entropy induced from $\varphi=\varphi_m$
(studied in \cite{mCD}, recall Subsection~\ref{ssc:phi_m} as well)
is an extremal one in an appropriate class of $\varphi$-relative entropies.

\subsection{Curvature-dimension condition}

We begin with a brief review of Lott, Sturm and Villani's curvature-dimension condition.
To formulate it, we need to introduce the two functions often used in comparison theorems
in Riemannian geometry.

For $K \in \R$, $N \in (1,\infty)$ and $0<r$ ($<\pi\sqrt{(N-1)/K}$ if $K>0$),
we consider the function
\[ \bs_{K,N}(r) := \left\{
 \begin{array}{cl}
 \sqrt{(N-1)/K} \sin (r\sqrt{K/(N-1)}) & {\rm if}\ K>0, \vspace{1mm}\\
 r & {\rm if}\ K=0, \vspace{1mm}\\
 \sqrt{-(N-1)/K} \sinh (r\sqrt{-K/(N-1)}) & {\rm if}\ K<0.
 \end{array} \right. \]
This is the solution to the differential equation
\[ \bs_{K,N}''+\frac{K}{N-1}\bs_{K,N}=0 \]
with the initial conditions $\bs_{K,N}(0)=0$ and $\bs'_{K,N}(0)=1$.
For $n \in \N$ with $n \ge 2$, $\bs_{K,n}(r)^{n-1}$ is proportional to
the area of the sphere of radius $r$ in the $n$-dimensional space form
of constant sectional curvature $K/(n-1)$.
Using $\bs_{K,N}$, we define
\[ \beta^t_{K,N}(r) :=\bigg( \frac{\bs_{K,N}(tr)}{t\bs_{K,N}(r)} \bigg)^{N-1}, \qquad
 \beta^t_{K,\infty}(r) :=e^{K(1-t^2)r^2/6} \]
for $K,N,r$ as above and $t \in (0,1)$.
Now, we are ready to state Sturm's \emph{curvature-dimension condition}
characterizing lower Ricci curvature bounds,
developed in \cite{vRS}, \cite{Stcon} and \cite{StII} in gradually general situations
(see also \cite{CMS}, \cite{CMS2} for related important work).
Recall \eqref{eq:S_N} and \eqref{eq:Ent} for the definitions of
the R\'enyi entropy $S_N$ and the relative entropy $\Ent_{\omega}$.

\begin{theorem}[Sturm's curvature-dimension condition]\label{th:CD}
Let $(M,\omega)$ be a weighted Riemannian manifold.
We have $\Ric_N \ge K$ for some $K \in \R$
and $N \in [n,\infty)$ if and only if any pair of measures
$\mu_0=\rho_0 \omega, \mu_1=\rho_1 \omega \in \cP^2_{\ac}(M,\omega)$ satisfies
\begin{align}
S_N(\mu_t) &\le -(1-t)\int_{M \times M}
 \beta^{1-t}_{K,N}\big( d_g(x,y) \big)^{1/N} \rho_0(x)^{-1/N} \,d\pi(x,y) \nonumber\\
&\quad -t\int_{M \times M}
 \beta^t_{K,N}\big( d_g(x,y) \big)^{1/N} \rho_1(y)^{-1/N} \,d\pi(x,y) \label{eq:StCD}
\end{align}
for all $t \in (0,1)$, where $(\mu_t)_{t \in [0,1]} \subset \cP^2_{\ac}(M,\omega)$
is the unique  minimal geodesic from $\mu_0$ to $\mu_1$,
and $\pi$ is the unique optimal coupling of $\mu_0$ and $\mu_1$.

Similarly, $\Ric_{\infty} \ge K$ is equivalent to the $K$-convexity of $\Ent_{\omega}$,
\[ \Ent_{\omega}(\mu_t) \le (1-t)\Ent_{\omega}(\mu_0)
 +t\Ent_{\omega}(\mu_1) -\frac{K}{2}(1-t)t W_2(\mu_0,\mu_1)^2. \]
\end{theorem}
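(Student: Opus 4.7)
My plan is to prove both equivalences by translating them into pointwise inequalities on the Jacobian of the optimal transport map, where the weighted Ricci curvature enters through a comparison argument for Jacobi fields. The two directions — sufficiency and necessity — are handled quite differently, the first via integration of pointwise bounds, the second by localization.

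For sufficiency, assume $\Ric_N \ge K$ with $N \in [n,\infty)$ and let $\mu_0 = \rho_0\omega, \mu_1 = \rho_1\omega \in \cP^2_{\ac}(M,\omega)$. By Theorems~\ref{th:FG} and~\ref{th:MA}, the unique minimal geodesic is $\mu_t = (\cT_t)_\sharp\mu_0$ with $\cT_t(x) = \exp_x(t\nabla\phi(x))$, and $\rho_t(\cT_t(x))\bJ^\omega_t(x) = \rho_0(x)$. A change of variables then gives
\[ S_N(\mu_t) = -\int_M \bJ^\omega_t(x)^{1/N}\,\rho_0(x)^{(N-1)/N} \,d\omega(x). \]
The core geometric step is the pointwise concavity estimate
\[ \bJ^\omega_t(x)^{1/N} \ge (1-t)\,\beta^{1-t}_{K,N}(d)^{1/N} + t\,\beta^t_{K,N}(d)^{1/N}\bJ^\omega_1(x)^{1/N}, \]
where $d = d_g(x,\cT_1(x))$. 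This follows from the Riccati equation along the geodesic $s\mapsto \cT_s(x)$: writing $y(s) := \bJ^\omega_s(x)^{1/N}$, the generalized Bochner identity together with $\Ric_N \ge K$ yields $y'' + (K/(N-1))y \le 0$ in the distributional sense, and then ODE comparison with the model equation $\bs_{K,N}'' + (K/(N-1))\bs_{K,N} = 0$ produces the inequality above. Integrating against $\rho_0(x)^{(N-1)/N}\,d\omega(x)$ and recognizing that $\pi = (\mathrm{id} \times \cT_1)_\sharp\mu_0$ is the optimal coupling (together with $\rho_1(\cT_1(x))^{-1/N} = \bJ^\omega_1(x)^{1/N}\rho_0(x)^{-1/N}$) yields exactly \eqref{eq:StCD}. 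The $N = \infty$ case is analogous: one works with $-\log \bJ^\omega_t(x)$, which under $\Ric_\infty \ge K$ satisfies $\frac{d^2}{dt^2}(-\log \bJ^\omega_t) \ge K|\dot\cT_t|^2$, and integration against $\rho_0\,d\omega$ — using $\Ent_\omega(\mu_t) = \int_M \log\rho_0\,d\mu_0 - \int_M \log\bJ^\omega_t\,d\mu_0$ — gives the $K$-convexity of $\Ent_\omega$ after noting $\int |\dot\cT_t|^2 \rho_0\,d\omega = W_2(\mu_0,\mu_1)^2$.

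For necessity, one argues by localization. Fix $x_0 \in M$ and $\bv \in T_{x_0}M$, and construct test pairs $(\mu_0^\varepsilon,\mu_1^\varepsilon)$ supported in small neighborhoods of $x_0$ and $\exp_{x_0}(\varepsilon\bv)$ with smooth densities of unit mass. Applying the assumed inequality at $t = 1/2$ and using the Taylor expansions
\[ \beta^{1/2}_{K,N}(r)^{1/N} = 1 + \frac{K}{8(N-1)}r^2 + o(r^2), \qquad \bJ^\omega_{1/2}\text{ expanded via Jacobi fields in }\bv, \]
one recovers, after dividing by $\varepsilon^2$ and passing to the limit $\varepsilon \downarrow 0$, the inequality $\Ric_N(\bv,\bv) \ge K\langle\bv,\bv\rangle$. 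The $N = \infty$ case is the original argument of von Renesse--Sturm: test with Gaussian-type measures and exploit the relation $\Hess_g(d_g(x_0,\cdot)^2/2)$ near $x_0$.

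The technically hardest piece is the pointwise Jacobian comparison in the sufficiency direction. The Bochner/Riccati derivation requires (i) handling the non-smoothness of $\phi$ (only twice differentiable a.e.\ by Alexandrov--Bangert) so that the ODE comparison is applied along $\mu_0$-almost every transport ray, and (ii) correctly incorporating the weight $e^{-f}$ into the Jacobian, which is exactly what introduces the $\Hess_g f$ and $Df\otimes Df$ corrections defining $\Ric_N$ in Definition~\ref{df:Ric_N}. Once these technical points are in place, the rest is essentially integration and the standard localization argument, both of which are by now routine in the Lott--Sturm--Villani theory and for which the precise execution is given in \cite{Stcon}, \cite{StII}, \cite{vRS}.
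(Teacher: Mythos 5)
This theorem is cited in the paper from \cite{vRS}, \cite{Stcon}, \cite{StII} without a proof of its own; the only proof material the paper contains in this vicinity is Claim~\ref{cl:mCD}, which handles the $K=0$ concavity of $\bJ_t^{\omega}(x)^{1/N}$ as part of Theorem~\ref{th:mCD}, and defers the $K\neq 0$ refinement to \cite{StII} and \cite{Oint}. Your outline follows the same change-of-variables/Jacobian-comparison strategy that those references and the paper's own Claim~\ref{cl:mCD} use, so the approach is the correct one. That said, two of your intermediate formulas are off in a way worth fixing.

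First, you assert that $y(s):=\bJ^\omega_s(x)^{1/N}$ satisfies $y''+\frac{K}{N-1}y\le 0$ and that ODE comparison with $\bs_{K,N}$ then delivers the $\beta^t_{K,N}$ estimate. But the Bochner/Riccati computation (this is precisely Claim~\ref{cl:mCD}) gives $N\,y^{-1}y''\le -\Ric_N(\dot\gamma)\le -Kd^2$ where $d=d_g(x,\cT_1(x))$, i.e.\ $y''\le -\frac{Kd^2}{N}y$ --- coefficient $K/N$, not $K/(N-1)$, and with the $d^2$ factor because $\gamma$ is parametrized on $[0,1]$ rather than by arclength. This inequality alone is strictly weaker than the statement you want: the exponent $N-1$ in $\beta^t_{K,N}(r)=(\bs_{K,N}(tr)/(t\,\bs_{K,N}(r)))^{N-1}$ comes from separating the ``radial'' direction of the Jacobian along $\dot\gamma$ from the transversal block, and the sharp one-dimensional comparison is applied to the transversal determinant, not to $\bJ^\omega_t{}^{1/N}$ directly. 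This radial/transversal split is exactly the ``more delicate estimate'' the paper alludes to when citing \cite{StII}, \cite{Oint}, and naively comparing $\bJ^{1/N}$ with $\bs_{K,N}$ does not reproduce Sturm's inequality for $K\neq 0$. Second, a small arithmetic slip: $\beta^{1/2}_{K,N}(r)^{1/N}=1+\frac{K}{8N}r^2+o(r^2)$, not $1+\frac{K}{8(N-1)}r^2+o(r^2)$; the coefficient $\frac{K}{8(N-1)}$ is that of the un-raised ratio $\bs_{K,N}(r/2)/\big(\tfrac12\bs_{K,N}(r)\big)$. Neither issue derails your overall strategy (you explicitly defer the technical execution to the references), but the first one is conceptually important since it is exactly the step at which the dimensional version of the condition departs from what a naive concavity comparison would give.
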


For $K=0$, we find $\beta^t_{0,N} \equiv 1$ and \eqref{eq:StCD} is nothing but the convexity of $S_N$,
\[ S_N(\mu_t) \le (1-t)S_N(\mu_0)+tS_N(\mu_1). \]
For $K \neq 0$, however, \eqref{eq:StCD} is not simply the $K$-convexity of $S_N$.

Lott and Villani's version of the curvature-dimension condition requires a similar
convexity condition, but for all entropies induced from functions in $\DC_N$
(\cite{LV1}, \cite{LV2}, \cite[Part~III]{Vi2}).
For $U \in \DC_N$ and $\mu \in \cP^2(M)$, we denote by $\mu=\rho\omega+\mu^s$
its Lebesgue decomposition into absolutely continuous and singular parts with respect to the base measure $\omega$,
and define
\[ U_{\omega}(\mu):=\lim_{\ve \downarrow 0} \int_{\{\rho \ge \ve\}} U(\rho) \,d\omega +U'(\infty)\mu^s[M],
 \qquad U'(\infty):=\lim_{r \to \infty}\frac{U(r)}{r}. \]
We set $\infty \cdot 0=0$ by convention, and remark that $U'(\infty)=\lim_{r \to \infty}U'(r)$
holds due to the convexity of $U$.

\begin{theorem}[Lott and Villani's version]\label{th:LVCD}
We have $\Ric_N \ge K$ for some $K \in \R$ and $N \in [n,\infty]$ if and only if,
given any pair of measures $\mu_0, \mu_1 \in \cP^2(M)$ decomposed
as $\mu_i=\rho_i \omega +\mu_i^s$ $(i=0,1)$, there is a minimal geodesic
$(\mu_t)_{t \in [0,1]} \subset \cP^2(M)$ between them such that
\begin{align}
U_{\omega}(\mu_t)
&\le (1-t)\int_M \int_ M \beta^{1-t}_{K,N}\big( d_g(x,y) \big)
 U\bigg( \frac{\rho_0(x)}{\beta^{1-t}_{K,N}(d_g(x,y))} \bigg) \,d\pi_x(y) d\omega(x) \nonumber\\
&\quad +t\int_M \int_M \beta^t_{K,N}\big( d_g(x,y) \big)
 U\bigg( \frac{\rho_1(y)}{\beta^t_{K,N}(d_g(x,y))} \bigg) \,d\pi_y(x) d\omega(y) \nonumber\\
&\quad +U'(\infty)\{ (1-t)\mu_0^s[M]+t\mu_1^s[M] \} \label{eq:LVCD}
\end{align}
holds for all $U \in \DC_N$ and $t \in (0,1)$, where $\pi_x$ $(\in \cP(M)$ $\mu_0$-a.e.\ $x)$
and $\pi_y$ $(\in \cP(M)$ $\mu_1$-a.e.\ $y)$ denote the disintegrations of $\pi$ by $\mu_0$ and $\mu_1$, i.e.,
\[ d\pi(x,y)=d\pi_x(y)d\mu_0(x)=d\pi_y(x)d\mu_1(y). \]
\end{theorem}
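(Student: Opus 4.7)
The plan is to prove both implications by connecting the integrand $U_{\omega}(\mu_t)$ along a Wasserstein geodesic to the Jacobian of the underlying optimal transport map, then exploiting the convexity structure of $\DC_N$.

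For the forward direction $\Ric_N \ge K \Rightarrow$ \eqref{eq:LVCD}, I would first reduce to the absolutely continuous case by a standard approximation argument: smooth $\mu_0$ and $\mu_1$ by convolution, handle the singular-part term via $U'(\infty)$ and lower semicontinuity of $U_\omega$, and pass to the limit. With $\mu_0, \mu_1 \in \cP^2_{\ac}(M,\omega)$, Theorem~\ref{th:FG} produces a locally semi-convex potential $\phi$ with transport map $\cT_t(x) = \exp_x(t\nabla \phi(x))$, and Theorem~\ref{th:MA} gives the weighted Jacobian identity $\rho_0(x) = \rho_t(\cT_t(x)) \bJ^\omega_t(x)$. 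The key geometric step is the pointwise Jacobian comparison: applying the Bochner formula to $t \mapsto \bJ^\omega_t(x)^{1/N}$ along the geodesic $t \mapsto \cT_t(x)$ together with $\Ric_N \ge K$ yields the differential inequality whose concavity-type integration produces
\[
\bJ^\omega_t(x)^{1/N}
 \ge (1-t)\beta^{1-t}_{K,N}\!\big(d_g(x,\cT_1(x))\big)^{1/N}
 + t\,\beta^t_{K,N}\!\big(d_g(x,\cT_1(x))\big)^{1/N} \bJ^\omega_1(x)^{1/N},
\]
with the analogous expression for $N=\infty$ using $\beta^t_{K,\infty}$.

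Next, I would rewrite $U_\omega(\mu_t) = \int_M U(\rho_0/\bJ^\omega_t)\,\bJ^\omega_t\,d\omega$ and use the convexity of $\psi_N(r) = r^N U(r^{-N})$ for $U \in \DC_N$ to combine the two Jacobian contributions. Specifically, writing $U(\rho/\bJ^\omega_t)\bJ^\omega_t = \rho\,\psi_N((\bJ^\omega_t/\rho)^{1/N})/(\bJ^\omega_t/\rho)^{N-1}\cdot\ldots$ and substituting the Jacobian bound, the convexity of $\psi_N$ together with its monotonicity (Lemma~\ref{lm:psi_N}) converts the pointwise estimate into the two-term integral bound appearing on the right-hand side of \eqref{eq:LVCD}. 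The standard reparametrization $y = \cT_1(x)$ with $d\mu_0(x) = d\pi_x(y)\,d\mu_0(x)$ then yields both summands, and the $U'(\infty)$ term absorbs the singular contributions after the approximation limit.

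For the reverse direction, I would localize. Applying \eqref{eq:LVCD} with $U(r) = r\ln r$ (so $U \in \DC_N$ for every $N$, including $N=\infty$) recovers, in the limit as $\mu_0,\mu_1$ concentrate near fixed points, the $K$-convexity of $\Ent_\omega$, which by Theorem~\ref{th:CD} (von Renesse--Sturm) forces $\Ric_\infty \ge K$ at least. To capture the dimension $N$, I would instead specialize to $U(r) = -Nr(1-r^{-1/N})$, whose $\psi_N$ is linear; then \eqref{eq:LVCD} is sharp and, upon shrinking $\mu_0,\mu_1$ to unit masses concentrated near $x$ with displacement in a direction $\bv$, a second-order Taylor expansion of the distortion coefficients $\beta^t_{K,N}$ (using $\bs''_{K,N}+(K/(N-1))\bs_{K,N}=0$) recovers $\Ric_N(\bv) \ge K\langle\bv,\bv\rangle$ pointwise. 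Equivalently, one may appeal to the fact that for this particular $U$, Lott--Villani's inequality reduces to Sturm's inequality \eqref{eq:StCD}, which characterizes $\Ric_N \ge K$ by Theorem~\ref{th:CD}.

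The main obstacle is the forward Jacobian estimate combined with the substitution producing the correct $\beta^t_{K,N}$-weighted integrals: one must simultaneously track the weight $e^{-f}$ (whose derivatives along the geodesic supply the $Df\otimes Df$ correction in $\Ric_N$), the concavity comparison with $\bs_{K,N}$, and the fact that $\psi_N$-convexity interacts correctly with non-linear combinations of two Jacobian values. Handling the singular parts and the cut locus via approximation is technical but standard; the dimensional bookkeeping in the $N=\infty$ case, where one must separately manipulate $\psi_\infty$ and $\beta^t_{K,\infty}$, is a parallel but simpler argument.
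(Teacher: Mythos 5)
The paper does not actually prove Theorem~\ref{th:LVCD}: it is imported from \cite{LV1}, \cite{LV2}, \cite[Part~III]{Vi2}, and the text only offers a one-paragraph hint afterward, namely that Sturm's version \eqref{eq:StCD} implies Lott--Villani's by localizing the Jacobian estimate on the non-branching space and then integrating it against each $U \in \DC_N$ via the disintegration, the infinitesimal step being the same as in Claim~\ref{cl:mCD}. So there is no in-paper proof to compare to; your sketch is an account of the standard external proof, and it matches the route the paper gestures at.

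As such a sketch it is essentially right. The forward direction correctly isolates the distorted concavity
\[
\bJ^\omega_t(x)^{1/N}
\ \ge\ (1-t)\,\beta^{1-t}_{K,N}\big(d_g(x,\cT_1(x))\big)^{1/N}
\ +\ t\,\beta^t_{K,N}\big(d_g(x,\cT_1(x))\big)^{1/N}\,\bJ^\omega_1(x)^{1/N},
\]
and the rewriting $U(\rho_0/\bJ^\omega_t)\,\bJ^\omega_t = \rho_0\,\psi_N\big((\bJ^\omega_t/\rho_0)^{1/N}\big)$ (the intermediate expression you wrote with the extra $(\bJ^\omega_t/\rho_0)^{N-1}$ factor is off; the clean identity has no such factor) together with the fact that $\psi_N$ is non-increasing and convex (Lemma~\ref{lm:psi_N}) does convert the Jacobian lower bound into the two $\beta$-weighted integrals, after the substitution $y=\cT_1(x)$ and the Monge--Amp\`ere identity from Theorem~\ref{th:MA}. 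For the converse, the efficient closing move is the last one you mention: take $U_N(r)=Nr(1-r^{-1/N})$ --- note the sign; $-Nr(1-r^{-1/N})$ is concave and not in $\DC_N$ --- for which $\psi_N(r)=N(1-r)$ is affine and \eqref{eq:LVCD} collapses exactly to Sturm's \eqref{eq:StCD}, and then invoke Theorem~\ref{th:CD}. Your preliminary step of plugging $U(r)=r\ln r$ into \eqref{eq:LVCD} does not literally recover the $K$-convexity of $\Ent_\omega$ when $N<\infty$, because the distortion coefficients are $\beta^t_{K,N}$, not $\beta^t_{K,\infty}$; it yields a distorted inequality that only controls $\Ric_\infty$ after some work, so this step is at best redundant once the $U_N$ argument is in hand. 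With those small corrections the proposal is a faithful summary of the argument used in the cited sources.
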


Recall that $\DC_{N'} \subset \DC_N$ for $n \le N<N'$ (Lemma~\ref{lm:DC_N}).
This agrees with the monotonicity $\Ric_N \le \Ric_{N'}$ for $n \le N \le N'$.
In the case where both $\mu_0$ and $\mu_1$ are absolutely continuous with respect to $\omega$, we find
\[ d\pi(x,y)=\rho_0(x)d\pi_x(y)d\omega(x)=\rho_1(y)d\pi_y(x)d\omega(y) \]
and $(\ref{eq:LVCD})$ is rewritten in the more symmetric form
\begin{align*}
U_{\omega}(\mu_t)
&\le (1-t)\int_{M \times M} \frac{\beta^{1-t}_{K,N}(d_g(x,y))}{\rho_0(x)}
 U\bigg( \frac{\rho_0(x)}{\beta^{1-t}_{K,N}(d_g(x,y))} \bigg) \,d\pi(x,y) \\
&\quad +t\int_{M \times M} \frac{\beta^t_{K,N}(d_g(x,y))}{\rho_1(y)}
 U\bigg( \frac{\rho_1(y)}{\beta^t_{K,N}(d_g(x,y))} \bigg) \,d\pi(x,y).
\end{align*}

Besides Riemannian manifolds, these two versions of the curvature-dimension condition
are equivalent for metric measure spaces where geodesics do not branch,
such as Finsler manifolds and Alexandrov spaces.
In other words, Sturm's version implies Lott and Villani's one.
Roughly speaking, this implication can be seen by localizing Sturm's \eqref{eq:StCD}
thanks to the non-branching property, and then integrating these local inequalities
for each $U \in \DC_N$ yields \eqref{eq:LVCD}.
The same infinitesimal estimate (Claim~\ref{cl:mCD}) will appear in our discussion.
Theorem~\ref{th:LVCD} is extended to general Finsler manifolds by introducing
the appropriate notion of the weighted Ricci curvature
(see Section~\ref{sc:Fins} and \cite{Oint}).

General (not necessarily differentiable) metric measure spaces satisfying
the condition in Theorem~\ref{th:CD} or \ref{th:LVCD} are known
to behave like Riemannian manifolds of $\Ric \ge K$ and $\dim \le N$
in geometric and analytic respects (\cite{StI}, \cite{StII}, \cite{LV1}, \cite{LV2}, \cite[Part~III]{Vi2}).
We shall generalize this technique to $\varphi$-relative entropies in the following sections.

\subsection{$\varphi$-relative entropy $H_{\varphi}$}

Let $(M,\omega,\varphi,\Psi)$ be an admissible space in the sense of Definition~\ref{df:adm}.
We modify $u_{\varphi}$ as, for $r \ge 0$,
\begin{equation}\label{eq:hphi}
h_{\varphi}(r) :=\begin{cases}
 u_{\varphi}(r) & \text{if } L_{\varphi}=\infty,\\
 u_{\varphi}(r)-rL_{\varphi} & \text{if } L_{\varphi}<\infty.
\end{cases}
\end{equation}
We also define
\[ h'_{\varphi}(\infty) :=\lim_{r\to \infty} h'_{\varphi}(r)
  =\begin{cases}
 \infty & \text{if } L_{\varphi}=\infty,\\
 0 & \text{if } L_{\varphi}<\infty.
 \end{cases} \]
Note that $u_{\varphi} \in \DC_{N_{\varphi}}$ (by Theorem~\ref{th:DC} thanks to the admissibility)
immediately implies $h_{\varphi} \in \DC_{N_{\varphi}}$.
Moreover, if $L_{\varphi}<\infty$, then $h_\varphi$ is non-increasing and hence nonpositive.
We set 
\begin{align*}
L^\varphi(M,\omega)
&:=\{ \rho:M \lra \R \,|\, \text{measurable}, h_{\varphi}(\rho) \in L^1(M,\omega)\}, \\
\cP^{\Psi}(M)
&:=\{ \mu\in\cP(M) \,|\, \Psi  \in L^1(M^{\Psi}_{\varphi},\mu) \}
\end{align*}
(we will use these notations only in Remark~\ref{rm:Hm}).
Now the Bregman divergence \eqref{eq:Breg} leads us to the following generalization
of the relative entropy.

\begin{definition}[$\varphi$-relative entropy]\label{df:Hm}
Given $\mu \in \cP(M)$, letting $\mu=\rho\omega +\mu^s$ be its Lebesgue decomposition,
we define the \emph{$\varphi$-relative entropy} of $\mu$ by
\begin{align}
H_\varphi(\mu)
&:= \int_M \{ h_{\varphi}(\rho)-h'_{\varphi}(\sigma) \rho \} \,d\omega
 -\int_M h'_{\varphi}(\sigma) \,d\mu^s +h'_{\varphi}(\infty) \mu^s[M] \nonumber\\
&= \int_M h_{\varphi} (\rho) \,d\omega -\int_M h'_{\varphi}(\sigma)  \,d\mu
 +h'_{\varphi}(\infty)\mu^s[M] \label{eq:Hm}
\end{align}
if $h_{\varphi}(\rho) \in L^1(M,\omega)$ and $h'_{\varphi}(\sigma) \in L^1(M,\mu)$,
otherwise we set $H_\varphi(\mu):=\infty$.
\end{definition}

Let us summarize several remarks on Definition~\ref{df:Hm}.

\begin{remark}\label{rm:Hm}
(1) In the second term of \eqref{eq:Hm}, to be precise, we set
\[ h'_{\varphi}(\sigma) :=\begin{cases}
 l_{\varphi} & \text{if } L_{\varphi}=\infty,\\
 l_{\varphi}-L_{\varphi} & \text{if } L_{\varphi}<\infty \end{cases}
 \qquad \text{on}\ M \setminus M^{\Psi}_{\varphi}. \]
This causes no problem because $M=M_{\varphi}^{\Psi}$ if $l_{\varphi}=-\infty$.
The additional condition $\mu[M^{\Psi}_{\varphi}]=1$
(in other words, $\mu \in \cP(M^{\Psi}_{\varphi})$) will be imposed
only when we compare the behavior of $\Psi$ with that of $H_{\varphi}$
(as in Theorems~\ref{th:mCD}, \ref{th:gf} and so forth).

(2) We remark that the condition (A-\ref{integrable}) in the admissibility guarantees
that $\sigma \in L^\varphi(M,\omega)$ as well as $\Psi \in L^1(M^{\Psi}_{\varphi},\nu)$.
Thus we have $H_{\varphi}(\nu) \in \R$ (by extending the definition~\eqref{eq:Hm} verbatim).

(3) The validity of the definition of $h'_\varphi(\infty)$
(for the lower semi-continuity of $H_{\varphi}$, see Lemma~\ref{lm:lsc})
would be understood by the following observation:
For small $\ve>0$, put  $\mu_\ve =\rho_\ve \omega :=\omega[B(x,\ve)]^{-1}\chi_{B(x,\ve)} \omega$,
where $\chi_{B(x,\ve)}$ stands for the characteristic function of $B(x,\ve)$.
Then we have 
\[ \int_{B(x,\ve)} h_\varphi(\rho_\ve) \,d\omega
 =\omega[B(x,\ve)] \cdot h_\varphi \left(\frac1{\omega[B(x,\ve)] } \right)
 \to h'_{\varphi}(\infty) \]
as $\ve$ tends to zero.

(4) Finally, as for the domain of $H_{\varphi}$,
it is more consistent to set $H_{\varphi}(\mu)=\infty$ only if
$h_{\varphi}(\rho)-\rho h'_{\varphi}(\sigma) \not\in L^1(M,\omega)$.
However, as we will sometimes treat the internal energy $\int_M h_{\varphi}(\rho) \,d\omega$
and the potential energy $\int_M h'_{\varphi}(\sigma) \,d\mu$ separately, we consider the smaller domain
in Definition~\ref{df:Hm}.
This may cause a problem when considering the lower semi-continuity of $H_{\varphi}$,
whereas we need it only for compact $M$ (see Lemma~\ref{lm:lsc} below)
where $h'_{\varphi}(\sigma) \in L^1(M,\mu)$ is always true
(so that $h_{\varphi}(\rho) \in L^1(M,\omega)$ if and only if
$h_{\varphi}(\rho)-\rho h'_{\varphi}(\sigma) \in L^1(M,\omega)$).

Let us add a comment on the relation between
$\rho \in L^{\varphi}(M,\omega)$ and $\mu \in \cP^{\Psi}(M)$.
Assume $L_\varphi <\infty$ and $\mu=\rho \omega +\mu^s \in \cP^\Psi(M)$.
The nonpositivity and the convexity of $h_\varphi$ yield
\begin{align*}
\int_M |h_\varphi(\rho)| \,d\omega
&= -\int_M h_{\varphi}(\rho) \,d\omega
 \le -\int_M \{ h_{\varphi}(\sigma)+h'_{\varphi}(\sigma)(\rho -\sigma) \} \,d\omega \\
&= -\int_M \{ h_\varphi(\sigma) -h'_\varphi(\sigma) \sigma  \} \,d\omega
 -\int_M \rho h'_\varphi(\sigma) \,d\omega <\infty.
\end{align*}
Hence $\rho \in L^\varphi(M,\omega)$ automatically holds.
One can also see the converse implication
($\rho \in L^\varphi(M,\omega)$ $\Rightarrow$ $\mu \in \cP^\Psi(M)$)
for the special case $\varphi_m(s)=s^m$ with $m>1$, where $L_m=\infty$ as in \eqref{eq:lLm}
(\cite[Remark~3.2(2)]{mCD}).
\end{remark}

It is easily observed that $\nu$ is a unique \emph{ground state} of $H_{\varphi}$
(provided that $\nu[M]=1$).

\begin{lemma}\label{lm:Hm}
Suppose $\nu[M]=1$.
For any $\mu=\rho\omega + \mu^s \in \cP(M)$,
we have $H_{\varphi}(\mu) \ge H_\varphi(\nu)$ and equality holds if and only if $\mu=\nu$.
\end{lemma}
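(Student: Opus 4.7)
The plan is to rearrange $H_\varphi(\mu) - H_\varphi(\nu)$ via the Lebesgue decomposition $\mu = \rho\omega + \mu^s$ and then to exploit the two elementary properties of $h_\varphi$: strict convexity (inherited from $u_\varphi$) and the resulting monotonicity of $h'_\varphi$. Since $\nu = \sigma\omega$ is absolutely continuous, the definition of $H_\varphi$ yields
\[
H_\varphi(\mu) - H_\varphi(\nu) = \int_M \{h_\varphi(\rho) - h_\varphi(\sigma) - h'_\varphi(\sigma)(\rho - \sigma)\}\, d\omega + \int_M \{h'_\varphi(\infty) - h'_\varphi(\sigma)\}\, d\mu^s.
\]
The admissibility condition (A-\ref{integrable}) together with $\nu[M] = 1$ makes $H_\varphi(\nu)$ a finite real number, so this rearrangement is legitimate whenever $H_\varphi(\mu) < \infty$, and the desired inequality is immediate otherwise.

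For the first integral, the integrand is the Bregman deviation of $h_\varphi$ at the pair $(\rho,\sigma)$, which is nonnegative by the convexity of $h_\varphi$ and strictly positive wherever $\rho \neq \sigma$, since $u_\varphi$ (and hence $h_\varphi$, which differs only by a linear term) is strictly convex. For the second integral, the convexity of $h_\varphi$ makes $h'_\varphi$ non-decreasing, so $h'_\varphi(\sigma) \leq h'_\varphi(\infty)$ pointwise and the integrand is $\geq 0$. Together these give $H_\varphi(\mu) \geq H_\varphi(\nu)$.

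For the equality case, both integrals must vanish. The first forces $\rho = \sigma$ $\omega$-a.e. For the second, I split into the two regimes in the definition of $h_\varphi$: if $L_\varphi = \infty$ then $h'_\varphi(\infty) = \infty$ while $h'_\varphi(\sigma)$ is finite on $M_\varphi^\Psi$ (where it equals $-\Psi$) and equal to the finite or $-\infty$ constant $l_\varphi$ outside, so the integrand is $+\infty$ wherever $\mu^s$ could place mass, forcing $\mu^s = 0$; if $L_\varphi < \infty$ then $h'_\varphi(\infty) = 0$ while $h'_\varphi(\sigma) = \ln_\varphi(\sigma) - L_\varphi < 0$ strictly on $M_\varphi^\Psi$ (using $\sigma < \infty$, which follows from the admissibility bound $\Psi > -L_{2-\theta_\varphi} \geq -L_\varphi$) and $h'_\varphi(\sigma) = l_\varphi - L_\varphi < 0$ outside, so again $\mu^s = 0$. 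Combined with $\rho = \sigma$ $\omega$-a.e., this gives $\mu = \nu$. The whole argument is quite direct; the only real care is in the case split above and in using the convention of Remark~\ref{rm:Hm}(1) to make $h'_\varphi(\sigma)$ well defined on $\{\sigma = 0\}$.
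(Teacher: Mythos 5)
Your proof is correct and takes essentially the same route as the paper's: you rewrite $H_\varphi(\mu)-H_\varphi(\nu)$ via the Lebesgue decomposition into the Bregman (absolutely continuous) part plus a singular part, show each is nonnegative by strict convexity of $h_\varphi$ and monotonicity of $h'_\varphi$, and for the equality case split on $L_\varphi=\infty$ versus $L_\varphi<\infty$ to force $\mu^s=0$. The only differences from the paper are expository (you combine the singular terms into a single integral against $\mu^s$ and spell out why $h'_\varphi(\sigma)<0$ on all of $M$, including on $M_\varphi^\Psi$, when $L_\varphi<\infty$), and none of them change the substance of the argument.
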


\begin{proof}
We assume $H_{\varphi}(\mu)<\infty$ without loss of generality.
Observe that
\begin{align}\label{eq:rel}
&H_\varphi(\mu)-H_{\varphi}(\nu) \nonumber\\
&= \int_M \{ h_{\varphi}(\rho)-h_{\varphi}(\sigma)-h'_{\varphi}(\sigma) (\rho-\sigma) \} \,d\omega
 -\int_M h'_{\varphi}(\sigma) \,d\mu^s\,+ h'_{\varphi}(\infty) \mu^s[M].
\end{align}
On the one hand, if $\mu^s[M]>0$, then the singular part  
\[ -\int_M h'_{\varphi}(\sigma)  \,d\mu^s + h'_{\varphi}(\infty) \mu^s[M] \]
in \eqref{eq:rel} is positive if $L_\varphi<\infty$
(recall that $h'_{\varphi}(\sigma)=l_{\varphi}-L_{\varphi}<0$ on $M \setminus M^{\Psi}_{\varphi}$)
or infinity if $L_\varphi=\infty$.
On the other hand,
the strict convexity of $h_\varphi$ implies that the absolutely continuous part    
\[ \int_M \{ h_{\varphi}(\rho)-h_{\varphi}(\sigma)-h'_{\varphi}(\sigma) (\rho-\sigma) \} \,d\omega \]
in \eqref{eq:rel}  is nonnegative and equality holds if and only if $\rho=\sigma$ $\omega$-a.e..
Thus $H_{\varphi}(\mu) \ge H_{\varphi}(\nu)$ and equality holds if and only if
$\mu^s[M]=0$ and $\rho=\sigma$ holds $\omega$-a.e..
$\qedd$
\end{proof}

Observe from \eqref{eq:rel} that it holds $D_\varphi(\mu|\nu)=H_\varphi(\mu)-H_\varphi(\nu)$
for any absolutely continuous measure $\mu$ with respect to $\omega$.
Thus the Bregman divergence $D_\varphi(\mu|\nu)$ measures the difference between
the entropies at $\mu$ and the ground state $\nu$.
In \cite{mCD}, we have studied the specific function $\varphi_m(s)=s^{2-m}$
and the associated \emph{$m$-relative entropy} $H_m(\mu|\nu)$
for $m \in [(n-1)/n,1) \cup (1,\infty)$.
In the present context, $H_m(\mu|\nu)$ coincides with $H_{\varphi_m}(\mu)-H_{\varphi_m}(\nu)$.

The following lemma will be used in Section~\ref{sc:gf} (Claim~\ref{cl:gf})
to construct a discrete approximation of the gradient flow of $H_{\varphi}$,
where $M$ is assumed to be compact.

\begin{lemma}\label{lm:lsc}
Let $M$ be compact.
Then the $\varphi$-relative entropy $H_\varphi$ is lower semi-continuous with respect to the weak topology, 
that is to say, if a sequence $\{ \mu_i \}_{i \in \N} \subset \cP(M)$
weakly converges to $\mu \in \cP(M)$, then we have
\[ H_\varphi(\mu) \le \liminf_{i \to \infty} H_\varphi(\mu_i). \]
\end{lemma}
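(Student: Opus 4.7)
The plan is to decompose $H_\varphi = \mathcal{I} - \mathcal{P}$ into the internal-energy part
\[ \mathcal{I}(\mu) := \int_M h_\varphi(\rho)\,d\omega + h'_\varphi(\infty)\,\mu^s[M] \]
and the potential part
\[ \mathcal{P}(\mu) := \int_M h'_\varphi(\sigma)\,d\mu, \]
where $\mu = \rho\omega + \mu^s$ is the Lebesgue decomposition with respect to $\omega$, and then to prove separately that $\mathcal{P}$ is weakly continuous and that $\mathcal{I}$ is weakly lower semi-continuous.

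The continuity of $\mathcal{P}$ is the easier half. Since $\sigma = \exp_\varphi(-\Psi)$ with $\Psi \in C(M)$, a direct inspection based on the convention of Remark~\ref{rm:Hm}(1) shows that
\[ h'_\varphi(\sigma) \,=\, \max\{-\Psi,\,l_\varphi\} \,-\, L_\varphi \cdot \mathbf{1}_{\{L_\varphi<\infty\}} \]
holds pointwise on $M$: on $M_\varphi^\Psi$ it reduces to $\ln_\varphi(\sigma) = -\Psi$ (shifted by $-L_\varphi$ when $L_\varphi<\infty$), while on $M\setminus M_\varphi^\Psi$, where $\sigma=0$ and $\Psi\ge -l_\varphi$, it coincides with the prescribed extension $l_\varphi$. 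Since $M$ is compact and $\Psi$ is continuous, $h'_\varphi(\sigma)$ is a bounded continuous function, so $\mathcal{P}$ is weakly continuous by the very definition of weak convergence of probability measures.

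For $\mathcal{I}$, the plan is to invoke the standard dual representation of convex integral functionals with recession term. Set $h_\varphi^*(y) := \sup_{r\ge 0}\bigl(ry - h_\varphi(r)\bigr)$, which is convex, lsc and finite exactly on $(-\infty,\,h'_\varphi(\infty)]$. The key claim is
\[ \mathcal{I}(\mu) \,=\, \sup\Bigl\{ \int_M \psi\,d\mu - \int_M h_\varphi^*(\psi)\,d\omega \,\Bigm|\, \psi\in C_b(M),\ \psi\le h'_\varphi(\infty) \Bigr\}. \]
Granting this, each admissible $\psi$ produces a weakly continuous affine functional of $\mu$ (the first term by $\psi\in C_b(M)$, the second being independent of $\mu$), so $\mathcal{I}$ is weakly lsc as a supremum of weakly continuous functionals. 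Combined with the continuity of $\mathcal{P}$, this yields the lemma.

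The main obstacle is to justify this dual representation, and in particular the correct appearance of the recession coefficient $h'_\varphi(\infty)$ on the singular part $\mu^s$. The inequality $\ge$ is the easy direction, following from Young's inequality $h_\varphi(\rho) \ge \rho\psi - h_\varphi^*(\psi)$ $\omega$-a.e.\ together with the bound $\psi \le h'_\varphi(\infty)$ on $\supp\mu^s$. For the reverse inequality, one approximates the formal optimizer $\psi_0 := h'_\varphi(\rho)\wedge h'_\varphi(\infty)$ by a sequence in $C_b(M)$ capped at $h'_\varphi(\infty)$; compactness of $M$, finiteness of $\omega$, and continuity of the convex function $h_\varphi$ (guaranteed by admissibility via Theorem~\ref{th:DC}) make the required truncation, density and dominated-convergence arguments routine. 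This type of representation is classical in the theory of convex integral functionals with linear growth, and a convenient reference in essentially the present form is Ambrosio--Gigli--Savar\'e's monograph.
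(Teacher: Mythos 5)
Your decomposition into the internal-energy part $\mathcal{I}$ and the potential part $\mathcal{P}$, and the observation that $h'_\varphi(\sigma)$ is bounded and continuous on the compact $M$ (whence $\mathcal{P}$ is weakly continuous), coincide exactly with the paper's proof. The only difference is in the lower semi-continuity of $\mathcal{I}$: the paper cites \cite[Theorem~B.33]{LV2} directly, whereas you invoke the classical dual representation via $h_\varphi^*$ that underlies that theorem. One small slip: $h_\varphi^*$ need not be finite at the endpoint $y=h'_\varphi(\infty)$. Indeed, when $L_\varphi<\infty$ one has $h'_\varphi(\infty)=0$ while $h_\varphi^*(0)=-\inf_{r\ge 0}h_\varphi(r)=+\infty$, because admissibility ($\theta_\varphi<2$) forces $h_\varphi(r)\to-\infty$ as $r\to\infty$. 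This is harmless---the supremum may simply be taken over $\psi<h'_\varphi(\infty)$---but the interval of finiteness should be stated as $(-\infty,h'_\varphi(\infty))$ in general.
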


\begin{proof}
We divide $H_\varphi(\mu)$ into two parts as 
\[ H^{(1)}_\varphi(\mu):= \int_M h_{\varphi}(\rho) \,d\omega +h'_{\varphi}(\infty) \mu^s[M],
 \qquad H^{(2)}_\varphi(\mu):=- \int_M h'_\varphi(\sigma) \,d\mu, \]
where $\mu=\rho\omega +\mu^s$.
Note that $\|h'_{\varphi}(\sigma)\|_{\infty}<\infty$ thanks to the compactness of $M$
(recall Remark~\ref{rm:Hm}(1)).
Then $H^{(2)}_\varphi(\mu)$ is clearly continuous in $\mu$ and
the lower semi-continuity of $H^{(1)}_\varphi(\mu)$ follows from \cite[Theorem~B.33]{LV2}.
$\qedd$
\end{proof}

\subsection{Displacement convexity of $H_{\varphi}$}

In our previous work~\cite{mCD}, we showed that the displacement $K$-convexity of the $m$-relative entropy
$H_m(\mu|\nu)=H_{\varphi_m}(\mu)-H_{\varphi_m}(\nu)$ with respect to $\mu \in \cP^2(M)$
is equivalent to the combination of $\Ric_N \ge 0$ and $\Hess\Psi \ge K$,
where $N=1/(1-m)$ (\cite[Theorem~4.1]{mCD}).
This characterization can be regarded as to correspond to Sturm's version of
the curvature-dimension condition~\eqref{eq:StCD}
($\Hess H_m(\cdot|\nu) \ge K$ and \eqref{eq:StCD} actually coincide if $\Psi$ is constant and $K=0$).
In the reminder of the section, we shall consider the convexity of appropriate families of
the $\varphi$-relative entropies corresponding to Lott and Villani's version
of the curvature-dimension condition~\eqref{eq:LVCD}.
Recall \eqref{eq:theta} for the definition of $\theta_{\varphi}$.

\begin{theorem}[Displacement convexity of families of $H_{\varphi}$]\label{th:mCD}
Given $K \in \R$, $N \in \R\setminus (-1,n)$ and an admissible space $(M,\omega,\varphi_m,\Psi)$,
the following three conditions are mutually equivalent, where $m=(N-1)/N:$
\begin{enumerate}[{\rm (A)}]
\item
We have $\Ric_N \ge 0$ and $\Hess\Psi \ge K$ on $M^{\Psi}_{\varphi_m}$.

\item 
For any $\mu_0,\mu_1 \in \cP^2(M)$ with $\mu_0[M^{\Psi}_{\varphi_m}]=\mu_1[M^{\Psi}_{\varphi_m}]=1$
such that any pair of points $x_i \in \supp\mu_i \cap M^{\Psi}_{\varphi_m}$
$(i=0,1)$ are joined by some minimal geodesic contained in $M_{\varphi_m}^{\Psi}$,
there exists a minimal geodesic $(\mu_t)_{t \in [0,1]} \subset \cP^2(M_{\varphi_m}^\Psi)$
along which
\[ H_{\varphi_m}(\mu_t) \le (1-t)H_{\varphi_m}(\mu_0) +tH_{\varphi_m}(\mu_1)
 -\frac{K}{2}(1-t)t W_2(\mu_0,\mu_1)^2 \]
holds for all $t \in [0,1]$.

\item 
Take any $\varphi$ with $\theta_{\varphi} \le 2-m$ such that $(M,\omega,\varphi,\Psi)$ is admissible.
Then, for any $\mu_0,\mu_1 \in \cP^2(M)$
with $\mu_0[M^{\Psi}_{\varphi}]=\mu_1[M^{\Psi}_{\varphi}]=1$
such that any pair of points $x_i \in \supp\mu_i \cap M^{\Psi}_{\varphi}$
$(i=0,1)$ are joined by some minimal geodesic contained in $M_{\varphi}^{\Psi}$,
there exists a minimal geodesic $(\mu_t)_{t \in [0,1]} \subset \cP^2(M_\varphi^\Psi)$
along which
\begin{equation}\label{eq:phiCD}
H_{\varphi}(\mu_t) \le (1-t)H_{\varphi}(\mu_0) +tH_{\varphi}(\mu_1)
 -\frac{K}{2}(1-t)t W_2(\mu_0,\mu_1)^2
\end{equation}
holds for all $t \in [0,1]$.
\end{enumerate}
\end{theorem}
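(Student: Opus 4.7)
The strategy is to close the cycle (A) $\Rightarrow$ (C) $\Rightarrow$ (B) $\Rightarrow$ (A). Two of the three links are essentially immediate. Taking $\varphi = \varphi_m$ in (C) (for which $\theta_{\varphi_m} = 2 - m$ and the admissibility of $(M,\omega,\varphi_m,\Psi)$ is our standing hypothesis) yields (C) $\Rightarrow$ (B). The reverse link (B) $\Rightarrow$ (A) is exactly \cite[Theorem~4.1]{mCD}, since (B) is the $K$-convexity of the $m$-relative entropy $H_{\varphi_m}(\cdot) - H_{\varphi_m}(\nu)$ on the specified subclass.

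The crux is (A) $\Rightarrow$ (C). First, from $\theta_\varphi \le 2-m = (N+1)/N$ one has $2 - \theta_\varphi \ge m$, so Theorem~\ref{th:DC} gives $u_\varphi \in \DC_{N_\varphi}$ and Lemma~\ref{lm:DC_N} (monotonicity in the parameter $m$) upgrades this to $u_\varphi \in \DC_N$; the same inclusion passes to $h_\varphi$ since subtracting the linear term $r L_\varphi$ preserves membership in $\DC_N$. Second, by a controlled approximation (truncating and mollifying the densities so as to stay inside $M_\varphi^\Psi$), combined with the lower semi-continuity of $H_\varphi$ and continuity of $W_2$, one reduces to the case $\mu_0, \mu_1 \in \cP^2_{\ac}(M, \vol_g)$ with compact support inside $M_\varphi^\Psi$. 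In that regime Theorem~\ref{th:MA} produces $\mu_t \in \cP^2_{\ac}(M,\vol_g)$ for all $t \in [0,1]$, so the singular-part contribution to $H_\varphi$ vanishes.

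With these reductions in hand, apply Theorem~\ref{th:FG} to obtain the locally semi-convex potential $\phi$ with optimal map $\cT_t(x) = \exp_x(t \nabla\phi(x))$, and Theorem~\ref{th:MA} to get the Jacobian identity $\rho_t(\cT_t(x)) \bJ_t^\omega(x) = \rho_0(x)$. The hypothesis $\Ric_N \ge 0$ yields, via the Bochner--Lichnerowicz identity for the weighted Jacobi equation along the geodesics $t \mapsto \cT_t(x)$, the appropriate concavity/convexity of $t \mapsto \bJ_t^\omega(x)^{1/N}$ (for $N \in [n,\infty)$ and $N < 0$) and of $t \mapsto \log \bJ_t^\omega(x)$ (for $N = \infty$). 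Changing variables,
\[
\int_M h_\varphi(\rho_t) \,d\omega
 = \int_M \rho_0(x) \, \psi_N\!\bigl( \bJ_t^\omega(x)^{1/N} / \rho_0(x)^{1/N} \bigr) \,d\omega(x),
\]
where $\psi_N(r) = r^N h_\varphi(r^{-N})$. Since $h_\varphi \in \DC_N$, the function $\psi_N$ is convex and monotone in the correct direction (Lemma~\ref{lm:psi_N}), and composing with the Jacobian estimate gives pointwise convexity of the integrand in $t$; integration yields convexity of the internal-energy term. For the potential-energy term, the admissibility condition $\sigma = \exp_\varphi(-\Psi)$ gives $h'_\varphi(\sigma) = -\Psi$ on $M_\varphi^\Psi$ (up to the additive constant $-L_\varphi$ when $L_\varphi < \infty$), so that $-\int h'_\varphi(\sigma) \,d\mu_t = \int \Psi \,d\mu_t + \mathrm{const}$; the $K$-convexity of $\Psi$ applied along each geodesic $t \mapsto \cT_t(x) \subset M_\varphi^\Psi$ and integrated against $\mu_0$, using $W_2(\mu_0,\mu_1)^2 = \int d_g(x, \cT_1(x))^2 \,d\mu_0$, gives the $K$-convexity of this term. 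Summing the two contributions establishes \eqref{eq:phiCD} on the smooth class, and the approximation of the previous paragraph extends it to the general case.

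The main obstacle is the infinitesimal estimate on $\bJ_t^\omega$ together with its coupling to $\psi_N$: one must carefully track the sign conventions when $N < 0$ (where $\psi_N$ is non-decreasing rather than non-increasing, reversing the intermediate inequalities while preserving the final convexity direction) and handle the borderline $N = n$ and $N = \infty$ cases uniformly. A close second difficulty is the approximation step, which must simultaneously respect the support condition in $M_\varphi^\Psi$, the integrability hypothesis (A-\ref{integrable}), and preserve (in the limit) the $K$-convex interpolation along a \emph{single} minimal geodesic rather than an approximating family.
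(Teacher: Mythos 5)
Your reduction of the theorem to the implication (A)~$\Rightarrow$~(C) is exactly the paper's, as is the algebraic heart of that implication on absolutely continuous measures: $h_\varphi\in\DC_N$ via Theorem~\ref{th:DC} and Lemma~\ref{lm:DC_N}, the change of variables through the Jacobian equation, the concavity of $N\bJ_t^\omega(x)^{1/N}$ under $\Ric_N\ge 0$, the monotonicity/convexity of $\psi_N$ (Lemma~\ref{lm:psi_N}) for the internal energy, and $\Hess\Psi\ge K$ for the potential energy. Your remark that subtracting $rL_\varphi$ from $u_\varphi$ leaves $\psi_N$ unchanged up to an additive constant is also correct.

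Where you diverge from the paper, and where a genuine gap opens, is the treatment of the singular parts of $\mu_0,\mu_1$. You propose to truncate and mollify to absolutely continuous measures supported in $M_\varphi^\Psi$, then invoke ``lower semi-continuity of $H_\varphi$ and continuity of $W_2$.'' This does not close the argument. Passing to the limit in \eqref{eq:phiCD} requires
\[
H_\varphi(\mu_t)\le \liminf_\ve H_\varphi(\mu_t^\ve)
\qquad\text{and}\qquad
\limsup_\ve H_\varphi(\mu_i^\ve)\le H_\varphi(\mu_i)\quad(i=0,1);
\]
lower semi-continuity supplies only the first of these and points in precisely the \emph{wrong} direction at the two endpoints. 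You would need to design the approximants so that $H_\varphi(\mu_i^\ve)\to H_\varphi(\mu_i)$, which is nontrivial: when $L_\varphi<\infty$ the singular part contributes $h'_\varphi(\infty)\,\mu_i^s[M]=0$ to the internal energy, and while the heuristic of Remark~\ref{rm:Hm}(3) suggests the mollified Dirac does recover this, the argument there is only indicative, it is local (one Dirac), and $h_\varphi(r)\to-\infty$ as $r\to\infty$, so naive bounds do not control $\int h_\varphi(\rho^\ve)\,d\omega$ uniformly over a general singular measure. Beyond that, you would also need (i) convergence of (a subsequence of) the geodesics $\mu_t^\ve$ to a geodesic between $\mu_0$ and $\mu_1$, which requires a compactness argument that is absent, and (ii) preservation of the hypothesis in (C) that supports be joinable by geodesics inside $M_\varphi^\Psi$, which is \emph{not} automatic after mollification when $K\le 0$, since $M_\varphi^\Psi$ need not be totally convex (see Remark~\ref{rm:conn}).

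The paper avoids all of this by a direct construction: when $L_\varphi=\infty$ the claim is vacuous ($H_\varphi=\infty$ on measures with singular part), and when $L_\varphi<\infty$ it decomposes the optimal coupling $\pi=\pi_{aa}+\pi_{as}+\pi_{sa}+\pi_{ss}$ according to absolute continuity of the marginals, builds $\hat\mu_t$ from $\pi-\pi_{ss}$ via Theorem~\ref{th:FG}, builds $\tilde\mu_t$ from $\pi_{ss}$ via Proposition~\ref{pr:LV}, and crucially exploits that $h_\varphi$ is non-increasing and nonpositive to absorb the cross and singular terms in the internal energy, while $\Hess\Psi\ge K$ handles the potential term along each piece. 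You should replace the approximation paragraph with this decomposition (or, if you insist on approximation, carry out the three missing steps explicitly).
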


\begin{proof}
The equivalence between (A) and (B) has been established in \cite[Theorem~4.1]{mCD}.
As (C) $\Rightarrow$ (B) is trivial (recall $\theta_{\varphi_m}=2-m$, see Subsection~\ref{ssc:phi_m}),
it is enough to show (A) $\Rightarrow$ (C).

We can assume that both $H_\varphi(\mu_0)$ and $H_\varphi(\mu_1)$ are finite,
otherwise the assertion $(\ref{eq:phiCD})$ is obvious.
We first consider the case where both $\mu_0$ and $\mu_1$ are absolutely continuous
with respect to $\omega$.
By Theorem~\ref{th:FG}, there exists an almost everywhere twice differentiable function $\phi:\Omega \lra \R$
with $\mu_0[\Omega]=1$ such that the map $\cT_t(x):=\exp_x(t\nabla\phi(x))$ ($t \in [0,1]$)
gives the unique minimal geodesic $\mu_t:=(\cT_t)_{\sharp}\mu_0$ from $\mu_0$ to $\mu_1$.
Given $\mu_0$-a.e.\ $x$, $\cT_1(x)$ is not a cut point of $x$ due to \cite[Proposition~4.1]{CMS},
so that the geodesic $(\cT_t(x))_{t \in [0,1]}$ is unique and contained in $M_\varphi^\Psi$.
Put $\mu_t=\rho_t \omega$ and $\bJ^{\omega}_t(x):=e^{f(x)-f(\cT_t(x))}\det(D\cT_t(x))$.
By the change of variables formula with the Jacobian equation
$(\rho_t \circ \cT_t) \bJ^{\omega}_t =\rho_0$ $\mu_0$-a.e.\ (Theorem~\ref{th:MA}),
we deduce that
\begin{align*}
H_{\varphi}^{(1)}(\mu_t) 
&:= \int_M h_{\varphi}(\rho_t) \,d\omega
 =\int_M h_{\varphi} \big( \rho_t(\cT_t) \big) \bJ^{\omega}_t \,d\omega \\
&=\int_M h_{\varphi} \bigg( \frac{\rho_0}{\bJ^{\omega}_t} \bigg)
 \frac{\bJ^{\omega}_t}{\rho_0} \,d\mu_0
 =\int_M \psi \bigg( \bigg( \frac{\bJ^{\omega}_t}{\rho_0} \bigg)^{1/N} \bigg) \,d\mu_0,
\end{align*}
where we set $\psi(r):=r^N h_{\varphi}(r^{-N})$.
As Theorem~\ref{th:DC} together with the monotonicity of $\DC_N$ in $m$
(Lemma~\ref{lm:DC_N}) ensures $h_{\varphi} \in \DC_N$,
the function $\psi(r)$ is non-increasing (resp.\ non-decreasing) and convex in $r$
if $N \ge 1$ (resp.\ $N<0$) due to Lemma~\ref{lm:psi_N}.

Then the essential ingredient is the concavity of $N \bJ_t^{\omega}(x)^{1/N}$ as in the next claim.
We give a sketch of the proof for completeness
(see \cite{StII} or \cite{Oint} for a detailed proof, where a more delicate estimate
under $\Ric_N \ge K$ for general $K \in \R$ is discussed).

\begin{claim}\label{cl:mCD}
Under $\Ric_N \ge 0$, $N \bJ_t^{\omega}(x)^{1/N}$ is concave in $t$ for $\mu_0$-a.e.\ $x$.
\end{claim}

\begin{proof}
Take an orthonormal basis $\{e_i\}_{i=1}^n$ of $T_xM$ 
and extend each $e_i$ to the vector field $E_i(t):=D(\cT_t)_x(e_i)$ for $t \in [0,1]$.
Note that every $E_i$ is a Jacobi field along the geodesic $\gamma(t):=\cT_t(x)$,
since $\cT_t$ is a transport along geodesics.
Let us consider the $n \times n$ matrix-valued functions $A(t)=(a_{ij}(t))$ and $B(t)=(b_{ij}(t))$
given by
\[ a_{ij}(t):=\langle E_i(t),E_j(t) \rangle, \qquad
 D_{\dot{\gamma}}E_i(t)=\sum_{j=1}^n b_{ij}(t)E_j(t), \]
where $D_{\dot{\gamma}}$ is the covariant derivative along $\gamma$.
Observe that $\bJ_t^{\omega}(x)=e^{f(x)-f(\gamma(t))}\sqrt{\det A(t)}$.
We see by calculations
$A'=2BA$ and $A''=-2\Ric_{\dot{\gamma}} +2B^2A$,
where we set $\Ric_{\dot{\gamma}}:=(\langle R(E_i,\dot{\gamma})\dot{\gamma},E_j \rangle)_{i,j=1}^n$
and $R$ stands for the Riemannian curvature tensor of $(M,g)$.
Combining these with the symmetry of $B$, we have
\begin{align*}
\frac{d^2}{dt^2} \Big[ (\det A)^{1/2n} \Big]
 = \bigg\{ \frac{(\trace B)^2}{n}-\trace(\Ric_{\dot{\gamma}} A^{-1})-\trace(B^2) \bigg\}
 \frac{(\det A)^{1/2n}}{n}
 \le -\frac{\Ric(\dot{\gamma})}{n} (\det A)^{1/2n}.
\end{align*}
Put
\[ v(t):=\bJ_t^{\omega}(x)^{1/N}, \quad v_1(t):=e^{\{f(x)-f(\gamma(t))\}/(N-n)},
 \quad v_2(t):=\{\det A(t) \}^{1/2n}. \]
As $v=v_1^{(N-n)/N} v_2^{n/N}$, we obtain
\begin{align*}
Nv^{-1}v'' &= (N-n)v_1^{-1} v''_1 +nv_2^{-1} v''_2
 -\frac{(N-n)n}{N}(v_1^{-1}v'_1 -v_2^{-1} v'_2)^2 \\
&\le -(f \circ \gamma)''+\frac{\{ (f \circ \gamma)' \}^2}{N-n} -\Ric(\dot{\gamma}) =-\Ric_N(\dot{\gamma}).
\end{align*}
Note that the range of $N \in (-\infty,0) \cup [n,\infty)$ is essential here
for making $(N-n)/N$ nonnegative.
Thus the assumption $\Ric_N \ge 0$ implies $Nv'' \le 0$, so that
$Nv=N\bJ_t^{\omega}(x)^{1/N}$ is concave in $t$.
$\hfill \diamondsuit$
\end{proof}

Therefore we have, as $\bJ_0^{\omega} \equiv 1$,
\begin{align}
\psi \big( (\bJ^{\omega}_t/\rho_0)^{1/N} \big)
&\le \psi \big( (1-t)(1/\rho_0)^{1/N} +t(\bJ^{\omega}_1/\rho_0)^{1/N} \big) \nonumber\\
&\le (1-t)\psi \big( (1/\rho_0)^{1/N} \big) +t\psi \big( (\bJ^{\omega}_1/\rho_0)^{1/N} \big) \label{eq:0}
\end{align}
$\mu_0$-a.e..
This implies  
\begin{align}\label{eq:1}
H^{(1)}_\varphi(\mu_t)
&= \int_M \psi \bigg( \bigg( \frac{\bJ^{\omega}_t}{\rho_0} \bigg)^{1/N} \bigg) \,d\mu_0 \nonumber\\
&\le \int_M \bigg\{ (1-t)\psi \bigg( \bigg( \frac{1}{\rho_0} \bigg)^{1/N} \bigg)
 +t\psi \bigg( \bigg( \frac{\bJ^{\omega}_1}{\rho_0} \bigg)^{1/N} \bigg) \bigg\} \,d\mu_0 \nonumber\\ 
&= (1-t)H^{(1)}_\varphi(\mu_0) + tH^{(1)}_\varphi(\mu_1). 
\end{align}
On the other hand, it follows from $\Hess\Psi \geq K$ that
\begin{align*}
\int_M \Psi \,d\mu_t &=\int_M \Psi(\cT_t) \,d\mu_0 
\le \int_M \bigg\{ (1-t)\Psi(\cT_0)+t\Psi(\cT_1)
 -\frac{K}{2}(1-t)td_g(\cT_0,\cT_1)^2 \bigg\} d\mu_0,
\end{align*}
and hence
\begin{align}\label{eq:2}
H^{(2)}_\varphi(\mu_t) &:=-\int_M h'_{\varphi}(\sigma) \,d\mu_t 
\le (1-t)H^{(2)}_\varphi(\mu_0) +tH^{(2)}_\varphi(\mu_1)
 -\frac{K}{2}(1-t)t W_2(\mu_0,\mu_1)^2. 
\end{align}
Combining \eqref{eq:1} with \eqref{eq:2}, we obtain the desired inequality \eqref{eq:phiCD}.

Let us next consider the case where $\mu_0$ or $\mu_1$ has nontrivial singular part.
We may assume $L_{\varphi}<\infty$, otherwise \eqref{eq:phiCD} trivially holds
by the definition of $h'_{\varphi}(\infty)$.
Decompose $\mu_0$ and $\mu_1$ as $\mu_0=\rho_0 \omega+\mu_0^s$ and
$\mu_1=\rho_1 \omega+\mu_1^s$, and take an optimal coupling $\pi$ of $\mu_0$ and $\mu_1$.
Let $p_1,p_2:M \times M \lra M$ denote the projections to the first and second components.
Now, $\pi$ is decomposed into four parts
\[ \pi=\pi_{aa}+\pi_{as}+\pi_{sa}+\pi_{ss} \]
such that $(p_1)_{\sharp}(\pi_{aa})$, $(p_1)_{\sharp}(\pi_{as})$, $(p_2)_{\sharp}(\pi_{aa})$ and
$(p_2)_{\sharp}(\pi_{sa})$ are absolutely continuous
and that $(p_1)_{\sharp}(\pi_{sa})$, $(p_1)_{\sharp}(\pi_{ss})$, $(p_2)_{\sharp}(\pi_{as})$
and $(p_2)_{\sharp}(\pi_{ss})$ are singular or null measures.
We divide optimal transport between $\mu_0$ and $\mu_1$ into two parts,
corresponding to $\pi-\pi_{ss}$ and $\pi_{ss}$.
For $\hat{\mu}_0:=(p_1)_{\sharp}(\pi-\pi_{ss})$ and $\hat{\mu}_1:=(p_2)_{\sharp}(\pi-\pi_{ss})$, 
Theorem~\ref{th:FG} guarantees the existence of a geodesic
\[ \hat{\mu}_t  \in (1-\pi_{ss}[M \times M]) \cdot \cP^2_{\ac}(M,\omega),
 \qquad t \in (0,1), \]
(i.e., $\hat{\mu}_t[M]=1-\pi_{ss}[M \times M]$) such that
$\hat{\mu}_t[M^{\Psi}_{\varphi}]=\hat{\mu}_t[M]$.
Setting $\hat{\mu}_t=\hat{\rho}_t\omega$, we observe
\begin{align*}
\int_M h_\varphi(\hat{\rho}_t) \,d\omega 
&\le  (1-t) \int_M h_\varphi(\rho_0) \,d\omega +t\int_M  h_\varphi(\rho_1) \,d\omega,\\
 -\int_M h'_{\varphi} (\sigma) \,d\hat{\mu}_t
&\le -(1-t)\int_M h'_{\varphi}(\sigma) \,d\hat{\mu}_0 -t\int_M h'_{\varphi}(\sigma) \,d\hat{\mu}_1\\
&\quad -\frac{K}{2}(1-t)t \int_{M \times M} d_g(x,y)^2 \,d(\pi-\pi_{ss})(x,y).
\end{align*}
To be precise, in the first inequality, we used $h_{\varphi} \le 0$ along the transports corresponding to
$\pi_{as}$ and $\pi_{sa}$. 
By Proposition~\ref{pr:LV}, we find a minimal geodesic
\[ \tilde{\mu}_t=\tilde{\rho}_t \omega +\tilde{\mu}_t^s \in \pi_{ss}[M \times M] \cdot \cP^2(M) \]
from $\tilde{\mu}_0:=(p_1)_{\sharp}(\pi_{ss})$ to $\tilde{\mu}_1:=(p_2)_{\sharp}(\pi_{ss})$
realized through a family of geodesics in $M_\varphi^\Psi$.
Then the condition $\Hess\Psi \ge K$ implies
\begin{align*}
-\int_M h'_{\varphi} (\sigma) \,d\tilde{\mu}_t 
&\le -(1-t)\int_M h'_{\varphi} (\sigma) \,d\tilde{\mu}_0
 -t\int_M h'_{\varphi} (\sigma)  \,d\tilde{\mu}_1 \\
&\quad-\frac{K}{2}(1-t)t \int_{M \times M} d_g(x,y)^2 \,d\pi_{ss}(x,y).
\end{align*}
We put $\mu_t:=\hat{\mu}_t+\tilde{\mu}_t$ and conclude that
\begin{align*}
H_\varphi(\mu_t) 
&= \int_M h_{\varphi} ( \hat{\rho}_t+\tilde{\rho}_t)\,d\omega -\int_M  h'_{\varphi} (\sigma )\,d\mu_t 
\leq \int_M h_{\varphi} ( \hat{\rho}_t)\,d\omega -\int_M  h'_{\varphi} (\sigma ) \,d(\hat{\mu}_t+\tilde{\mu}_t) \\
&\le (1-t)H_\varphi(\mu_0) +tH_\varphi(\mu_1) -\frac{K}{2}(1-t)tW_2(\mu_0,\mu_1)^2,
\end{align*}
where we used the fact that $h_{\varphi}$ is non-increasing (since $L_{\varphi}<\infty$) in the second line.
$\qedd$
\end{proof}

\begin{remark}\label{rm:conn}
Recall that $M^{\Psi}_{\varphi}=M^{\Psi}_{\varphi_m}=M$ if $l_{\varphi}=-\infty$ by definition
(and admissibility).
Hence the condition in (B) and (C) that $\supp\mu_0$ and $\supp\mu_1$
are connected in $M^{\Psi}_{\varphi}$ is nontrivial only if $l_{\varphi}>-\infty$.
Even when $l_{\varphi}>-\infty$, Lemma~\ref{lm:K>0}(i) guarantees that
$M^{\Psi}_{\varphi}$ is totally convex if $\Hess \Psi \ge K>0$.
\end{remark}

In the limit case of $N=\infty$ ($m=1$), we can follow the proof of 
(A) $\Rightarrow$ (C) using $\psi(r)=e^r h_{\varphi}(e^{-r})$
as well as the concavity of $\log(\bJ_t^{\omega}(x))$.
However, the implication (B) $\Rightarrow$ (A) is not true.
This is because the two weights $f$ and $\Psi$ are synchronized as $\nu=e^{-f-\Psi}\vol_g$
and we can control only the behavior of $f+\Psi$
(see the proof of (B) $\Rightarrow$ (A) sketched in the next subsection).

Instead, one can see from Theorem~\ref{th:LVCD} that $\Ric_{\infty} \ge K$ (of $(M,\omega)$)
implies the $\lambda(K,U)$-convexity of $U_{\omega}$ for all $U \in \DC_{\infty}$, where
\[ \lambda(K,U) :=\inf_{r>0}K\frac{rU'(r)-U(r)}{r} =\left\{ \begin{array}{cl}
 K\lim_{r \downarrow 0} \{rU'(r)-U(r)\}/r & \text{if}\ K>0, \smallskip\\
 0 & \text{if}\ K=0, \smallskip\\
 K\lim_{r \to \infty} \{ rU'(r)-U(r) \}/r & \text{if}\ K<0
 \end{array} \right. \]
(\cite[Theorem~7.3]{LV2}, \cite[Theorem~30.5]{Vi2}).

\section{Functional inequalities}\label{sc:func}

If $\Ric_{N_{\varphi}} \ge 0$ and $\Hess\Psi \ge K$ for some $K>0$, then we can obtain variants of
the Talagrand inequality, the HWI inequality, the logarithmic Sobolev inequality and the global Poincar\'e inequality.
These are derived from fundamental properties of convex functions along the lines of
\cite{OV} and \cite[Section~6]{LV2} (see also \cite[Section~5]{mCD} where
we studied the special case of the $m$-relative entropies).
We will impose only the strictly weaker condition $\Hess H_{\varphi} \ge K>0$ (for single $\varphi$)
in the $\varphi$-Talagrand inequality for the use in the next section.

For $\mu=\rho\omega \in \cP^2_{\ac}(M,\omega)$ with $\mu[M^{\Psi}_{\varphi}]=1$,
we define the \emph{$\varphi$-relative Fisher information} with respect to $\nu=\sigma\omega$ by
\begin{equation}\label{eq:Im}
I_{\varphi} (\mu)
 := \int_M \left|  \nabla [\ln_{\varphi}(\rho)-\ln_{\varphi}(\sigma)]  \right|^2 \,d\mu
 =\int_M \left| \frac{\nabla\rho}{\varphi(\rho)}+\nabla\Psi \right|^2 d\mu
\end{equation}
provided that it is well-defined, otherwise we set $I_{\varphi}(\mu):=\infty$.
This quantity describes the directional derivatives of $H_{\varphi}$ as follows.
(At this point the treatment in \cite{mCD} was somewhat too rough,
the argument in the present paper is correct.)

\begin{proposition}[Directional derivatives of $H_{\varphi}$]\label{pr:dHm}
Let $(M,\omega,\varphi,\Psi)$ be an admissible space with
$\Ric_{N_{\varphi}} \ge 0$ and $\Hess\Psi \ge K$ on $M^{\Psi}_{\varphi}$ for some $K \in \R$,
and $\mu=\rho\omega \in \cP^2_{\ac}(M,\omega)$ be such that
$\mu[M_{\varphi}^{\Psi}]=1$, $H_{\varphi}(\mu)<\infty$,
$\rho h'_{\varphi}(\rho)-h_{\varphi}(\rho) \in H^1_{\loc}(M)$
and that $|\nabla\rho/\varphi(\rho)|+|\nabla\Psi| \in L^2(M,\mu)$.
Take a minimal geodesic $(\mu_t)_{t \in [0,1]} \subset \cP^2(M)$
emanating from $\mu_0=\mu$ generated by a locally semi-convex function $\phi:M \lra \R$
as $\mu_t=(\cT_t)_{\sharp}\mu$ with $\cT_t(x)=\exp_x(t\nabla\phi(x))$.
If $\theta_{\varphi}<1$, then we further suppose that $\supp\mu_0$ and $\supp\mu_1$ are compact.
Then we have
\begin{equation}\label{eq:dHm}
\liminf_{t \downarrow 0}\frac{H_{\varphi}(\mu_t)-H_{\varphi}(\mu)}{t}
 \ge \int_M \bigg\langle \frac{\nabla\rho}{\varphi(\rho)}+\nabla\Psi,\nabla\phi \bigg\rangle \,d\mu.
\end{equation}
Moreover, equality holds in \eqref{eq:dHm}
$($with $\lim_{t \downarrow 0}$ in place of $\liminf_{t \downarrow 0})$ if $\phi \in C_c^{\infty}(M)$.
\end{proposition}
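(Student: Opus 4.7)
The plan is to split the entropy increment into internal and potential parts and handle each via the geometric convexity structure along the Wasserstein geodesic. Since $\sigma=\exp_{\varphi}(-\Psi)$ gives $h'_{\varphi}(\sigma)=-\Psi$ on $M^{\Psi}_{\varphi}$ (up to the additive constant $-L_{\varphi}$ when $L_{\varphi}<\infty$), I write
\[
H_{\varphi}(\mu_t)-H_{\varphi}(\mu)=\Delta^{(1)}(t)+\Delta^{(2)}(t),\qquad
\Delta^{(1)}(t):=\int_{M}[h_{\varphi}(\rho_t)-h_{\varphi}(\rho)]\,d\omega,\quad
\Delta^{(2)}(t):=\int_{M}\Psi\,(d\mu_t-d\mu).
\]
For $\Delta^{(2)}$, the $K$-convexity of $\Psi$ along each geodesic $s\mapsto\cT_s(x)$ supplies the first-order tangent inequality $\Psi(\cT_t(x))-\Psi(x)\ge t\langle\nabla\Psi(x),\nabla\phi(x)\rangle+\frac{K}{2}t^2|\nabla\phi(x)|^2$ for $\mu$-a.e.\ $x$. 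Dividing by $t$ and noting that $\int|\nabla\phi|^2\,d\mu=W_2(\mu_0,\mu_1)^2<\infty$ together with $|\nabla\Psi|\in L^2(M,\mu)$ provides an integrable lower bound, so Fatou yields $\liminf_{t\downarrow 0}\Delta^{(2)}(t)/t\ge\int\langle\nabla\Psi,\nabla\phi\rangle\,d\mu$.

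For $\Delta^{(1)}$, the Jacobian equation (Theorem~\ref{th:MA}) gives $\Delta^{(1)}(t)=\int_M[A_t-A_0]\,d\omega$ with $A_t(x):=\bJ^{\omega}_t(x)\,h_{\varphi}(\rho(x)/\bJ^{\omega}_t(x))$. By Claim~\ref{cl:mCD} (under $\Ric_N\ge 0$) the function $N(\bJ^{\omega}_t)^{1/N}$ is concave in $t$, and combined with $h_{\varphi}\in\DC_N$ from Theorem~\ref{th:DC}, the argument leading to \eqref{eq:0} shows that $t\mapsto A_t(x)$ is convex for $\mu$-a.e.\ $x$. A direct differentiation gives the right derivative $A'_0(0^+)(x)=-P_{\varphi}(\rho(x))\,\Delta_{\omega}\phi(x)$, where $P_{\varphi}(r):=rh'_{\varphi}(r)-h_{\varphi}(r)=\int_0^r s/\varphi(s)\,ds\ge 0$ (so in particular $P'_{\varphi}(r)=r/\varphi(r)$), and $\Delta_{\omega}\phi$ is the pointwise $\omega$-Laplacian available $\mu$-a.e.\ from Alexandrov--Bangert twice differentiability of $\phi$. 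Convexity of $A_t$ then gives $(A_t-A_0)/t\ge A'_0(0^+)$ pointwise, and Fatou (using $A'_0(0^+)$'s integrable negative part, which is bounded thanks to the semi-convexity of $\phi$ providing a uniform lower bound on $\Delta_{\omega}\phi$ over $\supp\phi$, and the assumed integrability of $P_{\varphi}(\rho)$ inherent in $h_{\varphi}(\rho)\in L^1$) yields $\liminf_{t\downarrow 0}\Delta^{(1)}(t)/t\ge-\int_M P_{\varphi}(\rho)\Delta_{\omega}\phi\,d\omega$.

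To match the right-hand side of~\eqref{eq:dHm}, I invoke the distributional $\omega$-Laplacian $\Delta_{\omega}^{\mathrm{distr}}\phi$ of the locally semi-convex $\phi$, which dominates its absolutely continuous part: $\Delta_{\omega}^{\mathrm{distr}}\phi-\Delta_{\omega}\phi\cdot\omega\ge 0$ as a measure. Since $P_{\varphi}(\rho)\ge 0$, testing against this nonnegative measure gives $-\int P_{\varphi}(\rho)\Delta_{\omega}\phi\,d\omega\ge-\int P_{\varphi}(\rho)\,d\Delta_{\omega}^{\mathrm{distr}}\phi$. Because $P_{\varphi}(\rho)=\rho h'_{\varphi}(\rho)-h_{\varphi}(\rho)\in H^1_{\loc}(M)$ by assumption, integration by parts is legitimate, and the chain rule $\nabla P_{\varphi}(\rho)=P'_{\varphi}(\rho)\nabla\rho=(\rho/\varphi(\rho))\nabla\rho$ (valid under the $L^2(\mu)$-control of $\nabla\rho/\varphi(\rho)$) converts the integral into $\int_M\langle\nabla\rho/\varphi(\rho),\nabla\phi\rangle\,d\mu$, which combined with the potential contribution produces~\eqref{eq:dHm}.

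For the equality claim when $\phi\in C_c^{\infty}(M)$, the distributional and pointwise Laplacians of $\phi$ coincide (no singular part), turning the above key inequality into an identity. Moreover, on $\supp\phi$ the maps $\cT_t$, their differentials, and $\bJ^{\omega}_t$ are uniformly close to the identity for small $t$, while outside $\supp\phi$ one has $\cT_t=\mathrm{id}$ and $A_t=A_0$; this provides a common integrable majorant, so dominated convergence upgrades $\liminf$ to $\lim$ in both $\Delta^{(1)}(t)/t$ and $\Delta^{(2)}(t)/t$, and the derived value matches the right-hand side of~\eqref{eq:dHm}. The delicate step is the one in the previous paragraph: reconciling the pointwise Alexandrov Laplacian produced by the convexity argument with the distributional Laplacian required for integration by parts against $P_{\varphi}(\rho)\in H^1_{\loc}$. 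The sign $P_{\varphi}\ge 0$ together with the one-sided measure inequality for semi-convex $\phi$ is what makes the inequality point in the correct direction; absent either ingredient the argument would fail.
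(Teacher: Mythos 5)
Your reconstruction captures the paper's core argument: the split into internal and potential parts, the convexity of $A_t(x)=\bJ^{\omega}_t(x)\,h_{\varphi}(\rho(x)/\bJ^{\omega}_t(x))$ via Claim~\ref{cl:mCD} and $h_{\varphi}\in\DC_N$, the first-order tangent inequality from the $K$-convexity of $\Psi$, and the pointwise derivative $A'_0(0^+)=-P_{\varphi}(\rho)\Delta^{\omega}\phi$ all match what the paper does. The explicit observation that the Alexandrov (pointwise) Laplacian sits below the distributional Laplacian for a semi-convex $\phi$, combined with $P_{\varphi}\ge 0$, is a nice way to see why the inequality in~\eqref{eq:dHm} runs in the right direction; the paper itself does not spell this out but implicitly relies on it through the reference to Villani.

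There is, however, a genuine gap in the step you yourself flag as delicate. You assert that integration by parts
\[
-\int_M P_{\varphi}(\rho)\,d\Delta_{\omega}^{\mathrm{distr}}\phi
=\int_M \langle\nabla P_{\varphi}(\rho),\nabla\phi\rangle\,d\omega
\]
is ``legitimate'' because $P_{\varphi}(\rho)\in H^1_{\loc}(M)$. But this identity is only immediate when one of the two functions is compactly supported. For a locally semi-convex $\phi$ that is not compactly supported and $P_{\varphi}(\rho)$ only in $H^1_{\loc}$, one must truncate, pass to the limit, and verify that the boundary contributions are controlled; this is nontrivial and is exactly the content of Step~3 of Theorem~23.14 in Villani's book, which the paper cites for this case. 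It is also exactly where the extra hypothesis that $\supp\mu_0$ and $\supp\mu_1$ be compact when $\theta_{\varphi}<1$ is needed (because then $s/\varphi(s)$ is unbounded for large $s$ and the $L^2(\mu)$-control of $\nabla\rho/\varphi(\rho)$ alone does not suffice; see Remark~\ref{rm:dHm}). Your proof never uses that assumption, so the case $\theta_{\varphi}<1$ is not actually covered. A second, smaller issue is that your Fatou/dominated-convergence bookkeeping is loose: the claimed integrable majorant for $(A_t-A_0)/t$ via ``$\cT_t$ uniformly close to the identity'' does not obviously produce an $L^1(\omega)$ bound (it requires, e.g., $P_{\varphi}(2\rho)\in L^1$ which is not automatic for general $\varphi$). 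The cleaner route, which is what the paper uses, is to note that the convexity of $t\mapsto A_t$ makes $(A_t-A_0)/t$ monotone in $t$ and to apply monotone convergence with the integrable bound $(A_1-A_0)\in L^1(\omega)$ coming from $H_{\varphi}(\mu_0),H_{\varphi}(\mu_1)<\infty$.
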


\begin{proof}
We first deduce equality in \eqref{eq:dHm} for $\phi \in C_c^{\infty}(M)$.
Put $\mu_t=\rho_t \omega$ and $\bJ^{\omega}_t:=e^{f-f(\cT_t)}\det(D\cT_t)$
as in the proof of Theorem~\ref{th:mCD}.
We follow the calculation in Theorem~\ref{th:mCD} and see
\[ H_{\varphi}(\mu_t)-H_{\varphi}(\mu)
 =\int_M \left\{ h_{\varphi}\bigg( \frac{\rho}{\bJ^{\omega}_t} \bigg) \bJ_t^{\omega}-h_{\varphi}(\rho) \right\} d\omega
 -\int_M \{ h'_{\varphi}(\sigma \circ \cT_t)-h'_{\varphi}(\sigma) \} \,d\mu. \]
By the convexity \eqref{eq:0} of $h_{\varphi}(\rho/\bJ^{\omega}_t) \bJ_t^{\omega}$ in $t$
and $\Hess\Psi \ge K$, we can apply the monotone convergence theorem and obtain
\[ \lim_{t \downarrow 0} \frac{H_{\varphi}(\mu_t)-H_{\varphi}(\mu)}{t}
 =\int_M \lim_{t \downarrow 0} \frac{h_{\varphi}(\rho/\bJ^{\omega}_t) \bJ_t^{\omega}-h_{\varphi}(\rho)}{t} \,d\omega
 +\int_M \langle \nabla\Psi,\nabla\phi \rangle \,d\mu. \]
Note that, by using the weighted Laplacian $\Delta^{\omega}$
introduced at the beginning of Subsection~\ref{ssc:heat} below,
\begin{align*}
\lim_{t \downarrow 0} \frac{\bJ^{\omega}_t -1}{t}
&= \lim_{t \downarrow 0} \frac{e^{f-f(\cT_t)}\det(D\cT_t)-1}{t}
 =\trace(\Hess \phi) -\langle \nabla\phi,\nabla f \rangle \\
&=\Delta\phi -\langle \nabla\phi,\nabla f \rangle =\Delta^{\omega}\phi.
\end{align*}
Thus we have
\[ \lim_{t \downarrow 0} \frac{h_{\varphi}(\rho/\bJ^{\omega}_t) \bJ_t^{\omega}-h_{\varphi}(\rho)}{t}
 =\{ h_{\varphi}(\rho) -h'_{\varphi}(\rho)\rho \}
 \lim_{t \downarrow 0} \frac{\bJ^{\omega}_t -1}{t}
 =\{ h_{\varphi}(\rho) -h'_{\varphi}(\rho)\rho \big\} \Delta^{\omega}\phi. \]
Therefore we conclude, by the integration by parts for $\Delta^{\omega}$ (since $\phi \in C_c^{\infty}(M)$),
\begin{align*}
\lim_{t \downarrow 0}\frac{ H_\varphi(\mu_t)-H_\varphi(\mu)}{t}
&= \int_M \langle \nabla[h'_{\varphi}(\rho)\rho -h_{\varphi}(\rho)]+\rho\nabla\Psi, \nabla\phi \rangle \,d\omega \\
&= \int_M \bigg\langle \frac{\nabla\rho}{\varphi(\rho)}+\nabla\Psi,\nabla\phi \bigg\rangle \,d\mu.
\end{align*}

In the case of $\phi \not\in C_c^{\infty}(M)$, we need to take care about the last step
of integration by parts.
If $\theta_{\varphi} \ge 1$ (equivalently, $N_{\varphi} \in [n,\infty] \cap (2,\infty]$),
then we can directly apply \cite[Theorem~23.14]{Vi2} to see \eqref{eq:dHm}.
For $\theta_{\varphi}<1$, the same proof (Step~3 in \cite[Theorem~23.14]{Vi2}) still works
provided that $\supp\mu_0$ and $\supp\mu_1$ are compact.
$\qedd$
\end{proof}

\begin{remark}\label{rm:dHm}
Let us add some more remarks on the case of $\theta_{\varphi}<1$.
A large part of the proof of \cite[Theorem~23.14]{Vi2} also works in this case
(even without the approximation procedure based on \cite[Proposition~17.7]{Vi2}).
Proposition~\ref{lm:case}(i) ensures $l_{\varphi}>-\infty$
so that $u_{\varphi}(r) \ge l_{\varphi}r$ for all $r \ge 0$,
and Lemma~\ref{lem:mono} shows
\[ \frac{s}{\varphi(s)} \le \bigg( \frac{s}{t} \bigg)^{1-\theta_{\varphi}} \frac{t}{\varphi(t)}
 \le \frac{t}{\varphi(t)} \]
for all $0<s<t$, which corresponds to \cite[(23.52)]{Vi2} with $A=\infty$
(hence (23.53) and (23.54) are unnecessary).
Note that $p'(s)$ in \cite{Vi2} is $s/\varphi(s)$ in our context.
The only problem is that $s/\varphi(s)$ is never bounded for large $s$,
as seen from the model case of $s/\varphi_m(s)=s^{m-1}$ with $m>1$.
The boundedness is used to guarantee $p'(\rho) \in L^2(M,\mu)$,
so that we can assume $\rho/\varphi(\rho) \in L^2(M,\mu)$
instead of the compactness of $\supp\mu_0 \cup \supp\mu_1$
in Proposition~\ref{pr:dHm} above.
\end{remark}

We assume $\nu[M]=1$ by scaling (recall Remark~\ref{rem:prob}),
and prove functional inequalities associated with $H_{\varphi}$.

\begin{theorem}\label{thm:inequ}
Let $(M,\omega,\varphi, \Psi)$ be admissible with $\nu \in \cP_{\ac}^2(M,\omega)$.
We set $H_{\nu}:=H_\varphi(\nu)$ for brevity.

\begin{enumerate}[{\rm (i)}]
\item{\rm ($\varphi$-Talagrand inequality)}
Suppose that $\Hess H_{\varphi} \ge K$ for some $K>0$.
For any $\mu \in \cP^2(M)$, we have
\begin{equation}\label{eq:tal} 
W_2(\mu,\nu) \le \sqrt{\frac{2}{K}(H_\varphi(\mu)-H_{\nu})}. 
\end{equation}

\item{\rm ($\varphi$-HWI, $\varphi$-logarithmic Sobolev inequalities)}
Assume $\Ric_{N_{\varphi}} \ge 0$ and $\Hess\Psi \ge K$ on $M^{\Psi}_{\varphi}$ for some $K>0$.
Given $\mu=\rho\omega \in \cP^2_{\ac}(M)$ with $\mu[M^{\Psi}_{\varphi}]=1$
such that $H_{\varphi}(\mu)<\infty$ and that $\rho$ is locally Lipschitz, we have 
\begin{align}
H_\varphi(\mu)-H_\nu
&\le \sqrt{I_\varphi(\mu)} \cdot W_2(\mu,\nu) -\frac{K}{2}W_2(\mu,\nu)^2, \label{eq:HWI} \\
H_\varphi(\mu)-H_\nu &\le \frac{1}{2K}I _\varphi(\mu). \label{eq:LS}
\end{align}

\item{\rm ($\varphi$-global Poincar\'e inequality)}
Let $(M,g)$ be compact and $\varphi$ be $C^1$, and
assume $\Ric_{N_{\varphi}} \ge 0$ and $\Hess\Psi \ge K$ on $M^{\Psi}_{\varphi}$ for some $K>0$.
Then for any Lipschitz function $w:M_\varphi^\Psi \lra \R$ such that $\int_{M_\varphi^\Psi}w \,d\nu=0$, we have
\begin{equation}\label{eq:sakana}
\int_{M_\varphi^\Psi} \frac{w^2\sigma}{ \varphi(\sigma)} \,d\nu
 \le \frac{1}{K}\int_{M_\varphi^\Psi} \left|\nabla \left( \frac{w\sigma}{\varphi(\sigma)} \right) \right|^2 \,d\nu.
\end{equation}
\end{enumerate}
\end{theorem}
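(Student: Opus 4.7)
The plan is to prove the three inequalities in sequence, each building on the displacement convexity obtained in Theorem~\ref{th:mCD} (or its weakening $\Hess H_\varphi \ge K$).

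For (i), I fix $\mu \in \cP^2(M)$ and, using the hypothesis $\Hess H_\varphi \ge K$ directly, take a minimal geodesic $(\mu_t)_{t \in [0,1]}$ from $\nu$ to $\mu$ along which the $K$-convexity inequality is realized. Subtracting $H_\nu$ from both sides and using Lemma~\ref{lm:Hm} (which gives $H_\varphi(\mu_t) \ge H_\nu$) reduces the inequality to
\[ 0 \le t\bigl( H_\varphi(\mu) - H_\nu \bigr) - \frac{K}{2}(1-t)t\, W_2(\mu,\nu)^2. \]
Dividing by $t>0$ and letting $t \downarrow 0$ yields \eqref{eq:tal}.

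For (ii), I consider the Brenier-type geodesic from $\mu$ to $\nu$ generated by a locally semi-convex potential $\phi$ as in Theorem~\ref{th:FG} (applicable because $\mu \in \cP^2_{\ac}(M)$). Displacement $K$-convexity of $H_\varphi$ along this geodesic gives
\[ \frac{H_\varphi(\mu_t) - H_\varphi(\mu)}{t} \le H_\nu - H_\varphi(\mu) - \frac{K}{2}(1-t) W_2(\mu,\nu)^2. \]
Passing to $\liminf_{t \downarrow 0}$, I invoke Proposition~\ref{pr:dHm} to bound the left-hand side below by $\int_M \langle \nabla\rho/\varphi(\rho) + \nabla\Psi, \nabla\phi \rangle \,d\mu$, and then use Cauchy--Schwarz together with the identity $\|\nabla\phi\|_{L^2(\mu)} = W_2(\mu,\nu)$ (since $\cT_t(x)=\exp_x(t\nabla\phi(x))$ realizes the optimal transport) to conclude that this integral is at least $-\sqrt{I_\varphi(\mu)}\,W_2(\mu,\nu)$. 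Rearranging gives \eqref{eq:HWI}. The logarithmic Sobolev inequality \eqref{eq:LS} then follows from the elementary quadratic bound $aW - \tfrac{K}{2}W^2 \le a^2/(2K)$ with $a = \sqrt{I_\varphi(\mu)}$ and $W = W_2(\mu,\nu)$.

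For (iii), the $\varphi$-global Poincar\'e inequality is obtained by linearizing \eqref{eq:LS} at the ground state $\nu$. Given a Lipschitz $w$ with $\int w\,d\nu = 0$, I define, for small $\epsilon>0$, the perturbed density $\rho_\epsilon := \sigma(1+\epsilon w)$ on $M^\Psi_\varphi$ (extended by $0$ elsewhere), which is nonnegative for small $\epsilon$ by compactness and automatically satisfies $\int \rho_\epsilon\,d\omega = 1$ thanks to the zero-mean condition. A second-order Taylor expansion of $h_\varphi$ inside \eqref{eq:rel}, using $h_\varphi''(\sigma) = 1/\varphi(\sigma)$ and $\rho_\epsilon - \sigma = \epsilon w\sigma$, yields
\[ H_\varphi(\mu_\epsilon) - H_\nu = \frac{\epsilon^2}{2}\int_{M^\Psi_\varphi} \frac{w^2\sigma}{\varphi(\sigma)}\,d\nu + o(\epsilon^2). \]
Since $\ln_\varphi(\rho_\epsilon) - \ln_\varphi(\sigma) = \epsilon v + O(\epsilon^2)$ with $v := w\sigma/\varphi(\sigma)$ (using the $C^1$ regularity of $\varphi$), formula \eqref{eq:Im} gives
\[ I_\varphi(\mu_\epsilon) = \epsilon^2 \int_{M^\Psi_\varphi} |\nabla v|^2\,d\nu + o(\epsilon^2). \]
Inserting both expansions into \eqref{eq:LS}, dividing by $\epsilon^2/2$, and letting $\epsilon \downarrow 0$ produces exactly \eqref{eq:sakana}.

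The main technical obstacle lies in step (iii): one must justify the Taylor expansions with remainders that are genuinely $o(\epsilon^2)$ in $L^1(\omega)$, uniformly over a sequence of test functions $w$. Compactness of $M$ together with the $C^1$ assumption on $\varphi$ provides uniform bounds on $\sigma$, $1/\varphi(\sigma)$ and $\varphi'(\sigma)$ on $\supp\nu$, which controls these remainders; the remaining subtlety is the possible degeneracy of $\sigma$ near $\partial M^\Psi_\varphi$ when $l_\varphi>-\infty$, which can be handled by first proving \eqref{eq:sakana} for $w$ supported in a relatively compact subset of $M^\Psi_\varphi$ and then recovering the general case by truncation and approximation.
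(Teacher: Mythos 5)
Your proof is correct and follows the same strategy as the paper's: $K$-convexity of $H_\varphi$ along the geodesic between $\mu$ and $\nu$ together with Lemma~\ref{lm:Hm} for (i), the one-sided derivative from Proposition~\ref{pr:dHm} combined with Cauchy--Schwarz for (ii), and linearization of \eqref{eq:LS} at the ground state $\nu$ with the perturbation $(1+\ve w)\sigma$ for (iii). The only cosmetic differences are that in (i) you parametrize the geodesic from $\nu$ to $\mu$ and let $t\downarrow 0$ whereas the paper goes from $\mu$ to $\nu$ and lets $t\to 1$, and in (iii) you flag the possible degeneracy of $\sigma$ near $\partial M_\varphi^\Psi$ more explicitly than the paper, which simply asserts the Taylor remainder is uniformly $O(\ve^3)$ by compactness of $M$.
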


\begin{proof}
We first remark that $M^{\Psi}_{\varphi}$ is totally convex if $\Hess\Psi \ge K>0$
(see Lemma~\ref{lm:K>0} and Remark~\ref{rm:conn} as well).
Thus $M^{\Psi}_{\varphi}$ is totally convex in (ii) and (iii).

(i)
There is nothing to prove if $H_\varphi(\mu)=\infty$, so that we assume $H_\varphi(\mu)<\infty$.
By the hypothesis $\Hess H_{\varphi} \ge K$, there is a minimal geodesic
$(\mu_t)_{t \in [0,1]} \subset \cP^2(M)$ from $\mu_0=\mu$ to $\mu_1=\nu$ such that
\begin{equation}\label{eq:TaH}
0 \leq H_\varphi(\mu_t) -H_\nu
\leq (1-t)H_\varphi(\mu) -(1-t)H_\nu-\frac{K}{2}(1-t)tW_2(\mu,\nu)^2
\end{equation}
for all $t \in [0,1]$.
Dividing both sides with $1-t$ and letting $t$ go to $1$, we obtain the desired inequality~\eqref{eq:tal}.

(ii)
As the case of $I_\varphi(\mu)=\infty$ is trivial, we assume $I_\varphi(\mu)<\infty$.
For the minimal geodesic $(\mu_t)_{t \in [0,1]}$ from $\mu_0=\mu$ to $\mu_1=\nu$,
Theorem~\ref{th:FG} ensures that $\mu_t \in \cP^2_{\ac}(M,\omega)$ for all $t \in [0,1]$ and
there is a locally semi-convex function $\phi$ such that $\mu_t=(\cT_t)_{\sharp}\mu$
with $\cT_t(x)=\exp_x(t\nabla\phi(x))$.
Due to $(\ref{eq:TaH})$, we have 
\begin{equation}\label{eq:LSH}
\frac{ H_\varphi(\mu_t)-H_\varphi(\mu)}{t}
 \le -H_\varphi(\mu)+H_\nu -\frac{K}{2}(1-t)W_2(\mu,\nu)^2.
\end{equation}
Moreover, Proposition~\ref{pr:dHm} shows that
\[ \liminf_{t \downarrow 0}\frac{ H_\varphi(\mu_t)-H_\varphi(\mu)}{t}
 \ge \int_M \langle \nabla[\ln_\varphi(\rho)- \ln_\varphi( \sigma)], \nabla \phi\rangle) \,d\mu. \]
We remark that $M^{\Psi}_{\varphi}$ is bounded if $\theta_{\varphi}<1$
(by Proposition~\ref{lm:case}(i) and Lemma~\ref{lm:K>0}(i)),
so that Proposition~\ref{pr:dHm} is certainly available.
We obtain from the Cauchy--Schwarz inequality that
\begin{align*}
\liminf_{t \downarrow 0}\frac{ H_\varphi(\mu_t)-H_\varphi(\mu)}{t}
&\ge -\bigg( \int_M | \nabla[\ln_\varphi(\rho)- \ln_\varphi( \sigma)]|^2 \,d\mu \bigg)^{1/2}
 \bigg( \int_M |\nabla\phi|^2 \,d\mu \bigg)^{1/2} \\
&= -\sqrt{I_\varphi(\mu)} \cdot W_2(\mu,\nu),
\end{align*}
where the last equality follows from
\[ |\nabla\phi (x)| =d_g\big( x, \exp_x(\nabla \phi) \big)
 =d_g \big( \cT_0(x),\cT_1(x) \big) \qquad \mu\text{-a.e. }x. \]
Combining this with $(\ref{eq:LSH})$, we obtain \eqref{eq:HWI}.
By completing the square, we deduce \eqref{eq:LS} from \eqref{eq:HWI} as
\[ \sqrt{I_\varphi(\mu)} \cdot W_2(\mu,\nu) -\frac{K}{2}W_2(\mu,\nu)^2
 =-\frac{K}{2}\left( W_2(\mu,\nu) -\frac{1}{K}\sqrt{I_\varphi(\mu)}\right)^2
 +\frac{I_\varphi(\mu)}{2K}
 \le \frac{I_\varphi(\mu)}{2K}. \]

(iii)
For small $\ve >0$, we put $\mu=\rho \omega:=(1+\ve w)\sigma\omega$.
We remark that $H_\varphi(\mu)$ and  $I_\varphi(\mu)$ are finite as $M$ is compact.
It follows from \eqref{eq:LS} that
\[  \int_{M^\Psi_\varphi} \{u_\varphi (\rho)-u_\varphi (\sigma)-u'_\varphi(\sigma) (\rho-\sigma)\} \,d\omega
 \le \frac{1}{2K}
 \int_{M^\Psi_\varphi} \left| \nabla [\ln_{\varphi}(\rho)-\ln_{\varphi}(\sigma)] \right|^2 \,d\mu. \]
On the one hand, we have by expansion
\[ u_\varphi (\rho)-u_\varphi (\sigma)-u'_\varphi(\sigma) (\rho-\sigma)
 =\frac{(\rho-\sigma)^2 }{2} u''_\varphi(\sigma) +O\left( (\rho-\sigma)^3 \right)
 =\frac{\ve^2 w^2 \sigma^2}{2\varphi(\sigma)} +O( \ve^3), \]
where $O(\ve^3)$ is uniform on $M$ (for fixed $w$) thanks to the compactness of $M$.
On the other hand, it holds
\begin{align*}
\left| \nabla [\ln_{\varphi}(\rho)-\ln_{\varphi}(\sigma)] \right|^2
&=\left| \nabla \big[ (\rho-\sigma) \ln'_\varphi(\sigma) +O\big( (\rho-\sigma)^2 \big) \big] \right|^2 
=\left| \nabla \left( \frac{\ve w\sigma}{\varphi(\sigma)} \right) +O(\ve ^2) \right|^2 \\
&=\ve^2 \left| \nabla \left( \frac{w\sigma}{\varphi(\sigma)} \right) \right|^2 + O(\ve^3).
\end{align*}
Thus we have, letting $\ve$ go to zero,
\[ \int_{M^\Psi_\varphi} \frac{w^2 \sigma }{\varphi(\sigma)} \,d\nu
 =\int_{M^\Psi_\varphi} \frac{w^2 \sigma^2 }{\varphi(\sigma)} \,d\omega
\le \frac{1}{K} \int_{M^\Psi_\varphi} \left| \nabla \left( \frac{w\sigma}{\varphi(\sigma)} \right) \right|^2 \,d\nu. \]
$\qedd$
\end{proof}

The $\varphi$-Talagrand inequality~\eqref{eq:tal} is regarded as a comparison between
the distance functions appearing in Wasserstein geometry and information geometry,
since the square root of the Bregman divergence behaves like a distance function (see Subsection~\ref{ssc:inf}).
Note that, in the $\varphi$-global Poincar\'e inequality~\eqref{eq:sakana},
the usual global Poincar\'e inequality $\int_M w^2 \,d\nu \le K^{-1} \int_M |\nabla w|^2 \,d\nu$
is indeed recovered when $\varphi(s)=\varphi_1(s)=s$.
Other inequalities are also clearly reduced to the usual ones for $\varphi=\varphi_1$.

\section{Concentration of measures}\label{sc:conc}

The aim of this section is to derive  the concentration of measures from the $\varphi$-Talagrand inequality~\eqref{eq:tal}.
Let us assume $\nu[M]=1$ (see Remark~\ref{rem:prob}) and define the \emph{concentration function} by
\[ \alpha(r) =\alpha_{(M,\nu)}(r)
 :=\sup \big\{ 1-\nu[B(A,r) ] \,|\, A \subset M \text{: measurable},\, \nu[A] \ge 1/2 \big\} \]
for $r>0$, where
$B(A,r):=\{ y \in M \,|\, \inf_{x \in A}d_g(x,y)<r \}$.
The function $\alpha$ describes how the probability measure $\nu$ is concentrating
on the neighborhood of an \emph{arbitrary} set $A$ of half the total measure in a quantitative way
(in other words, a kind of large (or moderate) deviation principle).
Equivalently, $\alpha$ measures how any $1$-Lipschitz function is close to the constant function
at its mean.
We refer to the excellent book~\cite{Le} for an introduction to the concentration
of measure phenomenon.

In the classical case of $\varphi_1(s)=s$, the Talagrand inequality~\eqref{eq:tal} implies the
\emph{normal concentration} $\alpha(r) \le C\exp(-cr^2)$ or equivalently
$\alpha(r)^{-1} \ge C^{-1} \exp(cr^2)$ 
with constants $c,C>0$ depending only on $K$ (see~\cite[Section~6.1]{Le}).
For general $\varphi$, we will similarly derive from \eqref{eq:tal} the \emph{$m$-normal concentration}
involving the $m$-exponential function $e_m$ (see Subsection~\ref{ssc:phi_m}).
Precisely, we have
$\alpha(r) \le Ce_m(-cr^2)$ with $m=m(\varphi) \le 2-\theta_{\varphi}$ if $\theta_{\varphi}>1$,
and $\alpha(r)^{-1} \ge C^{-1} e_m(cr^2)$ with $m=m(\varphi) \ge 2-\theta_{\varphi}$ if $\theta_{\varphi} \le 1$
(Theorem~\ref{thm:conc}).

\subsection{General estimate}

For each measurable set $A \subset M$ with $0<\nu[A]<\infty$,
denote the normalized restriction of $\nu$ on $A$ by
\[ \nu_A :=\frac{\chi_A}{\nu[A]}\nu \ \in \cP_{\ac}(M,\omega). \]
To analyze its entropy $H_{\varphi}(\nu_A)$, we introduce the function
\[ U(\xi,t) :=u_{\varphi} \!\left( \frac{\xi}{t} \right) -\frac{\xi}{t} \ln_{\varphi}(\xi),
 \qquad (\xi,t) \in (0,\infty) \times (0,1]. \]
Note that $H_\varphi(\nu_A)=\int_A U(\sigma, \nu[A]) \,d\omega$.
To be precise, we can set $U(\sigma,\nu[A]):=0$ on $M \setminus M^{\Psi}_{\varphi}$
thanks to the following lemma.

\begin{lemma}\label{lm:U}
If $\theta_\varphi<2$, then we have $\lim_{\xi\downarrow 0} U(\xi,t)=0$ for every $t \in (0,1]$.
\end{lemma}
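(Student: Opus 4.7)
The plan is to split
\[ U(\xi,t) = u_\varphi(\xi/t) - (\xi/t)\ln_\varphi(\xi) \]
and show each summand tends to $0$ as $\xi \downarrow 0$ with $t \in (0,1]$ fixed. For the first summand, since $\theta_\varphi<2$, Theorem~\ref{th:DC} guarantees that condition \eqref{del} holds, so (by Lemma~\ref{lm:u_phi}) $u_\varphi(r) = \int_0^r \ln_\varphi(s)\,ds$ is well-defined on $[0,\infty)$ and continuous at $0$ with $u_\varphi(0) = 0$; hence $u_\varphi(\xi/t) \to 0$.

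The second summand requires a case analysis based on whether $l_\varphi$ is finite. If $l_\varphi > -\infty$, then $\ln_\varphi$ is bounded below by $l_\varphi$ and above by $\ln_\varphi(1) = 0$ on $(0,1]$, so $(\xi/t)\ln_\varphi(\xi) \to 0$ trivially. If instead $l_\varphi = -\infty$, then the contrapositive of Lemma~\ref{lm:case}(i) forces $\theta_\varphi \ge 1$, so setting $m := 2 - \theta_\varphi \in (0,1]$ is legitimate. The lower bound in \eqref{eq:ln} of Lemma~\ref{hikaku} reads $\ln_\varphi(\xi) \ge \varphi(1)^{-1}\ell_m(\xi)$, which combined with $\ln_\varphi(\xi) \le 0$ for $\xi \in (0,1]$ yields
\[ 0 \le (\xi/t)|\ln_\varphi(\xi)| \le \frac{\xi}{t\,\varphi(1)}\,|\ell_m(\xi)|. \]
For $m = 1$ this is $\xi|\ln \xi|/(t\varphi(1)) \to 0$; for $m \in (0,1)$ one computes $\xi|\ell_m(\xi)| = (\xi^m - \xi)/(1-m) \to 0$ as $\xi \downarrow 0$, using that $m>0$. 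In either subcase $(\xi/t)\ln_\varphi(\xi) \to 0$, completing the proof.

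The only real subtlety is the case $l_\varphi = -\infty$, where one must quantitatively control the blowup of $\ln_\varphi(\xi)$; this is handled precisely by the comparison to the power-type function $\ell_m$ supplied by Lemma~\ref{hikaku}, and the decisive fact is that $\theta_\varphi<2$ forces $m>0$ so that $\xi^m \downarrow 0$. All other steps are immediate from continuity of $u_\varphi$ and boundedness of $\ln_\varphi$ near the origin.
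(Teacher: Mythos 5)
Your proof is correct, but it takes a genuinely different route from the paper's. The paper rewrites
\[ U(\xi,t) = -\int_0^{\xi/t}\frac{r}{\varphi(r)}\,dr + \frac{\xi}{t}\int_\xi^{\xi/t}\frac{dr}{\varphi(r)} \]
and then observes that both pieces vanish as $\xi \downarrow 0$ using only the integrability of $r/\varphi(r)$ near $0$ (condition \eqref{del}, guaranteed by $\theta_\varphi<2$ via Theorem~\ref{th:DC}): the first piece is the tail of an absolutely convergent integral, and the second is dominated by $t^{-1}\int_\xi^{\xi/t} r/\varphi(r)\,dr$ since $r\ge\xi$ on the domain of integration. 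Your decomposition $U(\xi,t)=u_\varphi(\xi/t)-(\xi/t)\ln_\varphi(\xi)$ handles the first summand by the same continuity-at-$0$ fact, but for the linear term $(\xi/t)\ln_\varphi(\xi)$ you need a case split on whether $l_\varphi$ is finite, and in the $l_\varphi=-\infty$ case you import the comparison $\ln_\varphi \ge \varphi(1)^{-1}\ell_{2-\theta_\varphi}$ from Lemma~\ref{hikaku} together with Lemma~\ref{lm:case}(i) to control the blowup. Both arguments are sound; the paper's has the virtue of needing nothing beyond \eqref{del} and a single algebraic regrouping, while yours is more explicit about \emph{why} $\xi\ln_\varphi(\xi)\to0$ and makes transparent the role of $\theta_\varphi<2$ in giving the exponent $m=2-\theta_\varphi>0$. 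The extra lemmas you invoke are available under the same hypothesis, so nothing is circular.
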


\begin{proof}
Note that
\begin{align}
U(\xi,t) &=\int_0^{\xi/t} \{ \ln_{\varphi}(s) -\ln_{\varphi}(\xi) \} \,ds
 = \int_0^{\xi/t} \int_{\xi}^s \frac{1}{\varphi(r)} \,drds \nonumber\\
&= -\int_0^{\xi} \int_s^{\xi} \frac{1}{\varphi(r)} \,drds
 +\int_{\xi}^{\xi/t} \int_{\xi}^s \frac{1}{\varphi(r)} \,drds = -\int_0^{\xi/t}  \frac{r}{\varphi(r)} \,dr +\frac{\xi}{t} \int_{\xi}^{\xi/t} \frac{1}{\varphi(r)} \,dr.
 \label{eq:U}
\end{align}
Then \eqref{del} (deduced from Theorem~\ref{th:DC}) shows the claim.
$\qedd$
\end{proof}

\begin{lemma}\label{lem:hojo}
Assume $\theta_\varphi<2$.
For any measurable set $A$ with $\nu[A] \ge 1/2$, we have $H_{\varphi}(\nu_A) \le 0$.
In particular, it holds $H_{\varphi}(\nu) \le 0$ if $\nu[M]=1$.
\end{lemma}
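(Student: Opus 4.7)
The plan is to prove the pointwise estimate $U(\xi,t) \le 0$ for every $\xi>0$ and every $t \in [1/2,1]$, which immediately gives $H_\varphi(\nu_A) = \int_A U(\sigma,\nu[A])\,d\omega \le 0$ by integration (using Lemma \ref{lm:U} so that the part of $A$ on which $\sigma = 0$ contributes nothing). The "in particular" clause then follows by taking $A = M$ so that $t = \nu[M] = 1 \ge 1/2$.

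Starting from the identity already obtained in the proof of Lemma \ref{lm:U}, after splitting $\int_0^{\xi/t} = \int_0^\xi + \int_\xi^{\xi/t}$ and regrouping terms, one arrives at the representation
\[
U(\xi,t) = -\int_0^\xi \frac{s}{\varphi(s)}\,ds + \int_\xi^{\xi/t} \frac{\xi/t - s}{\varphi(s)}\,ds.
\]
I would then bound the two pieces separately. For the second (positive) integral, non-decreasingness of $\varphi$ gives $\varphi(s) \ge \varphi(\xi)$ on $[\xi,\xi/t]$, so
\[
\int_\xi^{\xi/t} \frac{\xi/t - s}{\varphi(s)}\,ds \le \frac{(\xi/t - \xi)^2}{2\varphi(\xi)} = \frac{\xi^2(1-t)^2}{2t^2 \varphi(\xi)}.
\]
For the first (subtracted) integral, Lemma \ref{lem:mono} says $s \mapsto s^{\delta_\varphi}/\varphi(s)$ is non-increasing, hence $1/\varphi(s) \ge \xi^{\delta_\varphi}/(\varphi(\xi)s^{\delta_\varphi})$ for $s \in (0,\xi]$, and since $\delta_\varphi \le \theta_\varphi < 2$ the resulting integral converges and yields
\[
\int_0^\xi \frac{s}{\varphi(s)}\,ds \ge \frac{\xi^{\delta_\varphi}}{\varphi(\xi)} \int_0^\xi s^{1-\delta_\varphi}\,ds = \frac{\xi^2}{(2-\delta_\varphi)\varphi(\xi)}.
\]

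Combining these bounds yields
\[
U(\xi,t) \le \frac{\xi^2}{\varphi(\xi)}\left[\frac{(1-t)^2}{2t^2} - \frac{1}{2-\delta_\varphi}\right].
\]
Because $\varphi$ is non-decreasing, every difference quotient in the definition of $\delta_\varphi$ is nonnegative, so $\delta_\varphi \ge 0$ and $1/(2-\delta_\varphi) \ge 1/2$. On the other hand, for $t \ge 1/2$ we have $(1-t)/t \le 1$ and therefore $(1-t)^2/(2t^2) \le 1/2$, so the bracketed term is $\le 0$ and the claim follows. The main obstacle is keeping track of the directions of the two inequalities; the key asymmetry is that mere monotonicity of $\varphi$ already controls the $I_2$-type integral over $[\xi,\xi/t]$, whereas bounding the $I_1$-type integral over $(0,\xi]$ from below genuinely requires the quantitative information carried by $\delta_\varphi \ge 0$ via Lemma \ref{lem:mono}.
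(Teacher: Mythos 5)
Your proof is correct, but it takes a genuinely different route from the paper's. The paper establishes the same pointwise bound $U(\xi,t)\le 0$ for $t\ge 1/2$ by observing two monotonicities directly from the integral representation: $\partial_t U(\xi,t)=\frac{\xi}{t^2}\int_{\xi/t}^{\xi}\varphi(s)^{-1}\,ds\le 0$, and $\frac{d}{d\xi}U(\xi,1/2)=2\int_{\xi}^{2\xi}\bigl(\varphi(s)^{-1}-\varphi(\xi)^{-1}\bigr)\,ds\le 0$ — both consequences solely of $\varphi$ being non-decreasing — and then concludes $U(\xi,t)\le U(\xi,1/2)\le\lim_{\xi\downarrow 0}U(\xi,1/2)=0$ via Lemma~\ref{lm:U}. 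Your argument instead decomposes $U$ into two explicit integrals and estimates each one quantitatively, bounding $\int_\xi^{\xi/t}\frac{\xi/t-s}{\varphi(s)}\,ds$ from above by monotonicity of $\varphi$ and bounding $\int_0^\xi\frac{s}{\varphi(s)}\,ds$ from below via Lemma~\ref{lem:mono}. The trade-off: the paper's route uses only the bare monotonicity of $\varphi$ plus the already-proved Lemma~\ref{lm:U}, so it is lighter; your route brings in $\delta_\varphi$ and Lemma~\ref{lem:mono} but produces the sharper quantitative bound $U(\xi,t)\le\frac{\xi^2}{\varphi(\xi)}\bigl[\frac{(1-t)^2}{2t^2}-\frac{1}{2-\delta_\varphi}\bigr]$, which the paper does not record. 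The logic is sound throughout: $\delta_\varphi\ge 0$ is indeed immediate from the monotonicity of $\varphi$ (and is recorded in the declared range $\delta_\varphi\in[0,\infty)$ in \eqref{eq:delta}), $\delta_\varphi\le\theta_\varphi<2$ ensures convergence of $\int_0^\xi s^{1-\delta_\varphi}\,ds$, and $(1-t)^2/(2t^2)\le 1/2\le 1/(2-\delta_\varphi)$ closes the estimate for $t\ge 1/2$.
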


\begin{proof}
For any $\xi>0$, $U(\xi,t)$ is non-increasing in $t \in(0,1]$ since we have
\[ \frac{\del U}{\del t}(\xi,t)
 =-\frac{\xi}{t^2} \ln_{\varphi} \!\left( \frac{\xi}{t} \right) +\frac{\xi}{t^2} \ln_{\varphi}(\xi)
 =\frac{\xi}{t^2} \int_{\xi/t}^{\xi} \frac{1}{\varphi(s)} \,ds \le 0. \]
Similarly, $U(\xi,1/2)$ is non-increasing in $\xi$ due to
\[ \frac{dU}{d\xi}\left(\xi,\frac12\right) =2\ln_{\varphi}(2\xi)-2\ln_{\varphi}(\xi)-\frac{2\xi}{\varphi(\xi)}
 =2\int_{\xi}^{2\xi} \left( \frac{1}{\varphi(s)}-\frac{1}{\varphi(\xi)} \right) \,ds \le 0. \]
Thus we deduce from Lemma~\ref{lm:U} that, for any $\xi>0$ and $t \ge 1/2$,
\[ U(\xi,t) \le U(\xi,1/2) \le \lim_{\xi \downarrow 0} U(\xi,1/2) =0, \]
which shows
$H_{\varphi}(\nu_A) =\int_A U(\sigma,\nu[A]) \,d\omega \le 0$.
$\qedd$
\end{proof}

Next we give an estimate on $H_{\varphi}(\nu_B)$ for $B \subset M$
not necessarily $\nu[B] \ge 1/2$.
Recall \eqref{eq:delta} for the definition of $\delta_{\varphi}$.

\begin{lemma}\label{lem:hojojo}
Assume $\theta_\varphi <\infty$ and $\|\sigma\|_{\infty}<\infty$.
Given any measurable set $B \subset M$ with $0<\nu[B]<\infty$ and any
$\xi_0 \ge \max\{\nu[B],\|\sigma\|_{\infty}\}$, we have
\begin{equation}\label{eq:nu_B}
H_\varphi(\nu_B)
 \le -\nu[B]^{\delta_{\varphi}-2} \ln_\varphi(\nu[B]) \xi_0^{\theta_\varphi-\delta_\varphi}
 \int_B \sigma^{2-\theta_\varphi} \,d\omega.
\end{equation}
\end{lemma}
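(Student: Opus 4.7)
The plan is to establish a pointwise upper bound on $U(\xi,t) := u_\varphi(\xi/t) - (\xi/t)\ln_\varphi(\xi)$ and then integrate. Since $\nu_B = (\sigma/t)\chi_B\,\omega$ with $t := \nu[B]$, a direct substitution gives $H_\varphi(\nu_B) = \int_B U(\sigma,t)\,d\omega$ (as already noted just before Lemma~\ref{lm:U}), so it suffices to show
\[
U(\xi,t)\;\le\;-\xi_0^{\theta_\varphi - \delta_\varphi}\,\xi^{2-\theta_\varphi}\,t^{\delta_\varphi - 2}\,\ln_\varphi(t)
\qquad\text{for every }\xi \in [0,\xi_0],\ t \in (0,\xi_0].
\]
The main case is $t < 1$, where the right-hand side is strictly positive; at $t = 1$ the bound is $0$ and follows from $u_\varphi(\xi) \le \xi\ln_\varphi(\xi)$ (the convexity of $u_\varphi$ with $u_\varphi(0)=0$), while the case $t > 1$ is handled by a parallel argument that tracks signs and retains the term $-\int_0^{\xi/t} s/\varphi(s)\,ds$ (which one would otherwise like to drop).

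For $t < 1$, integration by parts in $u_\varphi$ yields the identity
\[
U(\xi,t) \;=\; \frac{\xi}{t}\int_\xi^{\xi/t}\frac{dr}{\varphi(r)} \;-\; \int_0^{\xi/t}\frac{s}{\varphi(s)}\,ds,
\]
and since the second term is non-positive one can drop it and then substitute $r = \xi s/t$ to obtain
\[
U(\xi,t) \;\le\; \frac{\xi^2}{t^2}\int_t^1\frac{ds}{\varphi(\xi s/t)}.
\]
Next, I would invoke Lemma~\ref{lem:mono} for the pair $\{s,\,\xi s/t\}$, whose ordering is determined by the sign of $\xi - t$. The non-increasing statement for $s^{\delta_\varphi}/\varphi(s)$ gives $\varphi(\xi s/t) \ge (\xi/t)^{\delta_\varphi}\varphi(s)$ when $\xi \ge t$, while the non-decreasing statement for $s^{\theta_\varphi}/\varphi(s)$ gives $\varphi(\xi s/t) \ge (\xi/t)^{\theta_\varphi}\varphi(s)$ when $\xi \le t$. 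Integrating the resulting inequalities against $\int_t^1 ds/\varphi(s) = -\ln_\varphi(t) > 0$ produces
\[
U(\xi,t) \;\le\; -\xi^{2-\delta_\varphi}\,t^{\delta_\varphi - 2}\,\ln_\varphi(t) \quad(\xi \ge t),
\qquad U(\xi,t) \;\le\; -\xi^{2-\theta_\varphi}\,t^{\theta_\varphi - 2}\,\ln_\varphi(t) \quad(\xi \le t).
\]

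The factor $\xi_0^{\theta_\varphi - \delta_\varphi}$ in the target is precisely what unifies the two sub-cases: using $\theta_\varphi \ge \delta_\varphi$, the constraint $\xi \le \xi_0$ gives $\xi^{2-\delta_\varphi} \le \xi^{2-\theta_\varphi}\,\xi_0^{\theta_\varphi - \delta_\varphi}$, settling the sub-case $\xi \ge t$, while $t \le \xi_0$ gives $t^{\theta_\varphi - 2} \le t^{\delta_\varphi - 2}\,\xi_0^{\theta_\varphi - \delta_\varphi}$, settling the sub-case $\xi \le t$. Either way the claimed pointwise bound is obtained, and integration over $B$ finishes the proof. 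The main obstacle is this asymmetric case split $\xi \gtreqless t$: it forces one to use \emph{both} monotonicity statements of Lemma~\ref{lem:mono} rather than just one, and it is precisely this asymmetry that explains why the somewhat unusual correction factor $\xi_0^{\theta_\varphi - \delta_\varphi}$ (which vanishes in the power-function case $\delta_\varphi = \theta_\varphi$) must appear in the statement.
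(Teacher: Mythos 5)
Your argument is correct for $t=\nu[B]\le 1$ (the only regime actually used: in Proposition~\ref{pr:conc} and Theorem~\ref{th:conc} one always has $\nu[B]\le 1/2$), and it reaches the bound by a genuinely different application of Lemma~\ref{lem:mono} than the one in the paper. After the common reductions (drop the non-positive first term in \eqref{eq:U} and substitute $r=\xi s/t$), the paper bounds $\xi^2/\varphi(s\xi/t)$ by a single two-step chain passing through $\xi_0 s/t$: the non-decrease of $s^{\theta_\varphi}/\varphi(s)$ replaces $\xi s/t$ by $\xi_0 s/t$ (using $\xi\le\xi_0$), then the non-increase of $s^{\delta_\varphi}/\varphi(s)$ replaces $\xi_0 s/t$ by $s$ (using $\xi_0\ge t$), with no case split and both monotonicities used at every $\xi$. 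You instead compare $\xi s/t$ with $s$ directly, which forces the split according to whether $\xi\ge t$ or $\xi\le t$, one monotonicity per case, and then a separate unification via $\theta_\varphi\ge\delta_\varphi$ together with $\xi\le\xi_0$ (resp.\ $t\le\xi_0$). Both routes yield the same pointwise estimate; yours is slightly longer but isolates the role of $\xi_0^{\theta_\varphi-\delta_\varphi}$ as exactly the price of merging the two monotonicity regimes, and shows at a glance why that factor collapses to $1$ for $\varphi_m$. One caution: your parenthetical for $t>1$ is not established. After the substitution the domain $\int_t^1$ is a reversed interval, so a pointwise upper bound on the integrand yields the final inequality in the wrong direction, and merely retaining $-\int_0^{\xi/t}s/\varphi(s)\,ds$ does not by itself repair this; the paper's printed argument has the same silent restriction, so nothing downstream is affected, but your sketch for $t>1$ should not be treated as a proof.
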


\begin{proof}
For $t,\xi \in (0,\xi_0]$, we deduce from \eqref{eq:U} that
\[ U(\xi,t) \le \frac{\xi}{t} \int_\xi^{\xi/t} \frac{1}{\varphi(r)} \,dr
 =\frac{\xi^2}{t^2} \int_t^1 \frac{1}{\varphi(s\xi t^{-1})} \,ds, \]
where we changed the variables as $r=s\xi t^{-1}$.
Note that Lemma~\ref{lem:mono} shows that for all $s>0$
\[ \frac{\xi^2}{\varphi(s\xi t^{-1})}
 \le \frac{\xi^{2-\theta_{\varphi}} \xi_0^{\theta_{\varphi}}}{\varphi(s\xi_0 t^{-1})}
 \le \frac{\xi^{2-\theta_{\varphi}} \xi_0^{\theta_{\varphi}}}{\varphi(s)}
 \bigg( \frac{\xi_0}{t} \bigg)^{-\delta_{\varphi}}. \]
Thus we find
\[ U(\xi,t) \le -t^{\delta_\varphi-2} \ln_\varphi(t) \xi_0^{\theta_\varphi-\delta_\varphi} \xi^{2-\theta_\varphi}. \]
This implies
\[ H_\varphi(\nu_B) =\int_B U(\sigma,\nu[B]) \,d\omega
 \le -\nu[B]^{\delta_\varphi-2} \ln_\varphi(\nu[B]) \xi_0^{\theta_\varphi-\delta_\varphi}
 \int_B \sigma^{2-\theta_\varphi} \,d\omega \]
as desired.
$\qedd$
\end{proof}

We remark that, if $\delta_{\varphi} \le 2$, then we have at any $s \in (0,1)$
\[ \frac{d}{ds}\big[ s^{\delta_{\varphi}-2} \ln_{\varphi}(s) \big]
 =s^{\delta_{\varphi}-3} \left\{ (\delta_{\varphi}-2) \ln_{\varphi}(s)+\frac{s}{\varphi(s)} \right\} >0. \]
Therefore the right hand side of \eqref{eq:nu_B} is non-increasing in $\nu[B]$
provided that $\nu$ is a probability measure.

Now we show a general estimate of $\alpha(r)$ under the strict convexity of $H_{\varphi}$.

\begin{proposition}\label{pr:conc}
Assume that $(M,\omega,\varphi, \Psi)$ is admissible, $\nu \in \cP_{\ac}(M,\omega)$,
$\Hess H_{\varphi} \ge K$ for some $K>0$ and that $\|\sigma\|_{\infty}<\infty$.
We set $H_{\nu}:=H_{\varphi}(\nu)\ (\le 0)$ as in Theorem~{\rm \ref{thm:inequ}}.
Then, for any $\xi_0 \geq \max\{1/2, \|\sigma\|_\infty\}$ and any $r>0$ with $\alpha(r)>0$, we have
\begin{equation}\label{eq:concc}
\alpha(r)^{\delta_\varphi-2} \ln_\varphi \!\big( \alpha(r) \big) \xi_0^{\theta_\varphi-\delta_\varphi}
 \cdot \sup_A \int_B \sigma^{2-\theta_\varphi} \,d\omega
 \le -\bigg( \sqrt{\frac{K}{2}}r -\sqrt{-H_\nu} \bigg)^2 -H_\nu,
\end{equation}
where $A \subset M$ runs over all measurable sets of $\nu[A] \ge 1/2$ and we set $B:=M \setminus B(A,r)$.
\end{proposition}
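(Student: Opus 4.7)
The plan is to chain the triangle inequality in $(\cP^2(M),W_2)$ with the $\varphi$-Talagrand inequality~\eqref{eq:tal} and the entropy estimates of Lemmas~\ref{lem:hojo} and~\ref{lem:hojojo}. First I would dispose of the trivial regime. Expanding shows that the right-hand side of~\eqref{eq:concc} equals $r\sqrt{K/2}\,(2\sqrt{-H_\nu}-\sqrt{K/2}\,r)$, which is nonnegative when $r\le 2\sqrt{-2H_\nu/K}$; in that range \eqref{eq:concc} is trivial since its left-hand side is nonpositive (as $\ln_\varphi(\alpha(r))\le 0$ for $\alpha(r)\le 1$ and the integral is nonnegative). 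Hence I may assume $r>2\sqrt{-2H_\nu/K}$, which in particular implies $r>\sqrt{-2H_\nu/K}$.

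Now fix any measurable $A\subset M$ with $\nu[A]\ge 1/2$, set $B:=M\setminus B(A,r)$, and restrict attention to those $A$ with $\nu[B]>0$. Since every pair $(x,y)\in A\times B$ satisfies $d_g(x,y)\ge r$, any coupling of $\nu_A$ and $\nu_B$ has quadratic cost at least $r^2$, so $W_2(\nu_A,\nu_B)\ge r$. Combining the triangle inequality with~\eqref{eq:tal} and the bound $H_\varphi(\nu_A)\le 0$ from Lemma~\ref{lem:hojo} gives
\[
r \;\le\; W_2(\nu_A,\nu)+W_2(\nu,\nu_B) \;\le\; \sqrt{-2H_\nu/K}+\sqrt{(2/K)\bigl(H_\varphi(\nu_B)-H_\nu\bigr)}.
\]
Since $r>\sqrt{-2H_\nu/K}$, squaring and rearranging yields the lower bound $H_\varphi(\nu_B)\ge (\sqrt{K/2}\,r-\sqrt{-H_\nu})^2+H_\nu$. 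On the other hand, $\nu[B]\le 1-\nu[A]\le 1/2\le \xi_0$, so Lemma~\ref{lem:hojojo} applies and provides the matching upper bound
\[
H_\varphi(\nu_B) \;\le\; -\nu[B]^{\delta_\varphi-2}\ln_\varphi\bigl(\nu[B]\bigr)\,\xi_0^{\theta_\varphi-\delta_\varphi}\int_B \sigma^{2-\theta_\varphi}\,d\omega.
\]
Chaining these two inequalities produces a pointwise (in $A$) version of~\eqref{eq:concc} with $\nu[B]$ in place of $\alpha(r)$.

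The remaining task is to replace $\nu[B]$ by $\alpha(r)$. A direct substitution is not possible because the remark following Lemma~\ref{lem:hojojo} tells that $s\mapsto -s^{\delta_\varphi-2}\ln_\varphi(s)$ is strictly decreasing on $(0,1)$, while only $\nu[B]\le \alpha(r)$ is available. The fix is to pick an approximating sequence of sets $A_n$ with $\nu[A_n]\ge 1/2$ and $\nu[B_n]\to \alpha(r)$, guaranteed by the definition of $\alpha(r)$ together with the assumption $\alpha(r)>0$. Along this sequence, continuity of $s\mapsto s^{\delta_\varphi-2}\ln_\varphi(s)$ at $\alpha(r)$ yields $-\nu[B_n]^{\delta_\varphi-2}\ln_\varphi(\nu[B_n])\to -\alpha(r)^{\delta_\varphi-2}\ln_\varphi(\alpha(r))$, while $\int_{B_n}\sigma^{2-\theta_\varphi}\,d\omega\le \sup_A \int_B \sigma^{2-\theta_\varphi}\,d\omega$. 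Passing to the limit $n\to\infty$ in the pointwise inequality and rearranging produces~\eqref{eq:concc}; the case $\sup_A \int_B \sigma^{2-\theta_\varphi}\,d\omega=\infty$ renders the left-hand side of~\eqref{eq:concc} equal to $-\infty$, making the inequality trivial.

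The main technical subtlety is precisely this monotonicity mismatch, which blocks a single-$A$ argument and forces the approximation step. A secondary care is needed if $H_\varphi(\nu_{B_n})=+\infty$: since Lemma~\ref{lem:hojojo} is an upper bound, this then forces $\int_{B_n}\sigma^{2-\theta_\varphi}\,d\omega=\infty$, whence $\sup_A \int_B \sigma^{2-\theta_\varphi}\,d\omega=\infty$ as well, returning us to the trivial case.
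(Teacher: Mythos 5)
Your proof is correct and follows essentially the same route as the paper: the triangle inequality combined with the $\varphi$-Talagrand inequality and Lemma~\ref{lem:hojo} gives $H_\varphi(\nu_B) \ge (\sqrt{K/2}\,r-\sqrt{-H_\nu})^2+H_\nu$, and Lemma~\ref{lem:hojojo} provides the matching upper bound. The only cosmetic differences are that the paper routes through $W_1\le W_2$ while you work with $W_2$ directly, and that you make explicit the approximating-sequence step (using $\nu[B_n]\to\alpha(r)$ together with $c_n\le 0$ and $I_n\le\sup_A I_A$) that the paper leaves implicit in its chain of suprema.
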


\begin{proof}
Since $\alpha(r) \le 1$ by definition, the left hand side of \eqref{eq:concc} is always nonpositive.
Therefore the assertion is clear if $r^2 \le -8H_\nu/K$.
Suppose $r^2>-8H_\nu/K$, take a measurable set $A \subset M$ with
$\nu[A] \ge 1/2$ and put $B:=M \setminus B(A,r)$.
We also assume $\nu[B]>0$ since we have $\alpha(r)=0$ if $\nu[B]=0$ for all such $A$.

Observe that $W_1(\nu_A,\nu_B) \ge r$ as $d_g(x,y) \ge r$ for all $x \in A$ and $y \in B$.
Note also that $W_1 \le W_2$ holds by the Cauchy--Schwarz inequality.
Then the triangle inequality for $W_1$ and the $\varphi$-Talagrand inequality~\eqref{eq:tal} yield
\[ r \le W_1(\nu_A,\nu_B) \le W_1(\nu_A,\nu)+W_1(\nu,\nu_B)
 \le \sqrt{\frac{2}{K} \left( H_\varphi(\nu_A)-H_\nu \right)} +\sqrt{\frac{2}{K}\left(H_\varphi(\nu_B)-H_\nu\right)}. \]
Applying Lemma~\ref{lem:hojo} gives, as $r^2>-8H_\nu/K$ ensures
$\sqrt{K/2}r \ge \sqrt{-H_{\nu}}$,
\[ H_\varphi(\nu_B) \ge \bigg( \sqrt{\frac{K}2}r -\sqrt{-H_\nu} \bigg)^2+H_\nu. \]
Since $A$ is arbitrary and $\nu[B] \le 1/2 \le \xi_0$,
combining the above estimate with Lemma~\ref{lem:hojojo} yields
\begin{align*}
-\bigg( \sqrt{\frac{K}{2}}r -\sqrt{-H_\nu} \bigg)^2 -H_\nu
&\ge \sup_A \bigg\{ \nu[B]^{\delta_{\varphi}-2} \ln_\varphi(\nu[B]) \xi_0^{\theta_\varphi-\delta_\varphi}
 \int_B \sigma^{2-\theta_\varphi} \,d\omega \bigg\} \\
&\ge \alpha(r)^{\delta_\varphi-2} \ln_\varphi \!\big( \alpha(r) \big) \xi_0^{\theta_\varphi-\delta_\varphi}
 \cdot \sup_A \int_B \sigma^{2-\theta_\varphi} \,d\omega.
\end{align*}
$\qedd$
\end{proof}

\subsection{Concentration of measures}

We shall obtain the concentration of $\{(M,\omega,\varphi,\Psi_i)\}_{i \in \N}$,
i.e., $\lim_{i \to \infty}\alpha_{(M,\nu_i)}(r)=0$
for all $r>0$ with $\nu_i:=\exp_{\varphi}(-\Psi_i)\omega$,
under an appropriate condition on the convexity of $H_{\varphi}$ associated with $\Psi_i$.
We first prove an auxiliary lemma.

\begin{lemma}\label{lm:fitt}
Assume that $(M,\omega,\varphi,\Psi)$ is admissible, $\nu \in \cP_{\ac}(M,\omega)$
and that $\|\sigma\|_{\infty}<\infty$.
Set $H_\nu:=H_\varphi(\nu)$ and take arbitrary $\xi_0 \ge \|\sigma\|_\infty$.

\begin{enumerate}[{\rm (i)}]
\item
If $\theta_\varphi  \leq 1$, then we have
\[ \int_M \sigma^{2-\theta_\varphi} \,d\omega \le \xi_0^{1-\theta_\varphi}, \qquad
 H_\nu \ge -\frac{\xi_0}{(2-\theta_\varphi)\varphi(\xi_0)}. \]

\item
If $\theta_\varphi \in(1,3/2)$ and $\omega[M]<\infty$, then we have
\[ \int_M \sigma^{2-\theta_\varphi} \,d\omega \le \omega[M]^{\theta_\varphi-1}, \qquad
 H_\nu \ge -\frac{\xi_0^{\theta_\varphi} \omega[M]^{\theta_\varphi-1}}{(2-\theta_\varphi)\varphi(\xi_0)}. \]
\end{enumerate}
\end{lemma}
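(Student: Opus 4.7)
The plan is to derive both bounds from a single clean integral representation of $H_\nu$, and then apply the monotonicity of $s^{\theta_\varphi}/\varphi(s)$ from Lemma~\ref{lem:mono}. First I would unfold the definition of $H_\varphi$ at $\mu=\nu$. By Definition~\ref{df:Hm},
\[ H_\nu = \int_M \{ h_\varphi(\sigma)-\sigma h'_\varphi(\sigma)\} \,d\omega, \]
and whether or not $L_\varphi$ is finite, the $L_\varphi$-terms in $h_\varphi$ and $h'_\varphi$ cancel, so the integrand equals $u_\varphi(\sigma)-\sigma\ln_\varphi(\sigma)$. Repeating the Fubini computation already used in the proof of Lemma~\ref{lm:U} (swapping the order of integration in the double integral defining $u_\varphi(r)-r\ln_\varphi(r)$) gives
\[ u_\varphi(r)-r\ln_\varphi(r) = -\int_0^r \frac{s}{\varphi(s)} \,ds, \]
so that $H_\nu = -\int_M \big(\int_0^\sigma s/\varphi(s)\,ds\big) \,d\omega$. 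Admissibility together with \eqref{del} ensures $s/\varphi(s)$ is integrable near $0$, so this is well-defined.

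Next I would estimate the inner integral pointwise. Since $\sigma\le\xi_0$ and $s\mapsto s^{\theta_\varphi}/\varphi(s)$ is non-decreasing by Lemma~\ref{lem:mono}, for $0<s\le\sigma\le\xi_0$ we have $s/\varphi(s)\le s^{1-\theta_\varphi}\xi_0^{\theta_\varphi}/\varphi(\xi_0)$, hence
\[ \int_0^\sigma \frac{s}{\varphi(s)} \,ds
 \le \frac{\xi_0^{\theta_\varphi}}{(2-\theta_\varphi)\varphi(\xi_0)} \sigma^{2-\theta_\varphi}. \]
(The case $\theta_\varphi=1$ reduces to $s/\varphi(s)\le \xi_0/\varphi(\xi_0)$ and integration is trivial; the condition $\theta_\varphi<3/2$ in case~(ii) is what makes the exponent $2-\theta_\varphi$ positive so the formula continues to make sense.) Integrating against $\omega$ then gives the uniform lower bound
\[ H_\nu \ge -\frac{\xi_0^{\theta_\varphi}}{(2-\theta_\varphi)\varphi(\xi_0)} \int_M \sigma^{2-\theta_\varphi} \,d\omega. \]

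The remaining task is to estimate $\int_M \sigma^{2-\theta_\varphi}\,d\omega$ in the two cases, which also supplies the first inequality of each part of the lemma. For (i), when $\theta_\varphi\le 1$ the exponent satisfies $1-\theta_\varphi\ge 0$, so $\sigma^{2-\theta_\varphi}=\sigma\cdot\sigma^{1-\theta_\varphi}\le \xi_0^{1-\theta_\varphi}\sigma$; integrating and using $\nu[M]=1$ gives $\int_M \sigma^{2-\theta_\varphi}\,d\omega\le \xi_0^{1-\theta_\varphi}$, and substitution into the $H_\nu$-bound yields $H_\nu\ge -\xi_0/[(2-\theta_\varphi)\varphi(\xi_0)]$. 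For (ii), when $\theta_\varphi\in(1,3/2)$ we have $2-\theta_\varphi\in(1/2,1)$, and H\"older's inequality with conjugate exponents $1/(2-\theta_\varphi)$ and $1/(\theta_\varphi-1)$ gives
\[ \int_M \sigma^{2-\theta_\varphi} \,d\omega
 \le \Big(\int_M \sigma \,d\omega\Big)^{2-\theta_\varphi} \omega[M]^{\theta_\varphi-1} = \omega[M]^{\theta_\varphi-1}, \]
and substitution again provides the claimed lower bound on $H_\nu$. There is no real obstacle here; the only step requiring care is the cancellation that reduces $h_\varphi(\sigma)-\sigma h'_\varphi(\sigma)$ to $u_\varphi(\sigma)-\sigma\ln_\varphi(\sigma)$ independent of the finiteness of $L_\varphi$, and the invocation of Lemma~\ref{lem:mono} in the correct direction (monotonicity of $s^{\theta_\varphi}/\varphi(s)$ rather than $s^{\delta_\varphi}/\varphi(s)$).
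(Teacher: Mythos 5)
Your proof is correct and follows essentially the same route as the paper: reduce $H_\nu$ to $\int_M U(\sigma,1)\,d\omega = -\int_M\int_0^\sigma s/\varphi(s)\,ds\,d\omega$ via the representation \eqref{eq:U} (which is exactly the Fubini computation from Lemma~\ref{lm:U}), bound $s/\varphi(s)\le s^{1-\theta_\varphi}\xi_0^{\theta_\varphi}/\varphi(\xi_0)$ using the monotonicity of $s^{\theta_\varphi}/\varphi(s)$ from Lemma~\ref{lem:mono}, and then estimate $\int_M\sigma^{2-\theta_\varphi}\,d\omega$ by $\|\sigma\|_\infty^{1-\theta_\varphi}$ in case (i) and by H\"older in case (ii). The cancellation of the $L_\varphi$-terms in $h_\varphi(\sigma)-\sigma h'_\varphi(\sigma)$ that you flag explicitly is indeed what makes the formula $H_\nu=\int_M U(\sigma,1)\,d\omega$ valid without case distinction, and your handling of the boundary case $\theta_\varphi=1$ is consistent with the general formula.
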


\begin{proof}
It follows from \eqref{eq:U} and Lemma~\ref{lem:mono} that
\begin{align*}
H_\nu &=\int_M U(\sigma,1) \,d\omega
 \ge -\int_M \int_0^\sigma \frac{r}{\varphi(r)} \,dr d\omega
 \ge -\int_M \int_0^\sigma \frac{\xi_0^{\theta_\varphi} r^{1-\theta_\varphi}}{\varphi(\xi_0)} \,dr d\omega \\
&=-\frac{\xi_0^{\theta_\varphi}}{\varphi(\xi_0)} \int_M \frac{\sigma^{2-\theta_\varphi}}{2-\theta_\varphi} \,d\omega.
\end{align*}

(i)
The assertion immediately follows from
\[ \int_M \sigma^{2-\theta_\varphi} \,d\omega
 \le \|\sigma\|_{\infty}^{1-\theta_{\varphi}} \int_M \sigma \,d\omega
 \le \xi_0^{1-\theta_\varphi}. \]

(ii)
The H\"older inequality yields that
\[ \int_M \sigma^{2-\theta_\varphi} \,d\omega
 \le \bigg( \int_M \sigma \,d\omega \bigg)^{2-\theta_\varphi} \omega[M]^{\theta_\varphi-1}
 =\omega[M]^{\theta_\varphi-1}, \]
which shows the claim.
$\qedd$
\end{proof}

\begin{theorem}\label{th:conc}
Let $\{(M,\omega,\varphi,\Psi_i)\}_{i \in \N}$ be a sequence of admissible spaces such that
\begin{enumerate}[{\rm (a)}]
\item $\omega[M]<\infty$ if $\theta_{\varphi}>1$,
\item $\nu_i =\sigma_i\omega :=\exp_\varphi(-\Psi_i) \omega \in \cP_{\ac}(M,\omega)$ for all $i$,
\item $\xi_i:=\max\{ 1,\|\sigma_i\|_{\infty} \}<\infty$ for all $i$,
\item $\Hess H^i_{\varphi} \ge K_i$ for some $K_i>0$,
where $H^i_{\varphi}$ is the $\varphi$-relative entropy for $(M,\omega,\varphi,\Psi_i)$,
\item \label{K_i}
$\lim_{i \to \infty}K_i \xi_i^{\delta_{\varphi}-1}=\infty$ if $\theta_{\varphi} \le 1$,
and $\lim_{i \to \infty}K_i \xi_i^{\delta_{\varphi}-\theta_{\varphi}}=\infty$ if $\theta_{\varphi}>1$.
\end{enumerate}
Then the concentration function $\alpha_i(r):=\alpha_{(M,\nu_i)}(r)$ satisfies
$\lim_{i \to \infty}\alpha_i(r)=0$ for all $r>0$.
\end{theorem}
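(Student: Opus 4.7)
The plan is to apply Proposition~\ref{pr:conc} to each quadruple $(M,\omega,\varphi,\Psi_i)$ with $\xi_0:=\xi_i$, use hypothesis~(e) to push the resulting lower bound for $-L(\alpha_i(r))$ to $+\infty$, and then invoke monotonicity of $L(\alpha):=\alpha^{\delta_\varphi-2}\ln_\varphi(\alpha)$ to conclude $\alpha_i(r)\to 0$. The remark just before Proposition~\ref{pr:conc} already shows that $L$ is strictly increasing on $(0,1)$; since $\delta_\varphi-2<0$ and $l_\varphi<0$, one further has $L(1)=0$ and $L(\alpha)\to-\infty$ as $\alpha\downarrow 0$, so $-L$ is a continuous decreasing bijection $(0,1]\to[0,+\infty)$ and the reduction to $-L(\alpha_i(r))\to+\infty$ is valid.

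To get that bound I would fix $r>0$, assume without loss of generality $\alpha_i(r)>0$, and rewrite~\eqref{eq:concc}. After multiplying by $-1$ and bounding $\sup_A\int_B\sigma_i^{2-\theta_\varphi}\,d\omega$ from above by $\int_M\sigma_i^{2-\theta_\varphi}\,d\omega$, expanding $(\sqrt{K_i/2}\,r-\sqrt{-H_{\nu_i}})^2+H_{\nu_i}=K_ir^2/2-r\sqrt{-2K_iH_{\nu_i}}$ gives
\[
-L\bigl(\alpha_i(r)\bigr)\;\ge\;\frac{K_ir^2/2-r\sqrt{-2K_iH_{\nu_i}}}{\xi_i^{\theta_\varphi-\delta_\varphi}\int_M\sigma_i^{2-\theta_\varphi}\,d\omega}.
\]
Next I would feed in Lemma~\ref{lm:fitt} together with the inequality $\varphi(\xi_i)\ge\xi_i^{\delta_\varphi}$, valid for $\xi_i\ge 1$ by Lemma~\ref{lem:mono}. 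In the case $\theta_\varphi\le 1$ these give $\int_M\sigma_i^{2-\theta_\varphi}\,d\omega\le\xi_i^{1-\theta_\varphi}$ and $-H_{\nu_i}\le\xi_i^{1-\delta_\varphi}/(2-\theta_\varphi)$, so setting $X_i:=K_i\xi_i^{\delta_\varphi-1}$ the preceding display reduces to
\[
-L\bigl(\alpha_i(r)\bigr)\;\ge\;\frac{X_ir^2}{2}-r\sqrt{\frac{2X_i}{2-\theta_\varphi}},
\]
which tends to $+\infty$ by hypothesis~(e). The case $\theta_\varphi>1$ (where $\omega[M]<\infty$) is parallel: $\int_M\sigma_i^{2-\theta_\varphi}\,d\omega\le\omega[M]^{\theta_\varphi-1}$ and $-H_{\nu_i}\le\xi_i^{\theta_\varphi-\delta_\varphi}\omega[M]^{\theta_\varphi-1}/(2-\theta_\varphi)$ turn the right-hand side into the analogous expression in $X_i:=K_i\xi_i^{\delta_\varphi-\theta_\varphi}$, again diverging by~(e).

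The substantive work is already done in Proposition~\ref{pr:conc} and Lemma~\ref{lm:fitt}; what remains is exponent bookkeeping. The only conceptual point I expect to verify is that the upper bounds on $-H_{\nu_i}$ and on $\int_M\sigma_i^{2-\theta_\varphi}\,d\omega$ conspire so that, after division, both the leading $K_ir^2/2$ and the square-root correction collapse into functions of the \emph{single} quantity $X_i$ whose divergence is postulated in~(e). This matching is precisely what selects the exponents $\delta_\varphi-1$ and $\delta_\varphi-\theta_\varphi$ appearing in~(e).
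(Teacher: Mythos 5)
Your proposal is correct and follows essentially the same route as the paper: apply Proposition~\ref{pr:conc} with $\xi_0=\xi_i$, feed in Lemma~\ref{lm:fitt} and $\varphi(\xi_i)\ge\xi_i^{\delta_\varphi}$, and observe that the resulting lower bound for $-\alpha_i(r)^{\delta_\varphi-2}\ln_\varphi(\alpha_i(r))$ depends only on $X_i:=K_i\xi_i^{\delta_\varphi-1}$ (resp.\ $K_i\xi_i^{\delta_\varphi-\theta_\varphi}$), which diverges by hypothesis~(e). The only cosmetic difference is in the final step: the paper first passes from $\ln_\varphi$ to the comparison power function $\ell_{2-\theta_\varphi}$ via \eqref{eq:ln} and then uses $\alpha_i^{\delta_\varphi-2}\le\alpha_i^{-2}$, whereas you invoke directly the strict monotonicity of $L(\alpha)=\alpha^{\delta_\varphi-2}\ln_\varphi(\alpha)$ recorded in the remark after Lemma~\ref{lem:hojojo}; both conclude $\alpha_i(r)\to 0$.
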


\begin{proof}
Fix $r>0$ and put $\alpha_i:=\alpha_i(r)$ and $H_{\nu_i}:=H_{\varphi}^i(\nu_i)$ for brevity.
It follows from Proposition~\ref{pr:conc} that
\[ \alpha_i^{\delta_\varphi-2} \ln_\varphi(\alpha_i) \xi_i^{\theta_\varphi-\delta_\varphi}
 \le -\bigg\{ \bigg( \sqrt{\frac{K_i}2}r -\sqrt{-H_{\nu_i}} \bigg)^2 +H_{\nu_i} \bigg\}
 \left(\int_M \sigma_i^{2-\theta_\varphi} \,d\omega \right)^{-1}. \]
We have $\ln_\varphi(\alpha_i) \ge \ell_{2-\theta_\varphi}(\alpha_i)$ by \eqref{eq:ln},
so that
\begin{equation}\label{eq:icon}
\alpha_i^{\delta_{\varphi}-2} \ell_{2-\theta_\varphi}(\alpha_i)
 \le -\xi_i^{\delta_{\varphi}-\theta_\varphi} \bigg( \frac{K_i}{2}r^2 -\sqrt{2K_i}\sqrt{-H_{\nu_i}}r \bigg)
 \left(\int_M \sigma_i^{2-\theta_\varphi} \,d\omega \right)^{-1}.
\end{equation}

Now, for $\theta_{\varphi} \le 1$, Lemma~\ref{lm:fitt}(i) and Lemma~\ref{lem:mono} yield
\[ \int_M \sigma_i^{2-\theta_{\varphi}} \,d\omega \le \xi_i^{1-\theta_{\varphi}}, \qquad
 H_{\nu_i} \ge -\frac{\xi_i}{(2-\theta_{\varphi}) \varphi(\xi_i)}
 \ge -\frac{\xi_i^{1-\delta_{\varphi}}}{2-\theta_{\varphi}} \]
since $\xi_i \ge  1$.
Hence the right hand side of \eqref{eq:icon} is bounded from above by (for large $i$)
\[ -\xi_i^{\delta_{\varphi}-1} \bigg( \frac{K_i}{2}r^2
 -\sqrt{2K_i}\sqrt{\frac{\xi_i^{1-\delta_{\varphi}}}{2-\theta_{\varphi}}} r \bigg)
 = -\frac{K_i}{\xi_i^{1-\delta_{\varphi}}} \bigg( \frac{r^2}{2}
 -\sqrt{\frac{2}{2-\theta_{\varphi}} \frac{\xi_i^{1-\delta_{\varphi}}}{K_i}}r \bigg)
 \to -\infty \]
as $i$ goes to infinity due to the condition \eqref{K_i}.
Therefore we obtain
\[ \lim_{i \to \infty} \alpha_i^{-2} \ell_{2-\theta_\varphi}(\alpha_i)
 \le \lim_{i \to \infty} \alpha_i^{\delta_{\varphi}-2} \ell_{2-\theta_\varphi}(\alpha_i)=-\infty, \]
and hence $\lim_{i \to \infty} \alpha_i =0$.

For $\theta_{\varphi}>1$, we similarly deduce from Lemma~\ref{lm:fitt}(ii) that
\[ \int_M \sigma_i^{2-\theta_{\varphi}} \,d\omega \le \omega[M]^{\theta_{\varphi}-1}, \qquad
 H_{\nu_i} \ge -\frac{\xi_i^{\theta_{\varphi}} \omega[M]^{\theta_{\varphi}-1}}{(2-\theta_{\varphi}) \varphi(\xi_i)}
 \ge -\frac{\xi_i^{\theta_{\varphi}-\delta_{\varphi}} \omega[M]^{\theta_{\varphi}-1}}{2-\theta_{\varphi}}. \]
Hence the right hand side of \eqref{eq:icon} is bounded from above by
\begin{align*}
&-\frac{\xi_i^{\delta_{\varphi}-\theta_{\varphi}}}{\omega[M]^{\theta_{\varphi}-1}} \bigg( \frac{K_i}{2}r^2
 -\sqrt{2K_i}\sqrt{\frac{\xi_i^{\theta_{\varphi}-\delta_{\varphi}}\omega[M]^{\theta_{\varphi}-1}}
 {2-\theta_{\varphi}}} r \bigg) \\
&= -\frac{K_i \xi_i^{\delta_{\varphi}-\theta_{\varphi}}}{\omega[M]^{\theta_{\varphi}-1}} \bigg( \frac{r^2}{2}
 -\sqrt{\frac{2\omega[M]^{\theta_{\varphi}-1}}{2-\theta_{\varphi}}
 \frac{\xi_i^{\theta_{\varphi}-\delta_{\varphi}}}{K_i}}r \bigg)
 \ \to -\infty \quad (i \to \infty).
\end{align*}
Thus we have $\lim_{i \to \infty} \alpha_i =0$.
$\qedd$
\end{proof}

\begin{remark}\label{rm:conc}
(1)
For $\varphi_m$ with $m<1$, we have $\theta_{\varphi_m}=\delta_{\varphi_m}$
and hence the condition \eqref{K_i} is reduced to $\lim_{i \to \infty}K_i =\infty$,
whereas the condition $\|\sigma_i\|_{\infty}<\infty$ was implicitly used in our discussion.
See \cite[Section~6]{mCD} for a more precise estimate associated with $\varphi_m$
without assuming $\|\sigma_i\|_{\infty}<\infty$.

(2)
We stress that only $\Hess H^i_{\varphi} \ge K_i$ for single $\varphi$
is assumed in Theorem~\ref{th:conc}, rather than $\Ric_{N_{\varphi}} \ge 0$ and $\Hess \Psi_i \ge K_i$.
If $\Hess \Psi_i \ge K_i$ and $l_{\varphi}>-\infty$, then Lemma~\ref{lm:K>0}(i)
gives a stronger estimate on the diameter of $M_{\varphi}^{\Psi_i}$ as
\[ \diam M_{\varphi}^{\Psi_i}
 \le 2\sqrt{\frac{2}{K_i} \left\{ \ln_{\varphi}(\|\sigma_i\|_{\infty}) -l_{\varphi} \right\}}
 \le 2\sqrt{\frac{2}{K_i} \left\{ \ln_{\varphi}(\xi_i) -l_{\varphi} \right\}}. \]
Indeed, we observe from \eqref{eq:ln} and Lemma~\ref{lem:mono} that
\[ \frac{\ln_{\varphi}(\xi_i)}{K_i}
 \le \frac{\xi_i^{\theta_{\varphi}} \ell_{2-\theta_{\varphi}}(\xi_i)}{K_i \varphi(\xi_i)}
 \le \frac{\xi_i^{\theta_{\varphi}-\delta_{\varphi}} \ell_{2-\theta_{\varphi}}(\xi_i)}{K_i}
 =\frac{\xi_i^{\theta_{\varphi}-\delta_{\varphi}}(\xi_i^{1-\theta_{\varphi}}-1)}{K_i(1-\theta_{\varphi})}, \]
provided that $\theta_{\varphi} \neq 1$.
If $\theta_{\varphi}<1$, then the leading term (as $\xi_i \to \infty$) is
\[ \frac{1}{1-\theta_{\varphi}} \frac{\xi_i^{1-\delta_{\varphi}}}{K_i} \to 0
 \quad (i \to \infty) \]
under the condition \eqref{K_i} in Theorem~\ref{th:conc}.
Similarly, for $\theta_{\varphi}>1$ the leading term is
\[ \frac{1}{\theta_{\varphi}-1} \frac{\xi_i^{\theta_{\varphi}-\delta_{\varphi}}}{K_i} \to 0
 \quad (i \to \infty). \]
Therefore, in both cases, $\lim_{i \to \infty}\diam M^{\Psi_i}_{\varphi}=0$ holds
and it is obviously stronger than $\lim_{i \to \infty}\alpha_{(M,\nu_i)}(r)=0$.
\end{remark}

\subsection{$m(\varphi)$-normal concentration}

In order to derive the $m$-normal concentration for some $m=m(\varphi)$
from the general estimate~\eqref{eq:concc},
we prove a computational lemma on $e_m$ (see also~\cite[Lemma~6.4]{mCD}).
Recall from Subsection~\ref{ssc:phi_m} that
$e_m(\tau)=\exp_{\varphi_m}(\tau)=[1+(m-1)\tau]_+^{1/(m-1)}$. 

\begin{lemma}\label{lm:conc}
\begin{enumerate}[{\rm (i)}]
\item
Given $0<m \leq m' <1$ with $ m+m'>1$, set 
\[
\beta=\beta(m,m') =1+\frac{1-m'}{1-m} \in (1,2]. 
\]
Then we have $\beta m' > 1$ and, for any $a,r>0$,
\begin{gather*} e_m\bigg( -\left(ar-\frac1{\sqrt{m'}}\right)^2+\frac1{m'} \bigg)
 \le e_m(\beta) e_m\left( -\left( 1-\frac{1}{\beta m'} \right) \frac{a^2r^2}{m+m'-1} \right).
\end{gather*}

\item
For any $m \in [1,2)$ and $a,r>0$, we have
\[ e_m\big( (ar-1)^2-1 \big) \ge e_m \bigg(\! -\frac{2}{m} \bigg) e_m\bigg( \frac{a^2}{2}r^2 \bigg). \]
\end{enumerate}
\end{lemma}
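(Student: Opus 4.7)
The plan is to reduce both inequalities to algebra via the \emph{pseudo-additivity} of $e_m$,
\[ e_m(x)\,e_m(y) = e_m\big(x+y+(m-1)xy\big), \]
an identity that follows immediately from $e_m(t)=[1+(m-1)t]_+^{1/(m-1)}$ whenever both factors $1+(m-1)x$ and $1+(m-1)y$ are positive. Combined with the monotonicity of $e_m$, this lets me reduce each assertion to a polynomial comparison in $ar$.

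For (i), I first verify $\beta m'>1$ from $\beta=(2-m-m')/(1-m)$: direct expansion gives
\[ \beta m' -1 = \frac{(1-m')(m+m'-1)}{1-m}>0 \]
under $m\leq m'<1$ and $m+m'>1$. The constant $1+(m-1)\beta$ simplifies to $m+m'-1$, so pseudo-additivity yields
\[ e_m(\beta)\, e_m\!\left(-\Big(1-\tfrac{1}{\beta m'}\Big)\tfrac{a^2r^2}{m+m'-1}\right) = e_m\!\left(\beta - a^2r^2\Big(1-\tfrac{1}{\beta m'}\Big)\right). \]
Expanding the square on the left-hand side and invoking monotonicity of $e_m$, the claim reduces to
\[ \beta + \frac{a^2r^2}{\beta m'} \ge \frac{2ar}{\sqrt{m'}}, \]
which is exactly AM--GM applied to the two positive quantities $\beta$ and $a^2r^2/(\beta m')$, whose product equals $a^2r^2/m'$.

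For (ii), the same strategy works. For $m\in[1,2)$ both $1+(m-1)(-2/m)=(2-m)/m$ and $1+(m-1)a^2r^2/2$ are positive, so pseudo-additivity gives
\[ e_m\!\left(-\tfrac{2}{m}\right) e_m\!\left(\tfrac{a^2r^2}{2}\right) = e_m\!\left(-\tfrac{2}{m}+\tfrac{(2-m)\,a^2r^2}{2m}\right). \]
After monotonicity the inequality becomes the quadratic
\[ \frac{3m-2}{2m}\,(ar)^2 - 2\,(ar) + \frac{2}{m} \ge 0. \]
The leading coefficient is positive since $m\geq 1$, and the discriminant condition $4 \le 4(3m-2)/m^2$ rearranges to $(m-1)(m-2)\leq 0$, which is valid throughout $[1,2]$.

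No serious obstacle is expected, as the whole argument is algebraic. The only bookkeeping is to check at each step that the arguments of $e_m$ lie in the analytic regime $t<L_m$ so that pseudo-additivity is legal; this is immediate in each case (using $m+m'>1$ for (i) and $m<2$ for (ii)), and the degenerate cases where $e_m$ is pushed to its boundary values $0$ or $\infty$ render the target inequalities automatic.
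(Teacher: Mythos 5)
Your proof is correct, and the underlying mechanism is the same as the paper's: both rest on the functional identity $e_m(x)\,e_m(y)=e_m\big(x+y+(m-1)xy\big)$ (which you name ``pseudo-additivity'' and the paper just re-derives by expanding $[1+(m-1)(\cdot)]^{1/(m-1)}$), followed by the monotonicity of $e_m$ to reduce each claim to a polynomial inequality in $ar$. In part (i) the reductions are literally identical: after pseudo-additivity you arrive at $\beta+\frac{a^2r^2}{\beta m'}\ge\frac{2ar}{\sqrt{m'}}$, which is exactly the AM--GM step the paper performs, and your direct factorization $\beta m'-1=\frac{(1-m')(m+m'-1)}{1-m}$ is a cleaner way to get the sign than the paper's chain of estimates. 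In part (ii) the routes diverge slightly at the reduction stage: the paper applies a weighted AM--GM ($2ar\le\frac{m}{2}\{a^2r^2+4/m^2\}$), which makes the quadratic $\frac{m}{2}t^2-2t+\frac{2}{m}$ a perfect square $\frac{m}{2}(t-2/m)^2$ and yields the sharper intermediate bound $e_m(-2/m)\,e_m(ma^2r^2/2)$, then weakens to the stated form; you instead pseudo-add the two target factors and check that the discriminant of $\frac{3m-2}{2m}t^2-2t+\frac{2}{m}$ is $\frac{4(m-1)(m-2)}{m^2}\le 0$ on $[1,2]$. Your route targets exactly the stated inequality, while the paper's goes through a slightly stronger one; both are elementary and of essentially the same length. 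Your bookkeeping of when the $e_m$ arguments are in the analytic regime is also sound (for $m<1$ the $y$-argument is negative so $1+(m-1)y>1$ automatically; for $m\ge1$ you check the two factors explicitly).
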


\begin{proof}
(i) 
The assumptions  $m' <1$ and  $ m+m'>1$ yields  $(m+m')(1-m')>(1-m')$, and hence
\[
\beta m'= \frac{m' \{2-(m+m')\} }{1-m}
=\frac{ (m'+m)(1-m')+m'-m  }{1-m} 
>\frac{ 1-m' +m'-m  }{1-m}  = 1.
\]
From the direct calculation  
\[ -\left( ar-\frac{1}{\sqrt{m'}} \right)^2 +\frac{1}{m'}
 =-a^2r^2 +\frac{2ar}{\sqrt{m'}}
 \le -a^2 r^2 +\frac{a^2 r^2}{\beta m'} +\beta  \]
and the monotonicity of $e_m$, we deduce that 
\begin{align*}
&e_m\bigg( -\bigg(ar-\frac1{\sqrt{m'}}\bigg)^2+\frac1{m'} \bigg)
 \le e_m\left( -\left(1-\frac{1}{\beta m'}\right)a^2r^2 +\beta   \right) \\
&= \left[ 1+(m-1) \left\{  -\left(1-\frac{1}{\beta m'}\right)a^2r^2 +\beta  \right\} \right]^{1/(m-1)} \\
&= \{1+(m-1) \beta\}^{1/(m-1)} \left\{ 1-(m-1) \left( 1-\frac{1}{\beta m'} \right) \frac{a^2r^2}{m+m'-1}  \right\}^{1/(m-1)}.
\end{align*}

(ii) 
The assertion for $m=1$ (with $e_1(\tau)=e^{\tau}$) is easily checked.
For $m\in (1,2)$, we deduce from
\[ (ar-1)^2-1 =a^2r^2 -2ar
 \ge a^2r^2 -\frac{m}{2} \bigg\{ a^2r^2+\left( \frac{2}{m} \right)^2 \bigg\}
 =\left( 1-\frac{m}{2} \right) a^2r^2 -\frac{2}{m} \]
that
\begin{align*}
e_m\big( (ar-1)^2-1 \big)
&\ge e_m\bigg( \bigg(1-\frac{m}{2} \bigg) a^2r^2 -\frac{2}{m} \bigg) \\
&= \bigg[ 1+(m-1) \left\{ \bigg(1-\frac{m}{2} \bigg) a^2r^2 -\frac{2}{m} \right\} \bigg]^{1/(m-1)} \\
&= \bigg\{ 1-(m-1)\frac{2}{m} \bigg\}^{1/(m-1)} \bigg\{ 1+\frac{m-1}{2} ma^2r^2 \bigg\}^{1/(m-1)} \\
&= e_m \bigg(\! -\frac{2}{m} \bigg) e_m\bigg( \frac{ma^2}{2}r^2 \bigg)
 \ge e_m \bigg(\! -\frac{2}{m} \bigg) e_m\bigg( \frac{a^2}{2}r^2 \bigg).
\end{align*}
$\qedd$
\end{proof}

\begin{theorem}[$m(\varphi)$-normal concentration]\label{thm:conc}
Assume that $(M,\omega,\varphi, \Psi)$ is admissible, $\nu \in \cP_{\ac}(M,\omega)$,
$\Hess H_{\varphi} \ge K$ for some $K>0$ and that $\|\sigma\|_{\infty}<\infty$.
Fix arbitrary $\xi_0 \ge \max\{1, \|\sigma\|_\infty\}$.

\begin{enumerate}[{\rm (i)}]
\item
If $\theta_\varphi<1$ and $\delta_\varphi >0$, then we have for any $r>0$
\[ \alpha(r)^{-1} \ge
 \left\{ \frac{\delta_\varphi(1-\theta_\varphi)}{(1-\delta_\varphi)(2-\delta_\varphi)} \right\}^{1/(1-\delta_\varphi)}
 \cdot e_{2-\delta_\varphi} \left( \frac{K}{4} \xi_0^{\delta_\varphi-1}r^2 \right). \]

\item 
If $\theta_\varphi \in (1,3/2)$, $\delta_{\varphi}>3(\theta_{\varphi}-1)$ and if $\omega[M]<\infty$,
then we have for any $r>0$
\begin{align*}
\alpha (r) &\le
 \bigg( \frac{(\theta_\varphi-1)(3-3\theta_{\varphi}+\delta_{\varphi})}
 {2\theta_{\varphi}-\delta_{\varphi}-1} \bigg) \bigg\}^{1/(1-2\theta_\varphi+\delta_\varphi)} \\
 &\quad \times
  e_{2(1-\theta_\varphi)+\delta_\varphi}
 \left( -\frac{K}{2} \frac{\theta_\varphi-1}{(2-\theta_\varphi)(3\theta_\varphi-\delta_\varphi-2)}
 \xi_0^{ \delta_\varphi-\theta_\varphi} \omega[M]^{1-\theta_\varphi} r^2 \right).
\end{align*}

\item
If $\theta_\varphi=1$ and $\delta_\varphi >1/2$, then we have for any $r>0$
\[ \alpha(r)^{-1} \ge
 e_{3-2\delta_\varphi} \left( \frac{-2}{3-2\delta_\varphi}\right) \cdot
 e_{3-2\delta_\varphi} \left( \frac{K}{4} \xi_0^{\delta_\varphi-1} r^2 \right). \]
\end{enumerate}
\end{theorem}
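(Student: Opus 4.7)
The plan is to derive each of the three bounds starting from the general estimate in Proposition~\ref{pr:conc}, following the same three-step recipe and specializing to each case. Step one: use Lemma~\ref{lm:fitt} to bound both $\int_M \sigma^{2-\theta_\varphi}\,d\omega$ (which upper-bounds $\sup_A \int_B \sigma^{2-\theta_\varphi}\,d\omega$) and $-H_\nu$. Step two: substitute these bounds into Proposition~\ref{pr:conc}; replacing $\sup_A\int_B\cdots$ by the larger quantity $\int_M\sigma^{2-\theta_\varphi}\,d\omega$ preserves the inequality because $\ln_\varphi(\alpha(r))\le 0$ makes the left-hand side nonpositive. Then convert $\ln_\varphi(\alpha(r))$ into $\ell_m(\alpha(r))$ for the index $m$ appropriate to each case via Lemma~\ref{hikaku} or Lemma~\ref{lm:cased}. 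Step three: solve the resulting algebraic inequality for $\alpha(r)$, and then invoke Lemma~\ref{lm:conc} to complete the square in $r$, absorbing the cross-term proportional to $\sqrt{-H_\nu}\,r$ into the multiplicative constant standing in front of $e_m(\mathrm{const}\cdot r^2)$.

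For case (i) I would apply Lemma~\ref{lm:fitt}(i) to get $\int_M \sigma^{2-\theta_\varphi}\,d\omega \le \xi_0^{1-\theta_\varphi}$ and, using $\varphi(\xi_0)\ge\xi_0^{\delta_\varphi}$ (from Lemma~\ref{lem:mono}), $-H_\nu\le\xi_0^{1-\delta_\varphi}/(2-\theta_\varphi)$. Substituting into Proposition~\ref{pr:conc} gives
\[ \xi_0^{1-\delta_\varphi}\,\alpha(r)^{\delta_\varphi-2}\ln_\varphi\!\big(\alpha(r)\big)
 \le -\Big(\sqrt{K/2}\,r-\sqrt{-H_\nu}\Big)^2 - H_\nu. \]
Using Lemma~\ref{hikaku} to replace $\ln_\varphi(\alpha(r))$ by $\ell_{2-\theta_\varphi}(\alpha(r))$ (which preserves the inequality since $\ln_\varphi\ge\ell_{2-\theta_\varphi}$ on $(0,1)$ and both sides are nonpositive), I would explicitly evaluate $\ell_{2-\theta_\varphi}$ to isolate $\alpha(r)^{1-\delta_\varphi}$, getting a relation of the schematic form $\alpha(r)^{-(1-\delta_\varphi)}\ge 1 + C_1\big((K\xi_0^{\delta_\varphi-1}/2)r^2 - C_2\sqrt{K\xi_0^{\delta_\varphi-1}}\,r\big)$. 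Finally I would apply Lemma~\ref{lm:conc}(i) with parameters chosen so that the resulting $e$-function carries the index $2-\delta_\varphi$, absorbing the linear cross-term into the stated constant $\{\delta_\varphi(1-\theta_\varphi)/[(1-\delta_\varphi)(2-\delta_\varphi)]\}^{1/(1-\delta_\varphi)}$.

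Cases (ii) and (iii) follow the same template but with two modifications. For case (ii) one uses Lemma~\ref{lm:fitt}(ii) (hence the hypothesis $\omega[M]<\infty$), giving $\int_M \sigma^{2-\theta_\varphi}\,d\omega \le \omega[M]^{\theta_\varphi-1}$ and $-H_\nu\le \xi_0^{\theta_\varphi-\delta_\varphi}\omega[M]^{\theta_\varphi-1}/(2-\theta_\varphi)$; the technical assumption $\delta_\varphi>3(\theta_\varphi-1)$ ensures the resulting index $m=2(1-\theta_\varphi)+\delta_\varphi$ is positive so Lemma~\ref{lm:conc}(i) applies with these parameters. For case (iii) the boundary value $\theta_\varphi=1$ makes the natural index from Lemma~\ref{hikaku} degenerate to the logarithmic case, so I would instead apply Lemma~\ref{lm:cased} at index $2-\delta_\varphi$ and then Lemma~\ref{lm:conc}(i) at the shifted index $3-2\delta_\varphi$ — the extra multiplicative factor $e_{3-2\delta_\varphi}(-2/(3-2\delta_\varphi))$ is precisely the analog of $e_m(\beta)$ appearing in Lemma~\ref{lm:conc}(i).

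The main obstacle is the bookkeeping of signs and matching of indices. Because $\ln_\varphi(\alpha(r))$, $\ell_m(\alpha(r))$, and $H_\nu$ are all nonpositive, directions of inequality flip every time one multiplies through, and one must verify at each step that the substitution $\sup_A\int_B\cdots\le\int_M\sigma^{2-\theta_\varphi}\,d\omega$ is indeed in the useful direction. More subtly, the index in the final $e_m$ is not simply $2-\theta_\varphi$ (the natural index produced by Lemma~\ref{hikaku}) but depends on $\delta_\varphi$ as well; matching these indices so that Lemma~\ref{lm:conc} applies cleanly is the source of the precise hypotheses (such as $\delta_\varphi>3(\theta_\varphi-1)$ in case (ii) and $\delta_\varphi>1/2$ in case (iii)) and of the specific multiplicative constants appearing in the statement.
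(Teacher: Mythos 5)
Your skeleton — Proposition~\ref{pr:conc}, Lemma~\ref{lm:fitt} to control $-H_\nu$, convert $\ln_\varphi$ to $\ell_m$, then complete the square via Lemma~\ref{lm:conc} — matches the paper's strategy, and your sign bookkeeping around the $\ell_m$-conversion is in the right direction. But there is a concrete gap in how you handle $\sup_A\int_B\sigma^{2-\theta_\varphi}\,d\omega$, and it breaks the index-matching you yourself identify as the crux.

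You propose to replace $\sup_A\int_B\sigma^{2-\theta_\varphi}\,d\omega$ by the larger $\int_M\sigma^{2-\theta_\varphi}\,d\omega$, which the hypothesis $\ln_\varphi(\alpha)\le 0$ does allow. But that discards the essential $\alpha$-dependence: the paper instead bounds
\[
\int_B\sigma^{2-\theta_\varphi}\,d\omega
 \le \|\sigma\|_\infty^{1-\theta_\varphi}\int_B\sigma\,d\omega
 \le \alpha(r)\,\xi_0^{1-\theta_\varphi}
\]
when $\theta_\varphi\le 1$ (used in cases (i) and (iii)), and by H\"older
\[
\int_B\sigma^{2-\theta_\varphi}\,d\omega
 \le \Big(\int_B\sigma\,d\omega\Big)^{2-\theta_\varphi}\omega[B]^{\theta_\varphi-1}
 \le \alpha(r)^{2-\theta_\varphi}\,\omega[M]^{\theta_\varphi-1}
\]
when $\theta_\varphi>1$ (case (ii)). After plugging into \eqref{eq:concc}, this upgrades the $\alpha$-exponent from $\alpha^{\delta_\varphi-2}$ to $\alpha^{\delta_\varphi-1}$ (cases (i), (iii)) or $\alpha^{\delta_\varphi-\theta_\varphi}$ (case (ii)). That exponent shift is exactly what makes the $\ell_{2-\theta_\varphi}$-conversion, together with the normalization $\alpha^{\delta_\varphi-\theta_\varphi}\ge 1\ge(1-\theta_\varphi)/(1-\delta_\varphi)$, land on $\ell_{2-\delta_\varphi}(C\alpha^{-1})$ (respectively $\ell_{2(1-\theta_\varphi)+\delta_\varphi}(c\alpha)$ in case (ii)). With your cruder replacement, the same manipulations produce a bound on $\alpha^{\delta_\varphi-2}$ rather than $\alpha^{\delta_\varphi-1}$; isolating $\alpha^{-1}$ then yields an $e$-index near $3-\delta_\varphi$ rather than the claimed $2-\delta_\varphi$, so you cannot recover the statement.

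Two smaller mismatches in your plan: in cases (i) and (iii) the square-completion is carried out through Lemma~\ref{lm:conc}(ii), not (i) (only case (ii) uses Lemma~\ref{lm:conc}(i)); and case (iii) does not pass through Lemma~\ref{lm:cased} — since $\theta_\varphi=1$ forces $\ln_\varphi(\alpha)\ge\ln(\alpha)$, the paper instead applies the ad-hoc numerical estimate $\ln(t)\ge(t^s-t^{-s})/(2s)$ with $s=1-\delta_\varphi$ to reach $-\ell_{3-2\delta_\varphi}(\alpha^{-1})$ before invoking Lemma~\ref{lm:conc}(ii).
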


\begin{proof}
We abbreviate $\alpha(r)$ as $\alpha$ in this proof, and assume $\alpha>0$ without loss of generality.
Let $A \subset M$ be a measurable set of $\nu[A] \ge 1/2$ and put $B:=M \setminus B(A,r)$.

(i) 
We first observe
\[ \int_B \sigma^{2-\theta_\varphi} \,d\omega
 \le \|\sigma\|_{\infty}^{1-\theta_\varphi} \int_B \sigma \,d\omega
 \le \alpha \xi_0^{1-\theta_\varphi}. \]
Then \eqref{eq:concc} yields
\begin{equation}\label{eq:conce}
\alpha^{\delta_\varphi-1} \ln_\varphi(\alpha) \le
 \xi_0^{\delta_\varphi-1} \bigg\{ -\bigg( \sqrt{\frac{K}{2}}r -\sqrt{-H_\nu} \bigg)^2-H_\nu \bigg\},
\end{equation}
where $H_{\nu}:=H_{\varphi}(\nu)$.
On the one hand, it follows from \eqref{eq:ln} that
\[ \alpha^{\delta_\varphi-1} \ln_\varphi(\alpha)
 \ge \alpha^{\delta_\varphi-1} \ell_{2-\theta_{\varphi}}(\alpha)
 =\frac{\alpha^{\delta_\varphi-\theta_\varphi}-\alpha^{\delta_\varphi-1}}{1-\theta_\varphi}. \]
Since $\alpha^{\delta_{\varphi}-\theta_{\varphi}} \ge 1 \ge (1-\theta_{\varphi})/(1-\delta_{\varphi})$,
we obtain
\[ \alpha^{\delta_\varphi-1} \ln_\varphi(\alpha)
 \ge \frac{1- (C\alpha^{-1})^{1-\delta_\varphi}}{1-\delta_\varphi}
 = -\ell_{2-\delta_\varphi}(C\alpha^{-1}), \quad
 C:=\bigg( \frac{1-\delta_{\varphi}}{1-\theta_{\varphi}} \bigg)^{1/(1-\delta_{\varphi})} \ge 1. \]
On the other hand, Lemmas~\ref{lm:fitt}(i) and \ref{lem:mono} give
\begin{equation}\label{eq:xi_0}
-\xi_0^{\delta_\varphi-1} H_\nu
 \le \frac{\xi_0^{\delta_\varphi}}{(2-\theta_\varphi) \varphi(\xi_0)}
 \le \frac1{2-\theta_\varphi} \le 1.
\end{equation}
Hence we have
\[ \ell_{2-\delta_\varphi}(C\alpha^{-1})
 \ge \bigg( \sqrt{\frac{K \xi_0^{\delta_\varphi-1}}2} r-1 \bigg)^2- 1. \]
We apply Lemma~\ref{lm:conc}(ii) and obtain
\begin{align*}
\alpha^{-1}
&\ge C^{-1}e_{2-\delta_\varphi} \bigg( \bigg(\sqrt{\frac{K}{2}} \xi_0^{(\delta_\varphi-1)/2}r-1 \bigg)^2 -1 \bigg) \\
&\ge C^{-1}e_{2-\delta_\varphi} \bigg( \frac{2}{\delta_\varphi-2} \bigg)
 e_{2-\delta_\varphi} \left( \frac{K}{4} \xi_0^{\delta_\varphi-1} r^2 \right) \\
&= \left\{ \frac{\delta_\varphi(1-\theta_\varphi)}{(1-\delta_\varphi)(2-\delta_\varphi)} \right\}^{1/(1-\delta_\varphi)}
 \cdot e_{2-\delta_\varphi} \left( \frac{K}{4} \xi_0^{\delta_\varphi-1} r^2 \right).
\end{align*}

(ii) 
We deduce from the H\"older inequality that
\[ \int_B \sigma^{2-\theta_\varphi} \,d\omega
 \le \bigg( \int_B \sigma \,d\omega \bigg)^{2-\theta_\varphi} \omega[B]^{\theta_\varphi-1}
 \le  \alpha^{2-\theta_\varphi}  \omega[M]^{\theta_\varphi-1}. \]
Then \eqref{eq:concc} gives
\[ \alpha^{\delta_\varphi-\theta_\varphi} \ln_\varphi(\alpha)
 \le \xi_0 ^{\delta_\varphi-\theta_\varphi}  \omega[M]^{1-\theta_\varphi}
 \bigg\{\! -\bigg( \sqrt{\frac{K}2}r -\sqrt{-H_\nu} \bigg)^2 -H_\nu \bigg\}. \]
Set $m:=2(1-\theta_\varphi)+\delta_\varphi$ and $m':=2-\theta_\varphi$,
and observe $0<m \le m'<1$ as well as $m+m'>1$.
Similarly to (i), \eqref{eq:ln} yields
\[ \alpha^{\delta_\varphi-\theta_\varphi} \ln_\varphi(\alpha)
 \ge \alpha^{\delta_{\varphi}-\theta_{\varphi}} \ell_{2-\theta_{\varphi}}(\alpha)
 =\frac{\alpha^{\delta_\varphi-2\theta_\varphi+1}-\alpha^{\delta_\varphi-\theta_\varphi}}{1-\theta_\varphi}
 =\frac{\alpha^{m-1}-\alpha^{m-m'}}{m'-1}. \]
As $\alpha^{m-m'} \ge 1 \ge (1-m')/(1-m)$, we find
\[ \alpha^{\delta_\varphi-\theta_\varphi} \ln_\varphi(\alpha)
 \ge \frac{(c\alpha)^{m-1}-1}{m-1}=\ell_m(c\alpha), \quad
 c:=\bigg( \frac{1-m}{1-m'} \bigg)^{1/(m-1)} \le 1. \]
Lemmas~\ref{lm:fitt}(ii) and \ref{lem:mono} imply
\[ -\xi_0^{\delta_\varphi-\theta_\varphi} \omega[M]^{1-\theta_\varphi} H_\nu
 \le \frac{\xi_0^{\delta_\varphi}}{(2-\theta_\varphi) \varphi(\xi_0)}
 \le \frac1{2-\theta_\varphi} =\frac{1}{m'}, \]
and hence
\[ \ell_{m}(c\alpha) \le
 -\bigg( \sqrt{ \frac{ K\xi_0^{\delta_\varphi-\theta_\varphi} \omega[M]^{1-\theta_\varphi}}{2}}r
 -\frac{1}{\sqrt{m'}} \bigg)^2+\frac{1}{m'}. \]
Then we apply Lemma~\ref{lm:conc}(i) to have, with $\beta=(2-m-m')/(1-m)$,
\[ \alpha \le
 c^{-1} e_m(\beta)  e_m \left( -\left(1-\frac{1}{\beta m'} \right)
 \frac{K\xi_0^{\delta_\varphi-\theta_\varphi} \omega[M]^{1-\theta_\varphi}}{2(m+m'-1)} r^2 \right). \]

(iii)
It immediately follows from \eqref{eq:conce} and \eqref{eq:xi_0} that 
\[ \alpha^{\delta_\varphi-1} \ln_\varphi(\alpha)
 \le \xi_0 ^{\delta_\varphi-1}
 \bigg\{ -\bigg( \sqrt{\frac{K}{2}}r -\sqrt{-H_\nu} \bigg)^2-H_\nu \bigg\}
 \le -\bigg( \sqrt{\frac{K \xi_0 ^{\delta_\varphi-1}}{2}}r -1 \bigg)^2 +1. \]
Note that \eqref{eq:ln} provides
$\alpha^{\delta_\varphi-1} \ln_\varphi (\alpha) \ge \alpha^{\delta_\varphi-1} \ln (\alpha)$.
If $\delta_{\varphi}=1$, then it holds $\alpha^{\delta_{\varphi}-1}\ln(\alpha)=-\ln(\alpha^{-1})$.
Otherwise, the numerical estimate
\[ \ln(t) \ge \frac{t^s-t^{-s}}{2s} \qquad \text{for}\ t \in (0,1],\ s>0, \]
shows $\alpha^{\delta_\varphi-1} \ln (\alpha) \ge -\ell_{3-2\delta_\varphi}(\alpha^{-1})$
(let $s=1-\delta_{\varphi}$ and $t=\alpha$).
Therefore we have, thanks to Lemma~\ref{lm:conc}(ii) with $m=3-2\delta_{\varphi}<2$,
\[
\alpha^{-1} \ge
e_{3-2\delta_\varphi} \bigg( \bigg( \sqrt{\frac{K\xi_0^{\delta_\varphi-1}}{2}}r -1 \bigg)^2-1 \bigg)
 \ge e_{3-2\delta_\varphi} \left( \frac{-2}{3-2\delta_\varphi}\right)
 \cdot e_{3-2\delta_\varphi} \left( \frac{K}4\xi_0 ^{\delta_\varphi-1} r^2 \right).
\]
$\qedd$
\end{proof}

Note that $3(\theta_{\varphi}-1) <\theta_{\varphi}$ in (ii) by $\theta_{\varphi}<3/2$,
so that the condition $\delta_{\varphi}>3(\theta_{\varphi}-1)$ is not vacuous.

\begin{remark}
Letting $\delta_\varphi=\theta_\varphi$ and then $\theta_{\varphi} \to 1$,
all of the estimates (i)--(iii) in Theorem~\ref{thm:conc} tend to the normal concentration $\alpha(r) \leq e^2 \exp(-Kr^2/4)$.
\end{remark}

\section{Gradient flow of $H_\varphi$: Compact case}\label{sc:gf}

In this and the next sections, we show that the gradient flow of the $\varphi$-relative entropy
produces weak solutions to the nonlinear evolution equation
\[ \frac{\partial\rho}{\partial t}
 =\div_{\omega} \bigg( \frac{\rho\nabla\rho}{\varphi(\rho)} +\rho\nabla\Psi \bigg) \]
on the weighted Riemannian manifold $(M, \omega)$.
See the beginning of Subsection~\ref{ssc:heat} for more explanation and background.
This kind of interpretation of evolution equations has turned out extremely
useful after the pioneering work due to Jordan et al.~\cite{JKO}.
There are several ways of interpreting this coincidence.
In this section, we adapt the rather `metric geometric' approach developed in \cite{Ogra}
inspired by \cite{PP} and \cite{Ly} (see also \cite{Pesig}).
This formulation of gradient flows requires a strong structure theorem (Theorem~\ref{th:angle})
of the Wasserstein space, which is known only for compact spaces.
The noncompact situation will be treated in the next section in a different strategy
along \cite{AGS} and \cite{Er}.

Before beginning the review of the structure of Wasserstein spaces,
let us recall basic notions of calculus on our weighted Riemannian manifold
$(M,\omega)$ with $\omega=e^{-f}\vol_g$.
For a differentiable vector field $V$ on $M$, the \emph{weighted divergence} is defined as
\[ \div_{\omega}V:=\div V-\langle V,\nabla f \rangle, \]
where $\div V$ denotes the usual divergence of $V$ for the unweighted space $(M,\vol_g)$.
Note that $\div_{\omega}V=e^f \div(e^{-f}V)$ and, for any $w \in C^1_c(M)$,
the integration by parts holds:
\begin{align*}
\int_M \langle \nabla w,V \rangle \,d\omega
 =\int_M \langle \nabla w,e^{-f}V \rangle \,d\!\vol_g
 =-\int_M w\div(e^{-f}V) \,d\!\vol_g
 = -\int_M w\div_{\omega}V \,d\omega.
\end{align*}
Through this formula, the weighted divergence is defined in the weak sense
also for measurable vector fields.
For $\rho \in H^1_{\loc}(M)$, the {\it weighted Laplacian} is defined in the weak form by
\[ \Delta^{\omega}\rho:=\div_{\omega}(\nabla \rho)
 =\Delta \rho -\langle \nabla \rho, \nabla f \rangle. \]

\subsection{Geometric structure of  $(\cP(M),W_2)$}\label{ssc:gf1}

Let $M$ be compact throughout the section, so that $\cP(M)=\cP^2(M)$.
It is known that $(\cP(M),W_2)$ is an Alexandrov space of nonnegative
curvature if and only if $(M,g)$ has the nonnegative sectional curvature
(\cite[Proposition~2.10]{StI}, \cite[Theorem~A.8]{LV2}).
Alexandrov spaces are metric spaces whose sectional curvature is bounded
from below by a constant in the sense of the triangle comparison property,
and such spaces are known to possess nice infinitesimal structures
(we refer to \cite{BBI} for the basic theory).
We remark that it is in most cases impossible to bound the curvature of $\cP(M)$ from above
(cf.\ \cite[Example~7.3.3]{AGS}).
In the case where $(M,g)$ is not nonnegatively curved, although $(\cP(M),W_2)$ does not
admit any lower curvature bound in the sense of Alexandrov (\cite[Proposition~2.10]{StI}),
we can consider the `angle' between geodesics (see also \cite[Theorem~3.6]{Ogra}).

\begin{theorem}\label{th:angle}{\rm (\cite[Theorem~3.4, Remark~3.5]{Gi})}
For any $\mu \in \cP(M)$ and unit speed geodesics
$\alpha,\beta:[0,\delta) \lra \cP(M)$ with $\alpha(0)=\beta(0)=\mu$, the joint limit
\[ \lim_{s,t \downarrow 0}\frac{s^2 +t^2-W_2(\alpha(s),\beta(t))^2}{2st}\ \in [-1,1] \]
exists.
\end{theorem}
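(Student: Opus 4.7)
The plan is to establish existence by producing matching upper and lower bounds for the quotient $(s^2 + t^2 - W_2(\alpha(s), \beta(t))^2)/(2st)$ as $s, t \downarrow 0$, both expressible as an integrated inner product of the initial velocities of $\alpha$ and $\beta$. The range $[-1,1]$ is immediate: the triangle inequality gives $W_2(\alpha(s), \beta(t)) \le s + t$ since $\alpha, \beta$ are unit speed, hence the quotient is $\ge -1$; the reverse triangle inequality gives $W_2(\alpha(s), \beta(t)) \ge |s-t|$, hence the quotient is $\le 1$.

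First, I would represent each geodesic via a dynamical plan. Applying Proposition~\ref{pr:LV} to $\alpha|_{[0,s_0]}$ and $\beta|_{[0,t_0]}$ for fixed small $s_0, t_0 \in (0,\delta)$ yields $\Pi_\alpha, \Pi_\beta \in \cP(\Gamma(M))$ with common starting marginal $\mu = (\ev_0)_\sharp \Pi_\alpha = (\ev_0)_\sharp \Pi_\beta$ and $\alpha(s) = (\ev_{s/s_0})_\sharp \Pi_\alpha$, $\beta(t) = (\ev_{t/t_0})_\sharp \Pi_\beta$. Geodesics in $(\cP(M),W_2)$ do not branch on Riemannian manifolds (Theorem~\ref{th:FG}), so these plans are unique and consistent under restriction; $\Pi_\bullet$-a.e.\ geodesic admits a well-defined initial velocity $\dot\gamma(0) \in TM$, and the unit speed of $\alpha$ translates to $\int |\dot\gamma(0)|^2 \, d\Pi_\alpha = s_0^2$ (similarly for $\beta$).

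Second, I would build a canonical coupling by gluing $\Pi_\alpha$ and $\Pi_\beta$ over their common $\ev_0$-marginal $\mu$: disintegrating $d\Pi_\alpha = d\Pi_{\alpha,x} \, d\mu(x)$ and $d\Pi_\beta = d\Pi_{\beta,x} \, d\mu(x)$, set $d\Xi := d\Pi_{\alpha,x}(\gamma_\alpha) \, d\Pi_{\beta,x}(\gamma_\beta) \, d\mu(x)$. Then $(\ev_{s/s_0} \times \ev_{t/t_0})_\sharp \Xi$ is a (not necessarily optimal) coupling of $\alpha(s)$ and $\beta(t)$, and on the compact Riemannian manifold $M$ the second-order law of cosines at a common base point gives
\[
d_g\bigl(\gamma_\alpha(s/s_0), \gamma_\beta(t/t_0)\bigr)^2 = \frac{s^2}{s_0^2}|\dot\gamma_\alpha(0)|^2 + \frac{t^2}{t_0^2}|\dot\gamma_\beta(0)|^2 - \frac{2st}{s_0 t_0} \langle \dot\gamma_\alpha(0), \dot\gamma_\beta(0) \rangle + R(s,t),
\]
with $|R(s,t)| \le C(s+t)^3$ uniformly. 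Integrating against $\Xi$ and dividing by $2st$ yields
\[
\liminf_{s,t \downarrow 0} \frac{s^2 + t^2 - W_2(\alpha(s), \beta(t))^2}{2st} \ge \frac{1}{s_0 t_0}\int \langle \dot\gamma_\alpha(0), \dot\gamma_\beta(0) \rangle \, d\Xi.
\]
For the reverse inequality, for each small $(s,t)$ pick an optimal coupling $\pi_{s,t}^*$ between $\alpha(s)$ and $\beta(t)$, lift it via $\Pi_\alpha, \Pi_\beta$ to a measure $\Xi^*_{s,t}$ on $\Gamma(M) \times \Gamma(M)$ with the prescribed endpoint marginals, and extract a weak subsequential limit $\Xi^*$ using compactness of $\cP(\Gamma(M) \times \Gamma(M))$. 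The limiting measure has both initial marginals equal to $\mu$ and is supported on pairs with $\gamma_\alpha(0) = \gamma_\beta(0)$; non-branching together with the uniqueness of the tangent plans $\Pi_\alpha, \Pi_\beta$ at $\mu$ (Theorem~\ref{th:FG}) forces $\Xi^* = \Xi$, matching the liminf.

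The principal obstacle lies in the uniqueness assertion of the final step: different optimal couplings $\pi_{s,t}^*$ could a priori accumulate along distinct subsequences to distinct limits, since $(\cP(M),W_2)$ is not itself a Riemannian manifold. The resolution, following Gigli's approach, exploits that on a Riemannian base the optimal transport from $\mu$ is supported on gradients of $c$-concave potentials and hence determined by the initial velocity field; combined with Theorem~\ref{th:FG}, this pins down the lifted tangent plans uniquely at $\mu$ and forces all subsequential limits of $\Xi^*_{s,t}$ to coincide with the canonical glued plan $\Xi$, independently of the chosen optimal couplings.
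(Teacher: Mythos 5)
This theorem is cited to Gigli \cite[Theorem~3.4, Remark~3.5]{Gi} and the paper offers no proof of its own, so the comparison is between your argument and Gigli's. Your elementary bounds $[-1,1]$ are correct, and the overall strategy of representing each geodesic by a dynamical plan and Taylor-expanding $d_g^2$ is the right idea. The lower bound via the glued plan $\Xi$ (with the common base point $\gamma_\alpha(0)=\gamma_\beta(0)$) is sound for compact $M$, because there the second-order law of cosines has a uniformly bounded error.

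The gap is in the upper bound, and in particular in the claim that all subsequential limits $\Xi^*$ of the lifted optimal couplings must equal the independently glued plan $\Xi$. This fails when $\mu$ is singular, which the theorem allows. Theorem~\ref{th:FG} and the "gradient of a $c$-concave potential" description pin down the geodesic only when $\mu$ is absolutely continuous; for singular $\mu$ the disintegrations $\Pi_{\alpha,x}$ and $\Pi_{\beta,x}$ need not be Dirac measures, and then a coupling of $\Pi_\alpha$ and $\Pi_\beta$ concentrated on the diagonal $\{\gamma_\alpha(0)=\gamma_\beta(0)\}$ is far from unique: the product coupling you built is only one of many, and the one that optimal couplings converge to should in general maximize $\int \langle\dot\gamma_\alpha(0),\dot\gamma_\beta(0)\rangle$. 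Consequently your lower bound (which uses only the product gluing) and your proposed upper bound (which would need to equal it) cannot both be sharp, and the argument closing the gap between them does not go through. There is a second difficulty you do not address: for the lifted optimal plan $\Xi^*_{s,t}$ at positive $s,t$, the base points $\gamma_\alpha(0)$ and $\gamma_\beta(0)$ do \emph{not} coincide, so the law-of-cosines expansion at a common base point is not applicable; one would need a more careful expansion of $d_g(\exp_x u,\exp_y v)^2$ in the regime $x\neq y$ with quantitative control as the mass concentrates near the diagonal, and the rate at which that concentration happens is precisely what has to be controlled against the denominator $2st$. Gigli's actual argument avoids these pitfalls: the limit is identified with a supremum over all admissible couplings of the velocity plans (not a single product coupling), and the existence of the joint limit is extracted from a monotonicity/calculus lemma adapted to the structure of $W_2$, rather than from uniqueness of an accumulation point. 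To repair your proof along the present lines you would at minimum need to (i) replace the product gluing by the supremum over all base-point-matched couplings $\Xi'$ in the lower bound, and (ii) supply a genuine second-order lower estimate for the transport cost under $\Xi^*_{s,t}$ with $\gamma_\alpha(0)\neq\gamma_\beta(0)$, and show it converges to the same supremum.
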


Theorem~\ref{th:angle} in particular guarantees that the scaling limit
\[ \lim_{\ve \downarrow 0}\frac{(s\ve)^2+(t\ve)^2-W_2(\alpha(s\ve),\beta(t\ve))^2}{2st\ve^2} \]
exists, and is independent of the choices of the parameters $s,t>0$.
This means that an angle between $\alpha$ and $\beta$ makes sense,
so that $(\cP(M),W_2)$ looks like a Riemannian space (rather than a Finsler space).
This observation makes it possible to investigate the infinitesimal structure
of $(\cP(M),W_2)$ in the manner of the theory of Alexandrov spaces.
For $\mu \in \cP(M)$, denote by $\Sigma'_{\mu}[\cP(M)]$ the set of all
nontrivial unit speed minimal geodesics emanating from $\mu$.
Given $\alpha,\beta \in \Sigma'_{\mu}[\cP(M)]$, Theorem~\ref{th:angle}
verifies that the {\it angle}
\[ \angle_{\mu}(\alpha,\beta)
 :=\arccos \bigg( \lim_{s,t \downarrow 0}\frac{s^2 +t^2-W_2(\alpha(s),\beta(t))^2}{2st} \bigg)
 \in [0,\pi] \]
is well-defined.
We define the {\it space of directions} $(\Sigma_{\mu}[\cP(M)],\angle_{\mu})$
as the completion of $(\Sigma'_{\mu}[\cP(M)]/\!\!\sim, \angle_{\mu})$,
where $\alpha \sim \beta$ holds if $\angle_{\mu}(\alpha,\beta)=0$.
The angle $\angle_{\mu}$ provides a natural distance structure of $\Sigma_{\mu}[\cP(M)]$.
The {\it tangent cone} $(C_{\mu}[\cP(M)],\sigma_{\mu})$ is defined as
the Euclidean cone over $(\Sigma_{\mu}[\cP(M)],\angle_{\mu})$, i.e.,
\begin{align*}
C_{\mu}[\cP(M)] &:=\big( \Sigma_{\mu}[\cP(M)] \times [0,\infty) \big)
 \big/ \big( \Sigma_{\mu}[\cP(M)] \times \{0\} \big), \\
\sigma_{\mu}\big( (\alpha,s),(\beta,t) \big)
&:=\sqrt{s^2+t^2-2st\cos\angle_{\mu}(\alpha,\beta)}.
\end{align*}
By means of this infinitesimal structure, we introduce a class of `differentiable curves'.

\begin{definition}[Right differentiability]\label{df:rd}
We say that a curve $\xi:[0,l) \lra \cP(M)$ is {\it right differentiable} at $t \in [0,l)$
if there is $\bv \in C_{\xi(t)}[\cP(M)]$ such that, for any sequences
$\{\ve_i\}_{i \in \N}$ of positive numbers tending to zero and $\{ \alpha_i \}_{i \in \N}$
of unit speed minimal geodesics from $\xi(t)$ to $\xi(t+\ve_i)$, the sequence
$\{ (\alpha_i,W_2(\xi(t),\xi(t+\ve_i))/\ve_i) \}_{i \in \N} \subset C_{\xi(t)}[\cP(M)]$
converges to $\bv$.
Such $\bv$ is clearly unique if it exists, and then we write $\dot{\xi}(t)=\bv$.
\end{definition}

\subsection{Gradient flows in $(\cP(M),W_2)$}\label{ssc:gf2}

Consider a lower semi-continuous function $H:\cP(M) \lra (-\infty,+\infty]$
which is $K$-convex in the weak sense for some $K \in \R$.
We in addition suppose that $H$ is not identically $+\infty$, and define
$\cP_H^*(M):=\{ \mu \in \cP(M) \,|\, H(\mu)<\infty \}$.

Given $\mu \in \cP_H^*(M)$ and $\alpha \in \Sigma_{\mu}[\cP(M)]$, we set
\[ D_{\mu}H(\alpha):=\liminf_{\Sigma'_{\mu}[\cP(M)] \ni \beta \to \alpha}
 \lim_{t \downarrow 0}\frac{H(\beta(t))-H(\mu)}{t}, \]
where the convergence $\beta \to \alpha$ is with respect to $\angle_{\mu}$.
Define the {\it absolute gradient} (also called the {\it local slope})
of $H$ at $\mu \in \cP_H^*(M)$ by
\[ |\grad H|(\mu):=\max\bigg\{ 0, \limsup_{\tilde{\mu} \to \mu}
 \frac{H(\mu)-H(\tilde{\mu})}{W_2(\mu,\tilde{\mu})} \bigg\}, \]
where $\tilde{\mu} \to \mu$ is with respect to $W_2$.
Note that $-D_{\mu}H(\alpha) \le |\grad H|(\mu)$ for any $\alpha \in \Sigma_{\mu}[\cP(M)]$.
The $K$-convexity of $H$ guarantees the unique existence of the direction
along which $H$ decreases the most.

\begin{lemma}\label{lm:gv}{\rm (\cite[Lemma~4.2]{Ogra})}
For each $\mu \in \cP_H^*(M)$ with $0<|\grad H|(\mu)<\infty$,
there exists a unique direction $\alpha \in \Sigma_{\mu}[\cP(M)]$ satisfying
$D_{\mu}H(\alpha)=-|\grad H|(\mu)$.
\end{lemma}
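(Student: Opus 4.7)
The plan is to mimic the classical existence-and-uniqueness argument for the gradient vector of a $K$-convex lower semi-continuous function on an Alexandrov space, with the Euclidean-cone tangent structure at $\mu$ provided by Theorem~\ref{th:angle} playing the role of the Alexandrov comparison.

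First I would derive a slope inequality: applying Definition~\ref{df:K-conv} along a $K$-convex minimal geodesic $\zeta$ from $\mu$ to an arbitrary $\nu \in \cP(M)$, dividing by $tW_2(\mu,\nu)$, and comparing the resulting $\limsup$ with the defining $\limsup$ of $|\grad H|(\mu)$ produces
\[ H(\nu) \ge H(\mu) - |\grad H|(\mu) \cdot W_2(\mu,\nu) + \tfrac{K}{2}W_2(\mu,\nu)^2 \]
for every $\nu \in \cP(M)$. Writing $S := |\grad H|(\mu)$, I then pick a minimizing sequence $\tilde\mu_n \to \mu$ attaining the defining $\limsup$ of $S$, take unit-speed minimal geodesics $\alpha_n \in \Sigma'_\mu[\cP(M)]$ through $\tilde\mu_n$ at time $T_n := W_2(\mu,\tilde\mu_n)$, and set $\epsilon_n := S + T_n^{-1}[H(\tilde\mu_n)-H(\mu)] \to 0$.

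The core step is to show $\{\alpha_n\}$ is Cauchy in $(\Sigma_{\mu}[\cP(M)], \angle_\mu)$. Applying the $K$-convexity to the pair $(\tilde\mu_m,\tilde\mu_n)$ (after extracting a subsequence and rescaling parametrizations so that $T_m = T_n =: T$) bounds $H$ at the midpoint $\eta_{mn}$ of a $K$-convex minimal geodesic between them. Combining this with the slope inequality at $\nu = \eta_{mn}$ and dividing by $T$, I pass to the limit $T \downarrow 0$: Theorem~\ref{th:angle} supplies $T^{-2}W_2(\tilde\mu_m,\tilde\mu_n)^2 \to 2(1-\cos\theta_{mn})$ with $\theta_{mn} := \angle_\mu(\alpha_m,\alpha_n)$, and the Euclidean-cone structure of $C_\mu[\cP(M)]$ supplies $T^{-1}W_2(\mu,\eta_{mn}) \to \cos(\theta_{mn}/2)$. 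A short algebraic simplification using $1-\cos\theta = 2\sin^2(\theta/2)$ and $\cos(\theta/2)-1 = -2\sin^2(\theta/4)$ shows that the $K$-contributions cancel, leaving $\sin^2(\theta_{mn}/4) \le (\epsilon_m+\epsilon_n)/(4S) \to 0$. Completeness of $(\Sigma_\mu,\angle_\mu)$ then furnishes a limit direction $\alpha$, and the $\liminf$ in the definition of $D_\mu H$ together with the general bound $-D_\mu H \le |\grad H|$ force $D_\mu H(\alpha) = -S$. Uniqueness follows from the same midpoint estimate applied to any two putative minimizers.

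The main obstacle is the infinitesimal analysis in the Cauchy step, and specifically the identification $T^{-1}W_2(\mu,\eta_{mn}) \to \cos(\theta_{mn}/2)$. Theorem~\ref{th:angle} directly addresses angles between two geodesics \emph{emanating from $\mu$}, whereas $\eta_{mn}$ sits on a geodesic joining two points at distance $T$ from $\mu$, so controlling $W_2(\mu,\eta_{mn})$ in the rescaled limit requires the finer Wasserstein tangent-cone analysis behind Theorem~\ref{th:angle}. This is the step where the compactness of $(M,g)$ and the Riemannian (rather than merely metric) structure are used most heavily.
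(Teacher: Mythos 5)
Your scheme is essentially the one behind \cite[Lemma~4.2]{Ogra}, which is the source the paper cites for this lemma (the paper itself gives no proof): the slope inequality derived from $K$-convexity, the minimizing sequence of directions $\alpha_n$, the Cauchy step via midpoints, and the conclusion by lower semi-continuity of $D_\mu H$ in the angle metric are all as in that reference. The one place your sketch outruns the tools you have explicitly on the table is exactly the one you flag: the identification $T^{-1}W_2(\mu,\eta_{mn}) \to \cos(\theta_{mn}/2)$. In the paper the cone $C_\mu[\cP(M)]$ is \emph{defined} abstractly as the Euclidean cone over $(\Sigma_\mu,\angle_\mu)$; Theorem~\ref{th:angle} as stated controls only the two-geodesic quantity $\lim_{s,t}\{s^2+t^2-W_2(\alpha(s),\beta(t))^2\}/(2st)$ and says nothing about the distance from $\mu$ to a point on a geodesic joining $\alpha_m(T)$ to $\alpha_n(T)$, and the triangle inequality alone only yields $|T-W_2(\mu,\eta_{mn})|\le \frac12 W_2(\alpha_m(T),\alpha_n(T))$, which is the wrong direction for the bound $W_2(\mu,\eta_{mn})\le T\cos(\theta_{mn}/2)+o(T)$ that the Cauchy computation requires.

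What closes this gap in the cited proof is not an additional curvature hypothesis or compactness of $M$, but the stronger structural result (\cite[Theorem~3.6]{Ogra}, see also \cite{Gi}) that the blowup identifies $C_\mu[\cP(M)]$ isometrically with a closed convex cone inside the Hilbert space of $L^2(\mu)$-vector fields on $M$; in a Hilbert space the midpoint of two vectors of norm $T$ at angle $\theta_{mn}$ sits at distance $T\cos(\theta_{mn}/2)$ from the origin, and this carries over to the convex cone. Once that identification is invoked, the rest of your computation — $T^{-2}W_2(\alpha_m(T),\alpha_n(T))^2 \to 2(1-\cos\theta_{mn})$ from Theorem~\ref{th:angle}, cancellation of the $K$-terms, $\sin^2(\theta_{mn}/4)\le(\epsilon_m+\epsilon_n)/(4S)$, completeness of $\Sigma_\mu$, and uniqueness via the same midpoint estimate — goes through and matches the argument of \cite[Lemma~4.2]{Ogra}. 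So your diagnosis of the obstacle is accurate; the missing input is the $L^2$-tangent structure behind Theorem~\ref{th:angle}, not the theorem as stated.
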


Using $\alpha$ in the above lemma, we define the {\it negative gradient vector}
of $H$ at $\mu$ by
\[ \grad H(\mu):=\big( \alpha,|\grad H|(\mu) \big) \in C_{\mu}[\cP(M)]. \]
If $|\grad H|(\mu)=0$, then we simply define $\grad H(\mu)$ as the origin
of $C_{\mu}[\cP(M)]$.
A trajectory of the gradient flow of $H$ (which will be called a gradient curve)
should be understood as a curve $\xi$ solving $\dot{\xi}(t)=\grad H(\xi(t))$.
Precisely, we adopt the following definition.

\begin{definition}[Gradient curves]\label{df:gc}
A continuous curve $\xi:[0,l) \lra \cP_H^*(M)$ which is locally Lipschitz on $(0,l)$
is called a {\it gradient curve} of $H$ if $|\grad H|(\xi(t))<\infty$ for all $t \in (0,l)$
and if it is right differentiable with $\dot{\xi}(t)=\grad H(\xi(t))$ at all $t \in (0,l)$.
We say that a gradient curve $\xi$ is {\it complete} if it is defined on entire $[0,\infty)$.
\end{definition}

By virtue of the $K$-convexity of $H$ as well as the compactness of $M$,
there starts a unique gradient curve from an arbitrary initial point $\mu \in \cP_H^*(M)$
enjoying the \emph{$K$-contraction property} as follows.

\begin{theorem}\label{th:cont}
{\rm (\cite[Theorem~5.11, Corollary~6.3]{Ogra}, \cite[Theorem~4.2]{GO})}
Let $M$ be compact and $H:\cP(M) \lra (-\infty,+\infty]$ be a $K$-convex function for some $K \in \R$.
\begin{enumerate}[{\rm (i)}]
\item From any $\mu \in \cP_H^*(M)$, there exists a unique complete gradient curve
$\xi:[0,\infty) \lra \cP_H^*(M)$ of $H$ with $\xi(0)=\mu$.

\item {\rm ($K$-contraction property)}
Given any two gradient curves $\xi,\zeta:[0,\infty) \lra \cP_H^*(M)$ of $H$, we have
\begin{equation}\label{eq:cont}
W_2 \big(\xi(t),\zeta(t) \big) \le e^{-Kt}W_2\big( \xi(0),\zeta(0) \big)
\end{equation}
for all $t \in [0,\infty)$.
\end{enumerate}
\end{theorem}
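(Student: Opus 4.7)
The plan is to build the gradient curve in (i) via De Giorgi's minimizing movement scheme and then to deduce the $K$-contraction (ii) from an Evolution Variational Inequality derived from $K$-convexity. Fix $\mu \in \cP_H^*(M)$ and a small time step $\tau>0$, set $\mu^\tau_0:=\mu$, and recursively let $\mu^\tau_{k+1}$ be a minimizer of
\[ \nu \mapsto H(\nu) + \frac{1}{2\tau} W_2(\nu,\mu^\tau_k)^2 \]
on $\cP(M)$. Because $M$ is compact, $(\cP(M),W_2)$ is compact and the $W_2$-topology coincides with the weak topology; combined with the lower semi-continuity of $H$ this guarantees the existence of a minimizer at each step. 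The minimality relation gives the energy inequality $H(\mu^\tau_{k+1}) + (2\tau)^{-1}W_2(\mu^\tau_{k+1},\mu^\tau_k)^2 \le H(\mu^\tau_k)$, and summing over $k$ (using that $H$ is bounded below on the compact $\cP(M)$ by lower semi-continuity) yields an equicontinuity bound of the form $W_2(\xi^\tau(s),\xi^\tau(t))^2 \le C(t-s+\tau)$ for the piecewise constant interpolation $\xi^\tau(t):=\mu^\tau_{\lfloor t/\tau \rfloor}$, with $C$ depending only on $H(\mu)-\inf_{\cP(M)} H$.

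By Ascoli-Arzelà in the compact space $(\cP(M),W_2)$, a subsequence $\xi^{\tau_j}$ converges uniformly on compacts to a $1/2$-Hölder curve $\xi:[0,\infty) \to \cP(M)$, which one then upgrades to a locally Lipschitz curve valued in $\cP_H^*(M)$ through further energy estimates refined by $K$-convexity. To identify $\xi$ as a gradient curve in the sense of Definition~\ref{df:gc}, the essential input is the Riemannian-like infinitesimal geometry provided by Theorem~\ref{th:angle}: each discrete step admits a variational interpretation in which the initial direction of the minimal geodesic from $\mu^\tau_{k+1}$ to $\mu^\tau_k$, rescaled by $\tau^{-1}W_2(\mu^\tau_{k+1},\mu^\tau_k)$, serves as an approximate negative gradient of $H$ at $\mu^\tau_{k+1}$. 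Passing to the limit and appealing to the unique steepest descent direction identified in Lemma~\ref{lm:gv}, one obtains the right differentiability of $\xi$ with $\dot\xi(t)=\grad H(\xi(t))$ at every $t \in (0,\infty)$.

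For (ii), the plan is to establish the Evolution Variational Inequality (EVI): for any gradient curve $\xi$ and any $\nu \in \cP_H^*(M)$,
\[ \frac{d^+}{dt}\frac{W_2(\xi(t),\nu)^2}{2} + \frac{K}{2}W_2(\xi(t),\nu)^2 + H(\xi(t)) \le H(\nu). \]
Its derivation expands the $K$-convexity inequality along a minimal geodesic from $\xi(t)$ to $\nu$, pairs this expansion with the right derivative $\dot\xi(t)$ through the cosine formula in the tangent cone, and uses the saturation $D_{\xi(t)}H(\dot\xi(t)) = -|\grad H|(\xi(t))^2$ provided by Lemma~\ref{lm:gv}. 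Applying the EVI symmetrically to two gradient curves $\xi,\zeta$ and adding yields $\frac{d^+}{dt}W_2(\xi(t),\zeta(t))^2 \le -2K W_2(\xi(t),\zeta(t))^2$; Grönwall's inequality then gives \eqref{eq:cont}, and uniqueness in (i) follows by taking $\xi(0)=\zeta(0)$. The main obstacle is precisely this derivation of the EVI from the \emph{weak} convexity in Definition~\ref{df:K-conv}, which requires the convexity inequality only along some minimal geodesic between each pair rather than along a prescribed family of generalized geodesics as in the Ambrosio-Gigli-Savaré framework; the angle structure of Theorem~\ref{th:angle} is sharp enough to absorb this flexibility, but it is intrinsic to Wasserstein spaces over Riemannian manifolds, and the noncompact case will accordingly require the different $L^2$-based strategy pursued in Section~\ref{sc:gf+}.
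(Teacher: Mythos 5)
The paper does not prove this theorem: it is quoted directly from \cite[Theorem~5.11, Corollary~6.3]{Ogra} and \cite[Theorem~4.2]{GO}, with the only in-text addition being the remark that uniqueness in (i) follows from the contraction property (ii). So there is no internal proof to compare against, and your proposal should be judged as a reconstruction of the cited references' argument.

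Your reconstruction follows the route actually taken in those references: a discrete minimizing-movement scheme, an a priori $1/2$-H\"older equicontinuity estimate from the telescoping energy inequality, Ascoli--Arzel\`a to extract a limit curve in the compact space $(\cP(M),W_2)$, identification of that limit as a gradient curve via the Alexandrov-type angle structure of Theorem~\ref{th:angle} together with the steepest-descent direction of Lemma~\ref{lm:gv}, and then a contraction estimate derived from an evolution variational inequality and Gr\"onwall. You also correctly isolate the real technical difficulty: the $K$-convexity in Definition~\ref{df:K-conv} is only required along \emph{some} minimal geodesic between each pair of measures, not along an arbitrary interpolating family, and it is precisely the angle structure (which is unavailable in the noncompact case and motivates the $L^2$-based treatment in Section~\ref{sc:gf+}) that makes this weaker hypothesis sufficient. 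This is consistent with the intent of the paper, which adopts the ``metric geometric'' approach of \cite{Ogra}, \cite{PP}, \cite{Ly} in the compact case.

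Two points you should make sharper if you were to turn this into a full argument. First, the identification step---passing from the discrete Euler--Lagrange relation satisfied by the minimizers $\mu^\tau_{k+1}$ to the statement $\dot\xi(t)=\grad H(\xi(t))$ for the limit---is where most of the technical work in \cite{Ogra} lives, and your one-sentence appeal to Lemma~\ref{lm:gv} is not by itself a justification: one needs the convergence of the rescaled discrete directions in the tangent cone, which uses the semi-concavity of $W_2(\cdot,\nu)^2$ and the first variation formula established in \cite{GO}. Second, in your EVI derivation the inequality $-|\grad H|(\xi(t))\cos\angle(\dot\xi(t),\gamma)\le D_{\xi(t)}H(\gamma)$ (the ``cosine formula'' you invoke) is not automatic from the definition of $|\grad H|$; it requires an angle-monotonicity property of $D_\mu H$ on the space of directions that is itself a nontrivial consequence of $K$-convexity and the comparison geometry of $\cP(M)$. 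Both are supplied by the cited references, so this is a matter of acknowledging dependencies rather than a genuine gap, but a self-contained write-up would have to prove them.
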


The uniqueness in (i) is indeed a consequence of
the $K$-contraction property \eqref{eq:cont}.
Thus the {\it gradient flow} $G:[0,\infty) \times \cP_H^*(M) \lra \cP_H^*(M)$
of $H$, given as $G(t,\mu)=\xi(t)$ for $\xi$ in Theorem~\ref{th:cont}(i),
is uniquely determined and continuously extended to the closure
$G:[0,\infty) \times \overline{\cP_H^*(M)} \lra \overline{\cP_H^*(M)}$.

\subsection{$H_{\varphi}$ and the $\varphi$-heat equation}\label{ssc:heat}

It is an established fact that the gradient flow of the relative entropy
(or the {\it free energy}) with respect to $\omega$,
\[ \Ent_{\omega}(\rho\omega) =\int_M \rho \ln\rho \,d\omega
 =\int_M (\rho e^{-f}) \ln(\rho e^{-f}) \,d\!\vol_g +\int_M f \,d\mu, \]
produces solutions to the associated {\it heat equation}
(or the {\it Fokker--Planck equation})
\[ \frac{\del\rho}{\del t} =\Delta^{\omega}\rho
 =e^f\big\{ \Delta(\rho e^{-f})
 +\div\big( (\rho e^{-f})\nabla f \big) \big\}. \]
See \cite[Theorem~5.1]{JKO}, \cite[Subsection~8.4.2]{Vi1} for the Euclidean case,
\cite[Theorem~6.6]{Ogra}, \cite[Theorem~4.6]{GO}, \cite[Corollary~23.23]{Vi2}
for the Riemannian case, \cite[Section~7]{OS} for the Finsler case,
and \cite{FSS}, \cite{Ju}, \cite{GKO}, \cite{Ma}, \cite{AGS2} for further related work on various kinds of spaces.

We shall see that a similar argumentation gives weak solutions to the equation
\begin{equation}\label{eq:pme}
\frac{\del \rho}{\del t}=\div_{\omega}\bigg( \frac{\rho \nabla\rho}{\varphi(\rho)}+\rho\nabla\Psi \bigg)
\end{equation}
as the gradient flow of the $\varphi$-relative entropy $H_{\varphi}$.
We will call \eqref{eq:pme} the \emph{$\varphi$-heat equation}.
In the special case of $\varphi_m(s)=s^{2-m}$, \eqref{eq:pme} is called the \emph{fast diffusion equation}
(for $m<1$) or the \emph{porous medium equation} (for $m>1$).
Then this identification was demonstrated by Otto~\cite{Ot} on $(\R^n,\cL^n)$,
and by \cite[Theorem~23.19]{Vi2} as well as \cite{mCD} on weighted Riemannian manifolds
(by the different means).
We can follow the strategy of \cite{mCD} for general $\varphi$,
up to some technical difficulties.

We first observe $|\grad H_{\varphi}|(\mu)=\sqrt{I_{\varphi}(\mu)}$
as Proposition~\ref{pr:dHm} suggests.

\begin{proposition}\label{pr:DH}
Let $(M,\omega,\varphi,\Psi)$ be a compact admissible space such that
$\Ric_{N_{\varphi}} \ge 0$ and $\Hess\Psi \ge K$ for some $K \in \R$.
Take $\mu=\rho\omega \in \cP_{\ac}(M,\omega)$ with
$\mu[M_{\varphi}^{\Psi}]=1$, $H_{\varphi}(\mu)<\infty$,
$\rho h'_{\varphi}(\rho)-h_{\varphi}(\rho) \in H^1(M)$
and with $|\nabla\rho/\varphi(\rho)| \in L^2(M,\mu)$.
Then we have $|\grad H_{\varphi}|(\mu)=\sqrt{I_{\varphi}(\mu)}$,
and the negative gradient vector $\grad H_{\varphi}(\mu)$ is given by
$-\nabla\rho/\varphi(\rho)-\nabla\Psi$.
\end{proposition}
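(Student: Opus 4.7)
The plan is to combine Proposition~\ref{pr:dHm} (the directional derivative formula for $H_\varphi$) with Cauchy--Schwarz and an approximation argument, exploiting the standard fact that for a $K$-convex functional on $(\cP(M),W_2)$ one has
$$|\grad H_\varphi|(\mu) \;=\; \sup_{\alpha \in \Sigma'_\mu[\cP(M)]} \bigl(-D_\mu H_\varphi(\alpha)\bigr),$$
so it suffices to control the derivative $D_\mu H_\varphi(\alpha)$ along every unit-speed direction.

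For the upper bound $|\grad H_\varphi|(\mu) \le \sqrt{I_\varphi(\mu)}$, I would pick an arbitrary $\alpha \in \Sigma'_\mu[\cP(M)]$. By Theorem~\ref{th:FG}, $\alpha$ is generated by a locally semi-convex function $\phi$ on $M$, parametrized so that $\|\nabla\phi\|_{L^2(\mu)}=1$. The regularity assumptions on $\rho$ and $\Psi$ (together with compactness of $M$, which upgrades the locality in Proposition~\ref{pr:dHm}) allow me to invoke Proposition~\ref{pr:dHm} to get
$$\liminf_{t\downarrow 0}\frac{H_\varphi(\alpha(t))-H_\varphi(\mu)}{t} \;\ge\; \int_M \Bigl\langle \frac{\nabla\rho}{\varphi(\rho)}+\nabla\Psi,\,\nabla\phi\Bigr\rangle\,d\mu \;\ge\; -\sqrt{I_\varphi(\mu)}$$
by the Cauchy--Schwarz inequality. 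Taking the liminf over $\beta\to\alpha$ in $\Sigma_\mu[\cP(M)]$ preserves the bound, giving $-D_\mu H_\varphi(\alpha)\le \sqrt{I_\varphi(\mu)}$ for every $\alpha$, hence the claimed upper bound.

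For the matching lower bound and the identification of $\grad H_\varphi(\mu)$, I set $V:=-\nabla\rho/\varphi(\rho)-\nabla\Psi$, so $\|V\|_{L^2(\mu;TM)}=\sqrt{I_\varphi(\mu)}$, and I approximate $V$ in $L^2(\mu;TM)$ by a sequence $\nabla\phi_i$ with $\phi_i \in C^\infty(M)$ (available since $M$ is compact and $V$ is a weak gradient, namely $V=\nabla[-\ln_\varphi(\rho)-\Psi]$). Each $\phi_i$ generates a minimal geodesic $\beta_i$ via Theorem~\ref{th:FG}; rescaling to unit speed produces $\alpha_i \in \Sigma'_\mu$. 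The equality case of Proposition~\ref{pr:dHm} (valid for smooth $\phi_i$) yields
$$\lim_{t\downarrow 0}\frac{H_\varphi(\beta_i(t))-H_\varphi(\mu)}{t} \;=\; \int_M\langle -V,\nabla\phi_i\rangle\,d\mu,$$
and dividing by $c_i:=\|\nabla\phi_i\|_{L^2(\mu)}\to\sqrt{I_\varphi(\mu)}$ gives $-D_\mu H_\varphi(\alpha_i)\to \sqrt{I_\varphi(\mu)}$. Therefore $|\grad H_\varphi|(\mu)=\sqrt{I_\varphi(\mu)}$. Since Lemma~\ref{lm:gv} provides a unique $\alpha\in \Sigma_\mu[\cP(M)]$ achieving $-D_\mu H_\varphi(\alpha)=|\grad H_\varphi|(\mu)$, the uniqueness and the construction above force $\alpha$ to be the direction represented by $V/\|V\|_{L^2(\mu)}$; together with the length $\sqrt{I_\varphi(\mu)}$ this identifies $\grad H_\varphi(\mu)$ with the vector $V=-\nabla\rho/\varphi(\rho)-\nabla\Psi$ in $C_\mu[\cP(M)]$.

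The main obstacle is the approximation step: one must justify that $V$ genuinely lies in the closure of smooth gradients in $L^2(\mu;TM)$, that the rescaled geodesics $\alpha_i$ converge in $\Sigma_\mu[\cP(M)]$ to a well-defined direction, and that the $\liminf$ in the definition of $D_\mu H_\varphi$ is indeed realized in the limit. Compactness of $M$ (so $\nabla\Psi\in L^\infty$ by local Lipschitzness of the $K$-convex $\Psi$, and $C^\infty(M)$ is dense in the Sobolev space) together with the hypothesis $\rho h'_\varphi(\rho)-h_\varphi(\rho)\in H^1(M)$ (whose gradient is $\rho\nabla\rho/\varphi(\rho)$) are exactly the ingredients needed to make these approximations rigorous; the verification follows the template of the analogous result for $H_m$ in \cite{mCD}.
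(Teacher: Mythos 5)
Your proposal is correct and follows essentially the same route as the paper: both the upper bound and the equality rely on Proposition~\ref{pr:dHm} plus Cauchy--Schwarz, the $K$-convexity supplied by Theorem~\ref{th:mCD}, and an $H^1$-approximation of $-\ln_\varphi(\rho)+\ln_\varphi(\sigma)$ by smooth $\phi_i$. The only cosmetic difference is that you route the upper bound through the abstract identity $|\grad H_\varphi|(\mu)=\sup_\alpha(-D_\mu H_\varphi(\alpha))$, whereas the paper bounds $\frac{H_\varphi(\mu)-H_\varphi(\mu_1)}{W_2(\mu,\mu_1)}$ directly from the $K$-convexity inequality along the geodesic to $\mu_1$ and then lets $\mu_1\to\mu$; the two computations are equivalent once one notes that your ``standard fact'' is itself a restatement of this $K$-convexity argument.
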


\begin{proof}
Given any $\mu_1 \in \cP(M)$ with $H_{\varphi}(\mu_1)<\infty$,
let $(\mu_t)_{t \in [0,1]} \subset \cP(M)$
be a minimal geodesic from $\mu_0=\mu$ to $\mu_1$ along which $H_{\varphi}$
is $K$-convex (Theorem~\ref{th:mCD}).
Letting $\mu_t=(\cT_t)_{\sharp}\mu$ with $\cT_t(x)=\exp_x(t\nabla\phi(x))$,
we deduce from the $K$-convexity of $H_{\varphi}$ that
\[ \lim_{t \downarrow 0}\frac{H_{\varphi}(\mu_t)-H_{\varphi}(\mu)}{t}
 \le H_{\varphi}(\mu_1) -H_{\varphi}(\mu) -\frac{K}{2}W_2(\mu,\mu_1)^2. \]
Combining this with Proposition~\ref{pr:dHm}, we have
\begin{align*}
\frac{H_{\varphi}(\mu)-H_{\varphi}(\mu_1)}{W_2(\mu,\mu_1)}
&\le -\frac{1}{W_2(\mu,\mu_1)}
 \int_M \bigg\langle \frac{\nabla\rho}{\varphi(\rho)}+\nabla\Psi,\nabla\phi \bigg\rangle \,d\mu
 -\frac{K}{2}W_2(\mu,\mu_1) \\
&\le \sqrt{I_{\varphi}(\mu)} -\frac{K}{2}W_2(\mu,\mu_1).
\end{align*}
Thus we obtain $|\grad H_{\varphi}|(\mu) \le \sqrt{I_{\varphi}(\mu)}$,
and equality follows also from Proposition~\ref{pr:dHm} by choosing $\{\phi_i\}_{i \in \N} \subset C^{\infty}(M)$
which approximates $-\ln_{\varphi}(\rho)+\ln_{\varphi}(\sigma)$ in $H^1(M,\mu)$.
Then, moreover, $\grad H_{\varphi}(\mu)$ is achieved by $-\nabla\rho/\varphi(\rho)-\nabla\Psi$
(to be precise, $((\mu_t)_{t \in [0,1]},W_2(\mu,\mu_1))$ associated with $\phi_i$ converges to
$\grad H_{\varphi}(\mu)$ in $C_{\mu}[\cP(M)]$).
$\qedd$
\end{proof}

Now we are ready to show the main theorem of the section.

\begin{theorem}[Gradient flow of $H_{\varphi}$]\label{th:gf}
Let $(M,\omega,\varphi,\Psi)$ be a compact admissible space such that
$\Ric_{N_{\varphi}} \ge 0$ and $\Hess\Psi \ge K$ on $M^{\Psi}_{\varphi}$ for some $K \in \R$.
We in addition assume that $\theta_{\varphi} \in (0,(n+1)/n)$,
$\lim_{s \to \infty}s^{\theta_{\varphi}}/\varphi(s)<\infty$ and that $\Psi$ is Lipschitz.
If a curve $(\mu_t)_{t \in [0,\infty)} \subset \cP_{\ac}(M,\omega)$
with $\mu_t[M^{\Psi}_{\varphi}] \equiv 1$
is a gradient curve of $H_{\varphi}$, then its density function $\rho_t$ is a weak solution
to the $\varphi$-heat equation \eqref{eq:pme}.
To be precise, $\rho_t$ is weakly differentiable as well as
$|\nabla\rho_t/\varphi(\rho_t)| \in L^2(M,\mu_t)$ a.e.\ $t$, and we have
\begin{equation}\label{eq:wpme}
\int_M w_{t_1} \,d\mu_{t_1} -\int_M w_{t_0} \,d\mu_{t_0}
 = \int_{t_0}^{t_1} \int_M \bigg\{ \frac{\del w_t}{\del t}
 -\bigg\langle \frac{\nabla\rho_t}{\varphi(\rho_t)}+\nabla\Psi,\nabla w_t \bigg\rangle \bigg\} \,d\mu_t \,dt
\end{equation}
for all $0\le t_0<t_1<\infty$ and $w \in C^{\infty}(\R \times M)$,
where $\mu_t=\rho_t \omega$ and $w_t=w(t,\cdot)$.
\end{theorem}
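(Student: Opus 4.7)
The plan is to translate the metric-geometric gradient flow condition $\dot\xi(t) = \grad H_\varphi(\xi(t))$ into the weak PDE formulation~\eqref{eq:wpme} via a first-variation argument, invoking Proposition~\ref{pr:DH} to identify the gradient vector with the Eulerian velocity field $V_t := -\nabla\rho_t/\varphi(\rho_t) - \nabla\Psi$. This follows the pattern of \cite[Theorem~6.6]{Ogra} and \cite[Theorem~4.6]{GO} (also used in \cite{mCD} for the case $\varphi=\varphi_m$). The additional hypotheses $\theta_\varphi < (n+1)/n$, $\limsup_{s\to\infty}s^{\theta_\varphi}/\varphi(s)<\infty$, and $\Psi$ Lipschitz serve to ensure enough regularity on $\rho_t$ and integrability of $V_t$ for the first-variation formula to be rigorous.

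First I would verify that at almost every $t$, the density $\rho_t$ satisfies the regularity hypotheses of Proposition~\ref{pr:DH}, namely $\rho_t h_\varphi'(\rho_t) - h_\varphi(\rho_t) \in H^1(M)$ and $|\nabla\rho_t/\varphi(\rho_t)| \in L^2(M,\mu_t)$. The finiteness $|\grad H_\varphi|(\mu_t) = \sqrt{I_\varphi(\mu_t)} < \infty$ on $(0,\infty)$, together with the growth bound on $s^{\theta_\varphi}/\varphi(s)$ and the Lipschitz bound on $\Psi$, will supply the $L^2$-integrability of $\nabla\rho_t/\varphi(\rho_t)$ via the identity $I_\varphi(\mu_t) = \int |\nabla\rho_t/\varphi(\rho_t) + \nabla\Psi|^2 \,d\mu_t$. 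Proposition~\ref{pr:DH} then identifies the negative gradient direction in $C_{\mu_t}[\cP(M)]$ as the direction generated by the potential $\phi_t$ solving (in the limit) $\nabla\phi_t = -\nabla\rho_t/\varphi(\rho_t) - \nabla\Psi = V_t$.

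Next I would establish the first-variation formula: for any smooth test $w \in C^\infty(\R \times M)$,
\[
\frac{d}{dt}\bigg|_+\!\!\int_M w_t \,d\mu_t = \int_M \frac{\partial w_t}{\partial t} \,d\mu_t + \int_M \langle \nabla w_t, V_t \rangle \,d\mu_t.
\]
The $\partial_t w_t$ term is handled by the locally Lipschitz dependence of $\mu_t$ on $t$ (Definition~\ref{df:gc}) and dominated convergence. For the spatial term, given a sequence $\ve_i \downarrow 0$, right-differentiability means the unit-speed geodesics from $\mu_t$ to $\mu_{t+\ve_i}$ converge in $C_{\mu_t}[\cP(M)]$ to $\grad H_\varphi(\mu_t)$. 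Each such geodesic is generated by a potential $\phi_i$ via $\cT^i_s(x)=\exp_x(s\nabla\phi_i(x))$ (Theorem~\ref{th:FG}), and standard computation yields $\ve_i^{-1}(\int w\, d\mu_{t+\ve_i} - \int w\,d\mu_t) \to \int \langle \nabla w, V_t \rangle \,d\mu_t$, where the convergence of $\nabla\phi_i$ to $V_t$ in $L^2(\mu_t)$ is guaranteed by the convergence in the tangent cone together with the representation in Proposition~\ref{pr:DH}. Integrating this first-variation identity from $t_0$ to $t_1$ yields \eqref{eq:wpme}; absolute continuity of $t \mapsto \int w_t \,d\mu_t$ follows from local Lipschitzness of $\xi$ combined with uniform boundedness of $\|\nabla w_t\|_\infty$.

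The main obstacle will be the rigorous justification that the spatial directional derivative equals $\int \langle \nabla w_t, V_t \rangle \,d\mu_t$ with uniform enough estimates (in $t$) to permit integration in time. Specifically, one needs to control the approximating potentials $\phi_i$ and to pass from one-sided right derivatives to a genuine integral identity. This requires either (i) upgrading right-differentiability to absolute continuity along $\xi$, using the $K$-convexity-based contraction and the identity $|\grad H_\varphi|=\sqrt{I_\varphi}$ to bound the metric derivative, or (ii) arguing directly via the Benamou--Brenier / continuity-equation characterization of Wasserstein-absolutely-continuous curves. The remaining steps — checking integrability of $\langle \nabla\Psi,\nabla w_t\rangle$ via the Lipschitz hypothesis and of $\langle \nabla\rho_t/\varphi(\rho_t), \nabla w_t\rangle$ via Cauchy--Schwarz with $I_\varphi(\mu_t)$, then concluding \eqref{eq:wpme} — are routine.
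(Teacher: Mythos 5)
Your proposal differs from the paper's proof in a crucial way and contains a genuine gap. The paper does \emph{not} compute the first variation of $t\mapsto\int w_t\,d\mu_t$ directly from the abstract right-differentiability in the tangent cone $C_{\mu_t}[\cP(M)]$. Instead, it introduces the Moreau--Yosida-type approximation $\mu^\delta:=\arg\min_\mu\{H_\varphi(\mu)+W_2(\mu,\mu_t)^2/(2\delta)\}$ and works with the explicit optimal transport maps between $\mu^\delta$ and $\mu_t$ (via Theorem~\ref{th:FG}), exploiting the minimality to derive one-sided estimates of $\delta^{-1}\{\int w_t\,d\mu^\delta-\int w_t\,d\mu_t\}$, and then invokes the key relation
$\lim_{\delta\downarrow 0}\delta^{-1}\{\int_M\eta\,d\mu_{t+\delta}-\int_M\eta\,d\mu^\delta\}=0$
from \cite{GO} to transfer the estimate from the implicit Euler step $\mu^\delta$ to the actual flow $\mu_{t+\delta}$.

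The gap in your argument is the step where you assert that convergence of the unit-speed geodesics $\alpha_i$ from $\mu_t$ to $\mu_{t+\ve_i}$ in the abstract metric cone $(C_{\mu_t}[\cP(M)],\sigma_{\mu_t})$ to $\grad H_\varphi(\mu_t)$ ``guarantees'' $L^2(\mu_t)$-convergence of the generating vector fields $\nabla\phi_i$ to $V_t=-\nabla\rho_t/\varphi(\rho_t)-\nabla\Psi$. The space of directions $\Sigma_{\mu_t}[\cP(M)]$ and the cone over it are constructed purely metrically via the angle $\angle_{\mu_t}$ (Theorem~\ref{th:angle}), and are obtained by metric completion; there is no established isometry (or even continuous injection with the relevant properties) between this abstract cone and the $L^2(\mu_t)$-closure of $\{\nabla\phi\}$. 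When $M$ is not nonnegatively curved, the angle is guaranteed to exist but no explicit formula $\cos\angle_{\mu_t}(\alpha,\beta)=\langle\nabla\phi_\alpha,\nabla\phi_\beta\rangle_{\mu_t}/(\|\cdot\|\|\cdot\|)$ is available; so angular convergence of directions does not \emph{a priori} yield norm convergence of potentials. Proposition~\ref{pr:DH} identifies $\grad H_\varphi(\mu_t)$ as the limit of a specific approximating sequence (coming from approximating $-\ln_\varphi(\rho_t)+\ln_\varphi(\sigma)$ by $\phi_i\in C^\infty$), but that identification does not hand you $L^2$-convergence of the \emph{a priori different} potentials generating the geodesics from $\mu_t$ to $\mu_{t+\ve_i}$. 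Your fallbacks (i) and (ii) do not close the gap: (ii) is precisely the alternative formulation used in Section~\ref{sc:gf+} for the noncompact case, but switching to it mid-proof requires showing that the metric-cone gradient curve of Definition~\ref{df:gc} is a curve of the type appearing in Proposition~\ref{pr:tvf}, which is itself nontrivial and not what the theorem asks. Finally, your sketch does not engage with the substantial work packaged in Claim~\ref{cl:gf}, where the hypotheses $\theta_\varphi\in(0,(n+1)/n)$, $\lim_{s\to\infty}s^{\theta_\varphi}/\varphi(s)<\infty$ and the Lipschitz bound on $\Psi$ actually earn their keep (showing the minimizer $\mu^\delta$ exists, is absolutely continuous, and produces $L^1$-convergence of $h_\varphi(\rho^\delta)-h'_\varphi(\rho^\delta)\rho^\delta$); without these one cannot justify the entropy-difference estimate even in the paper's scheme.
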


\begin{proof}
First of all, the weak differentiability of $\rho_t$ and $|\nabla\rho_t/\varphi(\rho_t)| \in L^2(M,\mu_t)$
follow from (I) $\Rightarrow$ (II) of Proposition~\ref{pr:gf+} below.
Fix $t \in (0,\infty)$ and, given small $\delta>0$, choose
$\mu^{\delta} \in \cP(M)$ as a minimizer of the function
\begin{equation}\label{eq:apprx}
\mu\ \longmapsto\ H_{\varphi}(\mu)+\frac{W_2(\mu,\mu_t)^2}{2\delta}.
\end{equation}
We postpone the proof of the following technical claim until the end of the section.
The condition $\theta_{\varphi}<(n+1)/n$ will come into play in (i),
while $\theta_{\varphi}>0$ and $\lim_{s \to \infty}s^{\theta_{\varphi}}/\varphi(s)<\infty$ will be used in (iii).

\begin{claim}\label{cl:gf}
\begin{enumerate}[{\rm (i)}]
\item Such a minimizer $\mu^{\delta}$ of \eqref{eq:apprx} indeed exists
and is absolutely continuous with respect to $\omega$.

\item We have
\[ \lim_{\delta \downarrow 0} \frac{W_2(\mu^{\delta},\mu_t)^2}{\delta}=0, \qquad
 \lim_{\delta \downarrow 0} H_\varphi(\mu^{\delta})=H_\varphi(\mu_t). \]
In particular, $\mu^{\delta}$ converges to $\mu_t$ weakly.

\item Moreover, by putting $\mu^{\delta}=\rho^{\delta}\omega$,
$h_{\varphi}(\rho^{\delta})-h'_{\varphi}(\rho^{\delta})\rho^{\delta}$ converges to
$h_{\varphi}(\rho_t)-h'_{\varphi}(\rho_t)\rho_t$ in $L^1(M,\omega)$
as $\delta \downarrow 0$.
\end{enumerate}
\end{claim}

Take a semi-convex function $\phi:M \lra \R$ such that
$\cT(x):=\exp_x(\nabla\phi(x))$ gives the optimal transport
from $\mu^{\delta}$ to $\mu_t$ (recall Theorem~\ref{th:FG}).
We also consider the transport $\mu^{\delta}_{\ve}:=(\cF_{\ve})_{\sharp}\mu^{\delta}$
in another direction for small $\ve>0$, where $\cF_{\ve}(x):=\exp_x(\ve\nabla w_t(x))$.
It immediately follows from the choice of $\mu^{\delta}$ that
\begin{equation}\label{eq:gf1}
H_{\varphi}(\mu^{\delta}_{\ve})
 +\frac{W_2(\mu^{\delta}_{\ve},\mu_t)^2}{2\delta}
 \ge H_{\varphi}(\mu^{\delta}) +\frac{W_2(\mu^{\delta},\mu_t)^2}{2\delta}.
\end{equation}
We first estimate the difference of the Wasserstein distances.
Observe that, as $(\cF_{\ve} \times \cT)_{\sharp}\mu^{\delta}$ is a
(not necessarily optimal) coupling of $\mu^{\delta}_{\ve}$ and $\mu_t$,
\begin{align*}
&\limsup_{\ve \downarrow 0}
 \frac{W_2(\mu^{\delta}_{\ve},\mu_t)^2-W_2(\mu^{\delta},\mu_t)^2}{\ve} \\
&\le \limsup_{\ve \downarrow 0}\frac{1}{\ve}\int_M \big\{
 d_g \big( \cF_{\ve}(x),\cT(x) \big)^2-d_g \big( x,\cT(x) \big)^2 \big\} \,d\mu^{\delta}(x)
 = -\int_M 2\langle \nabla w_t,\nabla\phi \rangle \,d\mu^{\delta}.
\end{align*}
We used the first variation formula for the Riemannian distance function $d_g$ in the last line
(cf., e.g., \cite[Theorem~II.4.1]{Ch}).
Thanks to the compactness of $M$, there is a constant $C>0$
(depending only on $(M,g)$ and $w$) such that
\[ w_t\big( \cT(x) \big) \le w_t(x)+\langle \nabla w_t(x),\nabla\phi(x) \rangle
 +Cd_g \big( x,\cT(x) \big)^2 \]
for a.e.\ $x \in M$.
Thus we obtain, by virtue of Claim~\ref{cl:gf}(ii),
\begin{align*}
&\liminf_{\delta \downarrow 0} \frac{1}{2\delta} \limsup_{\ve \downarrow 0}
 \frac{W_2(\mu^{\delta}_{\ve},\mu_t)^2-W_2(\mu^{\delta},\mu_t)^2}{\ve}
 \le -\limsup_{\delta \downarrow 0} \frac{1}{\delta} \int_M
 \langle \nabla w_t,\nabla\phi \rangle \,d\mu^{\delta} \\
&\le \liminf_{\delta \downarrow 0} \frac{1}{\delta} \bigg[ \int_M
 \{ w_t-w_t(\cT) \} \,d\mu^{\delta} +CW_2(\mu^{\delta},\mu_t)^2 \bigg]
 = \liminf_{\delta \downarrow 0} \frac{1}{\delta} \bigg\{
 \int_M w_t \,d\mu^{\delta} -\int_M w_t \,d\mu_t \bigg\}.
\end{align*}

Next we calculate the difference of the entropies in $(\ref{eq:gf1})$.
We put $\mu^{\delta}=\rho^{\delta}\omega$,
$\mu^{\delta}_{\ve}=\rho^{\delta}_{\ve} \omega$ and
$\bJ^{\omega}_{\ve}:=e^{f-f(\cF_{\ve})}\det(D\cF_{\ve})$.
Then we obtain from Proposition~\ref{pr:dHm} that, as $w_t \in C^{\infty}(M)$,
\[ \lim_{\ve \downarrow 0} \frac{H_{\varphi}(\mu^{\delta})-H_{\varphi}(\mu^{\delta}_{\ve})}{\ve}
 =\int_M \big[ \{ h'_{\varphi}(\rho^{\delta}) \rho^{\delta}-h_{\varphi}(\rho^{\delta}) \} \Delta^{\omega}w_t
 +\langle \rho^{\delta} \nabla[\ln_{\varphi}(\sigma)],\nabla w_t \rangle \big] \,d\omega \]
(we need the conditions $\Ric_{N_{\varphi}} \ge 0$ and $\Hess\Psi \ge K$
only here for applying Proposition~\ref{pr:dHm}).
Hence we deduce that, together with Claim~\ref{cl:gf}(ii), (iii),
\begin{align*}
&\lim_{\delta \downarrow 0} \lim_{\ve \downarrow 0}
 \frac{H_{\varphi}(\mu^{\delta})-H_{\varphi}(\mu^{\delta}_{\ve})}{\ve} 
 = \int_M \big[ \{ h'_{\varphi}(\rho_t)\rho_t -h_{\varphi}(\rho_t) \}
 \Delta^{\omega}w_t -\langle \rho_t\nabla\Psi,\nabla w_t \rangle \big] \,d\omega \\
&= -\int_M \langle \nabla[h'_{\varphi}(\rho_t)\rho_t -h_{\varphi}(\rho_t)]+\rho_t\nabla\Psi,\nabla w_t \rangle \,d\omega
 =-\int_M \bigg\langle \frac{\nabla\rho_t}{\varphi(\rho_t)}+\nabla\Psi,\nabla w_t \bigg\rangle \,d\mu_t.
\end{align*}

These together imply
\[ \liminf_{\delta \downarrow 0} \frac{1}{\delta} \bigg\{
 \int_M w_t \,d\mu^{\delta} -\int_M w_t \,d\mu_t \bigg\}
 \ge -\int_M \bigg\langle \frac{\nabla\rho_t}{\varphi(\rho_t)}+\nabla\Psi,\nabla w_t \bigg\rangle \,d\mu_t. \]
Moreover, equality holds since we can change $w$ into $-w$.
Recall from \cite[(5)]{GO} (see also \cite[Lemma~6.4]{Ogra}) that
\[ \lim_{\delta \downarrow 0} \frac{1}{\delta}\bigg\{ \int_M \eta \,d\mu_{t+\delta}
 -\int_M \eta \,d\mu^{\delta} \bigg\}=0 \]
holds for all $\eta \in C^{\infty}(M)$.
Therefore we conclude
\begin{align*}
&\lim_{\delta \downarrow 0} \frac{1}{\delta} \bigg\{
 \int_M w_{t+\delta} \,d\mu_{t+\delta} -\int_M w_t \,d\mu_t \bigg\} \\
&= \lim_{\delta \downarrow 0} \frac{1}{\delta} \bigg\{
 \int_M (w_{t+\delta}-w_t) \,d\mu_{t+\delta} +\int_M w_t \,d\mu_{t+\delta}
 -\int_M w_t \,d\mu_t \bigg\} \\
&= \int_M \bigg\{ \frac{\del w_t}{\del t}
 -\bigg\langle \frac{\nabla\rho_t}{\varphi(\rho_t)}+\nabla\Psi,\nabla w_t \bigg\rangle \bigg\} \,d\mu_t.
\end{align*}
This shows \eqref{eq:wpme} by integration in $t$.
$\qedd$
\end{proof}

\begin{remark}\label{rm:gfac}
In Theorem~\ref{th:gf}, assuming that $\mu_t$ is absolutely continuous is in fact redundant.
If $L_{\varphi}=\infty$, then $H_{\varphi}(\mu_t)<\infty$ guarantees
$\mu_t \in \cP_{\ac}(M,\omega)$ by definition.
As for $L_{\varphi}<\infty$, if $\mu_t$ with $t>0$ has a nontrivial singular part $\mu^s$,
then we can modify $\mu_t$ as in the proof of Claim~\ref{cl:gf}(i) below
(with $\mu^{\delta}=\mu_t$ and $\pi=\diag_{\sharp}\mu_t$ where $\diag(x):=(x,x)$) and obtain
$\hat{\mu}_r \in \cP_{\ac}(M,\omega)$ for small $r>0$ such that
\[ W_2(\hat{\mu}_r,\mu_t)^2 \le \mu^s[M]r^2, \qquad
 \lim_{r \downarrow 0} \frac{H_{\varphi}(\hat{\mu}_r)-H_{\varphi}(\mu_t)}{r}=-\infty. \]
This yields $|\grad H_{\varphi}|(\mu_t)=\infty$ and contradicts the definition of gradient curves
(compare this discussion with \cite[Theorem~10.4.8]{AGS}).
\end{remark}

Combining Theorems~\ref{th:mCD}, \ref{th:cont}, \ref{th:gf}, we obtain the following.

\begin{corollary}\label{cr:gf}
Let $(M,\omega,\varphi,\Psi)$ be an admissible space as in Theorem~$\ref{th:gf}$,
and further suppose that $M_{\varphi}^{\Psi}$ is totally convex.
Then the weak solution $(\mu_t)_{t \in [0,\infty)} \subset \cP_{\ac}(M_\varphi^\Psi,\omega)$
to the $\varphi$-heat equation constructed in Theorem~$\ref{th:gf}$ satisfies
the $K$-contraction property $(\ref{eq:cont})$.
\end{corollary}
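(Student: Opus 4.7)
The plan is to combine the three pillars established so far: the displacement $K$-convexity of $H_{\varphi}$ (Theorem~\ref{th:mCD}), the abstract $K$-contraction property of gradient flows of $K$-convex functionals on compact Wasserstein spaces (Theorem~\ref{th:cont}), and the identification in Theorem~\ref{th:gf} of gradient curves of $H_{\varphi}$ with weak solutions of the $\varphi$-heat equation.

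First I would verify that $H_{\varphi}$ is displacement $K$-convex on $\cP(\overline{M_{\varphi}^{\Psi}})$. Admissibility together with $\theta_{\varphi} \in (0,(n+1)/n)$ places us in the range of parameters where Theorem~\ref{th:mCD} applies with $N=N_{\varphi}$ and $m=2-\theta_{\varphi}$. Since $\Ric_{N_{\varphi}} \ge 0$ and $\Hess\Psi \ge K$ on $M_{\varphi}^{\Psi}$, condition (A) of Theorem~\ref{th:mCD} holds, and the total convexity of $M_{\varphi}^{\Psi}$ (hence of $\overline{M_{\varphi}^{\Psi}}=\supp\nu$) makes the connectivity hypothesis in (C) automatic. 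Proposition~\ref{pr:LV} then guarantees that $\cP(\overline{M_{\varphi}^{\Psi}})$ is itself totally convex in $(\cP(M),W_2)$, so the minimal geodesics realizing \eqref{eq:phiCD} remain inside $\cP(\overline{M_{\varphi}^{\Psi}})$.

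Next I would apply the abstract machinery of Section~\ref{sc:gf} to the restriction of $H_{\varphi}$ to the compact Wasserstein space $\cP(\overline{M_{\varphi}^{\Psi}})$. Compactness of $\overline{M_{\varphi}^{\Psi}}$ (which inherits compactness from $M$) together with Lemma~\ref{lm:lsc} gives lower semi-continuity of $H_{\varphi}$; admissibility guarantees $H_{\varphi}(\nu)\in\R$ so the functional is not identically $+\infty$; and Step~1 provides the displacement $K$-convexity. Hence Theorem~\ref{th:cont}(ii) applies and asserts that any two gradient curves $\xi,\zeta:[0,\infty)\lra\cP^{\ast}_{H_{\varphi}}(\overline{M_{\varphi}^{\Psi}})$ of $H_{\varphi}$ satisfy $W_2(\xi(t),\zeta(t))\le e^{-Kt}W_2(\xi(0),\zeta(0))$.

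Finally I would combine this with Theorem~\ref{th:gf}. The weak solutions $(\mu_t)$ constructed there are, by definition, gradient curves of $H_{\varphi}$ supported in $\cP_{\ac}(M_{\varphi}^{\Psi},\omega)\subset\cP(\overline{M_{\varphi}^{\Psi}})$; thus any pair of such weak solutions is a pair of gradient curves to which Step~2 directly applies, yielding the desired $K$-contraction property \eqref{eq:cont}.

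The only delicate point, rather than a genuine obstacle, is checking that the abstract gradient-flow framework on the compact space $\cP(M)$ restricts cleanly to the totally convex, closed subset $\cP(\overline{M_{\varphi}^{\Psi}})$; this is handled by Proposition~\ref{pr:LV} together with the fact (used already in the proof of Theorem~\ref{th:mCD}) that the minimal geodesic realizing the convexity inequality stays within $\cP(M_{\varphi}^{\Psi})$, so the construction of $\grad H_{\varphi}$ and the contraction estimate of Theorem~\ref{th:cont} carry over verbatim.
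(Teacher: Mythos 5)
Your argument is correct and follows exactly the route the paper intends: the corollary is stated as an immediate combination of Theorem~\ref{th:mCD} (to get displacement $K$-convexity of $H_{\varphi}$, using total convexity of $M_{\varphi}^{\Psi}$ to satisfy the connectivity hypothesis), Theorem~\ref{th:cont} (the abstract $K$-contraction for gradient curves of a $K$-convex, lower semi-continuous functional on a compact Wasserstein space), and Theorem~\ref{th:gf} (identifying gradient curves with weak solutions of the $\varphi$-heat equation). Your fuller verification of hypotheses and the observation that one must restrict to the totally convex closed subspace $\cP(\overline{M_{\varphi}^{\Psi}})$ are consistent with, and slightly more explicit than, what the paper does.
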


\subsection{Proof of Claim~\ref{cl:gf}}

(i) The existence follows from the compactness of $\cP(M)$
and the lower semi-continuity of $H_{\varphi}$ (Lemma~\ref{lm:lsc}).
The absolute continuity is obvious if $L_{\varphi}=\infty$.

Assume $L_{\varphi}<\infty$, so that $\theta_{\varphi} \in (1,(n+1)/n)$
and $N_{\varphi}=(\theta_{\varphi}-1)^{-1} \in (n,\infty)$ (Proposition~\ref{lm:case}(ii)).
We decompose $\mu^{\delta}$ into absolutely continuous and singular
parts $\mu^{\delta}=\rho\omega+\mu^s$ and suppose $\mu^s[M]>0$.
For small $r>0$, we modify $\mu^{\delta}$ into
$\hat{\mu}_r =\hat{\rho}_r \omega \in \cP_{\ac}(M,\omega)$ as
\[ \hat{\rho}_r(x)
 :=\rho(x) +\int_M \frac{\chi_{B(y,r)}(x)}{\omega[B(y,r)]} \,d\mu^s(y). \]
We shall show that $\hat{\mu}_r$ gives a better choice than $\mu^{\delta}$
in our approximation scheme \eqref{eq:apprx}, which is a contradiction and hence $\mu^s[M]=0$.
We first observe
\begin{align}
\int_M h'_{\varphi}(\sigma) \,d\hat{\mu}_r
&\ge \int_M h'_{\varphi}(\sigma) \,d\mu^{\delta}
 -\int_M \bigg| h'_{\varphi}\big( \sigma(y)\big)
 -\frac{1}{\omega[B(y,r)]}\int_{B(y,r)} h'_{\varphi}(\sigma) \,d\omega \bigg| \,d\mu^s(y) \nonumber\\
&\ge \int_M h'_{\varphi}(\sigma) \,d\mu^{\delta}
 -\Big\{ \sup_M |\nabla(h'_{\varphi} \circ \sigma)| \cdot r \Big\} \mu^s[M]. \label{eq:cla1}
\end{align}
Note that,  on $M^{\Psi}_{\varphi}$, $h'_{\varphi}(\sigma)=-\Psi-L_{\varphi}$
is Lipschitz since $\Psi$ is Lipschitz.
Given an optimal coupling $\pi=\pi_1+\pi_2$ of $\mu^{\delta}$ and $\mu_t$
such that $(p_1)_{\sharp}\pi_1=\rho\omega$ and $(p_1)_{\sharp}\pi_2=\mu^s$,
\[ d\hat{\pi}_r(x,z):=d\pi_1(x,z)
 +\int_{y\in M} \frac{\chi_{B(y,r)}(x)}{\omega[B(y,r)]} \,d\omega(x) \,d\pi_2(y,z) \]
is a coupling of $\hat{\mu}_r$ and $\mu_t$.
Hence we find
\begin{align}
W_2(\hat{\mu}_r,\mu_t)^2
&\le \int_{M \times M} d_g(x,z)^2 \,d\pi_1(x,z)
 +\int_{M \times M} \{ d_g(y,z)+r \}^2 \,d\pi_2(y,z) \nonumber\\
&\le \int_{M \times M} d_g(x,z)^2 \,d\pi(x,z)
 +\{ 2\diam M +r \} r\pi_2[M \times M] \nonumber\\
&\le W_2(\mu^{\delta},\mu_t)^2 +\{ 3\diam M \cdot r \} \mu^s[M]. \label{eq:cla2}
\end{align}

Next, observe that
\[ \int_M h_{\varphi}(\hat{\rho}_r) \,d\omega
 = \int_M h_{\varphi} \bigg( \int_M \bigg\{ \frac{\rho(x)}{\mu^s[M]}
 +\frac{\chi_{B(y,r)}(x)}{\omega[B(y,r)]} \bigg\} \,d\mu^s(y) \bigg) \,d\omega(x). \]
As $h_{\varphi}$ is convex, Jensen's inequality shows
\begin{align*}
&h_{\varphi} \bigg( \int_M \bigg\{ \frac{\rho(x)}{\mu^s[M]}
 +\frac{\chi_{B(y,r)}(x)}{\omega[B(y,r)]} \bigg\} \,d\mu^s(y) \bigg) \\
&\le \frac{1}{\mu^s[M]} \int_M h_{\varphi} \bigg( \rho(x)
 +\frac{\chi_{B(y,r)}(x)}{\omega[B(y,r)]}\mu^s[M] \bigg) \,d\mu^s(y).
\end{align*}
Since $h_{\varphi}$ is non-increasing, we deduce from the Fubini theorem that
\begin{align*}
\int_M h_{\varphi}(\hat{\rho}_r) \,d\omega 
&\le \frac{1}{\mu^s[M]} \int_M \bigg\{ \int_{M \setminus B(y,r)} h_{\varphi}(\rho) \,d\omega
 +\int_{B(y,r)} h_{\varphi}\bigg( \frac{\mu^s[M]}{\omega[B(y,r)]} \bigg) \,d\omega \bigg\} \,d\mu^s(y) \\
&\le \int_M h_{\varphi}(\rho) \,d\omega
 -\frac{1}{\mu^s[M]} \int_M \bigg( \int_{B(y,r)} h_{\varphi}(\rho) \,d\omega \bigg) \,d\mu^s(y) \\
&\quad +\sup_{y \in M} \bigg\{ \omega[B(y,r)] \cdot
 h_{\varphi}\bigg( \frac{\mu^s[M]}{\omega[B(y,r)]} \bigg) \bigg\}.
\end{align*}
By virtue of the compactness of $M$, there are constants $0<C_1 \le C_2$ such that
\[ C_1r^n \le \omega[B(y,r)] \le C_2r^n \]
for all $y \in M$ and small $r>0$.
Hence we have, as $h_{\varphi}$ is non-increasing and nonpositive,
\[ \sup_{y \in M} \bigg\{ \omega[B(y,r)] \cdot
 h_{\varphi}\bigg( \frac{\mu^s[M]}{\omega[B(y,r)]} \bigg) \bigg\}
 \le C_1r^n h_{\varphi}\bigg( \frac{\mu^s[M]}{C_2r^n} \bigg). \]
We  find, by the monotonicity of $\ln_\varphi$, Lemma~\ref{lem:mono}
and $N_{\varphi}=(\theta_{\varphi}-1)^{-1}$,
\begin{align*}
&\limsup_{r \downarrow 0} r^{N_{\varphi}-1}h_{\varphi}(r^{-N_{\varphi}})
 = \limsup_{r \downarrow 0} \left\{ r^{N_{\varphi}-1} \int_0^{r^{-N_{\varphi}}} \ln_{\varphi}(s) \,ds
 -r^{-1}L_\varphi \right\} \\
&\le \limsup_{r \downarrow 0} \left\{ r^{-1} \ln_\varphi ( r^{-N_{\varphi}})-r^{-1} L_\varphi \right\}
 = -\liminf_{r \downarrow 0} \int_{r^{-N_{\varphi}}}^{\infty} \frac{1}{r\varphi(s)} \,ds \\
&\le -\lim_{r \downarrow 0} \int_{r^{-N_{\varphi}}}^{\infty} \frac{s^{-\theta_{\varphi}}}{r} \,ds
 =\lim_{r \downarrow 0} \frac{r^{N_{\varphi}(\theta_{\varphi}-1)}}{(1-\theta_{\varphi})r}
 = \frac{1}{1-\theta_{\varphi}} <0.
\end{align*}
Hence we obtain, since $n<N_{\varphi}<\infty$,
\[ r^{n-1}h_{\varphi}(r^{-n})
 =r^{(n-N_{\varphi})/N_{\varphi}} \cdot (r^{n/N_{\varphi}})^{N_{\varphi}-1}
 h_{\varphi}\big( (r^{n/N_{\varphi}})^{-N_{\varphi}} \big)
 \to -\infty \]
as $r \downarrow 0$ (here we need the hypothesis $\theta_{\varphi}<(n+1)/n$).
Finally, for all $y \in \supp\mu^s$, the convexity of $h_{\varphi}$ yields
\begin{align*}
\int_{B(y,r)} h_{\varphi}(\rho) \,d\omega
&\ge \int_{B(y,r)} \{ h_{\varphi}(\sigma)+h'_{\varphi}(\sigma)(\rho-\sigma) \} \,d\omega \\
&= \int_{B(y,r)} \{ h_{\varphi}(\sigma)-h'_{\varphi}(\sigma) \sigma \} \,d\omega
 +\int_{B(y,r)} h'_{\varphi}(\sigma) \,d\mu.
\end{align*}
We therefore obtain
\begin{align*}
&\frac{1}{r} \bigg\{ \int_M h_{\varphi}(\hat{\rho}_r) \,d\omega
 -\int_M h_{\varphi}(\rho) \,d\omega \bigg\} \\
&\le -\frac{1}{r} \inf_{y \in M} \bigg[ \int_{B(y,r)} \{ h_{\varphi}(\sigma)-h'_{\varphi}(\sigma) \sigma \} \,d\omega
 +\int_{B(y,r)} h'_{\varphi}(\sigma) \,d\mu \bigg]
 +C_1 r^{n-1} h_{\varphi}\bigg( \frac{\mu^s[M]}{C_2 r^n} \bigg) \\
&\to -\infty
\end{align*}
as $r \downarrow 0$.
Combining this with \eqref{eq:cla1} and \eqref{eq:cla2}, we conclude that
\[ \lim_{r \downarrow 0} \frac{1}{r} \bigg\{ 
 H_{\varphi}(\hat{\mu}_r)+\frac{W_2(\hat{\mu}_r,\mu_t)^2}{2\delta}
 -H_{\varphi}(\mu^{\delta})-\frac{W_2(\mu^{\delta},\mu_t)^2}{2\delta} \bigg\}
 =-\infty. \]
This contradicts the choice of $\mu^{\delta}$ as a minimizer of \eqref{eq:apprx},
so that it holds $\mu^s[M]=0$.

(ii) By the choice of $\mu^{\delta}$, we have
\[ H_{\varphi}(\mu^{\delta}) +\frac{W_2(\mu^{\delta},\mu_t)^2}{2\delta}
 \le H_{\varphi}(\mu_t). \]
Together with $H_{\varphi}(\mu^{\delta}) \ge H_{\varphi}(\nu)$ (Lemma~\ref{lm:Hm}),
we immediately observe
\[ \lim_{\delta \downarrow 0}W_2(\mu^{\delta},\mu_t)^2
 \le \lim_{\delta \downarrow 0} 2\delta \{ H_{\varphi}(\mu_t)-H_{\varphi}(\nu) \}=0. \]
Thus $\mu^{\delta}$ converges to $\mu_t$ weakly, and hence
\[ \limsup_{\delta \downarrow 0} \frac{W_2(\mu^{\delta},\mu_t)^2}{2\delta}
\le H_{\varphi}(\mu_t) -\liminf_{\delta \downarrow 0} H_{\varphi}(\mu^{\delta}) \le 0 \]
by the lower semi-continuity of $H_{\varphi}$ (Lemma~\ref{lm:lsc}).
These further yield
\[ H_{\varphi}(\mu_t) \le \liminf_{\delta \downarrow 0} H_{\varphi}(\mu^{\delta})
 \le \limsup_{\delta \downarrow 0} H_{\varphi}(\mu^{\delta}) \le H_{\varphi}(\mu_t). \]

(iii) This is a consequence of the following lemma.

\begin{lemma}\label{lm:gf}
Assume that $\theta_\varphi \in (0,2)$ and
\[ C_\varphi:=\lim_{s \uparrow \infty} \frac{s^{\theta_\varphi}}{\varphi(s)} <\infty. \]
If a sequence $\{\mu_i\}_{i \in \N} \subset \cP_{\ac}(M,\omega)$
converges to $\mu \in \cP_{\ac}(M,\omega)$ weakly and satisfies
$\lim_{i \to \infty}H_{\varphi}(\mu_i)=H_{\varphi}(\mu)<\infty$,
then, by setting $\mu_i=\rho_i \omega$ and $\mu=\rho\omega$,
the function $h_{\varphi}(\rho_i)-\rho_i h'_{\varphi}(\rho_i)$
converges to $h_{\varphi}(\rho)-\rho h'_{\varphi}(\rho)$ in $L^1(M,\omega)$.
\end{lemma}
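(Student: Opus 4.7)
The plan is to extract from the hypotheses enough strong convergence of $\rho_i$ to $\rho$ to apply Vitali's theorem to
$F(r):=h_\varphi(r)-r h'_\varphi(r)=u_\varphi(r)-r\ln_\varphi(r)$.
An elementary computation gives
\[
F(r)=-\int_0^r \frac{s}{\varphi(s)}\,ds\ \le\ 0
\]
(the $L_\varphi$ contributions cancel in both cases $L_\varphi<\infty$ and $L_\varphi=\infty$). Combining Lemma~\ref{lem:mono} with the hypothesis $C_\varphi<\infty$ yields $s/\varphi(s)\le C_\varphi s^{1-\theta_\varphi}$ for every $s>0$, hence the key pointwise estimate
\[
|F(r)|\le\frac{C_\varphi}{2-\theta_\varphi}r^{2-\theta_\varphi},\qquad r\ge 0.
\]

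First I would establish $\int_M h_\varphi(\rho_i)\,d\omega\to\int_M h_\varphi(\rho)\,d\omega$. In the ambient setting of Section~\ref{sc:gf}, $\Psi$ is Lipschitz on the compact $M$ and equals $-\ln_\varphi(\sigma)$ on $M_\varphi^\Psi$, so $h'_\varphi(\sigma)$ is bounded and continuous on $M$ (with its natural constant extension across $\del M_\varphi^\Psi$). Then $\int h'_\varphi(\sigma)\,d\mu_i\to\int h'_\varphi(\sigma)\,d\mu$ by weak convergence; since $\mu_i$ and $\mu$ are absolutely continuous, subtracting from the hypothesis $H_\varphi(\mu_i)\to H_\varphi(\mu)$ yields the claim.

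The next and decisive step is to upgrade this to strong convergence of the densities. Consider the midpoints $\lambda_i:=(\rho_i+\rho)/2$; convexity of $h_\varphi$ gives $\int h_\varphi(\lambda_i)\,d\omega\le\tfrac12\int h_\varphi(\rho_i)\,d\omega+\tfrac12\int h_\varphi(\rho)\,d\omega\to\int h_\varphi(\rho)\,d\omega$, while the lower semicontinuity established in the proof of Lemma~\ref{lm:lsc} (via \cite[Theorem~B.33]{LV2}) forces the reverse inequality. Equality of these limits together with the strict convexity of $h_\varphi$ is precisely the Visintin-type situation producing $\rho_i\to\rho$ in $L^1(M,\omega)$; extraction gives a subsequence converging $\omega$-a.e. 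The $L^1$-weak compactness needed here is furnished by uniform integrability of $\{\rho_i\}$: when $\theta_\varphi\le1$, Lemma~\ref{lm:case} yields $L_\varphi=\infty$ and $h_\varphi(r)/r\to\infty$, so de la Vall\'ee Poussin applies; when $\theta_\varphi>1$, compactness of $M$ and $\int\rho_i\,d\omega=1$ suffice via H\"older.

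Finally I would invoke Vitali. In all cases $\{\rho_i^{2-\theta_\varphi}\}$ is uniformly integrable: for $\theta_\varphi\ge1$ this follows from $2-\theta_\varphi\le1$ and H\"older applied on the compact space; for $\theta_\varphi<1$ the asymptotics $h_\varphi(r)\asymp r^{2-\theta_\varphi}$ for large $r$ (readable off from $F(r)=u_\varphi(r)-r\ln_\varphi(r)$ together with $C_\varphi<\infty$) converts the uniform bound on $\int h_\varphi(\rho_i)\,d\omega$ into a uniform bound on $\int\rho_i^{2-\theta_\varphi}\,d\omega$ with exponent $>1$, which yields uniform integrability. The pointwise bound $|F(\rho_i)|\le C\rho_i^{2-\theta_\varphi}$ then transfers this to $\{F(\rho_i)\}$, and Vitali gives $F(\rho_i)\to F(\rho)$ in $L^1(M,\omega)$ along the subsequence. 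A standard subsequence-of-subsequence argument upgrades this to the full sequence. The main obstacle is the Visintin-style passage from convergence of the convex integrals to strong convergence of the densities; its subtlety lies in having to first secure $L^1$-weak compactness of $\{\rho_i\}$ from the entropy bound, which is case-dependent on whether $h_\varphi$ is superlinear ($\theta_\varphi\le1$) or not ($\theta_\varphi>1$).
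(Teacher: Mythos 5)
The two key ingredients you invoke — a Visintin-type theorem to upgrade convergence of $\int h_\varphi(\rho_i)$ to strong convergence of $\rho_i$, and Vitali's theorem on $F(\rho_i)$ — both require uniform integrability statements that your argument does not actually establish, and in at least one of the two regimes the claim as stated is false.

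First, when $\theta_\varphi>1$, you assert that ``compactness of $M$ and $\int_M\rho_i\,d\omega=1$ suffice via H\"older'' to give uniform integrability of $\{\rho_i\}$, which is what the Visintin theorem needs. This is not true: an $L^1$-bound on a compact space does not imply uniform integrability, and in this regime $h_\varphi$ grows only like $r^{2-\theta_\varphi}$ with $2-\theta_\varphi<1$, so the entropy bound is sublinear and de la Vall\'ee Poussin does not apply either. Without weak $L^1$-compactness of $\{\rho_i\}$ the Visintin step is unjustified. Second, when $\theta_\varphi<1$ you derive $\sup_i\int_M\rho_i^{\,m}\,d\omega<\infty$ with $m=2-\theta_\varphi>1$ and then claim this ``yields uniform integrability'' of $\{\rho_i^{\,m}\}$. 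It yields uniform integrability of $\{\rho_i\}$ (by de la Vall\'ee Poussin applied to $r\mapsto r^m$), but \emph{not} of $\{\rho_i^{\,m}\}$: an $L^1$-bound on a family gives no equi-integrability of that same family. Thus the uniform integrability input to Vitali is missing precisely in the case where the target $|F(\rho_i)|\lesssim\rho_i^{\,m}$ has superlinear growth.

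The paper circumvents both problems by working quantitatively rather than through Visintin/Vitali. Using the explicit modulus of convexity $h_\varphi''(s)=1/\varphi(s)$ non-increasing, one obtains the pointwise inequality
\[
h_\varphi\!\left(\tfrac{\rho+\rho_i}{2}\right)\le\frac{h_\varphi(\rho)+h_\varphi(\rho_i)}{2}-\frac{|\rho-\rho_i|^2}{8\max\{\varphi(\rho),\varphi(\rho_i)\}},
\]
and the assumed convergence of $H_\varphi(\mu_i)$ plus lower semicontinuity force $\int_M|\rho_i-\rho|^2/\max\{\varphi(\rho_i),\varphi(\rho)\}\,d\omega\to0$. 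Since $(s-\ve)^2/\varphi(s)$ is non-decreasing for $\theta_\varphi<2$, this yields $L^2$-convergence of the truncations $\min\{\rho_i,C\}\to\min\{\rho,C\}$ (Claim~\ref{cl:gf1}), and a separate H\"older argument on the complementary parts $\max\{\rho_i,C\}$ upgrades this to $L^m$-convergence $\rho_i\to\rho$ with $m=2-\theta_\varphi$ (Claim~\ref{cl:gf2}), which may well have $m<1$. The conclusion then follows directly from the Lipschitz-type bound $|F(r)-F(s)|\le\frac{C_\varphi}{2-\theta_\varphi}|r^m-s^m|$ and elementary estimates on $\int_M|\rho_i^{\,m}-\rho^m|\,d\omega$ — no Vitali, no Visintin, and crucially no claim of $L^1$-convergence of $\rho_i$ (which in the range $\theta_\varphi>1$ is not something the hypotheses obviously give). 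If you want to keep your route, you would at minimum have to replace the two uniform-integrability assertions with arguments that actually use the \emph{convergence} of the entropies (not just their boundedness), for instance via a Scheffé-type lemma after securing a.e.\ convergence — but to get the a.e.\ convergence you would again need the weak $L^1$ compactness that is problematic for $\theta_\varphi>1$, so the circularity remains.
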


\begin{proof}
We first show the following claim by using $\theta_{\varphi}<2$.

\begin{claim}\label{cl:gf1}
For any $C>0$, it holds
\[ \lim_{i \to \infty}\| \min\{\rho,C\} -\min\{\rho_i,C\} \|_{L^2(M,\omega)}=0. \]
\end{claim}

\begin{proof}
Assume the contrary, that is, there are some constants $C,\ve>0$
such that, taking a subsequence of $\{\rho_i\}_{i \in \N}$ if necessary, we have
\begin{equation}\label{eq:not*}
\| \min\{\rho,C\} -\min\{\rho_i,C\} \|_{L^2(M,\omega)} \ge \ve
\end{equation}
for all $i$.
Now, since $h''_{\varphi}(s)=\varphi(s)^{-1}$ is positive and non-increasing, we find
\[ h_{\varphi}\bigg( \frac{\rho+\rho_i}{2} \bigg)
 \le \frac{h_{\varphi}(\rho)+h_{\varphi}(\rho_i)}{2}
 -\frac{|\rho-\rho_i|^2}{8\max\{ \varphi(\rho),\varphi(\rho_i) \}}. \]
We shall further deduce from $\theta_{\varphi} < 2$ that
\begin{equation}\label{eq:for*}
\frac{|\rho-\rho_i|^2}{\max\{ \varphi(\rho),\varphi(\rho_i) \}}
 \ge \frac{|\min\{\rho,C\} -\min\{\rho_i,C\}|^2}{\varphi(C)}.
\end{equation}
This is clear if $\max\{\rho,\rho_i\} \le C$ or $\min\{\rho,\rho_i\} \ge C$.
Otherwise, \eqref{eq:for*} is reduced to
\[ \frac{(\tau-\ve)^2}{\varphi(\tau)} \ge \frac{(C-\ve)^2}{\varphi(C)},
 \qquad \ve \le C \le \tau, \]
and to the monotonicity of the function $s \mapsto (s-\ve)^2/\varphi(s)$ for $s>\ve$.
This monotonicity is easily seen by Lemma~\ref{lem:mono}, since $\theta_{\varphi}<2$ and
\[ \frac{(s-\ve)^2}{\varphi(s)}=\frac{s^{\theta_{\varphi}}}{\varphi(s)} \cdot s^{2-\theta_{\varphi}}
 \cdot \bigg( \frac{s-\ve}{s} \bigg)^2. \]

Thus we obtain from the hypothesis \eqref{eq:not*} that
\begin{align*}
\int_M h_{\varphi}\bigg( \frac{\rho+\rho_i}{2} \bigg) \,d\omega
&\le \int_M \frac{h_{\varphi}(\rho)+h_{\varphi}(\rho_i)}{2} \,d\omega
 -\frac{1}{8\varphi(C)} \| \min\{\rho,C\} -\min\{\rho_i,C\} \|^2_{L^2(M,\omega)} \\
&\le \frac{1}{2} \int_M h_{\varphi}(\rho) \,d\omega
 +\frac{1}{2} \int_M h_{\varphi}(\rho_i) \,d\omega -\frac{1}{8\varphi(C)} \ve^2.
\end{align*}
However, as $\lim_{i \to \infty}H_{\varphi}(\mu_i)=H_{\varphi}(\mu)$ by assumption,
this means that $\bar{\mu}_i:=\{(\rho+\rho_i)/2\}\omega$ satisfies
\[ \limsup_{i \to \infty}H_{\varphi}(\bar{\mu}_i)
 \le H_{\varphi}(\mu) -\frac{1}{8\varphi(C)} \ve^2. \]
This contradicts the lower semi-continuity of $H_{\varphi}$ (Lemma~\ref{lm:lsc})
and we complete the proof of Claim~\ref{cl:gf1}.
$\hfill \diamondsuit$
\end{proof}

Observe that
\[ h_{\varphi}(r)-r h'_{\varphi}(r)
 =\int_0^r \{ \ln(s)-\ln(r) \} \,ds =-\int_0^r \int_s^r \frac{1}{\varphi(t)} \,dtds
 =-\int_0^r \frac{t}{\varphi(t)} \,dt. \]
Combining this with Lemma~\ref{lem:mono}, we have for any $r,s>0$
\begin{align*}
|h_{\varphi}(r)-r h'_{\varphi}(r)-h_{\varphi}(s)-s h'_{\varphi}(s)|
&=\left| \int_s^r \frac{t}{\varphi(t)} \,dt \right|
 \le C_{\varphi} \left| \int_s^r t^{1-\theta_{\varphi}} \,dt \right| 
= \frac{C_\varphi}{2-\theta_{\varphi}} |r^m-s^m|,
\end{align*}
where we set $m=2-\theta_\varphi>0$.
Thus we deduce that
\begin{equation}\label{eq:delta1}
\int_M \left| h_{\varphi}(\rho_i)-\rho_i h'_{\varphi}(\rho_i)-h_{\varphi}(\rho)-\rho h'_{\varphi}(\rho) \right| \,d\omega
 \le \frac{C_\varphi}{2-\theta_{\varphi}} \int_M |\rho_i^m -\rho^m| \,d\omega.
\end{equation}
We are done if the right hand side tends to zero as $i \to \infty$.

\begin{claim}\label{cl:gf2}
For $m=2-\theta_{\varphi} \in (0,2)$, we have
\[ \rho,\rho_i \in L^m(M,\omega), \qquad
 \lim_{i \to \infty} \|\rho_i -\rho\|_{L^m(M,\omega)}=0. \]
\end{claim}

\begin{proof}
The first assertion is clear when $m \le 1$.
For $m>1$, it is a consequence of $h_{\varphi}(\rho), h_{\varphi}(\rho_i) \in L^1(M,\omega)$
(guaranteed by $H_{\varphi}(\mu), H_{\varphi}(\mu_i)<\infty$).
Indeed, by Lemma~\ref{lem:mono} and \eqref{eq:ln},
we have on $\{x \in M \,|\, \rho(x) \ge C\}$ for any $C>0$
\[ u_{\varphi}(\rho)-u_{\varphi}(C)
 =\int_C^{\rho} \ln_\varphi(s) \,ds \ge \int_C^{\rho} \ell_m(s) \,ds
 =\frac{\rho^m-C^m-m(\rho-C)}{m(m-1)}, \]
which implies $\max\{\rho,C\} \in L^m(M,\omega)$ since $m-1>0$,
$u_{\varphi}(\rho)-u_{\varphi}(C) \ge 0$ and $u_{\varphi}(\rho) \in L^1(M,\omega)$.
Thus we obtain $\rho \in L^m(M,\omega)$ and $\rho_i \in L^m(M,\omega)$ similarly.
We remark that, as $\lim_{i \to \infty}H_{\varphi}(\mu_i)=H_{\varphi}(\mu)$ by assumption,
we have $\lim_{i \to \infty} \int_M u_{\varphi}(\rho_i) \,d\omega=\int_M u_{\varphi}(\rho) \,d\omega$
so that $\int_M \rho_i^m \,d\omega$ is uniformly bounded in $i$.

As for the second estimate, thanks to Claim~\ref{cl:gf1} and $m<2$,
it suffices to show that $\rho_i -\min\{\rho_i,C\}$ converges to
$\rho-\min\{\rho,C\}$ in $L^m(M,\omega)$ for some (arbitrarily fixed) $C>0$.
Note first that
\[ |(\rho_i -\min\{\rho_i,C\})-(\rho -\min\{\rho,C\})|
 =|\max\{\rho_i,C\}-\max\{\rho,C\}|. \]
We put $\rho_i^C:=\max\{\rho_i,C\}$ and $\rho^C:=\max\{\rho,C\}$ for brevity.
By the same argumentation as Claim~\ref{cl:gf1},
$\lim_{i \to \infty}H_{\varphi}(\mu_i)=H_{\varphi}(\mu)$ yields
\[ \lim_{i \to \infty} \int_M \frac{|\rho_i -\rho|^2}{\max\{\varphi(\rho_i),\varphi(\rho)\}} \,d\omega =0. \]
Since $\varphi$ is positive and non-decreasing, it holds
\[ \int_M \frac{|\rho_i -\rho|^2}{\max\{\varphi(\rho_i),\varphi(\rho)\}} \,d\omega
 \ge \int_M \frac{|\rho_i^C -\rho^C|^2}{\varphi(\rho_i^C)+\varphi(\rho^C)} \,d\omega. \]
It follows from the H\"older inequality that
\[ \|\rho_i^C -\rho^C\|_{L^m(M,\omega)}^m
 \le \bigg( \int_M \frac{|\rho_i^C -\rho^C|^2}{\varphi(\rho_i^C)+\varphi(\rho^C)} \,d\omega \bigg)^{m/2}
 \bigg( \int_M \{ \varphi(\rho_i^C)+\varphi(\rho^C) \}^{m/\theta_{\varphi}} \,d\omega \bigg)^{\theta_{\varphi}/2}. \]
Observe that
\[ \{ \varphi(\rho_i^C)+\varphi(\rho^C) \}^{m/\theta_{\varphi}}
 \le \begin{cases}
 \varphi(\rho_i^C)^{m/\theta_{\varphi}}+\varphi(\rho^C)^{m/\theta_{\varphi}} & \text{for}\ m \le 1, \\
 2^{m/\theta_{\varphi}-1}
 \{ \varphi(\rho_i^C)^{m/\theta_{\varphi}}+\varphi(\rho^C)^{m/\theta_{\varphi}} \} & \text{for}\ m>1.
 \end{cases} \]
We deduce from Lemma~\ref{lem:mono} that
\[ \varphi(\rho_i^C)^{m/\theta_{\varphi}}+\varphi(\rho^C)^{m/\theta_{\varphi}}
 \le \frac{\varphi(C)^{m/\theta_{\varphi}}}{C^m} \{ (\rho_i^C)^m +(\rho^C)^m \}. \]
Since $\int_M (\rho_i^C)^m \,d\omega$ is uniformly bounded in $i$, we find
\[ \limsup_{i \to \infty} \int_M \{ \varphi(\rho_i^C)+\varphi(\rho^C) \}^{m/\theta_{\varphi}} \,d\omega <\infty,  \]
and hence $\lim_{i \to \infty} \|\rho_i^C-\rho^C\|_{L^m(M,\omega)}=0$.
$\hfill \diamondsuit$
\end{proof}

Now we obtain, for $m \le 1$,
\[ \int_M |\rho_i^m -\rho^m| \,d\omega \le \int_M |\rho_i -\rho|^m \,d\omega
 \to 0 \quad (i \to \infty) \]
with the help of Claim~\ref{cl:gf2}.
Similarly, it holds for $m>1$ that
\begin{align*}
&\int_M |\rho_i^m -\rho^m| \,d\omega
 \le m\int_M |\rho_i -\rho| \max\{\rho_i,\rho\}^{m-1} \,d\omega \\
&\le m\bigg( \int_M |\rho_i -\rho|^m \,d\omega \bigg)^{1/m}
 \bigg(  \int_M (\rho_i +\rho)^m \,d\omega \bigg)^{(m-1)/m}
 \ \to 0 \quad (i \to \infty).
\end{align*}
$\qedd$
\end{proof}

We remark that, in Lemma~\ref{lm:gf} and hence in Theorem~\ref{th:gf},
the assumptions $\theta_{\varphi} \in (0,2)$ and $C_{\varphi}<\infty$ can be replaced with
\[ \delta_{\varphi} \in (0,2), \qquad
 D_{\varphi}:=\lim_{s \downarrow 0} \frac{s^{\delta_{\varphi}}}{\varphi(s)}<\infty, \qquad
 d_{\varphi}:=\lim_{s \uparrow \infty} \frac{s^{\delta_{\varphi}}}{\varphi(s)}>0. \]
Indeed, then we have
\[ \frac{s^{\delta_{\varphi}}}{D_{\varphi}} \le \varphi(s) \le \frac{s^{\delta_{\varphi}}}{d_{\varphi}} \]
for all $s>0$, and \eqref{eq:delta1} becomes
\[ \int_M \left| h_{\varphi}(\rho_i)-\rho_i h'_{\varphi}(\rho_i)-h_{\varphi}(\rho)-\rho h'_{\varphi}(\rho) \right| \,d\omega
 \le \frac{D_\varphi}{2-\delta_{\varphi}} \int_M |\rho_i^m -\rho^m| \,d\omega \]
for $m:=2-\delta_{\varphi}$.
With this $m \in (0,2)$, Claim~\ref{cl:gf2} follows from Proposition~\ref{lm:cased}
and $\varphi(s) \le s^{\delta_{\varphi}}/d_{\varphi}$
(Claim~\ref{cl:gf1} is unnecessary in this case
since we can treat $\rho$ and $\rho_i$ themselves instead of $\rho^C$ and $\rho_i^C$).

Note that $C_{\varphi}=D_{\varphi}=d_{\varphi}=1<\infty$ for $\varphi_m(s)=s^{2-m}$.
For
\[ \varphi(s):= \begin{cases}
 \sqrt{s} & \text{for}\ 0<s<1, \\
 s & \text{for}\ s \ge 1,
\end{cases} \]
we have $\theta_{\varphi}=1$, $\delta_{\varphi}=1/2$, $C_{\varphi}=D_{\varphi}=1$
and $d_{\varphi}=0$.
An example of $\varphi$ with $C_{\varphi}=\infty$ is
\[ \varphi(s):= \begin{cases}
 \sqrt{s} & \text{for}\ 0<s<1, \\
 s & \text{for}\ 1 \le s \le 2, \\
 \sqrt{2s} & \text{for}\ s>2,
\end{cases} \]
for which $\theta_{\varphi}=1$, $\delta_{\varphi}=1/2$, $D_{\varphi}=1$ and $d_{\varphi}=1/\sqrt{2}$.

\section{Gradient flow of $H_{\varphi}$: Noncompact case}\label{sc:gf+}

We continue the study of gradient flows in the Wasserstein space $(\cP^2(M),W_2)$.
For noncompact $M$, we can not follow the intrinsic argument in Subsection~\ref{ssc:gf1}
since Theorem~\ref{th:angle} is unavailable.
We can nevertheless introduce a Riemannian structure of $\cP^2(M)$
using the underlying Riemannian structure of $M$.
Then gradient flows in $\cP^2(M)$ are also formulated with the help of
the underlying Riemannian/differentiable structure of $M$.
In order to see that the analogue of Theorem~\ref{th:gf} holds true,
we follow the argumentation in \cite{AGS}, \cite{Er} and \cite[Chapter~23]{Vi2}.
We refer to \cite{AGS} for the further deep theory of gradient flows.

\subsection{Riemannian structure of $(\cP^2(M),W_2)$}

Recall that minimal geodesics in $\cP^2(M)$ emanating from absolutely continuous
measures are described by the gradient vector fields of appropriate functions (Theorem~\ref{th:FG}).
This leads the following definitions due to Otto~\cite{Ot} of the tangent spaces
and the Riemannian structure.

\begin{definition}[Otto's Riemannian structure]\label{df:Otto}
We set
\[ \hat{T}\cP :=\{ \Phi=\nabla\phi \,|\, \phi \in C_c^{\infty}(M) \} \]
and define the \emph{tangent space} $(T_{\mu}\cP^2,\langle \cdot,\cdot \rangle_{\mu})$ of $\cP^2(M)$
at $\mu \in \cP^2(M)$ as the completion of $\hat{T}\cP$ with respect to the norm
$\|\cdot\|_{\mu}$ induced from the inner product
\[ \langle \Phi_1,\Phi_2 \rangle_{\mu} :=\int_M \langle \Phi_1,\Phi_2 \rangle \,d\mu,
 \qquad \Phi_1,\Phi_2 \in \hat{T}\cP. \]
\end{definition}

Note that $\langle \cdot,\cdot \rangle$ is extended to the whole space $T_{\mu}\cP^2$
as the limit, and $(T_{\mu}\cP^2,\langle \cdot,\cdot \rangle_{\mu})$ is a Hilbert space.
We next introduce the class of `differentiable curves' in a purely metric way
(cf.\ \cite[Section~1.1]{AGS}).

\begin{definition}[Absolutely continuous curves]\label{df:abc}
For $p \in [1,\infty]$, a curve $(\mu_t)_{t \in I} \subset \cP^2(M)$ on an open interval $I \subset \R$
is said to be \emph{$p$-absolutely continuous} if there is some $\eta \in L_{\loc}^p(I)$ such that
\begin{equation}\label{eq:abc}
W_2(\mu_s,\mu_t) \le \int_s^t \eta(r) \,dr
\end{equation}
holds for all $s,t \in I$ with $s<t$.
\end{definition}

Note that $p$-absolutely continuous curves are continuous.
We will consider only $2$-absolutely continuous curves, so that we simply call
them absolutely continuous curves.
For any absolutely continuous curve $(\mu_t)_{t \in I} \subset \cP^2(M)$,
the \emph{metric derivative}
\[ |\dot{\mu}_t| :=\lim_{s \to t}\frac{W_2(\mu_s,\mu_t)}{|t-s|} \]
exists for a.e.\ $t \in I$, and $\eta(t)=|\dot{\mu}_t|$ is a minimal function satisfying \eqref{eq:abc}
(cf.\ \cite[Theorem~1.1.2]{AGS}).
We can associate a one-parameter family of vector fields on $M$ with
an absolutely continuous curve in $\cP^2(M)$ via the continuity equation on $M$.

\begin{proposition}{\rm (\cite[Theorem~8.3]{AGS}, \cite[Proposition~2.5]{Er})}\label{pr:tvf}
Given an absolutely continuous curve $(\mu_t)_{t \in I} \subset \cP^2(M)$,
there exists a Borel vector field $\Phi:I \times M \lra TM$ $($with $\Phi_t(x):=\Phi(t,x) \in T_xM)$
satisfying $\Phi_t \in T_{\mu_t}\cP^2$ for a.e.\ $t \in I$ as well as the \emph{continuity equation}
\[ \frac{\del \mu_t}{\del t} +\div(\Phi_t \mu_t) =0 \]
in the weak sense that
\begin{equation}\label{eq:ceq}
\int_I \int_M \bigg\{ \frac{\del w_t}{\del t}+\langle \Phi_t,\nabla w_t \rangle \bigg\} \,d\mu_t dt=0
\end{equation}
holds for all $w \in C_c^{\infty}(I \times M)$.
Such a vector field $\Phi$ $($satisfying $\Phi_t \in T_{\mu_t}\cP^2$ and \eqref{eq:ceq}$)$
is uniquely determined up to a difference on a null measure set with respect to $d\mu_t dt$,
and we have $\|\Phi_t\|_{\mu_t}=|\dot{\mu}_t|$ for a.e.\ $t \in I$.

Conversely, if a curve $(\mu_t)_{t \in I} \subset \cP^2(M)$ admits
a Borel vector field $\Phi: I \times M \lra TM$ satisfying \eqref{eq:ceq} and
$\int_{t_0}^{t_1} \|\Phi_t\|_{\mu_t}^2 \,dt <\infty$ for all $t_0,t_1 \in I$ with $t_0<t_1$,
then $(\mu_t)_{t \in I}$ is absolutely continuous and
$|\dot{\mu}_t| \le \|\Phi_t\|_{\mu_t}$ at a.e.\ $t \in I$.
\end{proposition}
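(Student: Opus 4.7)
The plan is to prove the two implications separately, treating the converse (sufficiency) direction first since it is the more constructive part. For the converse, given a Borel vector field $\Phi$ satisfying \eqref{eq:ceq} with $\int_{t_0}^{t_1} \|\Phi_t\|_{\mu_t}^2 \,dt <\infty$, I would regularize $\Phi$ by a standard mollification in both time and space to obtain a smooth field $\Phi^\ve$, consider its characteristic flow $X^\ve_{s,t}:M \lra M$, and verify that $(X^\ve_{s,t})_\sharp \mu_s$ satisfies the mollified continuity equation and hence coincides with the mollified curve $\mu^\ve_t$. Passing to the limit $\ve \downarrow 0$ by a DiPerna--Lions type stability argument, the couplings $\pi^\ve:=(\mathrm{id}\times X^\ve_{s,t})_\sharp \mu_s$ converge to a coupling $\pi$ of $\mu_s$ and $\mu_t$, which yields
\[
W_2(\mu_s,\mu_t)^2 \le \int_M d_g\big(x,X_{s,t}(x)\big)^2 \,d\mu_s(x)
 \le (t-s)\int_s^t \int_M |\Phi_r(X_{s,r}(x))|^2 \,d\mu_s(x) \,dr
 = (t-s)\int_s^t \|\Phi_r\|_{\mu_r}^2 \,dr
\]
by Cauchy--Schwarz and the invariance $(X_{s,r})_\sharp \mu_s=\mu_r$. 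Dividing by $(t-s)^2$ and letting $t \downarrow s$ simultaneously gives absolute continuity and $|\dot{\mu}_t| \le \|\Phi_t\|_{\mu_t}$ at every Lebesgue point.

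For the forward direction, I would exploit the fact that for any $w \in C_c^{\infty}(M)$, the map $t \mapsto u_w(t):=\int_M w \,d\mu_t$ is locally absolutely continuous, because the estimate $|u_w(t)-u_w(s)| \le \|\nabla w\|_\infty W_1(\mu_s,\mu_t) \le \|\nabla w\|_\infty \int_s^t |\dot{\mu}_r| \,dr$ holds via Kantorovich--Rubinstein and $W_1 \le W_2$. Hence $\dot{u}_w(t)$ exists at a.e.\ $t \in I$. At each Lebesgue point $t$ of $|\dot{\mu}_\cdot|$, the linear functional
\[
L_t:\hat T\cP \lra \R, \qquad L_t(\nabla w):=\dot{u}_w(t)
\]
is well-defined and satisfies the sharper bound $|L_t(\nabla w)| \le \|\nabla w\|_{\mu_t}\cdot |\dot{\mu}_t|$, obtained by applying the Kantorovich duality formula for $W_2$ along the incremental couplings between $\mu_t$ and $\mu_{t\pm h}$ and taking $h \downarrow 0$ (the limit picks up exactly the $L^2(\mu_t)$-norm of $\nabla w$). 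Consequently $L_t$ extends continuously to $T_{\mu_t}\cP^2$, and Riesz representation in this Hilbert space furnishes a unique $\Phi_t \in T_{\mu_t}\cP^2$ with $L_t(\nabla w)=\langle \Phi_t,\nabla w \rangle_{\mu_t}$ and $\|\Phi_t\|_{\mu_t} \le |\dot{\mu}_t|$. Measurability of $t \mapsto \Phi_t$ follows from separability of $C_c^{\infty}(M)$, and integrating in $t$ with a space-time test function $w \in C_c^{\infty}(I \times M)$ yields \eqref{eq:ceq} after Fubini.

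Uniqueness within $T_{\mu_t}\cP^2$ is automatic: if $\Phi'_t \in T_{\mu_t}\cP^2$ is another vector field satisfying \eqref{eq:ceq}, then $\Phi_t-\Phi'_t$ is $L^2(\mu_t)$-orthogonal to the dense subspace $\hat T\cP \subset T_{\mu_t}\cP^2$, hence vanishes $\mu_t$-a.e. The norm equality $\|\Phi_t\|_{\mu_t}=|\dot{\mu}_t|$ follows by combining the inequality $\|\Phi_t\|_{\mu_t} \le |\dot{\mu}_t|$ established above with the reverse estimate obtained by applying the already-proved converse direction to the constructed $\Phi$.

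The main obstacle will be establishing the sharper bound $|L_t(\nabla w)| \le \|\nabla w\|_{\mu_t} \cdot |\dot{\mu}_t|$, which is strictly stronger than the crude $\|\nabla w\|_\infty W_2$ estimate and is precisely what forces $\Phi_t$ to lie in the metric tangent space $T_{\mu_t}\cP^2$ rather than just $L^2(\mu_t;TM)$; this requires a careful first-order expansion along optimal couplings together with a uniform integrability argument for $|\nabla w|^2$ along the weakly continuous family $(\mu_t)$. A secondary technical difficulty, specific to the noncompact setting, is controlling mass at infinity during the mollification step of the converse direction, which I would handle by a cut-off exploiting $\mu_t \in \cP^2(M)$ together with the a priori second-moment bound $\int d_g(x_0,\cdot)^2 \,d\mu_t<\infty$ that is uniform on compact subintervals of $I$ by absolute continuity in $(\cP^2(M),W_2)$.
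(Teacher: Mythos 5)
Your overall strategy reproduces the proof in the cited references (\cite[Theorem~8.3]{AGS}, \cite[Proposition~2.5]{Er}), which the paper does not reprove; the forward direction (absolute continuity of $t\mapsto\int w\,d\mu_t$, the sharp bound $|\dot u_w(t)|\le\|\nabla w\|_{\mu_t}|\dot\mu_t|$ via first-order expansion along near-optimal couplings, Riesz representation), the uniqueness, and the norm identity are all sound as sketched.

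There is, however, a genuine gap in the converse direction as you describe it. Mollifying $\Phi$ alone destroys the continuity equation: the pair $(\Phi^\ve,\mu_t)$ no longer solves any such equation, so the characteristic flow of $\Phi^\ve$ pushed onto $\mu_s$ has no reason to land on any mollified curve. The correct regularization mollifies the \emph{momentum} $E_t:=\Phi_t\mu_t$ together with the measure $\mu_t$ using the same kernel, so that $\del_t\mu^\ve_t+\div E^\ve_t=0$ still holds, and then defines the smooth velocity as $\Phi^\ve_t:=E^\ve_t/\mu^\ve_t$. Joint convexity of $(e,m)\mapsto|e|^2/m$ then gives $\|\Phi^\ve_t\|_{\mu^\ve_t}\le\|\Phi_t\|_{\mu_t}$ by Jensen's inequality, and it is the mollified measure $\mu^\ve_s$, not $\mu_s$, that the flow of $\Phi^\ve$ transports onto $\mu^\ve_t$. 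With these corrections your estimate becomes
\[
W_2(\mu^\ve_s,\mu^\ve_t)^2 \le (t-s)\int_s^t\|\Phi^\ve_r\|^2_{\mu^\ve_r}\,dr \le (t-s)\int_s^t\|\Phi_r\|^2_{\mu_r}\,dr,
\]
which passes to the limit and yields the absolute continuity and the bound $|\dot\mu_t|\le\|\Phi_t\|_{\mu_t}$. The limit step itself is a tightness-and-lower-semicontinuity argument for the couplings, not DiPerna--Lions stability, which is a well-posedness statement for flows of Sobolev vector fields and plays no role here.
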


\begin{definition}[Tangent vector fields]\label{df:tvf}
We say that the vector field $\Phi$ as in Proposition~\ref{pr:tvf} is the \emph{tangent vector field}
of the absolutely continuous curve $(\mu_t)_{t \in I}$, and write $\dot{\mu}_t=\Phi_t$
(for a.e.\ $t \in I$).
\end{definition}

It is guaranteed by the following \emph{Benamou--Brenier formula} (\cite{BB})
that Otto's Riemannian structure is compatible with the $W_2$-structure,
\[ W_2(\mu_0,\mu_1)=\inf_{(\mu_t)_{t \in [0,1]}}
 \bigg( \int_0^1 \|\dot{\mu}_t\|_{\mu_t}^2 \,dt \bigg)^{1/2} \]
for any $\mu_0,\mu_1 \in \cP^2(M)$,
where the infimum is taken over all absolutely continuous curves
$(\mu_t)_{t \in [0,1]} \subset \cP^2(M)$ from $\mu_0$ to $\mu_1$.

\subsection{Gradient flow of $H_{\varphi}$}

Using the Riemannian structure of $\cP^2(M)$ in the previous subsection,
we can formulate gradient curves (trajectories of gradient flow)
in a way different from the previous section.
We first define gradient vectors.

\begin{definition}[Gradient vectors]\label{df:gf+}
Given a functional $H:\cP^2(M) \lra (-\infty,\infty]$ and $\mu \in \cP_{\ac}^2(M)$
with $H(\mu)<\infty$, we say that $H$ is {\it differentiable} at $\mu$
if there is $\Phi \in T_{\mu}\cP^2$ such that
\[ \limsup_{t \downarrow 0}\frac{H(\mu_t)-H(\mu)}{t}
 \le \int_M \langle \Phi,\nabla\phi \rangle \,d\mu \]
along all minimal geodesics $(\mu_t)_{t \in [0,1]} \subset \cP^2(M)$ with $\mu_0=\mu$,
where $\mu_t=(\cT_t)_{\sharp}\mu$ with $\cT_t(x)=\exp_x(t\nabla\phi(x))$,
and if equality holds for $\phi \in C_c^{\infty}(M)$
(with $\lim_{t \downarrow 0}$ in place of $\limsup_{t \downarrow 0}$).
Such $\Phi$ is unique if it exists, so that we will write $\nabla_W H(\mu)=\Phi$.
\end{definition}

Note that $|\grad (-H)|(\mu) \le \|\nabla_W H(\mu)\|_{\mu}$ holds by the Cauchy--Schwarz inequality.
A gradient curve of the $\varphi$-relative entropy $H_{\varphi}$ should be understood
as a solution to $\dot{\mu}_t=\nabla_W[-H_{\varphi}](\mu_t)$.
Compare the next proposition with Proposition~\ref{pr:DH}.

\begin{proposition}\label{pr:gf+}
Let $(M,\omega,\varphi,\Psi)$ be admissible, assume $\Ric_{N_{\varphi}} \ge 0$
and $\Hess\Psi \ge K$ on $M^{\Psi}_{\varphi}$ for some $K \in \R$
$(K>0$ if $M$ is noncompact and $\theta_{\varphi}<1)$.
Fix $\mu=\rho\omega \in \cP^2_{\ac}(M,\omega)$ with $\mu[M^{\Psi}_{\varphi}]=1$,
$H_{\varphi}(\mu)<\infty$ and with $|\nabla\Psi| \in L^2(M,\mu)$.
Then the following are equivalent$:$
\begin{enumerate}[{\rm (I)}]
\item $|\grad H_{\varphi}|(\mu) <\infty$,
\item $\rho \in H^1_{\loc}(M)$ and
\[ \frac{\nabla\rho}{\varphi(\rho)} +\nabla\Psi=-\Phi \]
holds $\mu$-a.e.\ for some $\Phi \in T_{\mu}\cP^2$.
\end{enumerate}
Moreover, then we have $\Phi=\nabla_W[-H_{\varphi}](\mu)$ and
$\|\Phi\|_{\mu}=|\grad H_{\varphi}|(\mu)$.
\end{proposition}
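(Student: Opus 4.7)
The plan is to establish the equivalence via a duality argument, with the easier direction (II) $\Rightarrow$ (I) following directly from Proposition~\ref{pr:dHm} and the harder direction (I) $\Rightarrow$ (II) using a Hahn--Banach representation argument applied to a suitable linear functional on $\hat T\cP$.

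First, for (II) $\Rightarrow$ (I), assume we have $\Phi \in T_\mu\cP^2$ with $\nabla\rho/\varphi(\rho)+\nabla\Psi=-\Phi$ $\mu$-a.e. Given any minimal geodesic $(\mu_t)_{t\in[0,1]}$ emanating from $\mu$ and generated by $\phi \in C_c^\infty(M)$, Proposition~\ref{pr:dHm} yields
\[
\lim_{t \downarrow 0}\frac{H_\varphi(\mu_t)-H_\varphi(\mu)}{t}
 =\int_M \Bigl\langle \frac{\nabla\rho}{\varphi(\rho)}+\nabla\Psi,\nabla\phi \Bigr\rangle d\mu
 =-\int_M \langle \Phi,\nabla\phi \rangle\,d\mu.
\]
The Cauchy--Schwarz inequality then bounds $|\lim \cdot|$ by $\|\Phi\|_\mu \|\nabla\phi\|_\mu$, and combining this with the $K$-convexity of $H_\varphi$ (Theorem~\ref{th:mCD}) gives $|\grad H_\varphi|(\mu)\leq\|\Phi\|_\mu<\infty$. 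Moreover, the displayed identity identifies $\Phi$ as $\nabla_W[-H_\varphi](\mu)$ in the sense of Definition~\ref{df:gf+}, with the general-geodesic inequality obtained from the $K$-convexity. When $\theta_\varphi<1$, the standing hypothesis $K>0$ ensures via Lemma~\ref{lm:K>0}(i) that $\supp\mu \subset M^\Psi_\varphi$ is compact, so Proposition~\ref{pr:dHm} is applicable.

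For the converse (I) $\Rightarrow$ (II), the key observation is that the derivative formula for $H_\varphi$ along $\mu_t=(\cT_t)_\sharp\mu$ with $\cT_t(x)=\exp_x(t\nabla\phi(x))$ can be computed \emph{without} assuming $\rho \in H^1_{\loc}$: following the Jacobian calculation in the proof of Proposition~\ref{pr:dHm} and using $\lim_{t\downarrow 0}(\bJ^\omega_t-1)/t=\Delta^\omega \phi$, one finds
\[
\lim_{t \downarrow 0}\frac{H_\varphi(\mu_t)-H_\varphi(\mu)}{t}
 = \int_M [h_\varphi(\rho)-\rho h'_\varphi(\rho)]\,\Delta^\omega\phi\,d\omega
 +\int_M \langle \nabla\Psi,\nabla\phi \rangle\,d\mu,
\]
an expression that is well-defined for any $\phi \in C_c^\infty(M)$ since $h_\varphi(\rho)-\rho h'_\varphi(\rho) \in L^1_{\loc}(M,\omega)$ and $|\nabla\Psi| \in L^2(M,\mu)$. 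The finiteness of $|\grad H_\varphi|(\mu)$ together with the $K$-convexity bounds the absolute value of the right-hand side by $|\grad H_\varphi|(\mu)\cdot\|\nabla\phi\|_\mu$. Hence the linear functional
\[
L:\hat T\cP \ni \nabla\phi \longmapsto \int_M [h_\varphi(\rho)-\rho h'_\varphi(\rho)]\,\Delta^\omega\phi\,d\omega
 +\int_M \langle \nabla\Psi,\nabla\phi \rangle\,d\mu
\]
is continuous with respect to $\|\cdot\|_\mu$, and by the Hahn--Banach theorem (together with the Riesz representation in the Hilbert space $T_\mu\cP^2$) it admits a representative $\widetilde\Phi \in T_\mu\cP^2$ satisfying $L(\nabla\phi)=\int_M \langle \widetilde\Phi,\nabla\phi \rangle\,d\mu$ and $\|\widetilde\Phi\|_\mu \leq |\grad H_\varphi|(\mu)$.

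Setting $\Phi:=-\widetilde\Phi$, the representation rewrites as
\[
\int_M [h_\varphi(\rho)-\rho h'_\varphi(\rho)]\,\Delta^\omega\phi\,d\omega
 =-\int_M \rho\langle \Phi+\nabla\Psi,\nabla\phi \rangle\,d\omega
 \qquad\forall \phi \in C_c^\infty(M),
\]
which says precisely that the distributional gradient of $\rho h'_\varphi(\rho)-h_\varphi(\rho)$ equals $\rho(\Phi+\nabla\Psi)$ on $M$. Since this distributional gradient lies in $L^1_{\loc}$, the function $\rho h'_\varphi(\rho)-h_\varphi(\rho)$ belongs to $W^{1,1}_{\loc}(M)$, and the chain rule (applied on the open set $\{\rho>0\}$, noting $(d/dr)[rh'_\varphi(r)-h_\varphi(r)]=rh''_\varphi(r)=r/\varphi(r)$) gives $\rho\nabla\rho/\varphi(\rho)=\rho(\Phi+\nabla\Psi)$; hence $\rho \in H^1_{\loc}$ and the desired pointwise identity $\nabla\rho/\varphi(\rho)+\nabla\Psi=-\Phi$ holds $\mu$-a.e. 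Combined with the reverse inequality $|\grad H_\varphi|(\mu)\leq\|\Phi\|_\mu$ from the first direction, we conclude $\|\Phi\|_\mu=|\grad H_\varphi|(\mu)$ and $\Phi=\nabla_W[-H_\varphi](\mu)$. The main technical obstacle is justifying the chain-rule step where $\rho$ may vanish or where $1/\varphi(\rho)$ blows up; this requires truncation arguments and careful use of the fact that the identity involves $\rho\nabla\rho/\varphi(\rho)$, not the unweighted $\nabla\rho/\varphi(\rho)$, so that the vector field $\Phi$ is only prescribed on $\supp\mu$.
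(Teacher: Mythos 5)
Your (II) $\Rightarrow$ (I) direction is essentially the paper's argument: apply Proposition~\ref{pr:dHm}, use the $K$-convexity to upgrade the directional derivative to a bound on the local slope, and Cauchy--Schwarz. (Minor caveat: for arbitrary $\tilde\mu$ you only get the $\liminf$ inequality of Proposition~\ref{pr:dHm}, since the Kantorovich potential $\phi$ need not be in $C_c^\infty(M)$, but this suffices to bound the $\limsup$ in the definition of $|\grad H_\varphi|(\mu)$.)

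The (I) $\Rightarrow$ (II) direction, however, has a genuine gap, and it is precisely at the point you flag as a ``technical obstacle'' but do not actually address. Your linear functional $L$ is defined only on $\hat T\cP$, i.e.\ you test only against gradient fields $V=\nabla\phi$. The representation you obtain,
\[
\int_M [h_\varphi(\rho)-\rho h'_\varphi(\rho)]\,\Delta^\omega\phi\,d\omega
 =-\int_M \rho\langle \Phi+\nabla\Psi,\nabla\phi \rangle\,d\omega
 \qquad\forall \phi \in C_c^\infty(M),
\]
is a distributional identity of the form $\Delta^\omega u=\div_\omega W$ with $u=\rho h'_\varphi(\rho)-h_\varphi(\rho)$ and $W=\rho(\Phi+\nabla\Psi)$. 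This does \emph{not} imply $\nabla u=W$ in the distributional sense: in $\R^n$ one may take $u\equiv 0$ and $W$ any nonzero divergence-free field and the identity holds trivially. To conclude that $u$ is weakly differentiable with weak gradient $W$ one must test against \emph{all} compactly supported smooth vector fields $V$, not just gradients. This is exactly what the paper does: it considers the curves $\mu_t=(\cT_t)_{\sharp}\mu$ with $\cT_t(x)=\exp_x(tV(x))$ for an arbitrary $C^\infty$-vector field $V$ of compact support (these are not Wasserstein geodesics, but the Jacobian/first-variation computation underlying Proposition~\ref{pr:dHm} applies verbatim with $\div_\omega V$ in place of $\Delta^\omega\phi$), obtaining
\[
\left| \int_M \bigl[\{h'_\varphi(\rho)\rho-h_\varphi(\rho)\}\div_\omega V -\langle\rho\nabla\Psi,V\rangle\bigr]d\omega \right|
 \le |\grad H_\varphi|(\mu)\,\|V\|_\mu.
\]
Since $|\nabla\Psi|\in L^2(M,\mu)$, the $\nabla\Psi$-term is controlled and one directly reads off that $h'_\varphi(\rho)\rho-h_\varphi(\rho)\in H^1_{\loc}(M)$, hence (by monotonicity of $s\mapsto sh'_\varphi(s)-h_\varphi(s)$) that $\rho\in H^1_{\loc}(M)$. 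Only then is the Riesz representation in $T_\mu\cP^2$ invoked, applied to the now-well-defined functional $\nabla\phi\mapsto\int_M\langle\nabla\rho/\varphi(\rho)+\nabla\Psi,\nabla\phi\rangle\,d\mu$. Without the step that establishes weak differentiability of $\rho$ from the general-vector-field test, the chain rule you apply at the end has nothing to be applied to. You should replace the restriction to $\hat T\cP$ by the full class of compactly supported vector fields and deduce $\rho\in H^1_{\loc}(M)$ before invoking Riesz.
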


\begin{proof}
(I) $\Rightarrow$ (II):
Note that, by the calculation (before the integration by parts) in the proof of Proposition~\ref{pr:dHm},
\begin{align*}
&\bigg| \int_M \big[ \{ h'_{\varphi}(\rho)\rho-h_{\varphi}(\rho) \} \div_{\omega} V
 -\langle \rho\nabla\Psi,V \rangle \big] \,d\omega \bigg| \\
&= \lim_{t \downarrow 0} \bigg\{
 \frac{H_{\varphi}(\mu)-H_{\varphi}(\mu_t)}{W_2(\mu,\mu_t)} \frac{W_2(\mu,\mu_t)}{t} \bigg\}
 \le |\grad H_{\varphi}|(\mu) \|V\|_{\mu}
\end{align*}
for all $C^{\infty}$-vector fields $V$ of compact support,
where we put $\mu_t=(\cT_t)_{\sharp}\mu$ with $\cT_t(x)=\exp_x(tV(x))$.
Hence the hypothesis (I) together with $\Psi \in H^1_{\loc}(M)$ ensures that
the function $h'_{\varphi}(\rho)\rho-h_{\varphi}(\rho)$ is weakly differentiable.
Since the function $s \longmapsto h'_{\varphi}(s)s-h_{\varphi}(s)$ is differentiable
and increasing in $s>0$, this implies $\rho \in H^1_{\loc}(M)$, and we observe
\[ \nabla [h'_{\varphi}(\rho)\rho-h_{\varphi}(\rho)]
 =\frac{\rho}{\varphi(\rho)} \nabla\rho. \]
Moreover, the above estimate shows that the function
\begin{align*}
\hat{T}\cP \ni \nabla\phi\ \longmapsto\
 \int_M \langle \nabla [h'_{\varphi}(\rho)\rho-h_{\varphi}(\rho)]+\rho\nabla\Psi, \nabla\phi \rangle \,d\omega
 = \int_M \bigg\langle \frac{\nabla\rho}{\varphi(\rho)}+\nabla\Psi, \nabla\phi \bigg\rangle \,d\mu
\end{align*}
is extended to a bounded linear operator on the closure $T_{\mu}\cP^2$.
Therefore the Riesz representation theorem shows that there exists $\Phi \in T_{\mu}\cP^2$ with
\begin{equation}\label{eq:|Psi|}
\|\Phi\|_{\mu} \le |\grad H_{\varphi}|(\mu), \qquad
 \int_M \bigg\langle \frac{\nabla\rho}{\varphi(\rho)}+\nabla\Psi, \Xi \bigg\rangle \,d\mu
 =\int_M \langle -\Phi,\Xi \rangle \,d\mu
\end{equation}
for all $\Xi \in T_{\mu}\cP^2$.
Thus we have $\nabla\rho/\varphi(\rho)+\nabla\Psi=-\Phi$ $\mu$-a.e..

(II) $\Rightarrow$ (I):
We remark that the condition $K>0$ for $\theta_{\varphi}<1$
makes Proposition~\ref{pr:dHm} applicable.
Thus we obtain
\begin{equation}\label{eq:delH}
\limsup_{t \downarrow 0}\frac{H_{\varphi}(\mu)-H_{\varphi}(\mu_t)}{t}
 \le \int_M \langle \Phi,\nabla\phi \rangle \,d\mu
\end{equation}
along every minimal geodesic $(\mu_t)_{t \in [0,1]} \subset \cP^2(M)$ with $\mu_0=\mu$,
where $\mu_t=(\cT_t)_{\sharp}\mu$ and $\cT_t(x)=\exp_x(t\nabla\phi(x))$,
and equality holds if $\phi \in C_c^{\infty}(M)$.
Hence $|\grad H_{\varphi}|(\mu)<\infty$ follows from the hypothesis $\Phi \in T_{\mu}\cP^2$,
and we find $\Phi=\nabla_W[-H_{\varphi}](\mu)$ in the sense of Definition~\ref{df:gf+}.
We have $\|\Phi\|_{\mu} \le |\grad H_{\varphi}|(\mu)$ by \eqref{eq:|Psi|},
and $|\grad H_{\varphi}|(\mu) \le \|\Phi\|_{\mu}$ by \eqref{eq:delH},
so that $\|\Phi\|_{\mu} =|\grad H_{\varphi}|(\mu)$ holds.
$\qedd$
\end{proof}

Now, we are ready to show the main result of the section.
We remark that the roles of the conditions $\Ric_{N_{\varphi}} \ge 0$
and $\Hess\Psi \ge K$ are implicit at this stage,
whereas they were necessary for applying Proposition~\ref{pr:dHm}.

\begin{theorem}[Gradient flow of $H_{\varphi}$]\label{th:gf+}
Suppose that $(M,\omega,\varphi,\Psi)$ is admissible and satisfies $\Ric_{N_{\varphi}} \ge 0$
as well as $\Hess\Psi \ge K$ on $M^{\Psi}_{\varphi}$ for some $K \in \R$
$(K>0$ if $M$ is noncompact and $\theta_{\varphi}<1)$.
Let $(\mu_t)_{t \in [0,\infty)} \subset \cP^2_{\ac}(M,\omega)$ be a continuous curve
such that $\mu_t[M^{\Psi}_{\varphi}]=1$, $H_{\varphi}(\mu_t)<\infty$
and $|\nabla\Psi| \in L^2(M,\mu_t)$ for all $t>0$.
Then $(\mu_t)_{t \in (0,\infty)}$ is an absolutely continuous curve satisfying
\[ \dot{\mu}_t=\nabla_W[-H_{\varphi}](\mu_t) \in T_{\mu_t}\cP^2 \]
at a.e.\ $t \in (0,\infty)$ if and only if $(\rho_t)_{t \in [0,\infty)}$ is
a weak solution to the $\varphi$-heat equation \eqref{eq:pme}
with $\int_{t_0}^{t_1} |\nabla\rho_t/\varphi(\rho_t)|^2 \,d\mu_t dt<\infty$ for all $0<t_0<t_1<\infty$,
where $\mu_t=\rho_t \omega$.
\end{theorem}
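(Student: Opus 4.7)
My plan is to prove both implications by reducing the Wasserstein gradient flow identity $\dot\mu_t=\nabla_W[-H_\varphi](\mu_t)$ and the weak $\varphi$-heat equation to a common intermediate statement, namely that the continuity equation \eqref{eq:ceq} holds with the vector field $\Phi_t:=-(\nabla\rho_t/\varphi(\rho_t)+\nabla\Psi)$. The bridge is provided by Proposition~\ref{pr:gf+}, which matches $-\nabla_W[-H_\varphi](\mu)$ with $\nabla\rho/\varphi(\rho)+\nabla\Psi$, and by Proposition~\ref{pr:tvf}, which characterises absolutely continuous curves in $(\cP^2(M),W_2)$ through their tangent vector fields.

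For the forward implication, I would begin by noting that $\dot\mu_t\in T_{\mu_t}\cP^2$ has locally finite $L^2$-norm (since $(\mu_t)_{t>0}$ is absolutely continuous by Proposition~\ref{pr:tvf}), whence $|\grad H_\varphi|(\mu_t)<\infty$ at a.e.\ $t$. Under the standing curvature hypotheses together with $|\nabla\Psi|\in L^2(M,\mu_t)$, the implication (I)$\Rightarrow$(II) of Proposition~\ref{pr:gf+} then yields $\rho_t\in H^1_{\loc}(M)$ together with the pointwise identity $\nabla\rho_t/\varphi(\rho_t)+\nabla\Psi=-\dot\mu_t$ $\mu_t$-a.e. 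Substituting this identity into the continuity equation \eqref{eq:ceq} associated with $\dot\mu_t$ yields the weak form of \eqref{eq:pme}, and local square integrability of $\nabla\rho_t/\varphi(\rho_t)$ follows from the triangle inequality $\|\nabla\rho_t/\varphi(\rho_t)\|_{\mu_t}\le\|\dot\mu_t\|_{\mu_t}+\|\nabla\Psi\|_{\mu_t}$ combined with the fact that $\|\dot\mu_t\|_{\mu_t}=|\dot\mu_t|$ is locally in $L^2(dt)$.

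For the reverse implication, I would start from a weak solution $\rho_t$ of \eqref{eq:pme} with the stated integrability and introduce the Borel vector field $\Phi_t:=-(\nabla\rho_t/\varphi(\rho_t)+\nabla\Psi)$, which satisfies $\int_{t_0}^{t_1}\|\Phi_t\|_{\mu_t}^2\,dt<\infty$. The weak form of \eqref{eq:pme} rewrites as the continuity equation \eqref{eq:ceq} for $\Phi_t$, so the converse half of Proposition~\ref{pr:tvf} delivers absolute continuity of $(\mu_t)_{t>0}$ and a tangent vector field $\dot\mu_t\in T_{\mu_t}\cP^2$ itself satisfying \eqref{eq:ceq}. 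Subtracting the two identities gives $\int\!\int\langle\dot\mu_t-\Phi_t,\nabla w\rangle\,d\mu_t\,dt=0$ for all $w\in C_c^\infty(I\times M)$, so $\dot\mu_t$ coincides with the $T_{\mu_t}\cP^2$-orthogonal projection of $\Phi_t$ at a.e.\ $t$. To then identify $\dot\mu_t$ with $\nabla_W[-H_\varphi](\mu_t)$ I would invoke Proposition~\ref{pr:dHm}: it computes the directional derivatives of $H_\varphi$ along geodesics generated by any $\phi\in C_c^\infty(M)$ as $\int\langle\nabla\rho_t/\varphi(\rho_t)+\nabla\Psi,\nabla\phi\rangle\,d\mu_t=-\int\langle\dot\mu_t,\nabla\phi\rangle\,d\mu_t$, which is precisely the defining equality in Definition~\ref{df:gf+} for the differentiability of $-H_\varphi$ at $\mu_t$ with gradient $\dot\mu_t\in T_{\mu_t}\cP^2$.

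The principal technical obstacle is the tension between the ambient space $L^2(M,\mu_t)$, where $\Phi_t$ naturally lives, and its subspace $T_{\mu_t}\cP^2$ of Wasserstein tangent vectors: a priori $\Phi_t$ need not belong to $T_{\mu_t}\cP^2$, so the identification $\dot\mu_t=\nabla_W[-H_\varphi](\mu_t)$ can only be made up to orthogonal projection. That this suffices is exactly the content of Definition~\ref{df:gf+}, which tests $\nabla_W[-H_\varphi]$ only against gradients $\nabla\phi$ of $C_c^\infty$-test functions. A secondary delicate point is the applicability of Proposition~\ref{pr:dHm} in the regime $\theta_\varphi<1$, where compact support of $\mu_0,\mu_1$ is originally needed; the standing hypothesis $K>0$ imposed for noncompact $M$ when $\theta_\varphi<1$, combined with Lemma~\ref{lm:K>0}(i), forces $M_\varphi^\Psi$ to be bounded and thus renders Proposition~\ref{pr:dHm} (or its variant in Remark~\ref{rm:dHm}) applicable throughout the argument.
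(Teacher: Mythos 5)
Your forward direction is correct and follows the paper's line of reasoning. The reverse direction deviates from the paper and contains a genuine gap. Having observed that $\Phi_t:=-(\nabla\rho_t/\varphi(\rho_t)+\nabla\Psi)$ and $\dot\mu_t$ both solve the continuity equation, you rightly conclude that $\dot\mu_t$ is the $T_{\mu_t}\cP^2$-orthogonal projection of $\Phi_t$, and you then try to verify Definition~\ref{df:gf+} directly with $\Phi=\dot\mu_t$. But that definition has two clauses. You only verify the equality clause for $\phi\in C_c^\infty(M)$, using Proposition~\ref{pr:dHm} and the projection identity $\int\langle\Phi_t,\nabla\phi\rangle\,d\mu_t=\int\langle\dot\mu_t,\nabla\phi\rangle\,d\mu_t$. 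The inequality clause must hold along \emph{every} minimal geodesic emanating from $\mu_t$, and by Theorem~\ref{th:FG} these are generated by merely locally semi-convex (not compactly supported smooth) potentials $\phi$. Proposition~\ref{pr:dHm} gives this inequality with $\Phi_t$ in place of $\dot\mu_t$; transferring it to $\dot\mu_t$ requires $\int\langle\Phi_t-\dot\mu_t,\nabla\phi\rangle\,d\mu_t\le 0$ for such $\phi$, which does not follow from $\Phi_t-\dot\mu_t\perp T_{\mu_t}\cP^2$ unless one separately knows $\nabla\phi\in T_{\mu_t}\cP^2$ --- a nontrivial fact in the Ambrosio--Gigli--Savar\'e framework that you neither cite nor prove. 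Consequently your closing claim that Definition~\ref{df:gf+} ``tests $\nabla_W[-H_\varphi]$ only against gradients $\nabla\phi$ of $C_c^\infty$-test functions'' misreads the definition: only the equality clause is restricted to $C_c^\infty$.

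The paper bypasses this difficulty entirely. Its reverse direction first uses Proposition~\ref{pr:dHm} and the Cauchy--Schwarz inequality to obtain $|\grad H_\varphi|(\mu_t)\le\|\Phi_t\|_{\mu_t}<\infty$ at a.e.\ $t$, then applies (I)$\Rightarrow$(II) of Proposition~\ref{pr:gf+} to conclude that $\Phi_t$ \emph{itself} belongs to $T_{\mu_t}\cP^2$ and equals $\nabla_W[-H_\varphi](\mu_t)$, and finally invokes the uniqueness statement in Proposition~\ref{pr:tvf} to get $\dot\mu_t=\Phi_t$. This makes the projection step superfluous (the projection of an element of $T_{\mu_t}\cP^2$ onto $T_{\mu_t}\cP^2$ is itself) and renders the problematic inequality-clause verification unnecessary. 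You should replace the last step of your reverse direction by this application of Proposition~\ref{pr:gf+}.
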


\begin{proof}
Suppose $\dot{\mu}_t=\nabla_W[-H_{\varphi}](\mu_t)$ a.e.\ $t$.
Since $|\grad H_{\varphi}|(\mu_t) \le \|\nabla_W[-H_{\varphi}](\mu_t)\|_{\mu_t}<\infty$ by definition,
Proposition~\ref{pr:gf+} yields
\[ \dot{\mu}_t =-\bigg( \frac{\nabla\rho_t}{\varphi(\rho_t)}+\nabla\Psi \bigg) \in T_{\mu_t}\cP^2
 \qquad \text{a.e.}\ t. \]
Then it follows from the continuity equation $(\ref{eq:ceq})$ that
\[ \int_0^{\infty} \int_M \frac{\del w_t}{\del t} \,d\mu_t dt
 =\int_0^{\infty} \int_M \bigg\langle \frac{\nabla\rho_t}{\varphi(\rho_t)}+\nabla\Psi,\nabla w_t \bigg\rangle \,d\mu_t dt \]
for all $w \in C_c^{\infty}((0,\infty) \times M)$.
Therefore $\rho_t$ weakly solves $(\ref{eq:pme})$.

Conversely, if $\rho_t$ is a weak solution to $(\ref{eq:pme})$
with $\int_{t_0}^{t_1} |\nabla\rho_t/\varphi(\rho_t)|^2 \,d\mu_t dt<\infty$,
then the same calculation implies that
\[ \Phi_t=-\bigg( \frac{\nabla\rho_t}{\varphi(\rho_t)}+\nabla\Psi \bigg) \]
satisfies the continuity equation $(\ref{eq:ceq})$, and hence $(\mu_t)_{t \in (0,\infty)}$
is absolutely continuous by Proposition~\ref{pr:tvf}.
As Proposition~\ref{pr:dHm} guarantees
$|\grad H_{\varphi}|(\mu_t) \le \|\Phi_t\|_{\mu_t} <\infty$ a.e.\ $t$ (by \eqref{eq:delH}),
Proposition~\ref{pr:gf+} shows $\Phi_t=\nabla_W[-H_{\varphi}](\mu_t) \in T_{\mu_t}\cP^2$
and then the uniqueness of a solution to the continuity equation (Proposition~\ref{pr:tvf})
yields $\dot{\mu}_t=\Phi_t=\nabla_W[-H_{\varphi}](\mu_t)$ a.e.\ $t$.
$\qedd$
\end{proof}

\subsection{Remarks on construction and contraction}

We can construct the gradient flow of $H_{\varphi}$ along the line of \cite[Section~5]{Er},
provided that $M$ is compact.
Precisely, we need the compactness for applying Lemma~\ref{lm:lsc},
Claim~\ref{cl:gf}(i) and Lemma~\ref{lm:gf}.
As for the contractivity (see \eqref{eq:cont}), the usual technique starts from the first variation formula
for the distance $W_2(\mu_t^1,\mu_t^2)$ between two gradient curves
(see, e.g., \cite[Proposition~4.4]{Er}).
To follow this line, however, we need (at least) the $C^1$-regularity of the density functions $\rho_t^i$.
The authors do not know if such a regularity can be expected
for our (nonlinear, scale-variant) $\varphi$-heat equation \eqref{eq:pme}.

Another recipe (for construction as well as contraction)
would be to apply the general theory of Savar\'e~\cite{Sa}.
Under $\Hess H_{\varphi} \ge K$ and the additional \emph{semi-concavity condition}
of the squared distance function (which is always true for compact Riemannian manifolds),
one can construct a unique gradient flow of $H_{\varphi}$ enjoying the $K$-contractivity \eqref{eq:cont}.
However, we should take care about the point that his (metric) definition of gradient flows
is different from the one discussed in Theorem~\ref{th:gf+}.
Thus, in particular, the existence of a gradient flow in our sense
does not follow from Savar\'e's result.

We also mention another interesting contribution due to Gigli~\cite{Gi1},
he showed the unique existence of the gradient flow of the relative entropy
in a quite general situation without relying on the contractivity.
As mentioned at the end of \cite{Gi1}, his technique uses
some special properties of the generating function $u_{\varphi_1}(s)=s\log s-s$
and is not applicable to all $\varphi$'s in our consideration
(e.g., $\varphi_m$ for $m<1$ is excluded).

\section{Finsler case}\label{sc:Fins}

Most results in this article are extended to Finsler manifolds
according to the theory of Ricci curvature developed in \cite{Oint}, \cite{OS}
(see also a survey~\cite{Osur}).
A Finsler manifold is a differentiable manifold equipped with
a (Minkowski) norm on each tangent space.
Restricting these norms to those coming from inner products,
we have the family of  Riemannian manifolds as a subclass.
We refer to \cite{BCS} and \cite{Sh} for the basics of Finsler geometry.

\subsection{Finsler manifolds}

Let $M$ be a connected, $n$-dimensional
$C^{\infty}$-manifold without boundary.
Given a local coordinate $(x^i)_{i=1}^n$ on an open set $U \subset M$,
we will always use the coordinate $(x^i,\bv^j)_{i,j=1}^n$ of $TU$ such that
\[ \bv=\sum_{j=1}^n \bv^j \frac{\del}{\del x^j}\Big|_x \in T_xM, \qquad x \in U. \]

\begin{definition}[Finsler structures]\label{df:Fstr}
We say that a nonnegative function $F:TM \lra [0,\infty)$ is
a \emph{$C^{\infty}$-Finsler structure} of $M$ if the following three conditions hold:
\begin{enumerate}[(1)]
\item(Regularity)
$F$ is $C^{\infty}$ on $TM \setminus 0$,
where $0 \subset TM$ stands for the zero section.

\item(\emph{Positive $1$-homogeneity})
It holds $F(c\bv)=cF(\bv)$ for all $\bv \in TM$ and $c>0$.

\item(\emph{Strong convexity})
The $n \times n$ symmetric matrix
\begin{equation}\label{eq:gij}
\big( g_{ij}(\bv) \big)_{i,j=1}^n :=
 \bigg( \frac{1}{2}\frac{\del^2 (F^2)}{\del \bv^i \del \bv^j}(\bv) \bigg)_{i,j=1}^n
\end{equation}
is positive-definite for all $\bv \in T_xM \setminus 0$.
\end{enumerate}
We call such a pair $(M,F)$ a \emph{$C^{\infty}$-Finsler manifold}.
\end{definition}

In other words, $F$ provides a $C^{\infty}$-Minkowski norm (see Example~\ref{ex:Fins}(a) below)
on each tangent space $T_xM$ which varies smoothly also in the horizontal direction.
For $x,y \in M$, we define the distance from $x$ to $y$ in a natural way by
$d_F(x,y):=\inf_{\gamma} \int_0^1 F\big( \dot{\gamma}(t) \big) \,dt$,
where the infimum is taken over all $C^1$-curves $\gamma:[0,1] \lra M$
such that $\gamma(0)=x$ and $\gamma(1)=y$.
Note that $d_F$ is not necessarily symmetric,
namely $d_F(y,x) \neq d_F(x,y)$ can happen, since $F$ is only positively homogeneous.
A $C^{\infty}$-curve $\gamma$ on $M$ is called a \emph{geodesic}
if it is locally distance minimizing and has a constant speed
(i.e., $F(\dot{\gamma})$ is constant).
We remark that $t \longmapsto \gamma(1-t)$ may not be a geodesic.
Given $\bv \in T_xM$, if there is a geodesic $\gamma:[0,1] \lra M$
with $\dot{\gamma}(0)=\bv$, then we define the \emph{exponential map}
by $\exp_x(\bv):=\gamma(1)$.
We say that $(M,F)$ is \emph{forward complete} if the exponential
map is defined on whole $TM$.
Then the Hopf--Rinow theorem ensures that any pair of points
is connected by a minimal geodesic (cf.\ \cite[Theorem~6.6.1]{BCS}).

We define the $K$-convexity of a function $\Psi:M \lra \R$ in the weak sense
similarly to the case of symmetric distances (Definition~\ref{df:K-conv}), i.e., for any $x,y \in M$
there is a minimal geodesic $\gamma:[0,1] \lra M$ from $x$ to $y$ such that
\[ \Psi\big( \gamma(t) \big) \le (1-t)\Psi(x)+t\Psi(y) -\frac{K}{2}(1-t)t d_F(x,y)^2  \]
for all $t \in [0,1]$.

For each $\bv \in T_xM \setminus 0$, the positive-definite matrix
$(g_{ij}(\bv))_{i,j=1}^n$ in \eqref{eq:gij} induces
the Riemannian structure $g_{\bv}$ of $T_xM$ via
\begin{equation}\label{eq:gv}
g_{\bv}\bigg( \sum_{i=1}^n a_i \frac{\del}{\del x^i}\Big|_x,
 \sum_{j=1}^n b_j \frac{\del}{\del x^j}\Big|_x \bigg)
 := \sum_{i,j=1}^n g_{ij}(\bv) a_i b_j.
\end{equation}
This is regarded as the best Riemannian approximation of $F|_{T_xM}$
in the direction $\bv$.
In fact, the unit sphere of $g_{\bv}$ is tangent to that of $F|_{T_xM}$
at $\bv/F(\bv)$ up to the second order.
In particular, we have $g_{\bv}(\bv,\bv)=F(\bv)^2$.

Let us denote by $\cL^*:T^*M \lra TM$ the \emph{Legendre transform}.
Precisely, $\cL^*$ is sending $\alpha \in T_x^*M$ to the unique element $\bv \in T_xM$
such that $\alpha(\bv)=F^*(\alpha)^2$ and $F(\bv)=F^*(\alpha)$,
where $F^*$ stands for the dual norm of $F$.
Note that $\cL^*|_{T^*_xM}$ is a linear operator only when $F|_{T_xM}$
comes from an inner product.
For a differentiable function $\rho:M \lra \R$, the \emph{gradient vector}
of $\rho$ at $x$ is defined as the Legendre transform of the derivative of $\rho$,
\[ \nabla \rho(x):=\cL^*\big( D\rho(x) \big) \in T_xM. \]
If $D\rho(x)=0$, then clearly $\nabla \rho(x)=0$.
If $D\rho(x) \neq 0$, then we can write in coordinates
\[ \nabla \rho
 =\sum_{i,j=1}^n g^{ij}(\nabla \rho) \frac{\del \rho}{\del x^j} \frac{\del}{\del x^i}, \]
where $(g^{ij})$ stands for the inverse matrix of $(g_{ij})$.
We must be careful when $D\rho(x)=0$,
because $g_{ij}(\nabla \rho(x))$ is not defined as well as
the Legendre transform $\cL^*$ being only continuous at the zero section.
We also remark that the gradient $\nabla$ is a nonlinear operator
(i.e., $\nabla(\rho_1+\rho_2)(x) \neq \nabla \rho_1(x)+\nabla \rho_2(x)$
and $\nabla(-\rho)(x) \neq -\nabla \rho(x)$ in general),
since the Legendre transform is nonlinear unless $F$ happens to be Riemannian.

We mention some of basic examples of non-Riemannian Finsler manifolds.

\begin{example}\label{ex:Fins}
(a) (Minkowski spaces)
A \emph{Minkowski norm} $|\cdot|$ on $\R^n$ is a nonnegative function on $\R^n$
satisfying the conditions in Definition~\ref{df:Fstr}.
Note that the unit ball of $|\cdot|$ is a strictly convex
(but not necessarily symmetric to the origin) domain containing the origin in its interior.
A Minkowski norm induces a Finsler structure
in a natural way through the identification between $T_x\R^n$ and $\R^n$.
Then $(\R^n,|\cdot|)$ has the flat flag curvature
(the \emph{flag curvature} is a generalization of the sectional curvature).

(b) (Randers spaces)
A \emph{Randers space} $(M,F)$ is a special kind of Finsler manifold given by
$F(\bv)=\sqrt{g(\bv,\bv)} +\beta(\bv)$
for some Riemannian metric $g$ and a one-form $\beta$,
where we suppose $|\beta(\bv)|^2<g(\bv,\bv)$ unless $\bv=0$,
for $F$ being positive on $TM \setminus 0$.
Randers spaces are important in applications and reasonable for concrete calculations.
Sometimes $\beta$ is regarded as the effect of wind blowing on the Riemannian manifold $(M,g)$.

(c) (Hilbert geometry)
Let $D \subset \R^n$ be a bounded open set with smooth boundary
such that its closure $\overline{D}$ is strictly convex.
Then the associated \emph{Hilbert distance function} is defined by
\[ d_H(x_1,x_2):=\log \bigg(
 \frac{|x_1-x'_2| \cdot |x_2-x'_1|}{|x_1-x'_1| \cdot |x_2-x'_2|} \bigg) \]
for distinct $x_1,x_2 \in D$, where $|\cdot|$ is the standard Euclidean norm
and $x'_1,x'_2$ are intersections of $\del D$ and the line passing through
$x_1,x_2$ such that $x'_i$ is on the side of $x_i$.
Hilbert geometry is known to be realized by a Finsler structure
with constant negative flag curvature, and gives the Klein model of hyperbolic space
if $D$ is an ellipsoid.

(d) (Teichm\"uller space)
Teichm\"uller metric on Teichm\"uller space is arguably one of the most famous
Finsler structures in differential geometry.
It is known to be complete, while, e.g., the Weil--Petersson metric is incomplete
and Riemannian.
\end{example}

\subsection{Weighted Ricci curvature and nonlinear Laplacian}

Different from the Riemannian situation, one can not choose a unique canonical measure
on a Finsler manifold.
There are several constructive measures, such as the \emph{Busemann--Hausdorff measure} and
the \emph{Holmes--Thompson measure}, which are canonical in their own ways (see, e.g., \cite{AT}).
Thus we will fix an arbitrary positive $C^{\infty}$-measure $\omega$ on $M$
as our base measure, like the theory of weighted Riemannian manifolds.

The \emph{Ricci curvature} (as the trace of the flag curvature) on a Finsler manifold
is defined by using the Chern connection (there are other connections
but the flag and Ricci curvatures are in fact independent of the choice of connection).
Instead of giving a precise definition in coordinates,
here we explain a useful interpretation due to Shen \cite[\S 6.2]{Sh}.
Given a unit vector $\bv \in T_xM \cap F^{-1}(1)$, we extend it to a $C^{\infty}$-vector field $V$
on a neighborhood of $x$ in such a way that every integral curve of $V$ is geodesic,
and consider the Riemannian structure $g_V$ induced from \eqref{eq:gv}.
Then the Ricci curvature $\Ric(\bv)$ of $\bv$ with respect to $F$ coincides with
the Ricci curvature of $\bv$ with respect to $g_V$
(in particular, it is independent of the choice of $V$).

Inspired by the above interpretation of the Ricci curvature as well as the theory of
weighted Riemannian manifolds, the \emph{weighted Ricci curvature} for $(M,F,\omega)$
was introduced in \cite{Oint} as follows.

\begin{definition}[Weighted Ricci curvature]\label{df:wRic}
Given a unit vector $\bv \in T_xM$, let $\gamma:(-\ve,\ve) \lra M$ be the geodesic
such that $\dot{\gamma}(0)=\bv$.
We decompose $\omega$ as $\omega=e^{-f}\vol_{\dot{\gamma}}$ along $\gamma$,
where $\vol_{\dot{\gamma}}$ is the volume form of $g_{\dot{\gamma}}$.
Define
\begin{enumerate}[(1)]
\item $\Ric_n(\bv):=\displaystyle \left\{
 \begin{array}{ll} \Ric(\bv)+(f \circ \gamma)''(0) & {\rm if}\ (f \circ \gamma)'(0)=0, \\
 -\infty & {\rm otherwise}, \end{array} \right.$

\item $\Ric_N(\bv):=\Ric(\bv) +(f \circ \gamma)''(0) -\displaystyle\frac{(f \circ \gamma)'(0)^2}{N-n}\ $
for $N \in (-\infty,0) \cup (n,\infty)$,

\item $\Ric_{\infty}(\bv):=\Ric(\bv) +(f \circ \gamma)''(0)$.
\end{enumerate}
For $c \ge 0$, we set $\Ric_N(c\bv):=c^2 \Ric_N(\bv)$.
\end{definition}

It is established in \cite[Theorem~1.2]{Oint} that, for $K \in \R$ and $N \in [n,\infty]$,
the bound $\Ric_N(\bv) \ge KF(\bv)^2$ is equivalent to the curvature-dimension condition $\CD(K,N)$
(note that $(M,d_F)$ is non-branching and thus Sturm's and Lott--Villani's conditions are equivalent).
This extends the corresponding result on weighted Riemannian manifolds
(Theorems~\ref{th:CD}, \ref{th:LVCD}).
There are further applications of $\Ric_N$ beyond the curvature-dimension condition,
e.g., a Bochner-type formula and gradient estimates (\cite{OS3}).

\begin{remark}\label{rm:Ric}
For a Riemannian manifold $(M,g,\vol_g)$ endowed with the Riemannian volume measure,
clearly we have $f \equiv 0$ and hence $\Ric_N =\Ric$ for all $N$.
It is also known that, for Finsler manifolds of \emph{Berwald type},
the Busemann--Hausdorff measure satisfies $(f \circ \gamma)' \equiv 0$
(in other words, Shen's \emph{$\mathbf{S}$-curvature} vanishes, see \cite[\S 7.3]{Sh}).
In general, however, there may not exist any measure $\omega$ of vanishing
$\mathbf{S}$-curvature, see \cite{ORand} for such an example.
This means that, on a general Finsler manifold, there is no measure
as good as the Riemannian volume measure.
This is a reason why we began with an arbitrary measure $\omega$.
\end{remark}

Define the \emph{divergence} of a differentiable vector field $V$ on $M$
with respect to the base measure $\omega$ by
\[ \div_{\omega} V:=\sum_{i=1}^n
 \bigg( \frac{\del V_i}{\del x^i} +V_i \frac{\del \eta}{\del x^i} \bigg), \]
where we decompose $\omega$ in coordinates as $d\omega=e^{\eta} \,dx^1 dx^2 \cdots dx^n$.
Similarly to the Riemannian case,
this can be rewritten (and extended to weakly differentiable vector fields) in the weak form as
\[ \int_M w \div_{\omega} V \,d\omega =-\int_M Dw(V) \,d\omega \]
for all $w \in C_c^{\infty}(M)$.
Then we define the corresponding \emph{Laplacian} of $\rho \in H^1_{\loc}(M)$ by
$\Delta^{\omega} \rho:=\div_{\omega}(\nabla \rho)$ in the distributional sense that
\[ \int_M w\Delta^{\omega} \rho \,d\omega:=-\int_M Dw(\nabla \rho) \,d\omega \]
for $w \in C_c^{\infty}(M)$.
We remark that $H^1_{\loc}(M)$ is defined solely in terms of the differentiable structure of $M$.
It is established in \cite{OS} and \cite{OS3} that this nonlinear Laplacian
works quite well with the weighted Ricci curvature.

For later convenience, we introduce the following notations.

\begin{definition}[Reverse Finsler structure]\label{df:rev}
Define the \emph{reverse Finsler structure} $\overleftarrow{F}$ of $F$ by
$\overleftarrow{F}(\bv):=F(-\bv)$.
We will put arrows $\leftarrow$ on those quantities associated with $\overleftarrow{F}$,
for example, $\overleftarrow{d}\!_F(x,y)=d_F(y,x)$, $\overleftarrow{\nabla}\rho=-\nabla(-\rho)$
and $\overleftarrow{\Ric}_N(\bv)=\Ric_N(-\bv)$.
\end{definition}

\subsection{Displacement convexity of $H_{\varphi}$ and applications}

From now on, we consider only compact Finsler manifolds for simplicity.
We remark that all compact Finsler manifolds are forward complete.

Let us consider an admissible space $(M,\omega,\varphi,\Psi)$ in the sense of Definition~\ref{df:adm}
similarly to the Riemannian case.
Then the analogue of Theorem~\ref{th:mCD} is demonstrated along the same line as the Riemannian case
(see \cite{Oint} for details).

We can show the functional inequalities in Theorem~\ref{thm:inequ} also in the same way
by using the directional derivative of $H_{\varphi}$ (see \eqref{eq:dHm}) modified into
\[ \liminf_{t \downarrow 0} \frac{H_{\varphi}(\mu_t)-H_{\varphi}(\mu)}{t}
 \ge \int_M \left( \frac{D\rho}{\varphi(\rho)}+D\Psi \right) (\nabla\phi) \,d\mu. \]
Precisely, the $\varphi$-relative Fisher information of $\mu=\rho\omega \in \cP_{\ac}(M,\omega)$
is defined by
\[ I_{\varphi}(\mu):=\int_M F(\nabla[\ln_{\varphi}(\rho)-\ln_{\varphi}(\sigma)])^2 \,d\mu
 =\int_M F^*\left( \frac{D\rho}{\varphi(\rho)}+D\Psi \right)^2 \,d\mu, \]
and the $\varphi$-global Poincar\'e inequality means
\[ \int_{M^{\Psi}_{\varphi}} \frac{w^2\sigma}{ \varphi(\sigma)} \,d\nu
 \le \frac{1}{K}\int_{M^{\Psi}_{\varphi}} F\bigg( \nabla \bigg( \frac{w\sigma}{\varphi(\sigma)} \bigg) \bigg)^2 \,d\nu. \]
We also remark that $W_2(\mu,\nu)$ in (i) and (ii) of Theorem~\ref{thm:inequ} can be replaced with $W_2(\nu,\mu)$
since the curvature bound $\Ric_N \ge K$ for $F$ is equivalent to that
for its reverse $\overleftarrow{F}$.
The above $\varphi$-Talagrand inequality shows the concentration of measures as in Section~\ref{sc:conc},
where the open ball $B(A,r)$ in the definition of the concentration function
$\alpha(r)$ is replaced with
\[ B^+(A,r):=\Big\{ y \in M \,\Big|\, \inf_{x \in A} d_F(x,y)<r \Big\}\ \, \text{or}\ \,
 B^-(A,r):=\Big\{ y \in M \,\Big|\, \inf_{x \in A} d_F(y,x)<r \Big\}. \]

\subsection{Gradient flow of $H_{\varphi}$}

As for the gradient flow of $H_{\varphi}$, due to the lack of the analogue of
Theorem~\ref{th:angle}, the argument in Section~\ref{sc:gf} is unavailable.
Nonetheless, one can apply the discussion in Section~\ref{sc:gf+} using a (formal)
Finsler structure of the Wasserstein space, and obtain a result corresponding
to Theorem~\ref{th:gf+}.
We remark that, however, the $K$-contraction property \eqref{eq:cont}
essentially depends on the Riemannian structure and can not be expected
in the Finsler setting (see \cite{OS2} for details).

Let $(M,F)$ be compact again.
We introduce a Finsler structure of $(\cP(M),W_2)$ similarly to Section~\ref{sc:gf+}.
Given $\mu \in \cP(M)$, define the {\it tangent space} $(T_{\mu}\cP, F_{\mu})$ at $\mu$ by
\begin{align*}
F_{\mu}(\nabla\phi) :=\bigg( \int_M F(\nabla\phi)^2 \,d\mu \bigg)^{1/2}
 \ {\rm for}\ \phi \in C^{\infty}(M), \quad
T_{\mu}\cP :=\overline{\{ \nabla\phi \,|\, \phi \in C^{\infty}(M) \}},
\end{align*}
where the closure was taken with respect to the (Minkowski) norm $F_{\mu}$.
Then we can follow the line of Section~\ref{sc:gf+} up to some computational differences.
We denote by $\cL:=(\cL^*)^{-1}:TM \lra T^*M$ the Legendre transform
in the reverse direction.

\begin{definition}[Gradient vectors]\label{df:Fgf}
Given a functional $H:\cP(M) \lra (-\infty,\infty]$ and $\mu \in \cP(M)$
with $H(\mu)<\infty$, we say that $H$ is {\it differentiable} at $\mu$
if there is $\Phi \in T_{\mu}\cP$ such that
\[ \limsup_{t \downarrow 0}\frac{H(\mu_t)-H(\mu)}{t}
 \le \int_M \cL(\Phi)(\nabla\phi) \,d\mu \]
along all minimal geodesics $(\mu_t)_{t \in [0,1]} \subset \cP(M)$ with $\mu_0=\mu$,
where $\mu_t=(\cT_t)_{\sharp}\mu$ and $\cT_t(x):=\exp_x(t\nabla\phi(x))$,
and if equality holds for $\phi \in C^{\infty}(M)$
(with $\lim_{t \downarrow 0}$ in place of $\limsup_{t \downarrow 0}$).
Such $\Phi$ is unique if it exists, and then we write $\nabla_W H(\mu)=\Phi$.
\end{definition}

\begin{proposition}\label{pr:Fgf}
Let $(M,\omega,\varphi,\Psi)$ be compact, admissible and satisfy
$\Ric_{N_{\varphi}} \ge 0$ and $\Hess\Psi \ge K$ on $M^{\Psi}_{\varphi}$
for some $K \in \R$, and fix $\mu=\rho\omega \in \cP_{\ac}(M,\omega)$
with $\mu[M^{\Psi}_{\varphi}]=1$ and $H_{\varphi}(\mu)<\infty$.
Then the following are equivalent$:$
\begin{enumerate}[{\rm (I)}]
\item $|\grad H_{\varphi}|(\mu) <\infty$,
\item $\rho \in H^1(M)$ and
\[ \Phi= \cL^* \bigg( -\frac{D\rho}{\varphi(\rho)} -D\Psi \bigg) \qquad \mu\text{-a.e.} \]
for some $\Phi \in T_{\mu}\cP$.
\end{enumerate}
Moreover, then we have $\Phi=\nabla_W[-H_{\varphi}](\mu)$ and
$F_{\mu}(\Phi)=|\grad H_{\varphi}|(\mu)$.
\end{proposition}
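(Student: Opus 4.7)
The plan is to mirror the proof of Proposition~\ref{pr:gf+} step by step, replacing the Riemannian inner product with the canonical Legendre pairing between $T^*M$ and $TM$ and, crucially, replacing the Hilbert Riesz representation by a direct construction of the representing element via the (nonlinear) inverse Legendre transform $\cL^*$. A Finsler version of Proposition~\ref{pr:dHm} is available from \cite{Oint} and provides the first-variation formula that drives both implications.

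For (I) $\Rightarrow$ (II), I first test with $\mu_t := (\cT_t)_\sharp \mu$ for $\cT_t(x) := \exp_x(tV(x))$ and any smooth vector field $V$ of compact support. Following the Finsler analogue of Proposition~\ref{pr:dHm} and the computation in (I) $\Rightarrow$ (II) of Proposition~\ref{pr:gf+}, this yields
\[
 \biggl| \int_M \{ h'_\varphi(\rho)\rho - h_\varphi(\rho) \} \div_\omega V \,d\omega - \int_M D\Psi(V) \,d\mu \biggr| \le |\grad H_\varphi|(\mu)\cdot F_\mu(V).
\]
Since $s \mapsto h'_\varphi(s)s-h_\varphi(s)$ is smooth and strictly increasing, this shows $\rho \in H^1(M)$ with $D[h'_\varphi(\rho)\rho-h_\varphi(\rho)] = (\rho/\varphi(\rho))D\rho$, so that the one-form $D\rho/\varphi(\rho)+D\Psi$ pairs with $\nabla\phi$ to define a bounded linear functional of norm $\le |\grad H_\varphi|(\mu)$ on the Banach space $(T_\mu\cP,F_\mu)$ of smooth gradients. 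I then set $\Phi := \cL^*(-D\rho/\varphi(\rho)-D\Psi)$ $\mu$-a.e.\ and certify $\Phi \in T_\mu\cP$ by mollifying $-\ln_\varphi(\rho)-\Psi$ inside $M$ to produce $C^\infty$-functions $\phi_k$ with $D\phi_k \to -D\rho/\varphi(\rho)-D\Psi$ in the $F^*$-sense, and then using continuity of $\cL^*$ to pass to $\nabla\phi_k = \cL^*(D\phi_k) \to \Phi$ in $F_\mu$.

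For (II) $\Rightarrow$ (I), I apply the Finsler first-variation formula along $\mu_t := (\cT_t)_\sharp \mu$ with $\cT_t(x) := \exp_x(t\nabla\phi(x))$ and $\phi \in C^\infty(M)$ to obtain
\[
 \limsup_{t\downarrow 0}\frac{H_\varphi(\mu)-H_\varphi(\mu_t)}{t} \le \int_M \Bigl(\tfrac{D\rho}{\varphi(\rho)}+D\Psi\Bigr)(\nabla\phi)\,d\mu = \int_M \cL(\Phi)(\nabla\phi)\,d\mu \le F_\mu(\Phi)\cdot F_\mu(\nabla\phi),
\]
using the pointwise duality inequality $\alpha(\bv) \le F^*(\alpha)F(\bv)$. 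This gives $|\grad H_\varphi|(\mu) \le F_\mu(\Phi)<\infty$, and testing against the approximating gradients that align with $\Phi$ (constructed as in the previous paragraph) produces the reverse inequality and identifies $\Phi = \nabla_W[-H_\varphi](\mu)$ in the sense of Definition~\ref{df:Fgf}, together with $F_\mu(\Phi)=|\grad H_\varphi|(\mu)$.

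The main obstacle is the representation step in (I) $\Rightarrow$ (II). Because $(T_\mu\cP,F_\mu)$ is merely a Banach (Minkowski) space, the Hilbert Riesz argument used in Proposition~\ref{pr:gf+} is unavailable, and the representing element $\Phi$ must be constructed concretely as a pointwise Legendre image, with its membership in $T_\mu\cP$ verified by density. The technical subtlety here is that $\ln_\varphi(\rho)$ may be unbounded near $\{\rho=0\}$ (when $l_\varphi=-\infty$) and that $\cL^*$ is only continuous (not smooth) at the zero section of $T^*M$, so the mollification scheme must be arranged compatibly with both the weak differentiability of $\rho$ and the degeneracy of $\cL^*$ on $\{D\rho/\varphi(\rho)+D\Psi=0\}$.
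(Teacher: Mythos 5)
Your proposal correctly identifies the one genuine structural obstruction in porting Proposition~\ref{pr:gf+} to the Finsler setting: $(T_{\mu}\cP,F_{\mu})$ is only a Banach (Minkowski) space, so the Hilbert--Riesz representation used in the Riemannian proof is unavailable, and the candidate $\Phi$ must be produced by hand as $\cL^*(-D\rho/\varphi(\rho)-D\Psi)$ and then certified to lie in $T_{\mu}\cP$ by mollification and density. This is exactly the ``computational difference'' the paper alludes to (no explicit proof of Proposition~\ref{pr:Fgf} is written out; the reader is expected to adapt the proof of Proposition~\ref{pr:gf+}), and your route matches the intended one. Indeed, even in the Riemannian case the Riesz step is a shortcut: the final identification $\nabla\rho/\varphi(\rho)+\nabla\Psi=-\Phi$ $\mu$-a.e.\ already tacitly requires that $\nabla[\ln_{\varphi}(\rho)+\Psi]$ lies in $T_{\mu}\cP^2$, which one verifies by the same mollification argument you propose, so your version is in fact the more transparent one.

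One small inconsistency to fix: in the display for (II) $\Rightarrow$ (I) you write
\[
\limsup_{t\downarrow 0}\frac{H_{\varphi}(\mu)-H_{\varphi}(\mu_t)}{t}
 \le \int_M \Bigl(\tfrac{D\rho}{\varphi(\rho)}+D\Psi\Bigr)(\nabla\phi)\,d\mu
 =\int_M \cL(\Phi)(\nabla\phi)\,d\mu ,
\]
but the Finsler first-variation formula gives $\liminf_{t\downarrow 0}\{H_{\varphi}(\mu_t)-H_{\varphi}(\mu)\}/t \ge \int_M (D\rho/\varphi(\rho)+D\Psi)(\nabla\phi)\,d\mu$, so after changing sign and using $\cL(\Phi)=-\bigl(D\rho/\varphi(\rho)+D\Psi\bigr)$ the middle member should be $-\int_M \bigl(D\rho/\varphi(\rho)+D\Psi\bigr)(\nabla\phi)\,d\mu$; as written the claimed equality with $\int_M \cL(\Phi)(\nabla\phi)\,d\mu$ is off by a sign (the final term and the conclusion $|\grad H_{\varphi}|(\mu)\le F_{\mu}(\Phi)$ are nonetheless correct). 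Also, when passing from $D\phi_k\to -D\rho/\varphi(\rho)-D\Psi$ in $L^2(\mu;F^*)$ to $\nabla\phi_k\to\Phi$ in $F_{\mu}$ you should say explicitly that, on compact $M$, $F$ and $F^*$ are uniformly comparable to a fixed background Riemannian norm and $\cL^*$ is continuous, so one extracts an a.e.\ convergent subsequence and applies dominated convergence; mere ``continuity of $\cL^*$'' does not by itself convert $L^2$-convergence of the covectors into $L^2$-convergence of their Legendre images, since $\cL^*$ is nonlinear.
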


Note that
\[ \Phi= \cL^* \bigg( -\frac{D\rho}{\varphi(\rho)} -D\Psi \bigg)
 =\cL^* \big( D[-\ln_{\varphi}(\rho)-\Psi] \big) =\nabla[-\ln_{\varphi}(\rho)-\Psi]. \]

\begin{theorem}[Gradient flow of $H_{\varphi}$]\label{th:Fgf}
Let us suppose that $(M,\omega,\varphi,\Psi)$ is compact, admissible and satisfies $\Ric_{N_{\varphi}} \ge 0$
as well as $\Hess\Psi \ge K$ on $M^{\Psi}_{\varphi}$ for some $K \in \R$,
and let $(\mu_t)_{t \in [0,\infty)} \subset \cP_{\ac}(M,\omega)$ be a continuous curve
such that $\mu_t[M^{\Psi}_{\varphi}]=1$ and $H_{\varphi}(\mu_t)<\infty$ for all $t>0$.
Then $(\mu_t)_{t \in (0,\infty)}$ is an absolutely continuous curve satisfying
\[ \dot{\mu}_t=\nabla_W[-H_{\varphi}](\mu_t) \in T_{\mu_t}\cP \]
at a.e.\ $t \in (0,\infty)$ if and only if $(\rho_t)_{t \in [0,\infty)}$ is
a weak solution to the reverse $\varphi$-heat equation of the form
\begin{equation}\label{eq:rpme}
\frac{\del \rho}{\del t} =-\div_{\omega}\big( \rho\nabla [-\ln_{\varphi}(\rho)-\Psi] \big)
\end{equation}
with $\int_{t_0}^{t_1} F(\nabla[-\ln_{\varphi}(\rho_t)])^2 \,d\mu_t dt<\infty$
for all $0<t_0<t_1<\infty$, where $\mu_t=\rho_t \omega$.
\end{theorem}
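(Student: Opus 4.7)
The plan is to follow the Riemannian argument of Theorem~\ref{th:gf+} line by line, replacing the Hilbert tangent structure of $\cP^2(M)$ with the Minkowski one from Definition~\ref{df:Fgf}, and using Proposition~\ref{pr:Fgf} as the Finsler substitute for Proposition~\ref{pr:gf+}. The bridge between the metric notion of absolutely continuous curve in $(\cP(M),W_2)$ and the analytic notion of a tangent vector field is again a continuity equation: one first needs the Finsler analogue of Proposition~\ref{pr:tvf}, asserting that any absolutely continuous curve $(\mu_t)_{t\in I}\subset\cP(M)$ admits a Borel vector field $\Phi:I\times M\to TM$ with $\Phi_t\in T_{\mu_t}\cP$ and
\[ \int_I \int_M \Big\{ \frac{\partial w_t}{\partial t}+Dw_t(\Phi_t) \Big\} \,d\mu_t \,dt =0 \qquad \forall w\in C_c^\infty(I\times M), \]
with $F_{\mu_t}(\Phi_t)=|\dot\mu_t|$ for a.e.\ $t$, and conversely any curve admitting such a $\Phi$ with $\int_{t_0}^{t_1}F_{\mu_t}(\Phi_t)^2\,dt<\infty$ is absolutely continuous. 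Since $M$ is compact, this goes through exactly as in \cite{AGS}, the only change being that the duality pairing $\langle\Phi_t,\nabla w_t\rangle_{\mu_t}$ is replaced by $\int_M Dw_t(\Phi_t)\,d\mu_t$ and Legendre transforms of differentials are handled via $\cL^*$.

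For the \emph{only if} direction, suppose $\dot\mu_t=\nabla_W[-H_\varphi](\mu_t)$ a.e.\ $t$. Then $|\grad H_\varphi|(\mu_t)\le F_{\mu_t}(\nabla_W[-H_\varphi](\mu_t))<\infty$ a.e., so Proposition~\ref{pr:Fgf} forces $\rho_t\in H^1(M)$ and
\[ \dot\mu_t=\nabla[-\ln_\varphi(\rho_t)-\Psi] \qquad \mu_t\text{-a.e.}, \]
with $F_{\mu_t}$-norm equal to $|\grad H_\varphi|(\mu_t)$. Plugging this into the Finsler continuity equation and integrating against $w\in C_c^\infty((0,\infty)\times M)$ yields
\[ \int_0^\infty\!\!\int_M \frac{\partial w_t}{\partial t}\,d\mu_t\,dt =-\int_0^\infty\!\!\int_M Dw_t\big(\nabla[-\ln_\varphi(\rho_t)-\Psi]\big)\,d\mu_t\,dt, \]
which is precisely the weak form of \eqref{eq:rpme}; the integrability $\int_{t_0}^{t_1}F(\nabla[-\ln_\varphi(\rho_t)])^2\,d\mu_t\,dt<\infty$ comes from $F_{\mu_t}(\dot\mu_t)=|\dot\mu_t|\in L^2_{\loc}$ combined with $F(\nabla\Psi)\in L^\infty$ on $\supp\nu$ (by compactness of $M$ and smoothness of $\Psi$).

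For the \emph{if} direction, given a weak solution $\rho_t$ with the stated integrability, set $\Phi_t:=\nabla[-\ln_\varphi(\rho_t)-\Psi]$. The weak form of \eqref{eq:rpme} is exactly the Finsler continuity equation for $(\mu_t,\Phi_t)$, and the integrability $\int_{t_0}^{t_1}F_{\mu_t}(\Phi_t)^2\,dt<\infty$ (which follows from the assumed bound on $F(\nabla[-\ln_\varphi(\rho_t)])$ plus $F(\nabla\Psi)\in L^\infty$) combined with the Finsler analogue of Proposition~\ref{pr:tvf} shows that $(\mu_t)_{t\in(0,\infty)}$ is absolutely continuous with metric derivative controlled by $F_{\mu_t}(\Phi_t)$. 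Since Proposition~\ref{pr:Fgf} identifies this $\Phi_t$ with $\nabla_W[-H_\varphi](\mu_t)$, and since the tangent vector field of an absolutely continuous curve is unique up to $d\mu_t\,dt$-null sets, we conclude $\dot\mu_t=\nabla_W[-H_\varphi](\mu_t)$ a.e.\ $t$.

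The main obstacle is conceptual rather than computational: unlike in the Riemannian case, $\nabla$ on $M$ is nonlinear, so $\nabla[-\ln_\varphi(\rho)-\Psi]\neq -\nabla[\ln_\varphi(\rho)+\Psi]$, and this is precisely why \eqref{eq:rpme} is the \emph{reverse} $\varphi$-heat equation rather than the direct analogue of \eqref{eq:pme}. Consequently the gradient flow of $H_\varphi$ in the Finsler setting produces solutions of an equation governed by the reverse Finsler structure $\overleftarrow F$, and one must be careful that all identifications (of $\dot\mu_t$ with $\nabla[-\ln_\varphi(\rho_t)-\Psi]$, and of differentials with their Legendre transforms) use the correct direction throughout. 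Once this is systematically respected, the proof is a direct transcription of Theorem~\ref{th:gf+}.
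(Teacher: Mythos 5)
Your proof is correct and follows essentially the same route as the paper: Proposition~\ref{pr:Fgf} identifies $\nabla_W[-H_\varphi](\mu_t)$ with $\nabla[-\ln_\varphi(\rho_t)-\Psi]$ in both directions, and the continuity equation bridges the metric derivative and the weak formulation of \eqref{eq:rpme}. The one addition you make that the paper leaves tacit is the explicit remark that one needs a Finsler version of Proposition~\ref{pr:tvf} (with the pairing $\int_M Dw_t(\Phi_t)\,d\mu_t$ replacing the Hilbert inner product) and that the $L^2_{\loc}$-integrability of $F_{\mu_t}(\Phi_t)$ follows from the stated bound plus $F(\nabla\Psi)\in L^\infty$ by compactness — both of which are reasonable housekeeping points, not genuine departures.
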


\begin{proof}
If $\dot{\mu}_t=\nabla_W[-H_{\varphi}](\mu_t)$ a.e.\ $t$, then
Proposition~\ref{pr:Fgf} yields
$\dot{\mu}_t=\nabla [-\ln_{\varphi}(\rho_t)-\Psi] \in T_{\mu_t}\cP$ a.e.\ $t$.
Thus it follows from the continuity equation $(\ref{eq:ceq})$ that
\[ \int_0^{\infty} \int_M \frac{\del w_t}{\del t} \,d\mu_t dt
 =-\int_0^{\infty} \int_M Dw_t \big( \nabla [-\ln_{\varphi}(\rho)-\Psi] \big) \,d\mu_t dt \]
for all $w \in C_c^{\infty}((0,\infty) \times M)$,
and hence $\rho_t$ weakly solves $(\ref{eq:rpme})$.
Conversely, if $\rho_t$ is a weak solution to $(\ref{eq:rpme})$,
then the same calculation implies that $\Phi_t=\nabla [-\ln_{\varphi}(\rho_t)-\Psi]$
satisfies the continuity equation $(\ref{eq:ceq})$,
and $(\mu_t)_{t \in (0,\infty)}$ is absolutely continuous.
Therefore Proposition~\ref{pr:Fgf} shows
$\dot{\mu}_t=\Phi_t=\nabla_W[-H_{\varphi}](\mu_t)$ a.e.\ $t$.
$\qedd$
\end{proof}

We meant by the \emph{reverse $\varphi$-heat equation} the equation
with respect to the reverse Finsler structure $\overleftarrow{F}(\bv)=F(-\bv)$.
Since the gradient vector for $\overleftarrow{F}$ is written as
$\overleftarrow{\nabla}\rho=-\nabla(-\rho)$, $(\ref{eq:rpme})$ is indeed rewritten as
\[ \frac{\del \rho}{\del t} =\div_{\omega}\big( \rho\overleftarrow{\nabla}
 [\ln_{\varphi}(\rho)+\Psi] \big). \]

\renewcommand{\thesection}{\Alph{section}}
\setcounter{section}{0}
\section{Appendix: Measure concentration via $u_{\varphi}$-entropy inequality}\label{sc:appen}

Let us go back to the Riemannian situation.
In Section~\ref{sc:func}, we introduced the $\varphi$-logarithmic Sobolev inequality \eqref{eq:LS}
by generalizing the relative entropy to the $\varphi$-relative entropy associated with the Bregman divergence.
Precisely, the classical logarithmic Sobolev inequality (corresponding to $\varphi_1(s)=s$) of the form
\[ \Ent_{\nu}(\mu)-\Ent_{\nu}(\nu)
 \le \frac{2}{K} \int_M \bigg| \nabla\bigg( \sqrt{\frac{\rho}{\sigma}} \bigg) \bigg|^2 \,d\nu
 =\frac{1}{2K} \int_M \bigg| \frac{\nabla\rho}{\rho}-\frac{\nabla\sigma}{\sigma} \bigg|^2 \,d\mu \]
is generalized to
\[ H_{\varphi}(\mu)-H_{\varphi}(\nu)
 \le \frac{1}{2K}\int_M \big| \nabla[\ln_{\varphi}(\rho)-\ln_{\varphi}(\sigma)] \big|^2 \,d\mu, \]
where $\mu=\rho\omega$, $\nu=\sigma\omega$ and $K$ is a positive constant.

The logarithmic Sobolev inequality has the alternative form
\[ \int_M w\ln(w) \,d\nu -\left(\int_M w  \,d\nu \right) \ln \left(\int_M w \,d\nu \right)
 \le \frac{1}{2K} \int_M \frac{|\nabla w|^2}{w} \,d\nu \]
for nonnegative measurable functions $w:M \lra [0,\infty)$.
Then the inequality
\[ \int_M u_\varphi(w) \,d\nu -u_\varphi \left(\int_M w \,d\nu \right)
 \le \frac{1}{2K} \int_M u_\varphi''(w) |\nabla w|^2 \,d\nu
 =\frac{1}{2K} \int_M \frac{|\nabla w|^2}{\varphi(w)} \,d\nu \]
obtained by replacing the function $r \longmapsto r\ln r$ (generating the relative entropy)
with $u_\varphi$ is called the \emph{$u_\varphi$-entropy inequality},
which provides a generalization of the logarithmic Sobolev inequality
different from our $\varphi$-logarithmic Sobolev inequality.
The function $\varphi$ is usually imposed to be concave,
that is equivalent to the convexity of the function
\[ (s,t)\ \longmapsto\ d_{\varphi}(s+t,t) :=u_\varphi(s+t)-u_\varphi(t)-\ln_\varphi(t)s. \]
Note that $d_\varphi$ coincides with the density function of  the  Bregman divergence $D_\varphi$.
We refer to \cite{C1} and \cite{C2} for details, where instead of $u_\varphi$ it is treated
$C^2$-strictly convex functions $\Phi$ such that $1/\Phi''$ is concave.

We demonstrated in Section~\ref{sc:conc} that the $\varphi$-Talagrand inequality leads
the $m(\varphi)$-normal concentration of measures.
In the classical case of $\varphi_1(s)=s$,
it is known that the normal concentration also follows from the logarithmic Sobolev inequality
by the Herbst argument (see, e.g., \cite[Chapter~5]{Le}).
In the same spirit, we can deduce from the $u_\varphi$-entropy inequality
the corresponding $\varphi$-normal concentration of measures.
We first recall a kind of Chebyshev's inequality for later use.

\begin{lemma}[Chebyshev's inequality]\label{lm:Cheby}
Let $w$ be a measurable function on a measure space $(X,\mu)$.
Then for any nonnegative, non-decreasing, measurable function $v$ on $\R$,
\[ \mu \left[ \left\{ x \in X \,|\, w(x) \ge t \right\} \right] \le \frac{1}{v(t)} \int_X v(w) \,d\mu \]
holds for any $t>0$ with $v(t)>0$.
\end{lemma}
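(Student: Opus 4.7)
The plan is to give the standard one-line argument based on the monotonicity of $v$. First I would restrict attention to the set $A_t := \{x \in X \mid w(x) \ge t\}$. On $A_t$, since $v$ is non-decreasing on $\R$, we have the pointwise bound $v(w(x)) \ge v(t)$ for every $x \in A_t$. Integrating this bound against $\mu$ over $A_t$ yields
\[ v(t) \mu[A_t] = \int_{A_t} v(t) \,d\mu \le \int_{A_t} v(w) \,d\mu. \]

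Next I would extend the domain of integration from $A_t$ to all of $X$, which is legitimate since $v \circ w \ge 0$ everywhere by the nonnegativity hypothesis on $v$. This produces
\[ v(t) \mu[A_t] \le \int_X v(w) \,d\mu. \]
Finally, dividing both sides by $v(t) > 0$ gives the desired inequality.

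There is no real obstacle here; the only point worth verifying is that $\{w \ge t\}$ is indeed measurable (so that the restriction of the integral is well-defined), which follows from the measurability of $w$. The nonnegativity of $v$ is used precisely to enlarge the integration domain in the last step, and the monotonicity of $v$ provides the pointwise inequality on $A_t$; both hypotheses are therefore used in an essential way.
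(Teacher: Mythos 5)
Your proof is correct and is the standard Chebyshev argument: bound $v(w)\ge v(t)$ on $\{w\ge t\}$ by monotonicity, integrate, enlarge the domain using nonnegativity of $v$, and divide by $v(t)>0$. The paper states this lemma as a recalled classical fact and gives no proof, so there is no alternative argument to compare against; your proof fills the gap in the expected way.
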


We next show an auxiliary lemma.
We will normalize $\varphi$ as $\varphi(1)=1$ for simplicity,
recall that such a normalization does not change the value of $\theta_{\varphi}$
(Remark~\ref{rm:sim}).

\begin{lemma}\label{monono}
Let $\varphi:(0,\infty) \lra (0,\infty)$ be a positive concave function with $\varphi(1)=1$.
Then we have  $\theta_\varphi \le 1$ and
\[ u_\varphi(s)+a_\varphi s \ge a_\varphi \varphi(s) \ln_\varphi(s) \]
for any $s>0$, where we set $a_\varphi:=-u_\varphi(1)>0$.
\end{lemma}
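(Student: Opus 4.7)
The plan is to deduce $\theta_\varphi \le 1$ and a few other bounds directly from the concavity of $\varphi$, then to reduce the main inequality to showing that a single auxiliary function $F$ is nonnegative on $(0,\infty)$. Its sign will be controlled by monotonicity on three subintervals of $(0,\infty)$ determined by where $\ln_\varphi$ and $1 - a_\varphi \varphi'_+$ change sign.

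First I would record the preliminary consequences of positivity and concavity. Since $\varphi$ is concave and positive on $(0,\infty)$ it is automatically non-decreasing, and the right derivative $\varphi'_+$ exists, is non-increasing, and satisfies the supporting-line inequality $\varphi(x) \le \varphi(s) + \varphi'_+(s)(x-s)$ for every $x,s>0$. Setting $x=0$ and using $\varphi(0^+) \ge 0$ yields $s\varphi'_+(s) \le \varphi(s)$, which is exactly $\theta_\varphi \le 1$; the special case $s=1$ gives in addition $\varphi'_+(1) \le 1 - \varphi(0^+) \le 1$. The chord joining $(0,\varphi(0^+))$ with $(1,1)$ lies below the graph, so $\varphi(s) \ge s$ on $(0,1]$, and integration by parts (using $\ln_\varphi(1)=0$) gives
\[
a_\varphi \;=\; -u_\varphi(1) \;=\; \int_0^1 \frac{s}{\varphi(s)}\,ds \;\in\; (0,1],
\]
with positivity coming from $\ln_\varphi < 0$ on $(0,1)$.

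Next, introduce
\[
F(s) := u_\varphi(s) + a_\varphi s - a_\varphi \varphi(s)\ln_\varphi(s), \qquad s > 0,
\]
so the target inequality becomes $F \ge 0$. Using $u_\varphi'=\ln_\varphi$ and $\ln_\varphi' = 1/\varphi$ one checks $F(1)=0$ and gets the convenient factorization
\[
F'(s) \;=\; \ln_\varphi(s)\bigl(1 - a_\varphi\varphi'_+(s)\bigr).
\]
On $[1,\infty)$ both factors are nonnegative: $\ln_\varphi(s) \ge 0$, and the monotonicity of $\varphi'_+$ together with $a_\varphi\varphi'_+(1) \le 1$ gives $1 - a_\varphi\varphi'_+(s) \ge 0$. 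Hence $F$ is non-decreasing on $[1,\infty)$ and $F \ge F(1)=0$ there.

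For $s \in (0,1]$ each factor of $F'$ can change sign, so I would set $s_* := \inf\{s>0 : a_\varphi\varphi'_+(s) \le 1\} \in [0,1]$, which is well defined by monotonicity of $\varphi'_+$. On $(s_*,1)$ one has $\ln_\varphi \le 0$ and $1 - a_\varphi\varphi'_+ \ge 0$, so $F' \le 0$ and $F \ge F(1)=0$. On $(0,s_*)$ both factors are nonpositive, so $F' \ge 0$ and $F$ is non-decreasing; hence $F(s) \ge \liminf_{s'\downarrow 0} F(s')$. Since $u_\varphi(s') \to 0$, $a_\varphi s' \to 0$, and $-a_\varphi\varphi(s')\ln_\varphi(s') \ge 0$ on $(0,1)$ (because $\varphi>0$ and $\ln_\varphi<0$ there), this liminf is nonnegative, finishing the proof. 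The subtle point I expect to require the most care is exactly this endpoint at $0$: when $\varphi(0^+)=0$ and $l_\varphi=-\infty$ the term $\varphi(s)\ln_\varphi(s)$ is a genuine $0\cdot\infty$ indeterminate form, so evaluating $F(0^+)$ as an honest limit would demand a delicate rate estimate; the monotonicity-plus-liminf step sidesteps this by using only the sign of the integrand.
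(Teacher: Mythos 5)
Your proof is correct and follows essentially the same route as the paper: establish $\theta_\varphi\le 1$ from the supporting-line inequality, introduce the auxiliary function $F=A$, factor its one-sided derivative as $\ln_\varphi(s)\bigl(1-a_\varphi\varphi'_+(s)\bigr)$, and use the monotonicity of $\varphi'_+$ to control the sign of $F'$ on the three subintervals cut out by $s=1$ and the sign change of $1-a_\varphi\varphi'_+$. Your treatment of the left endpoint is in fact cleaner and more robust than the paper's: the paper asserts $\lim_{s\downarrow0}\varphi(s)\ln_\varphi(s)=0$ via Lemma~\ref{lm:cased}, which actually fails when $\delta_\varphi=0$ (e.g.\ $\varphi\equiv1$, where the limit is $l_\varphi<0$), whereas you only observe that $u_\varphi(s')\to0$, $a_\varphi s'\to 0$ and $-a_\varphi\varphi(s')\ln_\varphi(s')\ge0$, so $\liminf_{s'\downarrow0}F(s')\ge 0$, which is all that the monotonicity argument requires.
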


\begin{proof}
It follows from the concavity of $\varphi$ that
\[ \frac{\varphi(s+t)-\varphi(s)}{t} \le \frac{\varphi(s)-\varphi(\ve)}{s-\ve} <\frac{\varphi(s)}{s-\ve} \]
for any $0<\ve<s<s+t$.
Letting $\ve \downarrow 0$ and then $t \downarrow 0$, we find
\[ \frac{s}{\varphi(s)} \cdot \limsup_{t \downarrow 0} \frac{\varphi(s+t)-\varphi(s)}{t} \le 1. \]
Since $s>0$ is arbitrary, we obtain $\theta_\varphi \le 1$.

Set  $A(s):=u_\varphi(s)+a_\varphi s- a_\varphi \varphi(s) \ln_\varphi(s)$
and observe $A(1)=0$ by the choice of $a_{\varphi}$.
Proposition~\ref{lm:cased} implies
\[ 0 \ge \lim_{s \downarrow 0}\varphi(s)\ln_{\varphi}(s)
 \ge \lim_{s \downarrow 0} s^{\delta_{\varphi}} \ell_{2-\delta_{\varphi}}(s)=0, \]
so that $\lim_{s \downarrow 0}\varphi(s) \ln_\varphi(s)=0$ and we can put $A(0):=0$.
Since the concavity of $\varphi$ ensures that the right derivative
\[ \varphi'_+(s):=\lim_{\ve \downarrow 0} \frac{ \varphi(s+\ve)-\varphi(s)}{\ve}
 \in \left[0,\frac{\varphi(s)}{s} \theta_\varphi \right] \]
is well-defined and non-increasing on $(0,\infty)$, a direct computation yields
\[ A'_+(s) :=\lim_{\ve \downarrow 0} \frac{ A(s+\ve)-A(s)}{\ve}
 =\ln_\varphi(s)\left\{ 1 -a_\varphi \varphi_+'(s) \right\}. \]
Note that \eqref{eq:ln} shows
\[ 1 -a_\varphi  \varphi'_+(1) \ge 1-a_\varphi \theta_\varphi
 =1+\theta_\varphi \int_0^1 \ln_{\varphi}(t) \,dt
 \ge 1+\theta_{\varphi} \int_0^1 \ell_{2-\theta_{\varphi}}(t) \,dt
 =1-\frac{\theta_\varphi}{2-\theta_\varphi} \ge 0. \]

For $s \ge 1$, we deduce from $\ln_\varphi(s) \ge \ln_\varphi(1)=0$ and
$\varphi_+'(s) \le \varphi_+'(1)$ that $A'_+(s) \ge 0$.
Hence we have $A(s) \ge A(1)=0$.
On $(0,1)$, since $\ln_{\varphi}<0$, $A(0)=A(1)=0$ and $\varphi'_+$ is non-increasing,
$A$ is identically zero or there is some $s_0 \in (0,1)$ such that
$A'_+ \ge 0$ on $(0,s_0)$ and that $A'_+ \le 0$ on $(s_0,1)$.
Therefore we conclude that $A \ge 0$ on $(0,1)$.
$\qedd$
\end{proof}

\begin{remark}
The condition $\theta_\varphi \le 1$ does not imply the concavity of $\varphi$.
For instance, let
\[ \varphi(s):= \left\{ \begin{array}{cl}
 \sqrt{s} & {\rm for}\ 0<s<1, \\
 s & {\rm for}\ s \ge 1.
\end{array} \right. \]
Then we have $\theta_\varphi =1$, whereas $\varphi$ is clearly not concave.
\end{remark}

Now we prove that the $u_{\varphi}$-entropy inequality implies the $\varphi$-normal concentration
for $\varphi$ as in Lemma~\ref{monono}.

\begin{theorem}[$\varphi$-normal concentration from $u_{\varphi}$-entropy inequality]\label{th:u_phi}
Take a positive concave function $\varphi:(0,\infty) \lra (0,\infty)$ such that $\varphi(1)=1$.
For a Riemannian manifold $(M,g)$ and $\nu \in \cP(M)$,
assume that there is a positive constant $K$ such that the $u_{\varphi}$-entropy inequality
\begin{equation}\label{51}
\int_M u_\varphi(w) \,d\nu -u_\varphi \left(\int_M w \,d\nu \right)
 \le \frac{1}{2K} \int_M u_\varphi''(w) |\nabla w|^2 \,d\nu
\end{equation}
holds for every nonnegative measurable function $w \in L^1(M,\nu)$ satisfying
$u_\varphi''(w)|\nabla w|^2 \in L^1(M, \nu)$.
Then for any $r>0$ we have
\[ \alpha (r)^{-1} \ge \exp_\varphi \left( -\frac{u_\varphi(1)K}{8}r^2 \right), \]
where $\alpha$ stands for the concentration function of $(M,\nu)$.
\end{theorem}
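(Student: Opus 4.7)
The plan is to adapt the classical Herbst argument to the $u_\varphi$-entropy setting. Fix a $1$-Lipschitz $f:M\to\R$ with $\int_M f\,d\nu = 0$ and set
\[ Z(\lambda) := \int_M \exp_\varphi(\lambda f)\,d\nu, \qquad \alpha(\lambda) := \ln_\varphi\!\big(Z(\lambda)\big) \qquad (\lambda\ge 0). \]
Feed the test function $w := \exp_\varphi(\lambda f)$ into the $u_\varphi$-entropy inequality~\eqref{51}: since $\nabla w = \lambda\varphi(w)\nabla f$ and $|\nabla f|\le 1$, one has $u_\varphi''(w)|\nabla w|^2 \le \lambda^2\varphi(w)$, and Jensen's inequality for the concave $\varphi$ gives $\int_M\varphi(w)\,d\nu\le\varphi(Z(\lambda))$, so \eqref{51} yields
\[ \int_M u_\varphi(w)\,d\nu - u_\varphi\!\big(Z(\lambda)\big) \le \frac{\lambda^2}{2K}\,\varphi\!\big(Z(\lambda)\big). \]

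Lemma~\ref{monono} applied pointwise to $w$, using $\ln_\varphi(w)=\lambda f$, provides $u_\varphi(w) \ge a_\varphi\lambda f\,\varphi(w) - a_\varphi w$ and hence $\int_M u_\varphi(w)\,d\nu \ge a_\varphi \lambda Z'(\lambda) - a_\varphi Z(\lambda)$, with $Z'(\lambda) = \int_M f\varphi(w)\,d\nu$. Combining with the previous display and handling the residual $u_\varphi(Z) + a_\varphi Z$ via the complementary estimate $u_\varphi(Z) + a_\varphi Z \le a_\varphi Z\ln_\varphi(Z)$ — which should hold for $Z\ge 1$ because $\varphi(Z)/Z\le 1$ by concavity with $\varphi(1)=1$, and is saturated by the power functions $\varphi_m$ — the goal is the Herbst-type inequality
\[ \lambda\,\alpha'(\lambda) - \alpha(\lambda) \le \frac{\lambda^2}{2\,a_\varphi K}. \]
Because $\varphi(1)=1$ forces $Z(0)=1$ and $Z'(0)=0$, so that $\lim_{\lambda\downarrow 0}\alpha(\lambda)/\lambda=0$, integrating $\frac{d}{d\lambda}(\alpha/\lambda)\le 1/(2 a_\varphi K)$ from zero delivers $\ln_\varphi(Z(\lambda)) \le \lambda^2/(2 a_\varphi K)$.

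The concentration estimate then follows by invoking Lemma~\ref{lm:Cheby} with $v(t):=\exp_\varphi(\lambda t)$: this gives $\nu\{f\ge r\} \le Z(\lambda)/\exp_\varphi(\lambda r)$, and substituting the bound just obtained together with the choice $\lambda := a_\varphi Kr/2$ yields $\nu\{f\ge r\} \le \exp_\varphi(a_\varphi Kr^2/8)/\exp_\varphi(a_\varphi Kr^2/2)$. A sub-multiplicativity relation for $\exp_\varphi$, coming from the concavity of $\ln_\varphi$ and the normalization $\exp_\varphi(0)=1$, should reduce this ratio to $1/\exp_\varphi(a_\varphi Kr^2/8)$. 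Taking the supremum over $1$-Lipschitz $f$ with median zero — combined with the standard bootstrap from median to mean — produces the stated bound $\alpha(r)^{-1} \ge \exp_\varphi(a_\varphi Kr^2/8) = \exp_\varphi(-u_\varphi(1)Kr^2/8)$.

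The hard part will be establishing and correctly deploying the \emph{companion} inequality $u_\varphi(Z)+a_\varphi Z \le a_\varphi Z\ln_\varphi(Z)$ used above: Lemma~\ref{monono} itself only gives the inequality in the opposite direction, so some extra calculus is needed — for instance, the identity $u_\varphi(Z) = Z\ln_\varphi(Z) - \int_0^Z t/\varphi(t)\,dt$ combined with the monotonicity properties of $t/\varphi(t)$ furnished by Lemma~\ref{lem:mono}. The combined slack of this companion estimate, the Jensen step, and the suboptimal choice $\lambda = a_\varphi Kr/2$ is what accounts for the factor $1/8$ in the final exponent, in contrast to the $1/2$ of the classical Herbst bound.
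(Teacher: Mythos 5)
Your strategy is the right Herbst-type strategy and matches the paper's general plan (Chebyshev with a $\exp_\varphi$-test function, feed $w=\exp_\varphi(\cdot)$ into \eqref{51}, use Lemma~\ref{monono} for a lower bound on $\int u_\varphi(w)\,d\nu$ via $Z'$). However, the argument as written rests on the ``companion inequality''
\[
u_\varphi(Z)+a_\varphi Z \le a_\varphi Z\ln_\varphi(Z), \qquad Z\ge 1,
\]
and this inequality is \emph{false} for general concave $\varphi$ with $\varphi(1)=1$. Your own heuristic for it (``$\varphi(Z)/Z\le 1$'') is not enough: Lemma~\ref{monono} gives $u_\varphi(Z)+a_\varphi Z\ge a_\varphi\varphi(Z)\ln_\varphi(Z)$, and you need to jump across both the factor $\varphi(Z)\le Z$ and the direction of the inequality. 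Concretely, take $\varphi(s)=\sqrt{s}$ for $s\le 1$ and $\varphi(s)=(1+s)/2$ for $s\ge 1$ (positive, non-decreasing, concave, $C^1$, $\varphi(1)=1$, $a_\varphi=2/3$). Writing $U=(1+Z)/2\ge 1$, the claimed inequality reduces after a short computation to $(U+1)\ln U\le 2(U-1)$, which fails for all $U>1$ (e.g.\ at $U=e$ the left side is $e+1\approx 3.72$ while the right side is $2(e-1)\approx 3.44$). Equality only holds in the power family $\varphi_m$, which is exactly why your sanity check on $\varphi_m$ looked fine but does not generalize. Without this inequality your differential inequality $\lambda\alpha'(\lambda)-\alpha(\lambda)\le \lambda^2/(2a_\varphi K)$ cannot be derived, so the core of the argument is missing.

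There is a second, smaller gap at the end: the reduction of the ratio $\exp_\varphi(a_\varphi Kr^2/8)/\exp_\varphi(a_\varphi Kr^2/2)$ to $\exp_\varphi(a_\varphi Kr^2/8)^{-1}$ would require a super-multiplicativity property of $\exp_\varphi$, which again holds with equality for $\exp$ but is not valid for general $\exp_\varphi$. The paper side-steps both problems at once: it builds the quadratic shift into the Chebyshev test function, $v_s(t)=\exp_\varphi\bigl(st-s^2/(2a_\varphi K)\bigr)$, so no multiplicativity of $\exp_\varphi$ is needed; and instead of bounding $u_\varphi(I(s))+a_\varphi I(s)$ \emph{from above}, it derives the one-sided differential inequality $s\,a_\varphi I'(s)<\Phi_\varphi(I(s))$ with $\Phi_\varphi(t)=u_\varphi(t)+a_\varphi t$, and then argues by contradiction using the auxiliary functions $P(s)$ and $Q(s)=\Phi_\varphi(I(s))/P(s)$ to conclude $I(s)\le 1$. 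To repair your proof you would need to replace the false companion inequality with this kind of analysis of the one-sided inequality; the clean Herbst ODE $\frac{d}{d\lambda}(\alpha/\lambda)\le \mathrm{const}$ is simply not available in the generalized setting.
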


\begin{proof}
Fix arbitrary $A \subset M$ with $\nu[A] \ge 1/2$ and $r>0$.
Putting $B:=M \setminus B(A,r)$, we also assume $\nu[B]>0$ since we have $\alpha(r)=0$ if $\nu[B]=0$ for all such $A$.
Set $F_r(x):=\min\{d_g(x,A), r\}$ for $x \in M$, and observe that $F_r$ is $1$-Lipschitz.
Note also that the function
\[ G_r(x):=F_r(x) -\int_M F_r \,d\nu \]
satisfies $G_r(x) \ge r/2 $ for any $x \in B$
since $\int_M F_r \,d\nu \le r \cdot \nu[M \setminus A] \le r/2$.
Applying Chebyshev's inequality (Lemma~\ref{lm:Cheby}) to the nonnegative, non-decreasing function
\[ v_s(t) :=\exp_\varphi\left( s t-\frac{ s^2}{2 a_\varphi K} \right) \]
with $s>0$ and  $a_\varphi:=-u_\varphi(1) >0$, we have
\begin{equation}\label{eq:Pent}
\nu[B] \le \nu\left[ \left\{ x \in M \Bigm| G_r(x) \ge \frac{r}{2} \right\} \right]
 \le \frac{1}{v_s (r/2)} \int_M v_s(G_r) \,d\nu.
\end{equation}
We shall show that $I(s):=\int_M v_s(G_r) \,d\nu \ge \int_B v_s(r/2) \,d\nu >0$
is bounded above by $1$.

Set
\begin{align*}
w_s(x) &:=v_s \big( G_r(x) \big) =\exp_\varphi \left(s G_r(x)-\frac{s^2}{2a_\varphi K} \right), \\
X_s &:= \left\{ x \in M \biggm| sG_r(x) -\frac{s^2}{2a_\varphi K}  >l_\varphi \right\}.
\end{align*}
For $s\in (0,a_\varphi Kr)$ and any $x \in B$, we have
\[ sG_r(x) -\frac{s^2}{2a_\varphi K}
 \ge \frac{rs}2-\frac{s^2}{2a_\varphi K}
=-\frac{1}{2a_\varphi K} \left( s-\frac{a_\varphi K r}{2} \right)^2 +\frac{a_\varphi K r^2}{8}
 \ge 0 >l_\varphi, \]
proving $B \subset X_s$.
Let us introduce the strictly convex function $\Phi_\varphi(t):=u_\varphi(t) +a_\varphi t$
on $[0,\infty)$, and observe that
$\Phi_{\varphi} \le 0$ on $[0,1]$ and $\Phi_{\varphi}>0$ on $(1,\infty)$.
Then the inequality~\eqref{51} applied to $w=w_s$ can be rewritten as
\[ \int_{X_s} \Phi_\varphi(w_s) \,d\nu -\frac{1}{2K} \int_{X_s} \Phi_\varphi''(w_s) |\nabla w_s|^2 \,d\nu
 \le \Phi_\varphi \left( \int_{X_s} w_s \,d\nu \right). \]
Note that $w_s$ is bounded since $G_r$ is bounded by definition, and hence $w_s \in L^1(M,\nu)$.
Moreover, $u''_{\varphi}(w_s)|\nabla w_s|^2 \in L^1(M,\nu)$ is seen by
\[ \int_{X_s} \Phi''_\varphi(w_s) |\nabla w_s|^2 \,d\nu
 =\int_{X_s} s^2 \varphi(w_s) |\nabla G_r|^2 \,d\nu
 < s^2 \int_{X_s} \varphi(w_s) \,d\nu \]
for $s\in (0,a_\varphi Kr)$, where we used the fact that $|\nabla G_r| \le 1$ on whole $M$
and $|\nabla G_r| \equiv 0$ on $B$.
It follows from Lemma~\ref{monono} that
\begin{align*}
\int_{X_s} \left( \Phi_\varphi(w_s)- \frac{s^2}{2K} \varphi(w_s) \right) \,d\nu
&\ge \int_{X_s} \left( a_\varphi \varphi(w_s) \ln_\varphi(w_s) -\frac{s^2}{2K} \varphi(w_s) \right) \,d\nu \\
&= \int_{X_s} \varphi(w_s) \left( sa_\varphi G_r-\frac{s^2}{K} \right) \,d\nu 
= sa_\varphi \frac{d}{ds} \left( \int_{X_s} w_s \,d\nu \right).
\end{align*}
These together imply, as $\int_{X_s} w_s \,d\nu=\int_M w_s \,d\nu=I(s)$,
\begin{equation}\label{bibun}
sa_\varphi I'(s) < \Phi_\varphi \big( I(s) \big) \qquad \text{for}\ s \in (0,a_\varphi Kr).
\end{equation}
For $s_0 \in (0,a_{\varphi}Kr)$ chosen later, set
\[ P(s):=\exp \left( \frac1{a_\varphi} \int_{s_0}^s \frac{\Phi_\varphi'(I(t))}{t} \,dt \right),
 \qquad Q(s):=\frac{\Phi_\varphi(I(s))}{P(s)} \]
for $s \in (0,s_0]$, and observe
\[ Q'(s) =\frac{\Phi'_{\varphi}(I(s))}{P(s)}
 \bigg\{ I'(s)-\frac{\Phi_{\varphi}(I(s))}{sa_{\varphi}} \bigg\}. \]
Then we deduce from \eqref{bibun} that $Q'(s)=0$ if and only if $\Phi'_\varphi(I(s))=0$.

Assume that $\sup_{s \in (0,a_{\varphi}Kr)}I(s)>I(0)=1$ and choose $s_0 \in (0,a_{\varphi}Kr)$
such that $I(s_0)>1$ and
\[ c:=\sup_{s \in (0,s_0]} \Phi_\varphi' \big( I(s) \big) \in (a_\varphi,2a_{\varphi}) \]
(note that $\Phi'_{\varphi}(I(0))=a_{\varphi}$).
Then we have
\begin{equation}\label{eq:P(s)}
P(s) \ge \exp \left( \frac{c}{a_\varphi} \int_{s_0}^s \frac{1}{t} \,dt \right)
 =\left( \frac{s}{s_0} \right)^{c/a_{\varphi}},\qquad s \in (0,s_0].
\end{equation}
Moreover, since the convexity of $\Phi_\varphi$ and $\Phi_\varphi(0)=0$ imply
$I(s) \Phi_\varphi' (I(s)) \ge \Phi_\varphi (I(s))$,
we find $\Phi'_{\varphi}(I(s_0))>0$ and hence $Q'(s_0)<0$.
Note that there does not exist $s \in(0,s_0)$ such that $Q' \le 0$ on $(s,s_0)$ as well as $Q'(s)=0$,
since then $I(s)\Phi'_{\varphi}(I(s))=0 \ge \Phi_{\varphi}(I(s))$ and $Q(s_0) \le Q(s) \le 0$
which contradicts $I(s_0)>1$.
Thus $Q'<0$ on $(0,s_0)$, and by \eqref{eq:P(s)}
\[ Q(s_0) \le \limsup_{s \downarrow 0} Q(s)
 \le s_0^{c/a_{\varphi}} \limsup_{s \downarrow 0} \frac{\Phi_\varphi(I(s))}{s^{c/a_\varphi}}. \]
Now, since
\begin{align*}
I'(0) = \int_M G_r \,d\nu=0, \qquad
\Phi_{\varphi}\big( I(s) \big) = \Phi_{\varphi}(1) +s\Phi'_{\varphi}(1)I'(0) +O(s^2) =O(s^2)
\end{align*}
and $c<2a_{\varphi}$, it holds $\lim_{s \downarrow 0}s^{-c/a_{\varphi}}\Phi_{\varphi}(I(s))=0$.
This means $Q(s_0) \le 0$ and hence $I(s_0) \le 1$, which is a contradiction.
We therefore obtain $I(s) \le I(0)=1$ for any $s \in (0,a_\varphi Kr)$ as desired.

Hence we deduce from \eqref{eq:Pent} that
$\nu[B] \le v_s(r/2)^{-1}$
for any $s \in (0,a_\varphi Kr)$.
Choosing $s=a_\varphi Kr/2$ and taking the supremum in $A$, we conclude that
\[ \alpha(r) \le \frac{1}{\exp_\varphi(a_\varphi Kr^2/8)}. \]
$\qedd$
\end{proof}

\begin{remark}\label{rm:BG}
Bolley and Gentil~\cite{BG} showed that if a probability measure on $\R^n$ satisfies $\CD(K,\infty)$ with $K>0$,
then it satisfies the $u_{\varphi}$-entropy inequality~\eqref{51} with the same constant $K$.
We remark that the condition $\CD(K,\infty)$ leads the normal concentration which is stronger than
the $\varphi$-normal concentration for $\theta_\varphi<1$
(since $\exp_{\varphi}(r)^{-1} \ge e_{2-\theta_{\varphi}}(r)^{-1} \ge e^{-r}$ by \eqref{eq:exp}),
whereas there exists a probability measure which satisfies \eqref{51} and does not satisfy $\CD(K,\infty)$.
See~\cite[Theorem~2]{LO} for details, where they proved that the probability measure on $\R^n$ of the form
\[ d\mu_a(x):=\left( \frac{a}{2\Gamma(1/a)} \right)^n \exp(-|x|^a) \,d\cL^n(x) \]
with $a \in [1,2)$ satisfies the $u_{\varphi_m}$-entropy inequality for $m\in(1,2]$,
while the concentration function $\alpha(r)$ of $\mu_a$ is dominated by
$\exp(-r^2/3)$ (resp.\ $\exp(-r^a/3)$) for $r<1$ (resp.\ $r>1$).
\end{remark}

{\small

}
\end{document}